\providecommand{\U}[1]{\protect\rule{.1in}{.1in}}
\newtheorem{theorem}{Theorem}
\theoremstyle{plain}
\newtheorem{definition}{Definition}
\newtheorem{lemma}{Lemma}
\newtheorem{remark}{Remark}
\numberwithin{equation}{section}
\author{Yuanlong Ruan}
\address{Beijing University of Aeronautics and Astronautics, 100191, Beijing, China}
\email{ruanyl@buaa.edu.cn}
\subjclass[2010]{Primary 35A02, 35A15, 35D40}
\keywords{discontinuous quasilinear problem, $\Gamma$-convergence, viscosity solution, weak comparison principle, uniqueness}
\begin{document}
\title[Limit of quasilinear equations and related extremal problems]{Limit of a class of quasilinear equations and related extremal problems}

\begin{abstract}

We perform a complete analysis of the limiting behaviour of a class of quasilinear problems with Dirichlet boundary data $g$,
$$
-\operatorname{div}\mathcal{A}_{n}\left(  x,\nabla u\left(  x\right)  \right)
=0,\text{ }x\in\Omega.
$$
We show that the Lipschitz constant of $g$ plays a role in controlling the $\Gamma$-convergence of the natural energies. However the solutions converge uniformly to solution of a limiting equation whenever $g \in Lip(\partial \Omega)$, irrelevant to the Lipschitz constant of $g$ . The limiting equation has no coercivity in $u$. We prove that the limiting equation admits a weak comparison principle and has a unique viscosity solution. We also obtain a Poincare’s inequality in the Sobolev-Orlicz space for discontinuous $\mathcal{A}_n$, which paves the way for our study of an extremal problem where its operator $\mathcal{A}_{\infty}$ becomes unbounded in a subdomain $\Omega_\infty \subset \Omega $. Upon giving proper meaning to its solution, we show that the extremal problem has a unique solution. It turns out the solution belongs to $C\left(  \bar{\Omega}\right)  \cap W^{1,\infty}\left(  \Omega\right)  $, although $\mathcal{A}_{\infty}$ is discontinuous.  In the appendix we provide some technical inequalities which play crucial roles in the proof of uniqueness and we believe will be of independent interest.

\end{abstract}
\maketitle

\section{\label{sec_intro}Introduction}

We study the limiting behaviour as $n\rightarrow\infty$ of a class of
quasilinear problems in the nonlinear potential theory%
\begin{equation}
-\operatorname{div}\mathcal{A}_{n}\left(  x,\nabla u\left(  x\right)  \right)
=0,\text{ }x\in\Omega, \label{prototype_quasilinear_n}%
\end{equation}
and a related extremal problem%
\begin{equation}
-\operatorname{div}\mathcal{A}_{\infty}\left(  x,\nabla u\left(  x\right)
\right)  =0,\text{ }x\in\Omega, \label{prototype_quasilinear}%
\end{equation}
where $\mathcal{A}_{\infty}$ is unbounded in a subdomain of $\Omega$. These
quasilinear problems have been the prototype of many researches. The most
studied one being the $p$-Laplacian for large $p>1$,%
\[
-\Delta_{p}u=\operatorname{div}\left(  \left\vert \nabla u\right\vert
^{p-2}\nabla u\right)  =0,\text{ }x\in\Omega.
\]
Early studies in this respect dates back to 1980s, e.g. the torsional creep
problem \cite{bhattacharya1989limits} where the main interest lies in the
properties of solutions when $p$ is large. An interesting result is that for
large $p$, the $p$-Laplacian equation is close to the infinity Laplacian
equation,%
\[
-\Delta_{\infty}u=0,\text{ }x\in\Omega.
\]
This observation is also exploited in the study of sandpiles
\cite{aronsson1996fast}\cite{evans1997fast} where $p$-Laplacian with large $p$
is used as a simplistic approximation of collapsing sandpile. The asymptotic
analysis for the $p$-Laplacian was later extended to the $p\left(  x\right)
$-Laplacian \cite{manfredi2010limits}. When $p\left(  x\right)  $ is unbounded
in a subdomain $D\subset\Omega$ \cite{manfredi2009p}, the $p\left(  x\right)
$-harmonic function is related to the infinity Laplacian equation subject to a
mixed Dirichlet-Neumann boundary condition. In the subdmain $D$, the mixed
Dirichlet-Neumann problem%
\[
\left\{
\begin{array}
[c]{ll}%
-\Delta_{\infty}u=0, & x\in D;\\
u=g, & x\in\partial D;\\
\dfrac{\partial u}{\partial\nu}=0, & x\in\partial\Omega\backslash\partial D,
\end{array}
\right.
\]
coincides with the characterization of the tug-of-war game \cite{peres2009tug}%
\cite{charro2009mixed}. In addition to the $p\left(  x\right)  $-Laplacian,
limit of anisotropic $p$-Laplacian \cite{perez2011anisotropic} and isotropic
$\varphi$-Laplacian \cite{stancu2018asymptotic} were also considered. Another
thread of studies on limiting behaviour of $\left(
\ref{prototype_quasilinear}\right)  $ is mostly variational and involves the
convergence of the associated energies, e.g. \cite{bocea2015gamma}.

Motivated by these works, we intend to present a complete study of $\left(
\ref{prototype_quasilinear_n}\right)  $ and the related extremal problem
$\left(  \ref{prototype_quasilinear}\right)  $, from the perspectives of both
partial differential equations and variational convergences. The quasilinear
problems $\left(  \ref{prototype_quasilinear_n}\right)  $ and the extremal
problem $\left(  \ref{prototype_quasilinear}\right)  $ encompass the $p\left(
x\right)  $-Laplacian problem considered in \cite{manfredi2009p}. However, as
noted in \cite{harjulehto2016generalized}, going from the variable exponent
case to the general case requires new techniques on the level of both function
spaces and partial differential equations. There are also some new features
here. The general structure of the equation $\left(
\ref{prototype_quasilinear_n}\right)  $ allows complex interplay between the
$x$-variable and the gradient $\nabla u\left(  x\right)  $. Many properties
and results present for the case of $p\left(  x\right)  $-Laplacian are not
available for $\left(  \ref{prototype_quasilinear_n}\right)  $. Most
prominently, several new building blocks, particularly the uniqueness for the
limiting equation, some subtle estimates and the embedding in the generalized
Sobolev-Orlicz space with discontinuous coefficients $\mathcal{A}_{\infty}$
have to be built in the course of solving $\left(  \ref{prototype_quasilinear}%
\right)  $.

We first give a rough sketch of our approaches and results. Here in this
article, we do not try to pursue full generality, so we restrict ourselves to
quasilinear problems of the following form%

\begin{equation}
\left\{
\begin{array}
[c]{ll}%
-\operatorname{div}\left(  \dfrac{a_{n}\left(  x,\left\vert \nabla u\left(
x\right)  \right\vert \right)  }{\left\vert \nabla u\left(  x\right)
\right\vert }\nabla u\left(  x\right)  \right)  =0, & x\in\Omega;\\
u\left(  x\right)  =g\left(  x\right)  , & x\in\partial\Omega,
\end{array}
\right.  \label{general_quasilinear_n_intro}%
\end{equation}
where $\Omega$ is a bounded smooth domains of $\mathbb{R}^{d}$, $g\in
Lip\left(  \partial\Omega\right)  $ and%
\[
a_{n}\left(  x,s\right)  =\frac{\varphi_{n}\left(  x,s\right)  }{\varphi
_{n}\left(  x,1\right)  }.
\]
The assumptions on $\varphi_{n}$ will be made clear in section \ref{sec_pre}%
\ (ref. $\left(  \ref{assumption_phi_x}\right)  $). The natural energies
associated with this equation is denoted by%
\[
E_{n}\left(  u\right)  :u\in W^{1,\Phi_{n}\left(  \cdot\right)  }\left(
\Omega\right)  \mapsto
{\displaystyle\int_{\Omega}}
\dfrac{\Phi_{n}\left(  x,\left\vert \nabla u\right\vert \right)  }{\varphi
_{n}\left(  x,1\right)  }dx,
\]
where%
\[
\Phi_{n}\left(  x,s\right)  =\int_{0}^{s}\varphi_{n}\left(  x,t\right)  dt.
\]
We intend to let $n\rightarrow\infty$ and get limits of solutions $u_{n}\ $of
$\left(  \ref{general_quasilinear_n_intro}\right)  $, so require some
estimates independent of $n$. A key ingredient is a uniform gradient estimate
for the weak solutions of $\left(  \ref{general_quasilinear_n_intro}\right)
$. In obtaining the uniform gradient estimate, the boundary data $g$ seems to
play a role. The first evidence for $g$'s effects is revealed by the $\Gamma
$-convergence of the extended natural energies $\tilde{E}_{n}\left(  u\right)
$ associated with $\left(  \ref{general_quasilinear_n_intro}\right)  ,$%
\[
\tilde{E}_{n}\left(  u\right)  :u\in L^{1}\left(  \Omega\right)
\mapsto\left\{
\begin{array}
[c]{ll}
{\displaystyle\int_{\Omega}}
\dfrac{\Phi_{n}\left(  x,\left\vert \nabla u\right\vert \right)  }{\varphi
_{n}\left(  x,1\right)  }dx, & u\in W^{1,\Phi_{n}\left(  \cdot\right)
}\left(  \Omega\right)  ;\\
\infty, & \text{otherwise.}%
\end{array}
\right.
\]
Hereafter when there is no confusion, we will use the same symbol $E_{n}$ to
denote the natural energy on $W^{1,\Phi_{n}\left(  \cdot\right)  }\left(
\Omega\right)  $ and its extension to $L^{1}\left(  \Omega\right)  $. The
$\Gamma$-convergence result suggests that, in order for $E_{n}$ to be bounded,
we assume%
\begin{equation}
\left\Vert g\right\Vert _{Lip\left(  \partial\Omega\right)  }\leqslant1.
\label{assumption_g_lip}%
\end{equation}
Here $\left\Vert g\right\Vert _{Lip\left(  \partial\Omega\right)  }$ is the
Lipschitz constant of $g\in Lip\left(  \partial\Omega\right)  $, i.e.,%
\[
\left\Vert g\right\Vert _{Lip\left(  \partial\Omega\right)  }\triangleq
\sup_{\substack{x,y\in\partial\Omega\\x\neq y}}\frac{\left\vert g\left(
x\right)  -g\left(  y\right)  \right\vert }{\left\vert x-y\right\vert }.
\]
Previous studies for $p\left(  x\right)  $-Laplacian are mostly restricted to
the assumption $\left(  \ref{assumption_g_lip}\right)  $, e.g.
\cite{manfredi2009p}\cite{manfredi2010limits}. In fact we show that, for
convex domains, $\left(  \ref{assumption_g_lip}\right)  $ is a necessary and
sufficient condition for $E_{n}$ to be bounded. We also show that the required
uniform gradient estimate can be obtained without the assumption $\left\Vert
g\right\Vert _{Lip\left(  \partial\Omega\right)  }\leqslant1$, although
$\left\Vert g\right\Vert _{Lip\left(  \partial\Omega\right)  }>1$ might cause
the energies $E_{n}$ to explode as $n\rightarrow\infty$.

Once we get the required estimates, the convergence of solutions can be proved
and the limiting equation for $\left(  \ref{general_quasilinear_n_intro}%
\right)  $ turns out to be%
\begin{equation}
\left\{
\begin{array}
[c]{ll}%
-\Delta_{\infty}u-\left\vert \nabla u\right\vert \left\langle \Lambda\left(
x,\left\vert \nabla u\right\vert \right)  ,\nabla u\right\rangle =0, &
x\in\Omega;\\
u=g, & x\in\partial\Omega.
\end{array}
\right.  \label{equ_limit_intro}%
\end{equation}
Now here is one difficulty. Since this equation is not strictly coercive in
$u$, uniqueness for such equation is usually difficult to obtain. The
classical comparison principle does not readily apply and the perturbation
technique suggested by the User's guide \cite{crandall1992user}\ does not seem
to work either. So we have to prove a comparison principle for this limiting
equation. In the course of proving the uniqueness, some technical inequalities
have a crucial role to play. Once the uniqueness is obtained, the convergence
of the solutions for $\left(  \ref{general_quasilinear_n_intro}\right)  $ follows.

Next we look into the extremal equation%

\begin{equation}
\left\{
\begin{array}
[c]{ll}%
-\operatorname{div}\left(  \dfrac{a_{\infty}\left(  x,\left\vert \nabla
u\left(  x\right)  \right\vert \right)  }{\left\vert \nabla u\left(  x\right)
\right\vert }\nabla u\left(  x\right)  \right)  =0, & x\in\Omega;\\
u\left(  x\right)  =g\left(  x\right)  , & x\in\partial\Omega,
\end{array}
\right.  \label{equ_extremal_intro}%
\end{equation}
where $a_{\infty}\left(  x,s\right)  :\Omega\times\left[  0,\infty\right)
\rightarrow\mathbb{R\cup}\left\{  +\infty\right\}  $ is defined piecewise as
below. Let $\Omega_{\infty}$ be a smooth subdomain of $\Omega$, $a\left(
x,s\right)  $ an appropriate function on $\left(  \Omega\backslash
\Omega_{\infty}\right)  \times\left[  0,\infty\right)  $. We write
\begin{equation}
a_{\infty}\left(  x,s\right)  =\left\{
\begin{array}
[c]{ll}%
0, & s\leqslant1,x\in\Omega_{\infty};\\
\infty, & s>1,x\in\Omega_{\infty}.
\end{array}
\right.  \label{equ_extremal_a1}%
\end{equation}
and%
\begin{equation}
a_{\infty}\left(  x,s\right)  =a\left(  x,s\right)  \text{ for }x\in
\Omega\backslash\Omega_{\infty},\text{ }s\geqslant0. \label{equ_extremal_a2}%
\end{equation}
The basic question about understanding $\left(  \ref{equ_extremal_intro}%
\right)  $ is whether it has a solution and in what sense. If it does have a
solution in some reasonable sense, then how regular it is and what values does
it assume on the boundary $\partial\Omega_{\infty}$ where $a_{\infty}\left(
x,s\right)  $ becomes discontinuous.

The idea of solving $\left(  \ref{equ_extremal_intro}\right)  $ is to choose
$\tilde{a}_{n}\left(  x,s\right)  $ which approaches $\infty$ on $\bar{\Omega
}\times\left(  1,\infty\right)  $ and $0$ on $\bar{\Omega}\times\left[
0,1\right)  $ in a way that suits our purpose$.$ Such $\tilde{a}_{n}$ clearly
approximates $a_{\infty}$ in $\bar{\Omega}\times\left[  0,\infty\right)  .$
Then the study of $\left(  \ref{equ_extremal_intro}\right)  $ comes down to
that of the limiting behaviour of $\left(  \ref{general_quasilinear_n_intro}%
\right)  $ with $a_{n}$ replaced by $\tilde{a}_{n}$. In view of the form of
$a_{\infty}\left(  x,s\right)  ,$ i.e. $\left(  \ref{equ_extremal_a1}\right)
\left(  \ref{equ_extremal_a2}\right)  $, the approximations $\tilde{a}%
_{n}\left(  x,s\right)  $ are not continuous on $\bar{\Omega}\times\left[
0,\infty\right)  $ unless $\Omega_{\infty}=\Omega$. However, the classical
Poincare's inequality does not apply to the generalized Sobolev-Orlicz
space associated with discontinuous $\tilde{a}_{n}\left(  x,s\right)  .$ We
have to prove a Poincare's inequality for discontinuous discontinuous
$\tilde{a}_{n}\left(  x,s\right)  $ so that we can find weak solutions.

In the appendix, we prove several technical inequalities which play essential
roles in proving the uniqueness for the limiting equation and the convergence
of solutions. These inequalities will also be of independent interest.

\begin{remark}
\label{Rmk_g}Throughout this article we assume $g\in Lip\left(  \partial
\Omega\right)  $. To save us from redundant notations, we will use the same
symbol $g$ to mean both the function on $\partial\Omega$ and its absolutely
minimal Lipschitz extension (AMLE for short) into $\Omega$. Then we generally
have%
\[
\left\Vert \nabla g\right\Vert _{L^{\infty}\left(  \Omega\right)  }%
\leqslant\left\Vert g\right\Vert _{Lip\left(  \partial\Omega\right)  }.
\]

\end{remark}

The organization of this article is as follows. Section \ref{sec_pre} provides
the basic facts about the generalized Sobolev-Orlicz space, some utility
results are proved. In section \ref{sec_gamma}, we study the $\Gamma
$-convergence of the natural energies associated with $\left(
\ref{general_quasilinear_n_intro}\right)  $. Section \ref{sec_lim_g1} and
section \ref{sec_lim_g2} are devoted to the convergence of solutions of
$\left(  \ref{general_quasilinear_n_intro}\right)  $, both the case with
$\left\Vert g\right\Vert _{Lip\left(  \partial\Omega\right)  }\leqslant1$ and
without $\left\Vert g\right\Vert _{Lip\left(  \partial\Omega\right)
}\leqslant1$ are discussed. In section \ref{sec_comparison}, we prove a
comparison principle for the limiting equation $\left(  \ref{equ_limit_intro}%
\right)  $, from which the uniqueness for $\left(  \ref{equ_limit_intro}%
\right)  $ is derived. In the the remaining sections, we come back to the
extremal problem $\left(  \ref{equ_extremal_intro}\right)  $, a version of
Poincare's inequality is proved in section \ref{sec_Poincare} which
eventually enables us to find a solution for the extremal problem. Uniqueness
and gradient estimate for the solution of $\left(  \ref{equ_extremal_intro}%
\right)  $ are obtained. Finally in the appendix we collect some useful
technical inequalites.

\section{Acknowledgement}

The author is grateful to Prof. Yoshikazu Giga for his kindness and for the
careful reading by his colleague of the previous version of this paper and
generously providing insightful detailed comments which substantially improved
the paper. The proofs of several of the main results (e.g. Theorem
\ref{thm_weaksol_viscositysol}, Theorem \ref{thm_convergence_main}, Lemma
\ref{Lm_orlicz_embedding}, Theorem \ref{Thm_poincare_discontinuous} to name a
few) have greatly benefited from the comments. An alternative short proof is
suggested for the Lemma \ref{inequality_main} $\left(
\ref{inequality_main_inq2}\right)  $, although the author has kept the
original proof since it is referenced by other parts.

\section{\label{sec_pre}Preliminaries}

This section introduces some basics about Sobolev-Orlicz space
\cite{harjulehto2019orlicz,harjulehto2016generalized} and a few derived
properties specific to our problems, which will prepare us for the work that follows.

\subsection{$\Phi$-function and Sobolev-Orlicz space}

Assume that $\varphi\in C\left(  \left[  0,\infty\right)  \right)  \cap
C^{1}\left(  \left(  0,\infty\right)  \right)  $, $\varphi\left(  s\right)
>0$ for $s>0$ and there are constants $p^{-},$ $p^{+}$ such that
\begin{equation}
0<p^{-}-1\leqslant\frac{s\varphi^{\prime}\left(  s\right)  }{\varphi\left(
s\right)  }\leqslant p^{+}-1\text{ for }s>0. \label{assumption_phi}%
\end{equation}

\begin{definition}
\label{def_Phi_func}Let $\varphi$ satisfy $\left(  \ref{assumption_phi}%
\right)  $. The function,%
\begin{equation}
\Phi\left(  s\right)  =\int_{0}^{s}\varphi\left(  t\right)  dt
\label{equ_PHI_phi}%
\end{equation}
is\ referred to as a $\Phi$\textit{-function}.
\end{definition}

It is clear that a $\Phi$\textit{-function }$\Phi\left(  s\right)  :\left[
0,\infty\right)  \mapsto\left[  0,\infty\right)  $ is a convex, (strictly)
increasing function such that $\Phi\left(  0\right)  =0$. In addition
$\lim_{s\rightarrow\infty}\Phi\left(  s\right)  =\infty$, since non-constant
convex function is unbounded.

A $\Phi$-function is a $N$-function in the sense of \cite{adams2003sobolev},
but not vice versa since the definition of $N$-functions does not require
convexity and differentiability. We cite the following Lemma from
\cite{lieberman1991natural}.

\begin{lemma}
\label{lemma_lieberman}Let $\varphi\in C^{1}\left(  0,\infty\right)  $ satisfy
$\left(  \ref{assumption_phi}\right)  $. Then $\Phi\in C^{2}$ is convex and
\[
\frac{s\varphi\left(  s\right)  }{p^{+}}\leqslant\Phi\left(  s\right)
\leqslant\frac{s\varphi\left(  s\right)  }{p^{-}}\text{ for }s>0;
\]
or equivalently%
\begin{equation}
p^{-}\leqslant\frac{s\varphi\left(  s\right)  }{\Phi\left(  s\right)
}\leqslant p^{+}\text{ for }s>0; \label{inequality_PHI}%
\end{equation}

\end{lemma}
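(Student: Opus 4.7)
The plan is to (i) confirm the regularity and convexity of $\Phi$ directly from its definition, and (ii) deduce the two-sided estimate $(\ref{inequality_PHI})$ by a monotonicity argument applied to the auxiliary quantities $s\varphi(s)-p^{-}\Phi(s)$ and $p^{+}\Phi(s)-s\varphi(s)$.

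First, since $\Phi(s)=\int_{0}^{s}\varphi(t)\,dt$ and $\varphi\in C([0,\infty))\cap C^{1}((0,\infty))$, the fundamental theorem of calculus gives $\Phi'(s)=\varphi(s)$ and $\Phi''(s)=\varphi'(s)$ on $(0,\infty)$, so $\Phi\in C^{2}((0,\infty))$. Convexity reduces to $\varphi'\geqslant 0$, which follows from rearranging the assumption: since $\varphi(s)>0$ and $p^{-}-1>0$, the left inequality in $(\ref{assumption_phi})$ forces $\varphi'(s)\geqslant\tfrac{p^{-}-1}{s}\varphi(s)>0$.

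For the main estimate I set $F(s):=s\varphi(s)-p^{-}\Phi(s)$ and $G(s):=p^{+}\Phi(s)-s\varphi(s)$. A direct computation gives
\[
F'(s)=s\varphi'(s)-(p^{-}-1)\varphi(s),\qquad G'(s)=(p^{+}-1)\varphi(s)-s\varphi'(s),
\]
and both right-hand sides are non-negative, $F'\geqslant 0$ by the lower bound in $(\ref{assumption_phi})$ and $G'\geqslant 0$ by the upper bound. Hence $F$ and $G$ are non-decreasing on $(0,\infty)$, and the desired inequality $\tfrac{s\varphi(s)}{p^{+}}\leqslant\Phi(s)\leqslant\tfrac{s\varphi(s)}{p^{-}}$ will follow once we check $F(0^{+})=G(0^{+})=0$; the equivalent form $p^{-}\leqslant\tfrac{s\varphi(s)}{\Phi(s)}\leqslant p^{+}$ is then algebraic.

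The only real subtlety --- the step I expect to be the main (though minor) obstacle --- is the boundary behaviour $s\varphi(s)\to0$ as $s\to 0^{+}$; the vanishing of $\Phi(s)$ there is built in. To establish it I would integrate the lower half of $(\ref{assumption_phi})$, namely $\tfrac{\varphi'(s)}{\varphi(s)}\geqslant\tfrac{p^{-}-1}{s}$, from $s$ up to a fixed $s_{0}>0$, obtaining $\varphi(s)\leqslant\varphi(s_{0})(s/s_{0})^{p^{-}-1}$. Since $p^{-}-1>0$, this already forces $\varphi(s)\to 0$, whence $s\varphi(s)\to 0$, closing the argument. As a byproduct the upper half of $(\ref{assumption_phi})$ yields the companion power-law bound $\varphi(s)\geqslant\varphi(s_{0})(s/s_{0})^{p^{+}-1}$, so one could in fact bypass the monotonicity trick entirely and derive $(\ref{inequality_PHI})$ by integrating these two power-law bounds for $\varphi$ directly on $(0,s]$; I would keep the $F,G$ version because it is the shortest and uses only the hypothesis itself.
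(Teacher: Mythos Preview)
Your argument is correct. The paper does not supply its own proof of this lemma: it is quoted verbatim as a result from Lieberman \cite{lieberman1991natural} (``We cite the following Lemma from \cite{lieberman1991natural}''), so there is nothing in the paper to compare your approach against. Your monotonicity argument via $F(s)=s\varphi(s)-p^{-}\Phi(s)$ and $G(s)=p^{+}\Phi(s)-s\varphi(s)$ is clean and self-contained; the only point worth remarking is that your handling of the boundary term $s\varphi(s)\to 0$ via the integrated bound $\varphi(s)\leqslant\varphi(s_{0})(s/s_{0})^{p^{-}-1}$ is the right move, since the lemma as stated assumes only $\varphi\in C^{1}(0,\infty)$ and not continuity at $0$ (though elsewhere in the paper $\varphi\in C([0,\infty))$ is part of the standing hypotheses, in which case boundedness of $\varphi$ near $0$ would suffice).
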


\begin{lemma}
\label{lemma_delta2_condition}Let $\alpha>0$ and $H,$ $h:\left[
0,\infty\right)  \mapsto\left[  0,\infty\right)  $ be differentiable functions
such that $h=H^{\prime}$. Then the function $s\mapsto s^{-\alpha}H\left(
s\right)  $ is%
\[
\text{increasing if and only if }\alpha\leqslant\inf_{t>0}\frac{th\left(
t\right)  }{H\left(  t\right)  }\text{,}%
\]
and%
\[
\text{decreasing if and only if }\alpha\geqslant\sup_{t>0}\frac{th\left(
t\right)  }{H\left(  t\right)  }\text{.}%
\]
Moreover, let $p_{1},p_{2}>1$, then
\begin{equation}
p_{1}\leqslant\frac{sh\left(  s\right)  }{H\left(  s\right)  }\leqslant
p_{2}\text{ for }s>0 \label{inequality_PHI_H}%
\end{equation}
if and only if%
\begin{equation}
\min\left\{  \varrho^{p_{1}},\varrho^{p_{2}}\right\}  H\left(  s\right)
\leqslant H\left(  \varrho s\right)  \leqslant\max\left\{  \varrho^{p_{1}%
},\varrho^{p_{2}}\right\}  H\left(  s\right)  \text{ for }\varrho,s\geqslant0.
\label{lemma_delta2_condition_inq0}%
\end{equation}

\end{lemma}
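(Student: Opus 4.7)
The plan is straightforward differential calculus, followed by integration (up to a one-parameter exponential) to pass between the pointwise ratio bound and the multiplicative bound.

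For the first claim, I would just differentiate: since $H$ and $h$ are differentiable,
\[
\frac{d}{ds}\bigl(s^{-\alpha}H(s)\bigr)=s^{-\alpha-1}\bigl(sh(s)-\alpha H(s)\bigr),
\]
so the monotonicity of $s\mapsto s^{-\alpha}H(s)$ is governed pointwise by the sign of $sh(s)-\alpha H(s)$. The characterization in terms of the infimum/supremum of $th(t)/H(t)$ then falls out immediately by requiring the sign to hold for every $s>0$. (One only needs that $H$ stays positive for $s>0$, which follows from the assumed range and that, in the nontrivial case, $H$ is nondecreasing.)

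For the second claim, I would apply the first claim twice. Assuming $p_{1}\leqslant sh(s)/H(s)\leqslant p_{2}$, the map $s\mapsto s^{-p_{1}}H(s)$ is increasing and $s\mapsto s^{-p_{2}}H(s)$ is decreasing. Writing out both monotonicity statements for a fixed $s>0$ and an arbitrary $\varrho\geqslant 0$, and separating the cases $\varrho\geqslant 1$ and $0\leqslant\varrho<1$, gives respectively
\[
\varrho^{p_{1}}H(s)\leqslant H(\varrho s)\leqslant\varrho^{p_{2}}H(s)\quad(\varrho\geqslant 1)
\]
and
\[
\varrho^{p_{2}}H(s)\leqslant H(\varrho s)\leqslant\varrho^{p_{1}}H(s)\quad(0\leqslant\varrho<1),
\]
both of which can be merged into $\min\{\varrho^{p_{1}},\varrho^{p_{2}}\}H(s)\leqslant H(\varrho s)\leqslant\max\{\varrho^{p_{1}},\varrho^{p_{2}}\}H(s)$. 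The boundary value $\varrho=0$ is trivial since $H(0)=0$.

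For the converse direction in the second claim, I would take $\varrho=1+\varepsilon$ with small $\varepsilon>0$, so that $\min\{\varrho^{p_{1}},\varrho^{p_{2}}\}=\varrho^{p_{1}}$ and $\max=\varrho^{p_{2}}$, subtract $H(s)$ from both sides, divide by $\varepsilon$, and let $\varepsilon\to 0^{+}$. This yields $p_{1}H(s)\leqslant sh(s)\leqslant p_{2}H(s)$ for every $s>0$. The only thing to be careful about is that one actually has a right derivative of $H$ at $s$ equal to $h(s)$, which is given by the differentiability hypothesis. The main (mild) obstacle is getting the direction of the inequalities right in the two cases $\varrho\gtrless 1$, since the roles of $p_{1}$ and $p_{2}$ swap; I would write both out side by side to avoid sign confusion.
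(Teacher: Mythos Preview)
Your proposal is correct and follows essentially the same route as the paper: differentiate $s^{-\alpha}H(s)$ for the first part, then use the resulting monotonicity of $s^{-p_1}H(s)$ and $s^{-p_2}H(s)$ and split into $\varrho\geqslant 1$ and $\varrho<1$ for the second. The only minor difference is the converse of the second claim: the paper closes the equivalence by noting that the split-case inequalities are \emph{tautologically} the monotonicity statements and then invokes the first part again, whereas you recover the ratio bound directly via a difference quotient at $\varrho=1$; both are equally valid and equally elementary.
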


\begin{proof}
The first part of the conclusion follows easily by observing
\[
\left(  s^{-\alpha}H\left(  s\right)  \right)  ^{\prime}=s^{-\alpha}h\left(
s\right)  -\alpha s^{-\alpha-1}H\left(  s\right)  =s^{-\alpha-1}\left(
sh\left(  s\right)  -\alpha H\left(  s\right)  \right)  .
\]

It then follows that $\left(  \ref{inequality_PHI_H}\right)  $ is equivalent
to%
\[
s^{-p_{1}}H\left(  s\right)  \text{ is non-decreasing, }s^{-p_{2}}H\left(
s\right)  \text{ is non-increasing.}%
\]
Moreover%
\[
s^{-p_{1}}H\left(  s\right)  \text{ is non-decreasing}\iff\left\{
\begin{array}
[c]{cl}%
\left(  \varrho s\right)  ^{-p_{1}}H\left(  \varrho s\right)  \geqslant
s^{-p_{1}}H\left(  s\right)  , & \varrho\geqslant1\\
\left(  \varrho s\right)  ^{-p_{1}}H\left(  \varrho s\right)  \leqslant
s^{-p_{1}}H\left(  s\right)  , & 0\leqslant\varrho<1
\end{array}
\right.  ,
\]
and%
\[
s^{-p_{2}}H\left(  s\right)  \text{ is non-increasing}\iff\left\{
\begin{array}
[c]{cl}%
\left(  \varrho s\right)  ^{-p_{2}}H\left(  \varrho s\right)  \leqslant
s^{-p_{2}}H\left(  s\right)  , & \varrho\geqslant1\\
\left(  \varrho s\right)  ^{-p_{2}}H\left(  \varrho s\right)  \geqslant
s^{-p_{2}}H\left(  s\right)  , & 0\leqslant\varrho<1
\end{array}
\right.  .
\]
Putting these together yields that $\left(  \ref{inequality_PHI_H}\right)  $
is equivalent to%
\[
\left\{
\begin{array}
[c]{cl}%
\varrho^{p_{1}}H\left(  s\right)  \leqslant H\left(  \varrho s\right)
\leqslant\varrho^{p_{2}}H\left(  s\right)  , & \varrho\geqslant1\\
\varrho^{p_{2}}H\left(  s\right)  \leqslant H\left(  \varrho s\right)
\leqslant\varrho^{p_{1}}H\left(  s\right)  , & 0\leqslant\varrho<1
\end{array}
\right.  .
\]
Therefore $\left(  \ref{inequality_PHI_H}\right)  $ is equivalent to $\left(
\ref{lemma_delta2_condition_inq0}\right)  .$
\end{proof}

In view of Lemma \ref{lemma_lieberman} and Lemma \ref{lemma_delta2_condition},
any $\Phi$\textit{-}function\textit{ }$\Phi$ satisfies%
\begin{equation}
\min\left\{  \varrho^{p^{-}},\varrho^{p^{+}}\right\}  \Phi\left(  s\right)
\leqslant\Phi\left(  \varrho s\right)  \leqslant\max\left\{  \varrho^{p^{-}%
},\varrho^{p^{+}}\right\}  \Phi\left(  s\right)  \text{ for }\varrho
,s\geqslant0. \label{lemma_delta2_condition_inq1}%
\end{equation}
One implication of $\left(  \ref{lemma_delta2_condition_inq1}\right)  $ is
that $\Phi\left(  \varrho\right)  $ roughly stays between $\varrho^{p^{+}}%
$\ and $\varrho^{p^{-}}$ for all $\varrho\geqslant0,$ as can be seen by taking
$s=1$ in $\left(  \ref{lemma_delta2_condition_inq1}\right)  $. Another
implication of $\left(  \ref{lemma_delta2_condition_inq1}\right)  $ is the
following $\Delta_{2}$ condition.

\begin{definition}
A function $G\left(  s\right)  $ satisfies the $\Delta_{2}$ condition
(\cite[8.6]{adams2003sobolev}) or doubling condition (\cite[Definition
2.2.5]{harjulehto2019orlicz}) if for some constant $K>2,$%
\[
G\left(  2s\right)  \leqslant KG\left(  s\right)  \text{ for }s>0.
\]

\end{definition}

Setting $\varrho=2$ in $\left(  \ref{lemma_delta2_condition_inq1}\right)  $,
we get%
\begin{equation}
\Phi\left(  2s\right)  \leqslant2^{p^{+}}\Phi\left(  s\right)  \text{ for
}s>0, \label{delta2_Inq}%
\end{equation}
i.e. $\Phi$-functions as defined in Definition \ref{def_Phi_func} satisfy the
$\Delta_{2}$ condition.

Each $\Phi$-function\textit{ }$\Phi$ is associated with a conjugate function%
\begin{equation}
\Phi^{\ast}\left(  t\right)  =\sup_{s\geqslant0}\left\{  ts-\Phi\left(
s\right)  \right\}  , \label{def_conjugate_phi}%
\end{equation}
which again is a $\Phi$-function \cite[Lemma 2.5]{fukagai2006positive}
satisfying a condition similar to $\left(  \ref{inequality_PHI}\right)  $
\begin{equation}
\frac{p^{+}}{p^{+}-1}\leqslant\frac{s\left(  \Phi^{\ast}\right)  ^{\prime
}\left(  s\right)  }{\Phi^{\ast}\left(  s\right)  }\leqslant\frac{p^{-}}%
{p^{-}-1}\text{ for }s>0. \label{inequality_PHI_*}%
\end{equation}
This together with Lemma \ref{lemma_delta2_condition} gives%
\[
\min\left\{  \varrho^{p_{1}},\varrho^{p_{2}}\right\}  \Phi^{\ast}\left(
s\right)  \leqslant\Phi^{\ast}\left(  \varrho s\right)  \leqslant\max\left\{
\varrho^{p_{1}},\varrho^{p_{2}}\right\}  \Phi^{\ast}\left(  s\right)  \text{
for }\varrho,s\geqslant0,
\]
where $p_{1}=p^{+}\left/  \left(  p^{+}-1\right)  \right.  ,$ $p_{2}%
=p^{-}\left/  \left(  p^{-}-1\right)  \right.  .$ Noting $p_{1}\leqslant
p_{2},$ we get%
\begin{equation}
\Phi^{\ast}\left(  2t\right)  \leqslant2^{p_{2}}\Phi^{\ast}\left(  t\right)
=2^{p^{-}\left/  \left(  p^{-}-1\right)  \right.  }\Phi^{\ast}\left(
t\right)  \text{ for }t>0. \label{delta2*_Inq}%
\end{equation}
Therefore $\Phi^{\ast}$ satisfies the $\Delta_{2}$ condition.

\begin{lemma}
\label{lemma_convexity}Assume $\Phi$ is a $\Phi$-function. The mapping%
\[
s\mapsto\Phi\left(  s^{1/2}\right)
\]
is convex if $p^{-}\geqslant2$. The mapping
\[
s\mapsto\Phi^{1+\gamma}\left(  s^{1/2}\right)
\]
is convex if $\gamma\geqslant\left(  \inf_{t>0}\frac{t\varphi\left(  t\right)
}{\Phi\left(  t\right)  }\right)  ^{-1}$. In particular, under condition
$\left(  \ref{assumption_phi}\right)  $, $\Phi^{1+\gamma}\left(
s^{1/2}\right)  $ is a convex function of $s$ for any $\gamma\geqslant1.$
\end{lemma}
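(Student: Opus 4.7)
The plan is to verify convexity by computing second derivatives directly and reducing the sign of the result to the structural condition $(\ref{assumption_phi})$ through a short logarithmic-derivative calculation. Throughout I would substitute $t = s^{1/2}$ so that the chain rule produces a positive prefactor times a function of $t$ alone, whose sign or monotonicity is then the only thing left to check.

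For the first assertion, writing $g(s) = \Phi(s^{1/2})$, a direct second differentiation gives
\[
g''(s) \;=\; \frac{1}{4\,s^{3/2}}\bigl(t\varphi'(t) - \varphi(t)\bigr)\bigm|_{t = s^{1/2}},
\]
so $g$ is convex if and only if $t\varphi'(t)/\varphi(t) \geq 1$ for every $t > 0$. Under $(\ref{assumption_phi})$ this ratio is always $\geq p^{-} - 1$, and the cutoff $\geq 1$ is exactly the hypothesis $p^{-} \geq 2$.

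For the second assertion, with $h(s) = \Phi^{1+\gamma}(s^{1/2})$ the chain rule yields
\[
h'(s) \;=\; \tfrac{1+\gamma}{2}\,F(s^{1/2}), \qquad F(t) := \frac{\Phi(t)^{\gamma}\,\varphi(t)}{t}.
\]
Since $s \mapsto s^{1/2}$ is non-decreasing, $h$ is convex provided $F$ is non-decreasing in $t$. A single logarithmic differentiation gives
\[
\frac{t\,F'(t)}{F(t)} \;=\; \gamma\,\frac{t\varphi(t)}{\Phi(t)} \;+\; \frac{t\varphi'(t)}{\varphi(t)} \;-\; 1.
\]
The hypothesis on $\gamma$ forces the first summand to be $\geq 1$ uniformly in $t$, and $(\ref{assumption_phi})$ supplies $t\varphi'(t)/\varphi(t) \geq p^{-} - 1$; adding these yields $t F'(t)/F(t) \geq p^{-} - 1 > 0$, so $F$ is strictly increasing and $h$ is convex.

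The final ``in particular'' clause is immediate from Lemma \ref{lemma_lieberman}: that lemma gives $t\varphi(t)/\Phi(t) \geq p^{-} > 1$ uniformly in $t$, so $(\inf_{t>0} t\varphi(t)/\Phi(t))^{-1} \leq 1/p^{-} < 1 \leq \gamma$ and the threshold is met automatically. I do not anticipate any serious obstacle; the one care point is to rewrite the assumption on $\gamma$ in the form $\gamma\, t\varphi(t)/\Phi(t) \geq 1$ so that it plugs directly into the logarithmic derivative of $F$, turning what looks like a competition between three terms into a clean pointwise lower bound.
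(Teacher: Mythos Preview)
Your proof is correct and follows essentially the same approach as the paper: both arguments differentiate directly and reduce the sign condition to $\gamma\,\frac{t\varphi(t)}{\Phi(t)} + \frac{t\varphi'(t)}{\varphi(t)} - 1 \geq 0$. Your organization via the logarithmic derivative of $F(t) = \Phi(t)^{\gamma}\varphi(t)/t$ is a mild streamlining of the paper's brute-force second derivative, but the underlying computation is identical.
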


\begin{proof}
The proof can be completed by straightforward calculations.%
\[
\frac{d}{ds}\Phi\left(  s^{1/2}\right)  =\varphi\left(  s^{1/2}\right)
\frac{1}{2s^{1/2}},
\]%
\[
\frac{d^{2}}{ds^{2}}\Phi\left(  s^{1/2}\right)  =\varphi^{\prime}\left(
s^{1/2}\right)  \frac{1}{4s}-\varphi\left(  s^{1/2}\right)  \frac{1}{4s^{3/2}%
}=\frac{1}{4s^{3/2}}\left(  s^{1/2}\varphi^{\prime}\left(  s^{1/2}\right)
-\varphi\left(  s^{1/2}\right)  \right)  .
\]
The last term is nonegative if $p^{-}\geqslant2,$ hence $\Phi\left(
s^{1/2}\right)  $ is convex whenever $p^{-}\geqslant2$. In the same way,
\[
\frac{d}{ds}\Phi^{1+\gamma}\left(  s^{1/2}\right)  =\left(  1+\gamma\right)
\Phi^{\gamma}\left(  s^{1/2}\right)  \varphi\left(  s^{1/2}\right)  \frac
{1}{2s^{1/2}},
\]%
\begin{align*}
&  \frac{d^{2}}{ds^{2}}\Phi^{1+\gamma}\left(  s^{1/2}\right) \\
&  =\left(  1+\gamma\right)  \left[  \gamma\Phi^{\gamma-1}\left(
s^{1/2}\right)  \varphi^{2}\left(  s^{1/2}\right)  \frac{1}{4s}+\Phi^{\gamma
}\left(  s^{1/2}\right)  \varphi^{\prime}\left(  s^{1/2}\right)  \frac{1}%
{4s}-\Phi^{\gamma}\left(  s^{1/2}\right)  \varphi\left(  s^{1/2}\right)
\frac{1}{4s^{3/2}}\right] \\
&  =\left(  1+\gamma\right)  \frac{1}{4s^{3/2}}\Phi^{\gamma-1}\left(
s^{1/2}\right)  \left[  \gamma s^{1/2}\varphi^{2}\left(  s^{1/2}\right)
+s^{1/2}\Phi\left(  s^{1/2}\right)  \varphi^{\prime}\left(  s^{1/2}\right)
-\Phi\left(  s^{1/2}\right)  \varphi\left(  s^{1/2}\right)  \right]
\end{align*}
Note that the middle term in the last bracket is positive. By assumption%
\begin{align*}
\gamma s^{1/2}\varphi^{2}\left(  s^{1/2}\right)  -\Phi\left(  s^{1/2}\right)
\varphi\left(  s^{1/2}\right)   &  =\varphi\left(  s^{1/2}\right)  \Phi\left(
s^{1/2}\right)  \left[  \gamma\frac{s^{1/2}\varphi\left(  s^{1/2}\right)
}{\Phi\left(  s^{1/2}\right)  }-1\right] \\
&  \geqslant\varphi\left(  s^{1/2}\right)  \Phi\left(  s^{1/2}\right)  \left[
\gamma\inf_{t>0}\frac{t\varphi\left(  t\right)  }{\Phi\left(  t\right)
}-1\right]  \geqslant0.
\end{align*}
Hence $\Phi^{1+\gamma}\left(  s^{1/2}\right)  $ is convex whenever $\gamma
\inf_{t>0}\frac{t\varphi\left(  t\right)  }{\Phi\left(  t\right)  }%
-1\geqslant0.$ By $\left(  \ref{assumption_phi}\right)  $ and Lemma
\ref{lemma_lieberman},%
\[
\inf_{t>0}\frac{t\varphi\left(  t\right)  }{\Phi\left(  t\right)  }\geqslant
p^{-}>1.
\]
So we have%
\[
\gamma\geqslant\left(  \inf_{t>0}\frac{t\varphi\left(  t\right)  }{\Phi\left(
t\right)  }\right)  ^{-1}\text{ for all }\gamma\geqslant1,
\]
therefore $\Phi^{1+\gamma}\left(  s^{1/2}\right)  $ is convex in $s$ for any
$\gamma\geqslant1.$
\end{proof}

\subsection{Generalized $\Phi$-function and Musielak-Orlicz space}

Let $\Phi\left(  x,s\right)  :\Omega\times\left[  0,\infty\right)
\mapsto\mathbb{R}$. Write%
\[
\varphi\left(  x,s\right)  =\frac{\partial}{\partial s}\Phi\left(  x,s\right)
.
\]
Assume that $\varphi\in C\left(  \bar{\Omega}\times\left[  0,\infty\right)
\right)  \cap C^{1}\left(  \Omega\times\left(  0,\infty\right)  \right)  ,$
$\varphi\left(  x,s\right)  >0$ for $x\in\Omega$, $s>0$ and there are
constants $p^{-},$ $p^{+}>0$ such that
\begin{equation}
0<p^{-}-1\leqslant\frac{s\partial_{s}\varphi\left(  x,s\right)  }%
{\varphi\left(  x,s\right)  }\leqslant p^{+}-1\text{ for }x\in\Omega\text{,
}s>0. \label{assumption_phi_x}%
\end{equation}

\begin{definition}
\label{def_gen_Phi}Let $\varphi$ satisfy $\left(  \ref{assumption_phi_x}%
\right)  $. The function,%
\begin{equation}
\Phi\left(  x,s\right)  =\int_{0}^{s}\varphi\left(  x,t\right)  dt
\label{def_PHI}%
\end{equation}
is\ referred to as a generalized $\Phi$\textit{-function}. Clearly
$\Phi\left(  x,\cdot\right)  :\left[  0,\infty\right)  \mapsto\left[
0,\infty\right)  $ is a $\Phi$-function for each $x\in\Omega.$
\end{definition}

By Lemma \ref{lemma_lieberman}, $\Phi\left(  x,s\right)  $ satisfies%
\begin{equation}
p^{-}\leqslant\frac{s\varphi\left(  x,s\right)  }{\Phi\left(  x,s\right)
}\leqslant p^{+}\text{ for }x\in\Omega\text{, }s>0. \label{inequality_PHI_x}%
\end{equation}

Sometimes we will also need to work with%
\[
\frac{\Phi\left(  x,s\right)  }{\varphi\left(  x,1\right)  }\text{ and }%
\frac{\varphi\left(  x,s\right)  }{\varphi\left(  x,1\right)  }\text{,}%
\]
for which we fix the notation%
\begin{equation}
a\left(  x,s\right)  =\frac{\varphi\left(  x,s\right)  }{\varphi\left(
x,1\right)  }. \label{eq:def_a_xs}%
\end{equation}

\begin{remark}
\label{rmk_phi_a}It is useful to observe that $\Phi\left(  x,s\right)  $
(resp. $\varphi\left(  x,s\right)  $) satisfies $\left(
\ref{assumption_phi_x}\right)  \left(  \ref{inequality_PHI_x}\right)  $ if and
only if $\frac{\Phi\left(  x,s\right)  }{\varphi\left(  x,1\right)  }$ (resp.
$\frac{\varphi\left(  x,s\right)  }{\varphi\left(  x,1\right)  }$) satisfies
$\left(  \ref{assumption_phi_x}\right)  \left(  \ref{inequality_PHI_x}\right)
$.
\end{remark}

Another assumption which accompanies an arbitrary generalized $\Phi$-function
$\Phi$ in the\ Musielak-Orlicz space setting is the following
\textit{normalization condition} \cite[Definition 3.7.1]{harjulehto2019orlicz}%
, i.e. for $\beta\in\left(  0,1\right]  ,$%
\begin{equation}
\beta\leqslant\Phi^{-1}\left(  x,1\right)  \leqslant\beta^{-1}\text{ for
}a.e.\text{ }x\in\Omega, \label{PHI_normalization_x}%
\end{equation}
where $\Phi^{-1}\left(  x,\cdot\right)  $ is the generalized inverse of
$\Phi\left(  x,\cdot\right)  $. This condition, which is intended to exclude
difficulty created by such function as $\Phi\left(  x,s\right)  =\left\vert
x\right\vert ^{2}s^{2}$, roughly means that for each $x\in\Omega$, $\Phi^{-1}$
is \textit{unweighted}, i.e. $\Phi^{-1}\left(  x,1\right)  \approx1$. In
general, this is not the same as saying that $\Phi\left(  x,1\right)
\approx1$. However this is correct in our context, since $\Phi\left(
x,s\right)  $ is strictly increasing in $s$ and hence has an inverse. As we
see in the following Lemma, the normalization condition $\left(
\ref{PHI_normalization_x}\right)  $ is equivalent to $\left(
\ref{assumption_PHI_1}\right)  $ for any generalized $\Phi$-function
satisfiying $\left(  \ref{inequality_PHI_x}\right)  $.

\begin{lemma}
\label{lm_normalization}Let $\varphi\in C\left(  \bar{\Omega}\times\left[
0,\infty\right)  \right)  \cap C^{1}\left(  \Omega\times\left(  0,\infty
\right)  \right)  $ satisfy $\left(  \ref{assumption_phi_x}\right)  $\ and
$\Phi$ be given by $\left(  \ref{def_PHI}\right)  $. Then $\Phi$ satisfies the
normalization condition $\left(  \ref{PHI_normalization_x}\right)  $ if and
only if there are constants $c^{-},$ $c^{+}$ with $0<c^{-}\leqslant1\leqslant
c^{+}$ such that%
\begin{equation}
c^{-}\leqslant\Phi\left(  x,1\right)  \leqslant c^{+}\text{ for }x\in\Omega.
\label{assumption_PHI_1}%
\end{equation}
In particular, the constants in $\left(  \ref{assumption_PHI_1}\right)  $ can
be identified as $c^{-}=\beta^{p^{+}}$ and $c^{+}=\beta^{-p^{+}}$. Conversely
if $\left(  \ref{assumption_PHI_1}\right)  $ holds, then the normalization
condition $\left(  \ref{PHI_normalization_x}\right)  $ is fulfilled with%
\[
\beta=\min\left\{  \left(  c^{-}\right)  ^{\frac{1}{p^{-}}},\left(
c^{+}\right)  ^{-\frac{1}{p^{-}}}\right\}  .
\]

\end{lemma}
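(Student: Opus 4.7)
The plan is to deduce both directions from the power-type two-sided bound $\left(\ref{lemma_delta2_condition_inq1}\right)$ applied to $\Phi(x,\cdot)$ at $s=1$, combined with the strict monotonicity of $\Phi(x,\cdot)$ in $s$ (which holds because $\varphi(x,s) > 0$, so $\Phi^{-1}(x,\cdot)$ is a genuine inverse, not merely a generalized one).

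For the forward direction, assuming the normalization $\beta \leqslant \Phi^{-1}(x,1) \leqslant \beta^{-1}$ with $\beta \in (0,1]$, I first apply the strictly increasing function $\Phi(x,\cdot)$ to convert the inequality into $\Phi(x,\beta) \leqslant 1 \leqslant \Phi(x,\beta^{-1})$. Then I use $\left(\ref{lemma_delta2_condition_inq1}\right)$ with $s=1$ and $\varrho \in \{\beta,\beta^{-1}\}$: since $\beta \leqslant 1 \leqslant \beta^{-1}$ and $p^- \leqslant p^+$, the relevant extremal powers are $\beta^{p^+}$ and $\beta^{-p^+}$. Chaining these together produces $\beta^{p^+}\Phi(x,1) \leqslant \Phi(x,\beta) \leqslant 1$ and $1 \leqslant \Phi(x,\beta^{-1}) \leqslant \beta^{-p^+}\Phi(x,1)$, from which I read off $c^- = \beta^{p^+}$ and $c^+ = \beta^{-p^+}$, as claimed.

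For the converse I start from $c^- \leqslant \Phi(x,1) \leqslant c^+$ and seek $\beta \in (0,1]$ with $\Phi(x,\beta) \leqslant 1 \leqslant \Phi(x,\beta^{-1})$, which by monotonicity translates back into the normalization condition. Applying $\left(\ref{lemma_delta2_condition_inq1}\right)$ with $\varrho = \beta \leqslant 1$ yields $\Phi(x,\beta) \leqslant \beta^{p^-}\Phi(x,1) \leqslant \beta^{p^-} c^+$, and with $\varrho = \beta^{-1} \geqslant 1$ yields $\Phi(x,\beta^{-1}) \geqslant \beta^{-p^-}\Phi(x,1) \geqslant \beta^{-p^-} c^-$. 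Forcing the former to be $\leqslant 1$ and the latter to be $\geqslant 1$ produces the two constraints $\beta \leqslant (c^+)^{-1/p^-}$ and $\beta \leqslant (c^-)^{1/p^-}$, so taking the minimum of the two candidates works and recovers exactly the constant stated; the requirement $\beta \leqslant 1$ is automatic because $c^- \leqslant 1 \leqslant c^+$.

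No step presents a genuine obstacle; the only point demanding care is the bookkeeping of which of $\varrho^{p^-}$ and $\varrho^{p^+}$ realizes the $\min$ or $\max$ in $\left(\ref{lemma_delta2_condition_inq1}\right)$, since the ordering flips as $\varrho$ crosses $1$. Handling $\varrho \leqslant 1$ and $\varrho \geqslant 1$ separately — evaluating one at $\beta$ and the other at $\beta^{-1}$ — keeps the powers consistent and makes the two halves of the argument perfectly symmetric.
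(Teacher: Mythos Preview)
Your proof is correct and follows essentially the same approach as the paper: both arguments reduce the normalization condition to the equivalent form $\Phi(x,\beta)\leqslant 1\leqslant\Phi(x,\beta^{-1})$ via strict monotonicity, and then apply the two-sided power bound $\left(\ref{lemma_delta2_condition_inq1}\right)$ at $s=1$ with $\varrho\in\{\beta,\beta^{-1}\}$, tracking which exponent realizes the $\min$/$\max$ according to the sign of $\varrho-1$. The only cosmetic difference is the order in which the two implications are presented.
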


\begin{proof}
First observe that $\left(  \ref{PHI_normalization_x}\right)  $ is equivalent
to the following
\begin{equation}
\Phi\left(  x,\beta\right)  \leqslant1\text{ and }\Phi\left(  x,\beta
^{-1}\right)  \geqslant1\text{ for }a.e.x\in\Omega\label{PHI_normalization_x_}%
\end{equation}
for some $\beta\in\left(  0,1\right]  ,$ since the mapping $s\mapsto
\Phi\left(  x,s\right)  $ is strictly increasing on $\left(  0,\infty\right)
$ and has an inverse for each $x$. Now it suffices to show that $\left(
\ref{PHI_normalization_x_}\right)  $ is equivalent to $\left(
\ref{assumption_PHI_1}\right)  $. Suppose that $\left(  \ref{assumption_PHI_1}%
\right)  $ holds$.$ By virtue of Lemma \ref{lemma_delta2_condition}, for each
$x\in\Omega$, $\varrho>0$,
\begin{equation}
\min\left\{  \varrho^{p^{+}},\varrho^{p^{-}}\right\}  \Phi\left(  x,1\right)
\leqslant\Phi\left(  x,\varrho\right)  \leqslant\max\left\{  \varrho^{p^{+}%
},\varrho^{p^{-}}\right\}  \Phi\left(  x,1\right)  .
\label{lm_normalization_Inq1}%
\end{equation}
Using $\left(  \ref{assumption_PHI_1}\right)  $ we find%
\[
\min\left\{  \varrho^{p^{+}},\varrho^{p^{-}}\right\}  c^{-}\leqslant
\Phi\left(  x,\varrho\right)  \leqslant\max\left\{  \varrho^{p^{+}}%
,\varrho^{p^{-}}\right\}  c^{+}\text{ for }x\in\Omega.
\]
Hence setting $\varrho$ to
\[
\beta=\min\left\{  \left(  c^{-}\right)  ^{\frac{1}{p^{-}}},\left(
c^{+}\right)  ^{-\frac{1}{p^{-}}}\right\}  \in\left(  0,1\right]
\]
gives us
\begin{align*}
\Phi\left(  x,\beta\right)   &  \leqslant\max\left\{  \beta^{p^{+}}%
,\beta^{p^{-}}\right\}  c^{+}=\beta^{p^{-}}c^{+}\\
&  =\min\left\{  c^{-},\left(  c^{+}\right)  ^{-1}\right\}  c^{+}=\min\left\{
c^{-}c^{+},\left(  c^{+}\right)  ^{-1}c^{+}\right\}  \leqslant1,\text{ for
}x\in\Omega,
\end{align*}
while setting $\varrho$ to $\beta^{-1}$ yields
\begin{align*}
\Phi\left(  x,\beta^{-1}\right)   &  \geqslant\min\left\{  \beta^{-p^{+}%
},\beta^{-p^{-}}\right\}  c^{-}=\beta^{-p^{-}}c^{-}\\
&  =\max\left\{  \left(  c^{-}\right)  ^{-1},c^{+}\right\}  c^{-}=\max\left\{
\left(  c^{-}\right)  ^{-1}c^{-},c^{+}c^{-}\right\}  \geqslant1,\text{ for
}x\in\Omega.
\end{align*}
As a consequence $\left(  \ref{assumption_PHI_1}\right)  $ implies $\left(
\ref{PHI_normalization_x_}\right)  $. Conversely assume that $\Phi$ satisfies
$\left(  \ref{PHI_normalization_x_}\right)  $, which in combination with
$\left(  \ref{lm_normalization_Inq1}\right)  $ yields that$,$%
\[
\beta^{p^{+}}\Phi\left(  x,1\right)  =\min\left\{  \beta^{p^{+}},\beta^{p^{-}%
}\right\}  \Phi\left(  x,1\right)  \leqslant\Phi\left(  x,\beta\right)
\leqslant1,
\]
and%
\[
\beta^{-p^{+}}\Phi\left(  x,1\right)  =\max\left\{  \beta^{-p^{+}}%
,\beta^{-p^{-}}\right\}  \Phi\left(  x,1\right)  \geqslant\Phi\left(
x,\beta^{-1}\right)  \geqslant1.
\]
Hence%
\[
\beta^{p^{+}}\leqslant\Phi\left(  x,1\right)  \leqslant\beta^{-p^{+}},
\]
proving $\left(  \ref{assumption_PHI_1}\right)  $ with $c^{-}=\beta^{p^{+}%
}\leqslant1$ and $c^{+}=\beta^{-p^{+}}\geqslant1$.
\end{proof}

A useful observation derived from Lemma \ref{lm_normalization} and $\left(
\ref{inequality_PHI_x}\right)  $ is that $\varphi\left(  x,1\right)  $ has a
lower bound and a upper bounded independent of $x,$ i.e.
\begin{equation}
c^{+}p^{+}\geqslant\Phi\left(  x,1\right)  p^{+}\geqslant\varphi\left(
x,1\right)  \geqslant\Phi\left(  x,1\right)  p^{-}\geqslant c^{-}p^{-}>0\text{
for all }x\in\Omega. \label{assumption_phi_1_lbd}%
\end{equation}

\begin{definition}
Each $\Phi$ defines a modular for $u\in L^{1}\left(  \Omega\right)  ,$
\[
\varrho_{\Phi\left(  \cdot\right)  }\left(  u\right)  =\int_{\Omega}%
\Phi\left(  x,\left\vert u\left(  x\right)  \right\vert \right)  dx.
\]
The generalized Orlicz space (also Musielak-Orlicz space) $L^{\Phi\left(
\cdot\right)  }\left(  \Omega\right)  $ is defined as the subspace%
\[
L^{\Phi\left(  \cdot\right)  }\left(  \Omega\right)  =\left\{  u\in
L^{1}\left(  \Omega\right)  :\lim_{\lambda\rightarrow0}\varrho_{\Phi\left(
\cdot\right)  }\left(  \lambda u\right)  =0\right\}  ,
\]
equipped with the norm%
\[
\left\Vert u\right\Vert _{L^{\Phi\left(  \cdot\right)  }}=\inf\left\{
\lambda>0:\varrho_{\Phi\left(  \cdot\right)  }\left(  \frac{u}{\lambda
}\right)  \leqslant1\right\}  .
\]

\end{definition}

We refer to \cite[Lemma 3.2.2 (b)]{harjulehto2019orlicz} for the proof that
$\left\Vert \cdot\right\Vert _{L^{\Phi\left(  \cdot\right)  }}$ is indeed a
norm. Hereafter, instead of $\Phi$, we will use $\Phi\left(  \cdot\right)  $
in the subscripts and superscripts to emphasize the dependence on $x$.

\begin{lemma}
\label{lm_modulo_finite}If $u\in L^{\Phi\left(  \cdot\right)  }\left(
\Omega\right)  $, then%
\[
\min\left\{  \left\Vert u\right\Vert _{L^{\Phi\left(  \cdot\right)  }}^{p^{-}%
},\left\Vert u\right\Vert _{L^{\Phi\left(  \cdot\right)  }}^{p+}\right\}
\leqslant\int_{\Omega}\Phi\left(  x,\left\vert u\left(  x\right)  \right\vert
\right)  dx\leqslant\max\left\{  \left\Vert u\right\Vert _{L^{\Phi\left(
\cdot\right)  }}^{p^{-}},\left\Vert u\right\Vert _{L^{\Phi\left(
\cdot\right)  }}^{p+}\right\}  .
\]

\end{lemma}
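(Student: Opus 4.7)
The plan is to combine the scaling inequality from Lemma \ref{lemma_delta2_condition} (applied pointwise in $x$ to $\Phi(x,\cdot)$) with the definition of the Luxemburg norm. Set $\lambda:=\|u\|_{L^{\Phi(\cdot)}}$; the case $\lambda=0$, where $u=0$ a.e.\ and both sides vanish, is trivial, so assume $\lambda>0$.

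For the upper bound, I would fix $\varepsilon>0$ and use $\lambda_{\varepsilon}:=\lambda+\varepsilon$. By definition of the infimum, $\varrho_{\Phi(\cdot)}(u/\lambda_{\varepsilon})\leqslant 1$. Applying $(\ref{lemma_delta2_condition_inq1})$ (which, by Remark after its proof, holds for each $\Phi(x,\cdot)$ with the same exponents $p^{-},p^{+}$) with $\varrho=\lambda_{\varepsilon}$ and $s=|u(x)|/\lambda_{\varepsilon}$ gives
\[
\Phi\!\left(x,|u(x)|\right)\leqslant \max\{\lambda_{\varepsilon}^{p^{-}},\lambda_{\varepsilon}^{p^{+}}\}\,\Phi\!\left(x,|u(x)|/\lambda_{\varepsilon}\right).
\]
Integrating over $\Omega$ and using $\varrho_{\Phi(\cdot)}(u/\lambda_{\varepsilon})\leqslant 1$, then sending $\varepsilon\downarrow 0$, produces the upper estimate $\int_{\Omega}\Phi(x,|u|)\,dx\leqslant\max\{\lambda^{p^{-}},\lambda^{p^{+}}\}$.

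For the lower bound I would mirror the argument with $\lambda'_{\varepsilon}:=\lambda-\varepsilon>0$. Since $\lambda'_{\varepsilon}$ lies strictly below the infimum, it is not in the defining set, so $\varrho_{\Phi(\cdot)}(u/\lambda'_{\varepsilon})>1$. The lower half of $(\ref{lemma_delta2_condition_inq1})$, with $\varrho=\lambda'_{\varepsilon}$ and $s=|u(x)|/\lambda'_{\varepsilon}$, yields
\[
\Phi\!\left(x,|u(x)|\right)\geqslant \min\{(\lambda'_{\varepsilon})^{p^{-}},(\lambda'_{\varepsilon})^{p^{+}}\}\,\Phi\!\left(x,|u(x)|/\lambda'_{\varepsilon}\right),
\]
and integration followed by $\varepsilon\downarrow 0$ delivers the lower estimate.

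The only substantive obstacle is conceptual rather than computational: one must be sure that the scaling inequality $(\ref{lemma_delta2_condition_inq1})$ is available for the generalized $\Phi$-function $\Phi(x,\cdot)$ with $x$-independent exponents. This is immediate from $(\ref{inequality_PHI_x})$ together with Lemma \ref{lemma_delta2_condition} applied in the $s$-variable for each fixed $x$; no appeal to the $\Delta_{2}$-based equality $\varrho_{\Phi(\cdot)}(u/\lambda)=1$ is needed, because the one-sided inequalities $\leqslant 1$ and $>1$ on either side of the infimum are sufficient after passing to the limit $\varepsilon\downarrow 0$.
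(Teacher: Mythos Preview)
Your proof is correct and uses the same core ingredient as the paper, namely the scaling inequality $(\ref{lemma_delta2_condition_inq1})$ applied pointwise in $x$ together with the definition of the Luxemburg norm. For the upper bound you are essentially duplicating the paper's argument, only with an $\varepsilon$-cushion (the paper uses $\lambda=\|u\|_{L^{\Phi(\cdot)}}$ directly and the fact that $\varrho_{\Phi(\cdot)}(u/\lambda)\leqslant 1$ at the infimum).

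For the lower bound your route differs slightly from the paper's. The paper does not approach $\lambda$ from below; instead it \emph{defines} $\mu>0$ by the equation $\min\{\mu^{p^{-}},\mu^{p^{+}}\}=\int_{\Omega}\Phi(x,|u|)\,dx$, uses the scaling inequality to deduce $\varrho_{\Phi(\cdot)}(u/\mu)\leqslant 1$, and hence $\|u\|_{L^{\Phi(\cdot)}}\leqslant\mu$, from which the lower estimate follows. Your symmetric $\varepsilon$-argument (using $\varrho_{\Phi(\cdot)}(u/(\lambda-\varepsilon))>1$) is equally valid and arguably more transparent, since it mirrors the upper-bound step exactly and avoids having to solve for $\mu$. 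The paper's version, on the other hand, never needs to pass to a limit. Both approaches rest on the same two facts and neither requires anything beyond $(\ref{inequality_PHI_x})$ and Lemma~\ref{lemma_delta2_condition}.
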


\begin{proof}
The conclusion clearly holds if $\left\Vert u\right\Vert _{L^{\Phi\left(
\cdot\right)  }}=0$, which implies $u=0$ $a.e.$ $x\in\Omega$. Suppose
$\left\Vert u\right\Vert _{L^{\Phi\left(  \cdot\right)  }}>0$, let
$\lambda=\left\Vert u\right\Vert _{L^{\Phi\left(  \cdot\right)  }}$. Since
$u\in L^{\Phi\left(  \cdot\right)  }\left(  \Omega\right)  $, $\lambda<\infty
$. By virtue of Lemma \ref{lemma_delta2_condition} and the definition of
$\left\Vert \cdot\right\Vert _{L^{\Phi\left(  \cdot\right)  }},$ we have%
\[
\min\left\{  \lambda^{-p^{-}},\lambda^{-p^{+}}\right\}  \int_{\Omega}%
\Phi\left(  x,\left\vert u\left(  x\right)  \right\vert \right)
dx\leqslant\int_{\Omega}\Phi\left(  x,\frac{1}{\lambda}\left\vert u\left(
x\right)  \right\vert \right)  dx=\varrho_{\Phi\left(  \cdot\right)  }\left(
\frac{u}{\lambda}\right)  \leqslant1.
\]
Then%
\[
\int_{\Omega}\Phi\left(  x,\left\vert u\left(  x\right)  \right\vert \right)
dx\leqslant\max\left\{  \lambda^{p^{-}},\lambda^{p^{+}}\right\}  <\infty,
\]
and the right hand side of the desired inequality follows. Similarly using
Lemma \ref{lemma_delta2_condition} again, we have that for $\mu>0$,%
\begin{align*}
\int_{\Omega}\Phi\left(  x,\frac{1}{\mu}\left\vert u\left(  x\right)
\right\vert \right)  dx  &  \leqslant\max\left\{  \mu^{-p^{-}},\mu^{-p^{+}%
}\right\}  \int_{\Omega}\Phi\left(  x,\left\vert u\left(  x\right)
\right\vert \right)  dx\\
&  =\frac{\int_{\Omega}\Phi\left(  x,\left\vert u\left(  x\right)  \right\vert
\right)  dx}{\min\left\{  \mu^{p^{-}},\mu^{p^{+}}\right\}  }.
\end{align*}
If $\mu$ is taken such that%
\begin{equation}
\min\left\{  \mu^{p^{-}},\mu^{p^{+}}\right\}  =\int_{\Omega}\Phi\left(
x,\left\vert u\left(  x\right)  \right\vert \right)  dx,
\label{lm_modulo_finite_Inq1}%
\end{equation}
then%
\[
\int_{\Omega}\Phi\left(  x,\frac{1}{\mu}\left\vert u\left(  x\right)
\right\vert \right)  dx\leqslant1.
\]
Hence by the definition of $\left\Vert \cdot\right\Vert _{L^{\Phi\left(
\cdot\right)  }}$%
\begin{equation}
\left\Vert u\right\Vert _{L^{\Phi\left(  \cdot\right)  }}\leqslant\mu.
\label{lm_modulo_finite_Inq2}%
\end{equation}
Therefore we obtain from $\left(  \ref{lm_modulo_finite_Inq1}\right)  ,$
$\left(  \ref{lm_modulo_finite_Inq2}\right)  $ that%
\[
\min\left\{  \left\Vert u\right\Vert _{L^{\Phi\left(  \cdot\right)  }}^{p^{-}%
},\left\Vert u\right\Vert _{L^{\Phi\left(  \cdot\right)  }}^{p+}\right\}
\leqslant\min\left\{  \mu^{p^{-}},\mu^{p^{+}}\right\}  =\int_{\Omega}%
\Phi\left(  x,\left\vert u\left(  x\right)  \right\vert \right)  dx.
\]

\end{proof}

Similar to $\left(  \ref{def_conjugate_phi}\right)  $, we have a conjugate for
each generalized $\Phi$-function $\Phi,$%
\[
\Phi^{\ast}\left(  x,t\right)  =\sup_{s\geqslant0}\left\{  ts-\Phi\left(
x,s\right)  \right\}  .
\]
Recalling the discussion following Lemma \ref{lemma_delta2_condition}, we see
that both $\Phi$ and $\Phi^{\ast}$ satisfy $\Delta_{2}$ condition.

\begin{definition}
The Sobolev-Orlicz space $W^{1,\Phi\left(  \cdot\right)  }\left(
\Omega\right)  $ is the set of functions $u\in L^{\Phi\left(  \cdot\right)
}\left(  \Omega\right)  $ such that its weak derivatives $\partial
_{1}u,...,\partial_{d}u$ are members of $L^{\Phi\left(  \cdot\right)  }\left(
\Omega\right)  ,$ i.e.
\[
\left\Vert u\right\Vert _{W^{1,\Phi\left(  \cdot\right)  }}=\inf\left\{
\lambda>0:\varrho_{1,\Phi\left(  \cdot\right)  }\left(  \frac{u}{\lambda
}\right)  \leqslant1\right\}  <\infty,
\]
where%
\[
\varrho_{1,\Phi\left(  \cdot\right)  }\left(  u\right)  =\varrho_{\Phi\left(
\cdot\right)  }\left(  u\right)  +\sum_{k=1}^{d}\varrho_{\Phi\left(
\cdot\right)  }\left(  \partial_{k}u\right)
\]
The closure of $C_{0}^{\infty}\left(  \Omega\right)  $ in $W^{1,\Phi\left(
\cdot\right)  }\left(  \Omega\right)  $ under the norm $\left\Vert
\cdot\right\Vert _{W^{1,\Phi\left(  \cdot\right)  }}$ is denoted by
$W_{0}^{1,\Phi\left(  \cdot\right)  }\left(  \Omega\right)  .$
\end{definition}

Now we are in a position to state the following properties of $W^{1,\Phi
\left(  \cdot\right)  }\left(  \Omega\right)  $.

\begin{theorem}
[{\cite[Theorem 6.1.4]{harjulehto2019orlicz}}]\label{thm_reflexive}Assume that
$\Phi$ is a generalized $\Phi$-function verifying conditions $\left(
\ref{inequality_PHI_x}\right)  $ and $\left(  \ref{assumption_PHI_1}\right)
.$ Then $L^{\Phi\left(  \cdot\right)  }\left(  \Omega\right)  $, hence
$W^{1,\Phi\left(  \cdot\right)  }\left(  \Omega\right)  $, is a reflexive
separable Banach space.
\end{theorem}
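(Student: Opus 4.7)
The plan is to reduce the claim to the standard Musielak-Orlicz framework (as developed in \cite{harjulehto2019orlicz}) by verifying that $\Phi$ and its conjugate $\Phi^{\ast}$ each satisfy a uniform (in $x$) $\Delta_{2}$ condition together with the normalization condition, and then to invoke the general reflexivity/separability theorem from that framework. The transfer from $L^{\Phi(\cdot)}$ to $W^{1,\Phi(\cdot)}$ is done by the standard closed-subspace-of-a-product argument.

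First, I would record that the pointwise bound $\left(  \ref{assumption_phi_x}\right)  $ gives, via Lemma \ref{lemma_lieberman} applied to $\Phi(x,\cdot)$ for each fixed $x$, the bound $\left(  \ref{inequality_PHI_x}\right)  $ with constants $p^{-},p^{+}$ \emph{independent of} $x$. Applying Lemma \ref{lemma_delta2_condition} pointwise in $x$ then yields a uniform $\Delta_{2}$ inequality $\Phi(x,2s)\leqslant 2^{p^{+}}\Phi(x,s)$ for every $x\in\Omega$ and $s>0$. The same reasoning applied to the conjugate $\Phi^{\ast}(x,\cdot)$, together with $\left(  \ref{inequality_PHI_*}\right)  $ (which holds pointwise in $x$ by taking the Legendre transform variable-by-variable), gives $\Phi^{\ast}(x,2t)\leqslant 2^{p^{-}/(p^{-}-1)}\Phi^{\ast}(x,t)$ uniformly in $x$. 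The normalization condition $\left(  \ref{PHI_normalization_x}\right)  $ holds by Lemma \ref{lm_normalization}, once we appeal to $\left(  \ref{assumption_PHI_1}\right)  $; the continuity of $\varphi\in C(\bar{\Omega}\times[0,\infty))$ together with (inequality_PHI_x) automatically delivers such constants $c^{-},c^{+}$ on the compact set $\bar{\Omega}$.

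With these three ingredients, the conclusion for $L^{\Phi(\cdot)}(\Omega)$ follows from the standard theory: the $\Delta_{2}$ condition on $\Phi$ implies that $L^{\Phi(\cdot)}$ is separable (simple functions, in fact $C_{c}^{\infty}$, are dense) and that modular convergence coincides with norm convergence; the normalization plus $\Delta_{2}$ on $\Phi$ identifies the topological dual of $L^{\Phi(\cdot)}$ with $L^{\Phi^{\ast}(\cdot)}$ via the Orlicz-type pairing $\langle u,v\rangle=\int_{\Omega}uv\,dx$; applying the identification once more, using $\Delta_{2}$ for $\Phi^{\ast}$, yields $(L^{\Phi^{\ast}(\cdot)})^{\ast}\cong L^{\Phi(\cdot)}$, so $L^{\Phi(\cdot)}$ is reflexive. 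That it is a Banach space is already given by \cite[Lemma 3.2.2]{harjulehto2019orlicz}.

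Finally, for $W^{1,\Phi(\cdot)}(\Omega)$, I would consider the isometric embedding
\[
J:W^{1,\Phi(\cdot)}(\Omega)\hookrightarrow \bigl(L^{\Phi(\cdot)}(\Omega)\bigr)^{d+1},\qquad J(u)=(u,\partial_{1}u,\ldots,\partial_{d}u).
\]
The product space is reflexive and separable by the previous paragraph. The image $J(W^{1,\Phi(\cdot)})$ is closed: if $J(u_{k})\to(u,v_{1},\ldots,v_{d})$ in the product norm, then in particular $u_{k}\to u$ and $\partial_{i}u_{k}\to v_{i}$ in $L^{\Phi(\cdot)}$, which by Lemma \ref{lm_modulo_finite} and the embedding $L^{\Phi(\cdot)}\hookrightarrow L^{1}$ (a consequence of $\left(  \ref{assumption_PHI_1}\right)  $ and $\left(  \ref{lemma_delta2_condition_inq1}\right)  $) forces convergence in $L^{1}_{\mathrm{loc}}$, so $v_{i}=\partial_{i}u$ in the distributional sense. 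A closed subspace of a reflexive separable Banach space is itself reflexive and separable, which yields the statement for $W^{1,\Phi(\cdot)}(\Omega)$. The main obstacle in executing this plan rigorously is the duality identification $(L^{\Phi(\cdot)})^{\ast}\cong L^{\Phi^{\ast}(\cdot)}$, where the normalization condition is what prevents degenerate weights from breaking the Hölder-type pairing; this is precisely the content of the Musielak-Orlicz machinery invoked.
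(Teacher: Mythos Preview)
The paper does not give its own proof of this statement; it is quoted verbatim as \cite[Theorem 6.1.4]{harjulehto2019orlicz} and no proof environment follows. Your proposal is a correct outline of the standard Musielak--Orlicz argument that underlies the cited reference: uniform $\Delta_{2}$ for both $\Phi$ and $\Phi^{\ast}$ (which the paper already records in $\left(\ref{delta2_Inq}\right)$ and $\left(\ref{delta2*_Inq}\right)$), the normalization condition, and the closed-subspace-of-a-product step for $W^{1,\Phi(\cdot)}$. There is nothing to compare against in the paper itself; your sketch matches the approach of the cited source and is sound.
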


\begin{theorem}
[{\cite[Lemma 6.1.6]{harjulehto2019orlicz}}]\label{thm_embedding}Assume that
$\Phi$ is a generalized $\Phi$-function verifying conditions $\left(
\ref{inequality_PHI_x}\right)  $ and $\left(  \ref{assumption_PHI_1}\right)
.$ There is $p>1$ such that for each $x\in\Omega$, $s\longmapsto s^{-p}%
\Phi\left(  x,s\right)  $ is increasing. Then $W^{1,\Phi\left(  \cdot\right)
}\left(  \Omega\right)  \subset W^{1,p}\left(  \Omega\right)  $.
\end{theorem}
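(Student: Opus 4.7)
The statement gives us $p>1$ satisfying $\Phi(x,\varrho s) \geq \varrho^{p} \Phi(x,s)$ for all $\varrho \geq 1$ (this follows from the increasing property of $s \mapsto s^{-p}\Phi(x,s)$). Indeed, by Lemma \ref{lemma_delta2_condition} applied with $\alpha = p$, the hypothesis on $p$ is equivalent to $p \leq \inf_{s>0} s\varphi(x,s)/\Phi(x,s)$ for every $x$, and condition $(\ref{inequality_PHI_x})$ guarantees we may take any $p \in (1, p^-]$, e.g. $p = p^-$.

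My plan is to show directly that $\int_\Omega |u|^p\,dx < \infty$ and $\int_\Omega |\partial_k u|^p\,dx < \infty$ whenever $u \in W^{1,\Phi(\cdot)}(\Omega)$. The key pointwise comparison is that, for $s \geq 1$, the monotonicity of $s \mapsto s^{-p}\Phi(x,s)$ together with the normalization bound $(\ref{assumption_PHI_1})$ yields
\[
s^{p} \;=\; s^{p} \cdot 1^{-p}\Phi(x,1)/\Phi(x,1) \;\leq\; \Phi(x,s)/\Phi(x,1) \;\leq\; \Phi(x,s)/c^{-}.
\]
For $s < 1$ one has trivially $s^{p} \leq 1$.

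Applying this pointwise to $v = |u|$ and to $v = |\partial_k u|$, and splitting $\Omega$ into the sets $\{v \geq 1\}$ and $\{v < 1\}$, I obtain
\[
\int_\Omega v^{p}\,dx \;\leq\; |\Omega| + \frac{1}{c^{-}} \int_\Omega \Phi(x, v(x))\,dx.
\]
Since $u, \partial_k u \in L^{\Phi(\cdot)}(\Omega)$, Lemma \ref{lm_modulo_finite} gives that the right-hand side is finite. Consequently $u$ and all its weak partial derivatives lie in $L^{p}(\Omega)$, which yields $u \in W^{1,p}(\Omega)$ and hence the desired embedding $W^{1,\Phi(\cdot)}(\Omega) \subset W^{1,p}(\Omega)$.

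The argument involves no real obstacle: the only subtle step is correctly exploiting the normalization bound to convert the scale-invariant comparison $s^{-p}\Phi(x,s) \geq 1^{-p}\Phi(x,1)$ into an absolute one with a constant independent of $x$. The continuity of the embedding (should it be required) would follow by the standard closed graph argument, since $L^{\Phi(\cdot)}$ and $L^{p}$ are both Banach spaces and the above estimate combined with Lemma \ref{lm_modulo_finite} gives $\|v\|_{L^p}^p \leq |\Omega| + \frac{1}{c^{-}} \max\{\|v\|_{L^{\Phi(\cdot)}}^{p^-}, \|v\|_{L^{\Phi(\cdot)}}^{p^+}\}$.
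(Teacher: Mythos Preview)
The paper does not give its own proof of this theorem: it is quoted directly from \cite[Lemma 6.1.6]{harjulehto2019orlicz} without argument. So there is no in-paper proof to compare against.

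That said, your argument is correct and is in fact the same mechanism the paper itself uses elsewhere. The splitting $\{v\leq 1\}\cup\{v>1\}$ together with the monotonicity of $s\mapsto s^{-p}\Phi(x,s)$ and the normalization bound $\Phi(x,1)\geq c^{-}$ is exactly the device employed in the proof of Lemma~\ref{lm_bounded_W1d1}(i) (there with $p=p_n^{-}$) and again in Lemma~\ref{Lm_orlicz_embedding}. Your invocation of Lemma~\ref{lm_modulo_finite} to pass from finiteness of the norm to finiteness of the modular is also the step used repeatedly in the paper. So while the paper outsources the proof, your version is consistent with the toolbox it builds and could stand in for the citation.
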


One last ingredient we need is Poincare's inequality for $W_{0}%
^{1,\Phi\left(  \cdot\right)  }\left(  \Omega\right)  $. In general
Poincare's inequality does not hold for any generalized $\Phi$-function.
Additional continuity assumptions must be imposed. The function $\Phi\left(
x,s\right)  $ is said to satisfiy the \textit{local continuity condition} if
there exists $c\in\left(  0,1\right)  $ for which%
\begin{equation}
c\Phi^{-1}\left(  x,s\right)  \leqslant\Phi^{-1}\left(  y,s\right)  ,
\label{PHI_condition_A1}%
\end{equation}
for all $s\in\left[  1,\left\vert B\right\vert ^{-1}\right]  $, almost all
$x,y\in B\cap\Omega$ and any ball $B$ with $\left\vert B\right\vert
\leqslant1.$ For generalized $\Phi$-functions satisfying $\left(
\ref{inequality_PHI_x}\right)  $ and $\left(  \ref{assumption_PHI_1}\right)
$, it is straightforward to see that the local continuity condition $\left(
\ref{PHI_condition_A1}\right)  $ is equivalent (see also \cite[Corollary
4.1.6]{harjulehto2019orlicz}) to the following: there exists $c\in\left(
0,1\right)  $ for which%
\begin{equation}
\Phi\left(  x,cs\right)  \leqslant\Phi\left(  y,s\right)  ,
\label{PHI_condition_A11}%
\end{equation}
for all $\Phi\left(  y,s\right)  \in\left[  1,\left\vert B\right\vert
^{-1}\right]  $, almost all $x,y\in B\cap\Omega$ and any ball $B$ with
$\left\vert B\right\vert \leqslant1.$ Note that for the choice
\[
\Phi\left(  x,s\right)  =s^{p\left(  x\right)  },\text{for }x\in\Omega\text{
and }s>0,
\]
the local continuity condition is equivalent to the log-H\"{o}lder continuity
of $1/p\left(  x\right)  $ \cite[Proposition 7.1.2]{harjulehto2019orlicz},
which is required for most results of Lebesgue spaces with variable exponents
to work \cite{diening2011lebesgue}.

We now have Poincare's inequality for generalized $\Phi$-functions.

\begin{theorem}
[{\cite[Theorem 6.2.8]{harjulehto2019orlicz}}]\label{thm_poincare_inequality}%
Let $\Phi$ be a generalized $\Phi$-function satisfying $\left(
\ref{inequality_PHI_x}\right)  $, the normalization condition $\left(
\ref{assumption_PHI_1}\right)  $ and the local continuity condition $\left(
\ref{PHI_condition_A11}\right)  $. Then for every $u\in W_{0}^{1,\Phi\left(
\cdot\right)  }\left(  \Omega\right)  ,$%
\[
\left\Vert u\right\Vert _{L^{\Phi\left(  \cdot\right)  }}\leqslant C\left\Vert
\nabla u\right\Vert _{L^{\Phi\left(  \cdot\right)  }},
\]
where $C>0$ depends on $\Omega$ and the constant in the local continuity condition.
\end{theorem}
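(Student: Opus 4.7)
The plan is to reduce the stated norm inequality to a modular estimate, obtain a pointwise bound of $u$ by a Riesz potential of $\nabla u$, and then close by invoking boundedness of the Hardy--Littlewood maximal operator on $L^{\Phi(\cdot)}(\Omega)$. By Lemma \ref{lm_modulo_finite}, it suffices to produce a constant $K > 0$, depending on $\Omega$ and the structural constants in $(\ref{inequality_PHI_x})$, $(\ref{assumption_PHI_1})$, $(\ref{PHI_condition_A11})$, such that
\[
\int_\Omega \Phi\!\left(x, \tfrac{|u(x)|}{K}\right) dx \;\leq\; \int_\Omega \Phi(x, |\nabla u(x)|)\, dx
\]
for every $u \in W_0^{1,\Phi(\cdot)}(\Omega)$. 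The $\Delta_2$ condition inherited from $(\ref{inequality_PHI_x})$ ensures density of $C_c^\infty(\Omega)$, so one may assume $u \in C_c^\infty(\Omega)$, extended by zero outside $\Omega$.

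For such $u$, integration along rays issuing from $x$ yields the classical pointwise bound
\[
|u(x)| \;\leq\; C(\Omega)\int_\Omega \frac{|\nabla u(y)|}{|x-y|^{d-1}}\, dy \;=\; C(\Omega)\, I_1(|\nabla u|)(x).
\]
A Hedberg-type splitting ($|x-y| < r$ versus $|x-y| \geq r$), together with the fact that $\Omega$ is bounded (so the far part of $I_1(|\nabla u|)$ is controlled by the $L^1$ mass of $|\nabla u|$, and hence by $\int_\Omega \Phi(x,|\nabla u|)\,dx$ via Jensen's inequality, Theorem \ref{thm_embedding}, and Lemma \ref{lm_modulo_finite}), produces a truncation estimate of the form
\[
I_1(|\nabla u|)(x) \;\leq\; C(\Omega)\, M(|\nabla u|)(x),
\]
where $M$ is the Hardy--Littlewood maximal operator.

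The decisive step, and the one I expect to be the main obstacle, is the boundedness of $M$ on $L^{\Phi(\cdot)}(\Omega)$. Under the present hypotheses, this is the central maximal theorem of Musielak--Orlicz theory: the local continuity condition $(\ref{PHI_condition_A11})$ is exactly the scale-regularity that makes $\Phi(x,\cdot)$ and $\Phi(y,\cdot)$ comparable on balls small enough for the standard averaging argument to close. Without some version of it one already finds variable-exponent counterexamples in the spirit of $L^{p(x)}$-theory without log-H\"{o}lder continuity. Once maximal boundedness is accepted, combining it with the Hedberg inequality above and unwinding the modular/norm conversion through Lemma \ref{lm_modulo_finite} yields the stated inequality with $C$ depending on $\Omega$, the exponents $p^{\pm}$, and the constant $c$ appearing in $(\ref{PHI_condition_A11})$.
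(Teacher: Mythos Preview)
The paper does not prove this theorem; it is quoted verbatim from \cite[Theorem 6.2.8]{harjulehto2019orlicz} and used as a black box, so there is no ``paper's proof'' to compare against.

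Your sketch is a sound outline of the standard argument and is essentially how the cited reference proceeds: the pointwise Riesz potential bound $|u(x)|\leq C(\Omega)\,I_1(|\nabla u|)(x)$ for $u\in C_c^\infty(\Omega)$, followed by the Hedberg dyadic-annuli estimate on a bounded domain (which indeed gives $I_1 f(x)\leq C\,\mathrm{diam}(\Omega)\,Mf(x)$ for $f$ supported in $\Omega$), and then the maximal theorem on $L^{\Phi(\cdot)}$ under (A0)+(A1). Two minor remarks. First, the detour through the modular via Lemma~\ref{lm_modulo_finite} is unnecessary and a bit awkwardly phrased: once you have the pointwise inequality $|u(x)|\leq C\,M(|\nabla u|)(x)$ you get $\|u\|_{L^{\Phi(\cdot)}}\leq C\,\|M(|\nabla u|)\|_{L^{\Phi(\cdot)}}$ directly from monotonicity of the Luxemburg norm, and then apply maximal boundedness. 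Second, you correctly identify the maximal theorem as the real content here, but you should be aware that in the generalized Orlicz setting its proof also uses a decay condition at infinity (often labelled (A2) or (aDec)); in the present paper this is supplied by \eqref{inequality_PHI_x}, which guarantees both (aInc) and (aDec) simultaneously, so the hypotheses suffice.
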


\section{\label{sec_gamma}$\Gamma$-convergence of the Natural Energies}

This section devotes to the study of the convergence of the following natural
energies associated with the quasilinear problems $\left(
\ref{general_quasilinear_n_intro}\right)  ,$ i.e.%
\begin{equation}
E_{n}\left(  u\right)  =\int_{\Omega}\frac{\Phi_{n}\left(  x,\left\vert \nabla
u\right\vert \right)  }{\varphi_{n}\left(  x,1\right)  }dx,\text{ }u\in
W^{1,\Phi_{n}\left(  \cdot\right)  }\left(  \Omega\right)  . \label{eq:En}%
\end{equation}
Assume that $\varphi_{n}\in C\left(  \bar{\Omega}\times\left[  0,\infty
\right)  \right)  \cap C^{1}\left(  \Omega\times\left(  0,\infty\right)
\right)  $ for $n\geqslant1$, $\varphi_{n}\left(  x,s\right)  >0$ for
$x\in\Omega$, $s>0$, $n\geqslant1$ and\ there are constants $p_{n}^{-},$
$p_{n}^{+}>0$ such that
\begin{equation}
0<p_{n}^{-}-1\leqslant\frac{s\partial_{s}\varphi_{n}\left(  x,s\right)
}{\varphi_{n}\left(  x,s\right)  }\leqslant p_{n}^{+}-1\text{ for }x\in
\Omega\text{, }s>0. \label{assumption_phi_n}%
\end{equation}
Then for each $n$,%
\[
\Phi_{n}\left(  x,s\right)  =\int_{0}^{s}\varphi_{n}\left(  x,t\right)  dt
\]
a generalized $\Phi$-function. Moreover the inequalites in Lemma
\ref{lemma_lieberman} are satisfied uniformly in $x$ and $s$, i.e.%
\begin{equation}
p_{n}^{-}\leqslant\frac{s\varphi_{n}\left(  x,s\right)  }{\Phi_{n}\left(
x,s\right)  }\leqslant p_{n}^{+}\text{ for }x\in\Omega\text{, }s>0\text{,
}n\geqslant1. \label{assumption_Phi_n}%
\end{equation}
We also assume that the following Harnack type inequality holds,
\begin{equation}
\lim_{n\rightarrow\infty}p_{n}^{-}=\infty\text{ and }p_{n}^{+}\leqslant\mu
p_{n}^{-}\text{ for some constant }\mu>1. \label{assumption_c_n}%
\end{equation}

\begin{definition}
\label{def_gamma_convergence}Let $X$ be a metric space. A sequence
$J_{n}:X\longmapsto\mathbb{R\cup}\left\{  \infty\right\}  $ of functionals
$\Gamma$-converges to $J,$ denoted by $\Gamma\left(  X\right)  $%
-$\lim\limits_{n\rightarrow\infty}J_{n}=J$, if for $u\in X$ the following are satisfied:

(i) (asymptotic common lower bound) for any $v_{n}\rightarrow u$ in X,%
\[
J\left(  u\right)  \leqslant\liminf_{n\rightarrow\infty}J_{n}\left(
v_{n}\right)  ;
\]

(ii) (existence of recovery sequence) there is $v_{n}\rightarrow u$ in X,%
\[
J\left(  u\right)  \geqslant\limsup_{n\rightarrow\infty}J_{n}\left(
v_{n}\right)  .
\]

\end{definition}

The simple result below is quite useful in a number of situations.

\begin{lemma}
\label{lm_bounded_W1d1}Assume that $\left(  \ref{assumption_c_n}\right)  $ is
satisfied and $u_{n}\in W^{1,\Phi_{n}\left(  \cdot\right)  }\left(
\Omega\right)  $. Then the following are true.

(i) It holds that%
\[
\left\Vert \nabla u_{n}\right\Vert _{L^{p_{n}^{-}}}^{p_{n}^{-}}\leqslant
\left\vert \Omega\right\vert +E_{n}\left(  u_{n}\right)  p_{n}^{+}%
\leqslant\left\vert \Omega\right\vert +E_{n}\left(  u_{n}\right)  \mu
p_{n}^{-}.
\]

(ii) Assume that there is $\sigma>0$ such that
\[
E_{n}\left(  u_{n}\right)  \leqslant\sigma\text{ for large }n.
\]
Then there is a constant $c\left(  \Omega,\sigma,\mu\right)  >0$ such that,
for all $m\geqslant1$ and $n\geqslant1$ for which $p_{n}^{-}\geqslant m$ and
$E_{n}\left(  u_{n}\right)  \leqslant\sigma,$%
\[
\left\Vert \nabla u_{n}\right\Vert _{L^{m}}\leqslant c\left(  \Omega
,\sigma,\mu\right)  .
\]

(iii) Assume that $p_{n}^{-}\geqslant m,$ $v\in W^{1,\infty}\left(
\Omega\right)  $ and $u_{n}\in v+W_{0}^{1,\Phi_{n}\left(  \cdot\right)
}\left(  \Omega\right)  $. Then there is $c\left(  \Omega,m\right)  >0$ such
that
\[
\left\Vert u_{n}\right\Vert _{W^{1,m}}\leqslant c\left(  \Omega,m\right)
\left(  \left\Vert \nabla u_{n}\right\Vert _{L^{m}}+\left\Vert v\right\Vert
_{W^{1,\infty}}\right)  .
\]
In particular, if $\left\{  \left\Vert \nabla u_{n}\right\Vert _{L^{m}%
}\right\}  $ is bounded, then $\left\{  u_{n}\right\}  $ is bounded in
$W^{1,m}\left(  \Omega\right)  $.
\end{lemma}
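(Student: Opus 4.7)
For part (i), I would split the integration region according to whether $|\nabla u_n(x)| \leq 1$ or $|\nabla u_n(x)| > 1$. On the set where $|\nabla u_n| \leq 1$ one has $|\nabla u_n|^{p_n^-} \leq 1$, contributing at most $|\Omega|$ to the integral. On the set $\{|\nabla u_n| > 1\}$, Lemma \ref{lemma_delta2_condition} applied to $\Phi_n(x,\cdot)$ with $\varrho = |\nabla u_n| \geq 1$ and $s = 1$ (using $(\ref{assumption_Phi_n})$ and $p_n^- \leq p_n^+$) yields
\[
|\nabla u_n|^{p_n^-}\,\Phi_n(x,1) \leq \Phi_n(x,|\nabla u_n|).
\]
Combining this with $\varphi_n(x,1) \leq p_n^+\,\Phi_n(x,1)$, which is $(\ref{assumption_Phi_n})$ at $s=1$, gives $|\nabla u_n|^{p_n^-} \leq p_n^+ \Phi_n(x,|\nabla u_n|)/\varphi_n(x,1)$ on that set. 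Integrating produces the first inequality of (i), and the second follows from the Harnack-type bound $p_n^+ \leq \mu p_n^-$ of $(\ref{assumption_c_n})$.

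For part (ii), H\"older's inequality (valid since $m \leq p_n^-$) yields
\[
\|\nabla u_n\|_{L^m} \leq |\Omega|^{1/m - 1/p_n^-}\,\|\nabla u_n\|_{L^{p_n^-}}.
\]
Since the exponent $1/m - 1/p_n^-$ lies in $[0,1]$, the prefactor is bounded by $\max\{1,|\Omega|\}$, a quantity depending only on $\Omega$. From (i) combined with $E_n(u_n) \leq \sigma$, we get $\|\nabla u_n\|_{L^{p_n^-}} \leq (|\Omega| + \sigma\mu p_n^-)^{1/p_n^-}$. The map $p \mapsto (|\Omega| + \sigma\mu p)^{1/p}$ is continuous on $[1,\infty)$ with limit $1$ at infinity (since $\log(|\Omega|+\sigma\mu p)/p \to 0$), hence uniformly bounded on $[1,\infty)$ by a constant depending only on $\Omega,\sigma,\mu$. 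Multiplying the two bounds yields the desired estimate, independent of both $m$ and $n$.

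For part (iii), decompose $u_n = v + w_n$ with $w_n \in W_0^{1,\Phi_n(\cdot)}(\Omega)$. The hypothesis $p_n^- \geq m$ together with Lemma \ref{lemma_delta2_condition} gives $\Phi_n(x,s) \geq s^m\,\Phi_n(x,1)$ for $s \geq 1$, and the continuity and positivity of $\Phi_n(x,1)$ on $\bar\Omega$ furnishes (for each fixed $n$) an embedding $W^{1,\Phi_n(\cdot)}(\Omega) \hookrightarrow W^{1,m}(\Omega)$. In particular $w_n \in W_0^{1,m}(\Omega)$: its gradient $\nabla w_n = \nabla u_n - \nabla v$ lies in $L^m$ by the hypotheses, and its zero trace is inherited via approximation by $C_0^\infty$ functions. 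The classical Poincar\'e inequality then yields $\|w_n\|_{L^m} \leq c(\Omega,m)\|\nabla w_n\|_{L^m}$, and the stated estimate follows from the triangle inequalities applied to $u_n = v + w_n$ and $\nabla w_n = \nabla u_n - \nabla v$, together with the elementary bound $\|v\|_{W^{1,m}} \leq |\Omega|^{1/m}\|v\|_{W^{1,\infty}}$.

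The main subtlety is in part (ii): the constant must be independent of $m$, which ultimately rests on the elementary observation that $(|\Omega| + \sigma\mu p)^{1/p}$ is uniformly bounded on $p \geq 1$. Parts (i) and (iii) amount to careful bookkeeping with the $\Phi$-function growth estimates from Section \ref{sec_pre}.
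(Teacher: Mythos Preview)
Your proposal is correct and follows essentially the same route as the paper: the split $\{|\nabla u_n|\leq 1\}\cup\{|\nabla u_n|>1\}$ combined with Lemma~\ref{lemma_delta2_condition} and $(\ref{assumption_Phi_n})$ for (i), H\"older plus a uniform bound on $p\mapsto(|\Omega|+\sigma\mu p)^{1/p}$ for (ii), and the embedding $W^{1,\Phi_n(\cdot)}\hookrightarrow W^{1,m}$ followed by the classical Poincar\'e inequality for (iii). The only cosmetic differences are that the paper cites Theorem~\ref{thm_embedding} directly for the embedding in (iii), and in (ii) it makes the constant explicit by factoring $|\Omega|^{1/m-1/p_n^-}(|\Omega|+\sigma\mu p_n^-)^{1/p_n^-}=|\Omega|^{1/m}(1+\sigma\mu p_n^-/|\Omega|)^{1/p_n^-}$ and using the monotonicity of $x\mapsto(1+x)^{1/x}$ rather than your compactness argument.
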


\begin{proof}
(i) By virtue of Lemma \ref{lemma_delta2_condition}$,$%
\[
\left\vert \nabla u_{n}\right\vert ^{p_{n}^{-}}1_{\left\{  \left\vert \nabla
u_{n}\right\vert >1\right\}  }\leqslant\frac{\Phi_{n}\left(  x,\left\vert
\nabla u_{n}\right\vert \right)  }{\Phi_{n}\left(  x,1\right)  }1_{\left\{
\left\vert \nabla u_{n}\right\vert >1\right\}  }\text{ for }x\in\Omega.
\]
It then follows that%
\begin{align*}
\left\Vert \nabla u_{n}\right\Vert _{L^{p_{n}^{-}}}^{p_{n}^{-}}  &
=\int_{\Omega}\left\vert \nabla u_{n}\right\vert ^{p_{n}^{-}}1_{\left\{
\left\vert \nabla u_{n}\right\vert \leqslant1\right\}  }dx+\int_{\Omega
}\left\vert \nabla u_{n}\right\vert ^{p_{n}^{-}}1_{\left\{  \left\vert \nabla
u_{n}\right\vert >1\right\}  }dx\\
&  \leqslant\left\vert \Omega\right\vert +\int_{\Omega}\frac{\Phi_{n}\left(
x,\left\vert \nabla u_{n}\right\vert \right)  }{\Phi_{n}\left(  x,1\right)
}1_{\left\{  \left\vert \nabla u_{n}\right\vert >1\right\}  }dx\\
&  \leqslant\left\vert \Omega\right\vert +\int_{\Omega}\frac{\varphi
_{n}\left(  x,1\right)  }{\Phi_{n}\left(  x,1\right)  }\frac{\Phi_{n}\left(
x,\left\vert \nabla u_{n}\right\vert \right)  }{\varphi_{n}\left(  x,1\right)
}dx\\
&  \leqslant\left\vert \Omega\right\vert +E_{n}\left(  u_{n}\right)  p_{n}%
^{+}\\
&  \leqslant\left\vert \Omega\right\vert +E_{n}\left(  u_{n}\right)  \mu
p_{n}^{-},
\end{align*}
where the last inequality is due to $\left(  \ref{assumption_c_n}\right)  $.
(ii) Now the energies $E_{n}\left(  u_{n}\right)  $ are bounded by $\sigma>0$.
Note that the function $x\mapsto\left(  1+x\right)  ^{\frac{1}{x}}$ is
decreasing on $\left(  0,\infty\right)  $. Using H\"{o}lder's inequality and
the result of (i) we get, for $p_{n}^{-}\geqslant m,$%
\begin{align*}
\left\Vert \nabla u_{n}\right\Vert _{L^{m}}  &  \leqslant\left\vert
\Omega\right\vert ^{\frac{1}{m}-\frac{1}{p_{n}^{-}}}\left\Vert \nabla
u_{n}\right\Vert _{L^{p_{n}^{-}}}\\
&  \leqslant\left\vert \Omega\right\vert ^{\frac{1}{m}-\frac{1}{p_{n}^{-}}%
}\left(  \left\vert \Omega\right\vert +\sigma\mu p_{n}^{-}\right)  ^{\frac
{1}{p_{n}^{-}}}\\
&  =\left\vert \Omega\right\vert ^{\frac{1}{m}}\left(  1+\frac{\sigma\mu
p_{n}^{-}}{\left\vert \Omega\right\vert }\right)  ^{\frac{1}{p_{n}^{-}}}\\
&  \leqslant\left(  1+\left\vert \Omega\right\vert \right)  \left(
1+\frac{\sigma\mu}{\left\vert \Omega\right\vert }\right) \\
&  =c_{1}\left(  \Omega,\sigma,\mu\right)  .
\end{align*}
(iii) In view of Lemma \ref{lemma_delta2_condition}, the mapping $s\mapsto
s^{-m}\Phi_{n}\left(  x,s\right)  $ is increasing. Hence by Theorem
\ref{thm_embedding}
\[
W^{1,\Phi_{n}\left(  \cdot\right)  }\left(  \Omega\right)  \subset
W^{1,m}\left(  \Omega\right)  \text{ for all }n\text{ for which }p_{n}%
^{-}\geqslant m.
\]
Using Poincar\'{e}'s inequality we get%
\begin{align*}
\left\Vert u_{n}\right\Vert _{W^{1,m}}  &  \leqslant\left\Vert u_{n}%
-v\right\Vert _{W_{0}^{1,m}}+\left\Vert v\right\Vert _{W^{1,m}}\\
&  \leqslant c_{2}\left(  \Omega,m\right)  \left\Vert \nabla u_{n}-\nabla
v\right\Vert _{L^{m}}+c_{3}\left(  \Omega\right)  \left\Vert v\right\Vert
_{W^{1,\infty}}\\
&  \leqslant c_{4}\left(  \Omega,m\right)  \left(  \left\Vert \nabla
u_{n}\right\Vert _{L^{m}}+\left\Vert v\right\Vert _{W^{1,\infty}}\right)  .
\end{align*}
Therefore $\left\Vert u_{n}\right\Vert _{W^{1,m}}$ is bounded whenever
$\left\{  \left\Vert \nabla u_{n}\right\Vert _{L^{m}}\right\}  $ is bounded.
The proof is completed.
\end{proof}

\begin{theorem}
\label{Thm_gamma_convergence}Extend the functional $E_{n}\left(  u\right)  $
from $W^{1,\Phi_{n}\left(  \cdot\right)  }\left(  \Omega\right)  $ to
$L^{1}\left(  \Omega\right)  $ as below%
\[
E_{n}\left(  u\right)  =\left\{
\begin{array}
[c]{ll}
{\displaystyle\int_{\Omega}}
\dfrac{\Phi_{n}\left(  x,\left\vert \nabla u\right\vert \right)  }{\varphi
_{n}\left(  x,1\right)  }dx, & u\in W^{1,\Phi_{n}\left(  \cdot\right)
}\left(  \Omega\right)  ;\\
\infty, & \text{otherwise},
\end{array}
\right.
\]
and let%
\[
E_{\infty}\left(  u\right)  =\left\{
\begin{array}
[c]{ll}%
0, & u\in W^{1,\infty}\left(  \Omega\right)  \text{ and }\left\Vert \nabla
u\right\Vert _{L^{\infty}}\leqslant1;\\
\infty, & \text{otherwise}.
\end{array}
\right.
\]
Then $\Gamma\left(  L^{1}\left(  \Omega\right)  \right)  $-$\lim
\limits_{n\rightarrow\infty}E_{n}=E_{\infty}.$
\end{theorem}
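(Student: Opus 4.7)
The plan is to verify the two conditions of Definition \ref{def_gamma_convergence} separately, using the constant sequence $v_n = u$ as the recovery sequence and reducing the liminf condition to a weak lower semicontinuity argument in a fixed-exponent Sobolev space. The driving force behind both estimates is the assumption $p_n^- \to \infty$ from $\left(\ref{assumption_c_n}\right)$.

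For the \emph{recovery sequence}, the case $E_{\infty}(u)=\infty$ is trivial (take $v_n=u$, the inequality holds vacuously). Otherwise $u\in W^{1,\infty}(\Omega)$ with $\left\Vert\nabla u\right\Vert_{L^{\infty}}\leqslant 1$, and I take $v_n=u$. Applying Lemma \ref{lemma_delta2_condition} to the $\Phi$-function $\Phi_n(x,\cdot)$ with $\varrho=|\nabla u(x)|\leqslant 1$ yields the pointwise estimate $\Phi_n(x,|\nabla u|)\leqslant |\nabla u|^{p_n^-}\Phi_n(x,1)$, while Lemma \ref{lemma_lieberman} evaluated at $s=1$ gives $\Phi_n(x,1)/\varphi_n(x,1)\leqslant 1/p_n^-$. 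Combining these,
\[
E_n(u)\leqslant\frac{1}{p_n^-}\int_{\Omega}|\nabla u|^{p_n^-}\,dx\leqslant\frac{|\Omega|}{p_n^-}\longrightarrow 0=E_{\infty}(u).
\]

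For the \emph{liminf inequality}, let $v_n\to u$ in $L^1(\Omega)$ and assume without loss of generality that $\liminf E_n(v_n)=:\sigma<\infty$, passing to a subsequence along which $E_n(v_n)\leqslant\sigma+1$. Fix any $m\geqslant 1$. For all $n$ with $p_n^-\geqslant m$, Lemma \ref{lm_bounded_W1d1}(i) combined with H\"{o}lder's inequality yields
\[
\left\Vert\nabla v_n\right\Vert_{L^m(\Omega)}\leqslant|\Omega|^{\frac{1}{m}-\frac{1}{p_n^-}}\bigl(|\Omega|+(\sigma+1)\mu p_n^-\bigr)^{1/p_n^-},
\]
whose right-hand side tends to $|\Omega|^{1/m}$ as $n\to\infty$ since $p_n^-\to\infty$. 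Because $v_n\to u$ in $L^1$, the means $\bar v_n$ are bounded, so Poincar\'{e}--Wirtinger together with the gradient bound shows that $\{v_n\}$ is bounded in $W^{1,m}(\Omega)$; a weakly convergent subsequence (whose limit is necessarily $u$) and weak lower semicontinuity of the $L^m$-norm then give $\left\Vert\nabla u\right\Vert_{L^m}\leqslant|\Omega|^{1/m}$. Letting $m\to\infty$ forces $\left\Vert\nabla u\right\Vert_{L^{\infty}}\leqslant 1$, so that $u$ lies in the effective domain of $E_{\infty}$ and $E_{\infty}(u)=0\leqslant\liminf E_n(v_n)$.

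The main obstacle I anticipate is that each $v_n$ naturally lives in its own Sobolev--Orlicz space $W^{1,\Phi_n(\cdot)}(\Omega)$ with a moving exponent $p_n^-$, so no direct weak convergence argument in a single ambient space is available. The key exploit is that $\left(\ref{assumption_c_n}\right)$ makes $(|\Omega|+Cp_n^-)^{1/p_n^-}\to 1$, which allows one to trade the varying exponent for an arbitrary fixed $L^m$ gradient bound; this reduces the problem to classical weak lower semicontinuity in $W^{1,m}(\Omega)$, and the sharp $L^{\infty}$ bound on $\nabla u$ is then recovered in the limit $m\to\infty$.
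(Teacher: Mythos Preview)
Your proposal is correct and follows essentially the same approach as the paper: the recovery sequence is the constant one $v_n=u$, and the liminf inequality is obtained by bounding $\left\Vert\nabla v_n\right\Vert_{L^m}$ via Lemma~\ref{lm_bounded_W1d1}, passing to a weak limit in $L^m$, and then sending $m\to\infty$. The only cosmetic difference is in the final step: the paper uses a level-set argument on $\Omega_\epsilon=\{|\nabla u|>1+\epsilon\}$ to conclude $\left\Vert\nabla u\right\Vert_{L^\infty}\leqslant 1$, whereas you take the equivalent (and slightly more direct) route of letting $m\to\infty$ in $\left\Vert\nabla u\right\Vert_{L^m}\leqslant|\Omega|^{1/m}$.
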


\begin{proof}
1. Existence of recovery sequence. Fix $u\in L^{1}\left(  \Omega\right)  $. If
$E_{\infty}\left(  u\right)  =$ $\infty$, then there is nothing to prove. If
$E_{\infty}\left(  u\right)  <\infty$, then $E_{\infty}\left(  u\right)  =0$,
$u\in W^{1,\infty}\left(  \Omega\right)  $ and $\left\Vert \nabla u\right\Vert
_{L^{\infty}}\leqslant1.$ It follows that%
\[
E_{\infty}\left(  u\right)  =0\leqslant E_{n}\left(  u\right)  =\int_{\Omega
}\frac{\Phi_{n}\left(  x,\left\vert \nabla u\right\vert \right)  }{\varphi
_{n}\left(  x,1\right)  }dx\leqslant\int_{\Omega}\frac{\Phi_{n}\left(
x,1\right)  }{\varphi_{n}\left(  x,1\right)  }dx\leqslant\frac{\left\vert
\Omega\right\vert }{p_{n}^{-}}\rightarrow0.
\]
Hence $v_{n}\equiv u$ is a recovery sequence.

2. $E_{n}$ has asymptotic common lower bound. Consider an arbitrary sequence
$v_{n}\rightarrow u$ in $L^{1}\left(  \Omega\right)  $. If there are only a
finite number of $v_{n}$ belonging to $W^{1,\Phi_{n}\left(  \cdot\right)
}\left(  \Omega\right)  $, then $E_{n}\left(  v_{n}\right)  =\infty$ for all
$n$ large. Hence $\liminf\limits_{n\rightarrow\infty}E_{n}\left(
v_{n}\right)  =\infty$, the desired inequality clearly holds. Now we assume up
to a subsequence that $v_{n}\in W^{1,\Phi_{n}\left(  \cdot\right)  }\left(
\Omega\right)  $ for $n\geqslant1$ and that
\[
E_{n}\left(  v_{n}\right)  \text{ is a bounded sequence.}%
\]
In view of Lemma \ref{lm_bounded_W1d1}, we can further assume that $\nabla
v_{n}$ converges weakly to some $\varpi$ $\in L^{m}\left(  \Omega\right)  $,
$m\geqslant1$. Since $v_{n}\rightarrow u$ in $L^{1}\left(  \Omega\right)  ,$
we infer that $\varpi=\nabla u$ and $u\in W^{1,m}\left(  \Omega\right)  $. We
need to show%
\[
E_{\infty}\left(  u\right)  \leqslant\liminf_{n\rightarrow\infty}E_{n}\left(
v_{n}\right)  .
\]
This amounts to showing%
\[
\left\vert \nabla u\left(  x\right)  \right\vert \leqslant1\text{ for
}a.e.\text{ }x\in\Omega.
\]
For $\epsilon>0,$ let%
\[
\Omega_{\epsilon}=\left\{  x\in\Omega:\left\vert \nabla u\left(  x\right)
\right\vert >1+\epsilon\right\}  .
\]
By H\"{o}lder's inequality,%
\begin{align*}
\int_{\Omega_{\epsilon}}\left\vert \nabla u\left(  y\right)  \right\vert dy
&  \leqslant\left\vert \Omega_{\epsilon}\right\vert ^{1-\frac{1}{m}}\left\Vert
\nabla u\right\Vert _{L^{m}}\\
&  \leqslant\left\vert \Omega_{\epsilon}\right\vert ^{1-\frac{1}{m}}%
\liminf_{n\rightarrow\infty}\left\Vert \nabla v_{n}\right\Vert _{L^{m}}\\
&  \leqslant\left\vert \Omega_{\epsilon}\right\vert ^{1-\frac{1}{m}}\left\vert
\Omega\right\vert ^{\frac{1}{m}-\frac{1}{p_{n}^{-}}}\liminf_{n\rightarrow
\infty}\left\Vert \nabla v_{n}\right\Vert _{L^{p_{n}^{-}}}\\
&  \leqslant\left\vert \Omega_{\epsilon}\right\vert ^{1-\frac{1}{m}}\left\vert
\Omega\right\vert ^{\frac{1}{m}-\frac{1}{p_{n}^{-}}}\left(  \left\vert
\Omega\right\vert +E_{n}\left(  v_{n}\right)  \mu p_{n}^{-}\right)  ^{\frac
{1}{p_{n}^{-}}}.
\end{align*}
Note that the energies $E_{n}\left(  v_{n}\right)  $ are bounded. Taking the
limit $p_{n}^{-}\rightarrow\infty$ and then $m\rightarrow\infty,$ we get
\[
\left(  1+\epsilon\right)  \left\vert \Omega_{\epsilon}\right\vert
\leqslant\int_{\Omega_{\epsilon}}\left\vert \nabla u\left(  y\right)
\right\vert dy\leqslant\left\vert \Omega_{\epsilon}\right\vert .
\]
So $\left\vert \Omega_{\epsilon}\right\vert =0$. Since $\epsilon>0$ is
arbitrary, this shows that $\left\vert \left\{  \left\vert \nabla u\right\vert
>1\right\}  \right\vert =0.$ The proof is completed.
\end{proof}

\section{\label{sec_lim_g1}Limit of the Solutions with $\left\Vert
g\right\Vert _{Lip\left(  \partial\Omega\right)  }\leqslant1$}

Throughout this section, $g$ is assumed to satisfy $\left(
\ref{assumption_g_lip}\right)  ,$ i.e.%
\[
\left\Vert g\right\Vert _{Lip\left(  \partial\Omega\right)  }\leqslant1.
\]
Then the AMLE $g\in W^{1,\infty}\left(  \Omega\right)  $ (ref. Remark
\ref{Rmk_g}). Consider natural energy%
\begin{equation}
E_{n}\left(  u\right)  =\int_{\Omega}\frac{\Phi_{n}\left(  x,\left\vert \nabla
u\right\vert \right)  }{\varphi_{n}\left(  x,1\right)  }dx,\text{ }u\in
g+W_{0}^{1,\Phi_{n}\left(  \cdot\right)  }\left(  \Omega\right)
\label{eq:En_g}%
\end{equation}
where $\varphi_{n}$, $\Phi_{n}$ are given at the beginning of Section
\ref{sec_gamma}$,$ and $\Phi_{n}$ satisfies the equivalent normalization
condition $\left(  \ref{assumption_PHI_1}\right)  $ (see Lemma
\ref{lm_normalization}) uniformly in $n,$ i.e.,%
\begin{equation}
c^{-}\leqslant\Phi_{n}\left(  x,1\right)  \leqslant c^{+}\text{ for }%
x\in\Omega\text{ and }n\geqslant1. \label{assumption_PHI_n_1}%
\end{equation}
The critical points of $E_{n}$ in $g+W_{0}^{1,\Phi_{n}\left(  \cdot\right)
}\left(  \Omega\right)  $ are weak solutions of the equaiton%
\begin{equation}
\left\{
\begin{array}
[c]{ll}%
-\operatorname{div}\left(  \dfrac{a_{n}\left(  x,\left\vert \nabla u\left(
x\right)  \right\vert \right)  }{\left\vert \nabla u\left(  x\right)
\right\vert }\nabla u\left(  x\right)  \right)  =0, & x\in\Omega\\
u\left(  x\right)  =g\left(  x\right)  , & x\in\partial\Omega
\end{array}
\right.  \label{quasilinear_PDE_n}%
\end{equation}
where%
\begin{equation}
a_{n}\left(  x,s\right)  =\frac{\varphi_{n}\left(  x,s\right)  }{\varphi
_{n}\left(  x,1\right)  }. \label{eq:def_a_xs_n}%
\end{equation}

We consider the convergence as $n\rightarrow\infty$\ of the solutions of the
equation $\left(  \ref{quasilinear_PDE_n}\right)  $. The results in the last
seciton roughly suggest that the solutions of equation $\left(
\ref{quasilinear_PDE_n}\right)  $ converge as $n\rightarrow\infty$ to a
solution of the equation associated with $E_{\infty}$. This section is devoted
to making this intuition rigorous.

\begin{theorem}
\label{Thm_existence}Assume that $\Phi_{n}$ satisfies the normalization
condition $\left(  \ref{assumption_PHI_n_1}\right)  ,$ the local continuity
condition $\left(  \ref{PHI_condition_A11}\right)  $ and $p_{n}^{-}\geqslant
d+1$. Then the equation $\left(  \ref{quasilinear_PDE_n}\right)  $ has a
unique weak solution $u$ which minimizes $E_{n}$ in $g+W_{0}^{1,\Phi
_{n}\left(  \cdot\right)  }\left(  \Omega\right)  .$ In addition $u\in
C\left(  \bar{\Omega}\right)  \cap W^{1,\Phi_{n}\left(  \cdot\right)  }\left(
\Omega\right)  .$
\end{theorem}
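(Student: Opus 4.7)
The plan is to apply the direct method of the calculus of variations to $E_{n}$ on the affine set $\mathcal{A} = g + W_{0}^{1,\Phi_{n}(\cdot)}(\Omega)$, produce a minimizer, and read off the PDE as the Euler--Lagrange equation. Since the AMLE $g$ satisfies $|\nabla g| \leqslant 1$ a.e.\ by Remark~\ref{Rmk_g}, Lemma~\ref{lemma_lieberman} combined with (\ref{assumption_PHI_n_1}) gives $E_{n}(g) < \infty$, so $\mathcal{A}$ is nonempty. For a minimizing sequence $\{u_{k}\} \subset \mathcal{A}$, the bound on $E_{n}(u_{k})$ together with Lemma~\ref{lm_bounded_W1d1}(i) yields a uniform bound on $\|\nabla u_{k}\|_{L^{p_{n}^{-}}}$; then Lemma~\ref{lm_modulo_finite} and Poincar\'{e}'s inequality (Theorem~\ref{thm_poincare_inequality}) applied to $u_{k} - g \in W_{0}^{1,\Phi_{n}(\cdot)}(\Omega)$ give boundedness of $\{u_{k}\}$ in $W^{1,\Phi_{n}(\cdot)}(\Omega)$. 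Reflexivity (Theorem~\ref{thm_reflexive}) then produces a subsequence $u_{k} \rightharpoonup u$, and $u \in \mathcal{A}$ because $\mathcal{A}$ is weakly closed.

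The integrand $F(x,p) = \Phi_{n}(x,|p|)/\varphi_{n}(x,1)$ is convex in $p$ as the composition of the convex increasing function $\Phi_{n}(x,\cdot)$ with the norm $|\cdot|$, and $E_{n}$ is strongly continuous on $W^{1,\Phi_{n}(\cdot)}(\Omega)$ thanks to the $\Delta_{2}$ condition (\ref{delta2_Inq}); a Mazur-lemma argument then upgrades this to weak lower semicontinuity, giving $E_{n}(u) \leqslant \liminf_{k} E_{n}(u_{k})$, so $u$ is a minimizer. For uniqueness, $F(x,\cdot)$ is in fact strictly convex on $\mathbb{R}^{d}$: strict convexity of $\Phi_{n}(x,\cdot)$ on $[0,\infty)$ (a consequence of $\partial_{s}\varphi_{n} > 0$) handles the case of aligned gradients, while the strict triangle inequality handles linearly independent ones. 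Two minimizers would then force $\nabla u_{1} = \nabla u_{2}$ a.e.\ by tracing the equality case in Jensen's inequality applied to $(u_{1}+u_{2})/2$, and $u_{1} = u_{2}$ follows since $u_{1} - u_{2} \in W_{0}^{1,\Phi_{n}(\cdot)}(\Omega)$ vanishes at the boundary.

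To see that $u$ is a weak solution of (\ref{quasilinear_PDE_n}), compute $\tfrac{d}{dt}E_{n}(u + t\phi)|_{t=0}$ for $\phi \in C_{0}^{\infty}(\Omega)$; the $\Delta_{2}$ bound dominates the difference quotient uniformly for $|t| \leqslant 1$, permitting dominated convergence and yielding the weak formulation. Conversely, convexity of $E_{n}$ makes every weak solution a minimizer, so the two notions coincide. For $C(\bar{\Omega})$ regularity, the hypothesis $p_{n}^{-} \geqslant d+1 > d$ combined with Theorem~\ref{thm_embedding} embeds $W^{1,\Phi_{n}(\cdot)}(\Omega) \hookrightarrow W^{1,d+1}(\Omega) \hookrightarrow C^{0,\alpha}(\bar{\Omega})$ via Morrey's inequality, and the boundary condition $u = g$ on $\partial \Omega$ is attained pointwise because $u - g \in W_{0}^{1,d+1}(\Omega)$ has continuous representative. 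The main obstacle I anticipate is verifying weak lower semicontinuity in the generalized Orlicz setting; however, the $\Delta_{2}$ condition reduces this to the standard convex-integrand argument, so after the preliminaries of Section~\ref{sec_pre} the whole proof is fairly mechanical.
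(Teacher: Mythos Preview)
Your proposal is correct and follows essentially the same route as the paper: direct method on $g+W_{0}^{1,\Phi_{n}(\cdot)}(\Omega)$, coercivity via the Poincar\'e inequality (Theorem~\ref{thm_poincare_inequality}), weak compactness from reflexivity (Theorem~\ref{thm_reflexive}), weak lower semicontinuity from convexity of the integrand, uniqueness from strict convexity, and $C(\bar\Omega)$ regularity from the embedding $W^{1,\Phi_{n}(\cdot)}\hookrightarrow W^{1,d+1}\hookrightarrow C(\bar\Omega)$. Your treatment of weak lower semicontinuity (Mazur plus strong continuity under $\Delta_{2}$) is actually more carefully justified than the paper's, which invokes it by regarding $E_{n}$ as ``an equivalent norm'' on the gradient space.
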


\begin{proof}
First observe that $E_{n}$ is coercive by Theorem
\ref{thm_poincare_inequality} (see also Remark \ref{rmk_phi_a} and $\left(
\ref{assumption_phi_1_lbd}\right)  $). Let $v_{k}\in g+W_{0}^{1,\Phi
_{n}\left(  \cdot\right)  }\left(  \Omega\right)  $ be a minimizing sequence
for $E_{n}$. Then it is bounded in $g+W_{0}^{1,\Phi_{n}\left(  \cdot\right)
}\left(  \Omega\right)  $. Since $W^{1,\Phi_{n}\left(  \cdot\right)  }\left(
\Omega\right)  $ is reflexive by Theorem \ref{thm_reflexive}, we can assume
without loss of generality that $v_{k}$ converges as $k\rightarrow\infty$
weakly to some $u\in g+W_{0}^{1,\Phi_{n}\left(  \cdot\right)  }\left(
\Omega\right)  $. By the form of $E_{n}$, it is clear that $E_{n}$ can be
regarded as an equivalent norm on
\[
\left\{  \nabla v\in L^{\Phi_{n}\left(  \cdot\right)  }\left(  \Omega\right)
:v\in W^{1,\Phi_{n}\left(  \cdot\right)  }\left(  \Omega\right)  \right\}  .
\]
It follows that, since $\nabla v_{k}$ converges weakly to $\nabla u$ in
$L^{\Phi_{n}\left(  \cdot\right)  }$,%
\[
E_{n}\left(  u\right)  \leqslant\liminf_{k\rightarrow\infty}E_{n}\left(
v_{k}\right)  .
\]
This shows that $u$ is a minimizer of $E_{n}$. Since the mapping%
\[
\xi\mapsto\frac{\Phi_{n}\left(  x,\left\vert \xi\right\vert \right)  }%
{\varphi_{n}\left(  x,1\right)  }%
\]
is strictly convex, the uniqueness of the minimizer follows from the standard
argument, e.g. \cite{dacorogna2007direct}. Finally noting that $s\mapsto
s^{-\left(  d+1\right)  }\Phi_{n}\left(  x,s\right)  $ is increasing by Lemma
\ref{lemma_delta2_condition}, we get by Theorem \ref{thm_embedding} that%
\[
W^{1,\Phi_{n}\left(  \cdot\right)  }\left(  \Omega\right)  \subset
W^{1,d+1}\left(  \Omega\right)  \text{.}%
\]
Also note that $g+W_{0}^{1,d+1}\left(  \Omega\right)  \subset C\left(
\bar{\Omega}\right)  $. Under our assumption, the energy $E_{n}\left(
u\right)  $ is strictly convex in $u$, thus for each $n$ the energy has a
unique critical point which corresponds to the unique weak solution of the
equaiton $\left(  \ref{quasilinear_PDE_n}\right)  $. \ Hence $u$, which
satisfies the Dirichlet condition in the classical sense, is the unique weak
solution for equation $\left(  \ref{quasilinear_PDE_n}\right)  .$
\end{proof}

Given a \textit{continuous }function $F:\bar{\Omega}\times\mathbb{R}^{d}%
\times\mathbb{R}^{2d}\mapsto\mathbb{R}$, a \textit{continuous }%
function\textit{ }$g:\partial\Omega\mapsto\mathbb{R}$, we consider degenerate
elliptic Dirichlet problem of the form%
\begin{equation}
\left\{
\begin{array}
[c]{ll}%
F\left(  x,\nabla u,D^{2}u\right)  =0, & x\in\Omega\\
u=g, & x\in\partial\Omega
\end{array}
\right.  .\label{dirichlet_PDE}%
\end{equation}
The solution is understood in the viscosity sense \cite{crandall1992user}$.$

\begin{remark}
\label{rmk_operator_continuity}For our equation, we have%
\[
F\left(  x,\nabla u,D^{2}u\right)  =-\operatorname{div}\left(  \frac{a\left(
x,\left\vert \nabla u\left(  x\right)  \right\vert \right)  }{\left\vert
\nabla u\left(  x\right)  \right\vert }\nabla u\left(  x\right)  \right)  .
\]
As is required by $\left(  \ref{dirichlet_PDE}\right)  $, we want $\left(
x,p,X\right)  \in\left(  \mathbb{R}^{d},\mathbb{R}^{d},\mathbb{R}^{d\times
d}\right)  \mapsto F\left(  x,p,X\right)  $ to be continuous. Recall that
$a\left(  x,s\right)  =\left.  \varphi\left(  x,s\right)  \right/
\varphi\left(  x,1\right)  $ (see $\left(  \ref{eq:def_a_xs}\right)  $) with
$\varphi$ satisfying $\left(  \ref{assumption_phi_x}\right)  .$
Straightforward calculation yields that%
\begin{align*}
F\left(  x,p,X\right)   &  =-\frac{a\left(  x,p\right)  }{p}\text{tr}\left(
X\right)  -\partial_{s}a\left(  x,\left\vert p\right\vert \right)
\text{tr}\left(  \frac{p}{\left\vert p\right\vert }\otimes\frac{p}{\left\vert
p\right\vert }X\right)  \\
&  +\frac{a\left(  x,\left\vert p\right\vert \right)  }{\left\vert
p\right\vert }\text{tr}\left(  \frac{p}{\left\vert p\right\vert }\otimes
\frac{p}{\left\vert p\right\vert }X\right)  -\partial_{x}a\left(  x,\left\vert
p\right\vert \right)  \cdot p.
\end{align*}
To fulfill the continuity requirement, it is enough to ensure the continuity
at $p=0.$ For this we assume that%
\begin{equation}
p^{-}>3\text{ and there exists }a_{0}\in C\left(  \Omega\right)  \text{ so
that }\lim_{s\rightarrow0+}\partial_{x}a\left(  x,s\right)  =a_{0}\left(
x\right)  \text{ in }\Omega,\label{eq:assume_F_cont}%
\end{equation}
where $p^{-}$ is the index as in $\left(  \ref{assumption_phi_x}\right)  $.
Indeed, under the assumption $\left(  \ref{assumption_phi_x}\right)  $,%
\[
\left(  p^{-}-1\right)  \frac{a\left(  x,s\right)  }{s}\leqslant\partial
_{s}a\left(  x,s\right)  \leqslant\left(  p^{+}-1\right)  \frac{a\left(
x,s\right)  }{s}\text{ for }x\in\Omega,\text{ }s>0.
\]
Hence we readily have that with $\left(  \ref{eq:assume_F_cont}\right)  ,$%
\begin{align*}
\left.  \frac{a\left(  x,p\right)  }{p}\right\vert _{p=0} &  =0,\text{
}\left.  \partial_{s}a\left(  x,\left\vert p\right\vert \right)  \frac
{p}{\left\vert p\right\vert }\otimes\frac{p}{\left\vert p\right\vert
}\right\vert _{p=0}=0,\\
\left.  \frac{a\left(  x,\left\vert p\right\vert \right)  }{\left\vert
p\right\vert }\text{tr}\left(  \frac{p}{\left\vert p\right\vert }\otimes
\frac{p}{\left\vert p\right\vert }X\right)  \right\vert _{p=0} &  =0,\text{
}\left.  \partial_{x}a\left(  x,\left\vert p\right\vert \right)  \cdot
p\right\vert _{p=0}=0.
\end{align*}
By this understanding $F\left(  x,p,X\right)  $ is defined everywhere as a
continuous function. In particular,%
\[
F\left(  x,0,X\right)  =-a_{0}\cdot0=0.
\]

\end{remark}

\begin{theorem}
\label{thm_weaksol_viscositysol}Assume that $\Phi_{n}$ satisfies the
normalization condition $\left(  \ref{assumption_PHI_n_1}\right)  ,$ the local
continuity condition $\left(  \ref{PHI_condition_A11}\right)  ,$ $a_{n}$
satisfies $\left(  \ref{eq:assume_F_cont}\right)  $ and $p_{n}^{-}\geqslant
d+1$. If $u\in W^{1,\Phi_{n}\left(  \cdot\right)  }\left(  \Omega\right)  $ is
a continuous weak solution for equation $\left(  \ref{quasilinear_PDE_n}%
\right)  $, then it is a viscosity solution for equation $\left(
\ref{quasilinear_PDE_n}\right)  $.
\end{theorem}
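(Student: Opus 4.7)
The plan is to verify the subsolution inequality, as the supersolution case is symmetric. Fix $x_0 \in \Omega$ and $\phi \in C^2(\Omega)$ with $\phi(x_0) = u(x_0)$ and $\phi \geq u$ in a neighborhood of $x_0$, and let $F_n$ denote the continuous extension from Remark \ref{rmk_operator_continuity} of the operator in $(\ref{quasilinear_PDE_n})$. I must show $F_n(x_0, \nabla\phi(x_0), D^2\phi(x_0)) \leq 0$. If $\nabla\phi(x_0) = 0$, the extension (justified by $p_n^{-} > 3$) gives $F_n(x_0, 0, X) = 0$ for every $X$, and the inequality is automatic. Otherwise I argue by contradiction: suppose $F_n(x_0, \nabla\phi(x_0), D^2\phi(x_0)) > 0$.

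Next I would strictify the touching. Set $\tilde\phi(x) = \phi(x) + \epsilon|x - x_0|^2$ with $\epsilon > 0$ so small that $F_n(x_0, \nabla\tilde\phi(x_0), D^2\tilde\phi(x_0)) > 0$ (which holds by continuity of $F_n$, since $\nabla\tilde\phi(x_0) = \nabla\phi(x_0)$ and $D^2\tilde\phi(x_0) - D^2\phi(x_0) = 2\epsilon I$). Continuity of $F_n$ in $(x,p,X)$ then yields $r > 0$ and $\delta > 0$ such that $\overline{B_r(x_0)} \subset \Omega$ and $F_n(x, \nabla\tilde\phi(x), D^2\tilde\phi(x)) \geq \delta$ on $\overline{B_r(x_0)}$; in particular $\tilde\phi$ is a classical strict supersolution of $(\ref{quasilinear_PDE_n})$ there. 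Since $\tilde\phi - u \geq \epsilon|x-x_0|^2$ with equality only at $x_0$, the number $m := \min_{\partial B_r(x_0)}(\tilde\phi - u) \geq \epsilon r^2$ is strictly positive. Fix $\eta \in (0,m)$ and put $D := \{x \in B_r(x_0) : \tilde\phi(x) - \eta < u(x)\}$. Continuity of $u$ and $\tilde\phi$ makes $D$ an open subset of $B_r(x_0)$ containing $x_0$, with $\overline{D} \subset B_r(x_0)$ (since $\tilde\phi - \eta - u \geq m - \eta > 0$ on $\partial B_r(x_0)$) and $u = \tilde\phi - \eta$ on $\partial D$.

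Now let $w := (u - \tilde\phi + \eta)^{+}$, extended by zero to $B_r(x_0)$; then $w$ is a nonnegative element of $W_0^{1,\Phi_n(\cdot)}(D)$ with $\nabla w = \nabla(u - \tilde\phi)\, 1_{\{u > \tilde\phi - \eta\}}$, so it is admissible in both weak formulations. Writing $A_n(x,p) := a_n(x,|p|)\, p/|p|$ (with $A_n(x,0) := 0$), testing the weak equation for $u$ against $w$ gives
\[
\int_D A_n(x,\nabla u) \cdot \nabla w \, dx = 0,
\]
while integration by parts against the classical strict supersolution $\tilde\phi$ gives
\[
\int_D A_n(x,\nabla\tilde\phi) \cdot \nabla w \, dx \geq \delta \int_D w \, dx > 0,
\]
the boundary term vanishing because $w$ is supported in $\overline{D} \subset B_r(x_0)$. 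Subtracting yields
\[
\int_{\{u > \tilde\phi - \eta\}} \bigl[A_n(x,\nabla u) - A_n(x,\nabla\tilde\phi)\bigr] \cdot \nabla(u - \tilde\phi) \, dx < 0.
\]
However, $A_n(x,\cdot)$ is the $p$-gradient of $p \mapsto \Phi_n(x,|p|)/\varphi_n(x,1)$, which is strictly convex in $p$ thanks to $p_n^{-} > 1$ in $(\ref{assumption_phi_n})$; hence $p \mapsto A_n(x,p)$ is strictly monotone and the integrand above is pointwise nonnegative, a contradiction.

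The main obstacle is reconciling the divergence-form flux $A_n$ appearing in the weak formulation with the continuous viscosity operator $F_n$ across critical points of $\phi$. The assumption $p_n^{-} > 3$ from $(\ref{eq:assume_F_cont})$ resolves this simultaneously on two fronts: it makes $F_n$ continuous through $p = 0$, so that the $\nabla\phi(x_0) = 0$ case is disposed of trivially via $F_n(x_0,0,X) = 0$; and it forces $\nabla\tilde\phi \neq 0$ on $\overline{B_r(x_0)}$ whenever $F_n \geq \delta > 0$ is arranged there, so the classical supersolution inequality and the subsequent integration by parts are genuinely valid throughout $D$.
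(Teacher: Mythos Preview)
Your argument is correct and follows essentially the same route as the paper: argue by contradiction, perturb the touching test function so that the strict supersolution inequality holds on a small ball, test against the positive part $(u-\tilde\phi+\eta)^{+}$, and derive a contradiction from the monotonicity of $p\mapsto A_n(x,p)$. The paper differs only cosmetically---it assumes from the outset a \emph{strict} local maximum and shifts $\phi$ down by a constant rather than adding $\epsilon|x-x_0|^2$, and it invokes the quantitative inequality of Lemma~\ref{inequality_main_x} in place of your strict-convexity remark---but the skeleton is identical. One point the paper treats more carefully than you do is the justification that $(u-\tilde\phi+\eta)^{+}$ is an admissible test function in the weak formulation: it shows explicitly that $v\mapsto\int_\Omega a_n(x,|\nabla u|)\,|\nabla v|\,dx$ is continuous on $W^{1,\Phi_n(\cdot)}$, so that smooth compactly supported approximants suffice. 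Your assertion ``it is admissible in both weak formulations'' is correct but would benefit from that one line of justification.
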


\begin{proof}
For notational simplicity, we will drop the subscripts $n$ in $a_{n},$
$\varphi_{n},$ $\Phi_{n},$ $p_{n}^{-},$ $p_{n}^{+}$ in the proof. In view of
Theorem \ref{Thm_existence}, $u$ takes the boundary values continuously. So,
to show that $u$ is a viscosity subsolution, it suffices to check viscosity
inequality for interior points. Assume that $u-\phi$ has a strict local
maximum at $x_{0}\in\Omega$ for some $\phi\in C^{2}\left(  \bar{\Omega
}\right)  $, $u\left(  x_{0}\right)  =\phi\left(  x_{0}\right)  $. We intend
to show that%
\[
F=\left(  x,\nabla\phi\left(  x_{0}\right)  ,D^{2}\phi\left(  x_{0}\right)
\right)  =-\operatorname{div}\left(  \frac{a\left(  x_{0},\left\vert
\nabla\phi\left(  x_{0}\right)  \right\vert \right)  }{\left\vert \nabla
\phi\left(  x_{0}\right)  \right\vert }\nabla\phi\left(  x_{0}\right)
\right)  \leqslant0.
\]
If $\nabla\phi\left(  x_{0}\right)  =0,$ then the poof is automatic since
$F\left(  x,\nabla\phi\left(  x_{0}\right)  ,D^{2}\phi\left(  x_{0}\right)
\right)  =0$ in view of Remark \ref{rmk_operator_continuity}. If $\nabla
\phi\left(  x_{0}\right)  \neq0,$ we prove by contradition. Assume that%
\[
-\operatorname{div}\left(  \frac{a\left(  x_{0},\left\vert \nabla\phi\left(
x_{0}\right)  \right\vert \right)  }{\left\vert \nabla\phi\left(
x_{0}\right)  \right\vert }\nabla\phi\left(  x_{0}\right)  \right)  >0.
\]
then there is a small open ball $B_{r}\left(  x_{0}\right)  \subset\Omega$
with radius $r>0$ centering at $x_{0}$ such that $F$ has the continuity as
discussed in Remark \ref{rmk_operator_continuity} for $x$ in the closure of
$B_{r}\left(  x_{0}\right)  $ and that
\[
-\operatorname{div}\left(  \frac{a\left(  x,\left\vert \nabla\phi\left(
x\right)  \right\vert \right)  }{\left\vert \nabla\phi\left(  x\right)
\right\vert }\nabla\phi\left(  x\right)  \right)  >0\text{ for }x\in
B_{r}\left(  x_{0}\right)  .
\]
Let%
\[
\zeta\left(  x\right)  =\phi\left(  x\right)  -\frac{1}{2}\inf_{\left\vert
x-x_{0}\right\vert =r}\left(  \phi-u\right)  .
\]
Clearly $\zeta\left(  x_{0}\right)  <u\left(  x_{0}\right)  ,$ $\zeta-u>0$ on
$\partial B_{r}\left(  x_{0}\right)  $ and
\[
-\operatorname{div}\left(  \frac{a\left(  x,\left\vert \nabla\zeta\left(
x\right)  \right\vert \right)  }{\left\vert \nabla\zeta\left(  x\right)
\right\vert }\nabla\zeta\left(  x\right)  \right)  >0\text{ for }x\in
B_{r}\left(  x_{0}\right)  .
\]
Multiply this inequality by $\max\left\{  u-\zeta,0\right\}  $ and integrate,
then we get%
\begin{equation}
\int_{B_{r}\left(  x_{0}\right)  \cap\left\{  \zeta<u\right\}  }\frac{a\left(
x,\left\vert \nabla\zeta\right\vert \right)  }{\left\vert \nabla
\zeta\right\vert }\left\langle \nabla\zeta,\nabla\left(  u-\zeta\right)
\right\rangle dx>0. \label{thm_weaksol_viscositysol_inq1}%
\end{equation}
Since $u$ is a weak solution, for $\xi\in C_{0}^{\infty}\left(  \Omega\right)
,$%
\[
\int_{\Omega}\frac{a\left(  x,\left\vert \nabla u\right\vert \right)
}{\left\vert \nabla u\right\vert }\left\langle \nabla u,\nabla\xi\right\rangle
dx=0.
\]
At this point if we can use the continuous function $g_{0}=\max\left\{
u-\zeta,0\right\}  $, which is extended by zero outside $B_{r}\left(
x_{0}\right)  ,$ as a test function in place of $\xi$, then the proof would be
finished. To see this, we take for granted for the moment that this is valid
and replace $\xi$ with $\max\left\{  u-\zeta,0\right\}  $ in the above
equality, i.e.,
\begin{equation}
\int_{B_{r}\left(  x_{0}\right)  \cap\left\{  \zeta<u\right\}  }\frac{a\left(
x,\left\vert \nabla u\right\vert \right)  }{\left\vert \nabla u\right\vert
}\left\langle \nabla u,\nabla\left(  u-\zeta\right)  \right\rangle dx=0,
\label{thm_weaksol_viscositysol_inq2}%
\end{equation}
Then subtracting $\left(  \ref{thm_weaksol_viscositysol_inq1}\right)  $ from
$\left(  \ref{thm_weaksol_viscositysol_inq2}\right)  $ yields%
\begin{equation}
0>\int_{B_{r}\left(  x_{0}\right)  \cap\left\{  \zeta<u\right\}  }\left\langle
\frac{a\left(  x,\left\vert \nabla u\right\vert \right)  }{\left\vert \nabla
u\right\vert }\nabla u-\frac{a\left(  x,\left\vert \nabla\zeta\right\vert
\right)  }{\left\vert \nabla\zeta\right\vert }\nabla\zeta,\nabla\left(
u-\zeta\right)  \right\rangle dx. \label{thm_weaksol_viscositysol_inq3}%
\end{equation}
However the right hand side of $\left(  \ref{thm_weaksol_viscositysol_inq3}%
\right)  $ is nonnegative due to Lemma \ref{inequality_main_x}, which is a
contradiction. Therefore to complete the proof, it remains to justify using
$\max\left\{  u-\zeta,0\right\}  $ as a test function. Recalling the
assumption $\left(  \ref{assumption_phi_n}\right)  $ and the lower bound
$\left(  \ref{assumption_phi_1_lbd}\right)  $ on $\varphi\left(  x,1\right)
$, we have for $\xi\in C^{\infty}\left(  \Omega\right)  ,$%
\begin{align*}
\int_{\Omega}a\left(  x,\left\vert \nabla u\right\vert \right)  \frac
{\left\vert \nabla\xi\right\vert }{\lambda}dx  &  =\int_{\Omega}\frac
{\varphi\left(  x,\left\vert \nabla u\right\vert \right)  }{\varphi\left(
x,1\right)  }\frac{\left\vert \nabla\xi\right\vert }{\lambda}dx\\
&  \leqslant\frac{1}{c^{-}p^{-}}\int_{\Omega}\varphi\left(  x,\left\vert
\nabla u\right\vert \right)  \frac{\left\vert \nabla\xi\right\vert }{\lambda
}dx\\
&  \leqslant\frac{1}{c^{-}p^{-}}\int_{\Omega}\left[  \Phi^{\ast}\left(
x,\varphi\left(  x,\left\vert \nabla u\right\vert \right)  \right)
+\Phi\left(  x,\frac{\left\vert \nabla\xi\right\vert }{\lambda}\right)
\right]  dx,
\end{align*}
where $\lambda>\left\Vert \nabla\xi\right\Vert _{\Phi\left(  \cdot\right)  }$
so that then $\left\Vert \nabla\xi/\lambda\right\Vert _{\Phi\left(
\cdot\right)  }<1.$ By the definition of the norm $\left\Vert \cdot\right\Vert
_{\Phi\left(  \cdot\right)  },$ we have%
\[
\int_{\Omega}\Phi\left(  x,\frac{\left\vert \nabla\xi\right\vert }{\lambda
}\right)  dx\leqslant1.
\]
Hence%
\begin{align*}
\int_{\Omega}a\left(  x,\left\vert \nabla u\right\vert \right)  \frac
{\left\vert \nabla\xi\right\vert }{\lambda}dx  &  \leqslant\frac{1}{c^{-}%
p^{-}}\int_{\Omega}\left[  \Phi^{\ast}\left(  x,\varphi\left(  x,\left\vert
\nabla u\right\vert \right)  \right)  +1\right]  dx\\
&  \leqslant\frac{1}{c^{-}p^{-}}\int_{\Omega}\left[  \varphi\left(
x,\left\vert \nabla u\right\vert \right)  \left\vert \nabla u\right\vert
+1\right]  dx\\
&  \leqslant\frac{1}{c^{-}p^{-}}\int_{\Omega}\left[  p^{+}\Phi\left(
x,\left\vert \nabla u\right\vert \right)  +1\right]  dx.
\end{align*}
The RHS is finite by virtue of Lemma \ref{lm_modulo_finite}. It follows that%
\[
\int_{\Omega}a\left(  x,\left\vert \nabla u\right\vert \right)  \left\vert
\nabla\xi\right\vert dx\leqslant\lambda\frac{1}{c^{-}p^{-}}\int_{\Omega
}\left[  p^{+}\Phi\left(  x,\left\vert \nabla u\right\vert \right)  +1\right]
dx
\]
Let $\lambda\downarrow\left\Vert \nabla\xi\right\Vert _{\Phi\left(
\cdot\right)  },$ we get%
\[
\int_{\Omega}a\left(  x,\left\vert \nabla u\right\vert \right)  \left\vert
\nabla\xi\right\vert dx\leqslant\left\Vert \nabla\xi\right\Vert _{\Phi\left(
\cdot\right)  }\frac{1}{c^{-}p^{-}}\int_{\Omega}\left[  p^{+}\Phi\left(
x,\left\vert \nabla u\right\vert \right)  +1\right]  dx.
\]
This implies that%
\[
v\in W^{1,\Phi\left(  \cdot\right)  }\left(  \Omega\right)  \mapsto
\int_{\Omega}a\left(  x,\left\vert \nabla u\right\vert \right)  vdx
\]
is a continuous linear functional. Note that $u,$ $\zeta\in W^{1,\Phi\left(
\cdot\right)  }\left(  \Omega\right)  $ and it is readily seen that the test
function $g_{0}\in W^{1,\Phi\left(  \cdot\right)  }\left(  \Omega\right)  $ by
the chain rule of weak derivative \cite[Lemma 7.6]{gilbarg2001elliptic}. Thus
we can approximate $g_{0}$ by a sequence of smooth functions $\xi_{k}\in
C_{0}^{\infty}\left(  B_{r}\left(  x_{0}\right)  \right)  $ and employ the
continuity of the above functional to justify the equation $\left(
\ref{thm_weaksol_viscositysol_inq2}\right)  .$ This completes the proof that
$u$ is a viscosity subsolution. That $u$ is a viscosity supersolution can be
proved analogously.
\end{proof}

To show that the right hand side of the inequality $\left(
\ref{thm_weaksol_viscositysol_inq3}\right)  $ is positive, we can also use
Cauchy-Schwarz inequality to obtain
\begin{align*}
&  \left\langle \frac{a\left(  x,\left\vert \nabla u\right\vert \right)
}{\left\vert \nabla u\right\vert }\nabla u-\frac{a\left(  x,\left\vert
\nabla\zeta\right\vert \right)  }{\left\vert \nabla\zeta\right\vert }%
\nabla\zeta,\nabla\left(  u-\zeta\right)  \right\rangle \\
&  =\frac{a\left(  x,\left\vert \nabla u\right\vert \right)  }{\left\vert
\nabla u\right\vert }\left\vert \nabla u\right\vert ^{2}-\left\langle
\frac{a\left(  x,\left\vert \nabla u\right\vert \right)  }{\left\vert \nabla
u\right\vert }\nabla u,\nabla\zeta\right\rangle -\left\langle \frac{a\left(
x,\left\vert \nabla\zeta\right\vert \right)  }{\left\vert \nabla
\zeta\right\vert }\nabla\zeta,\nabla u\right\rangle +\frac{a\left(
x,\left\vert \nabla\zeta\right\vert \right)  }{\left\vert \nabla
\zeta\right\vert }\left\vert \nabla\zeta\right\vert ^{2}\\
&  \geqslant\frac{a\left(  x,\left\vert \nabla u\right\vert \right)
}{\left\vert \nabla u\right\vert }\left\vert \nabla u\right\vert ^{2}%
-\frac{a\left(  x,\left\vert \nabla u\right\vert \right)  }{\left\vert \nabla
u\right\vert }\left\vert \nabla u\right\vert \left\vert \nabla\zeta\right\vert
-\frac{a\left(  x,\left\vert \nabla\zeta\right\vert \right)  }{\left\vert
\nabla\zeta\right\vert }\left\vert \nabla\zeta\right\vert \left\vert \nabla
u\right\vert +\frac{a\left(  x,\left\vert \nabla\zeta\right\vert \right)
}{\left\vert \nabla\zeta\right\vert }\left\vert \nabla\zeta\right\vert ^{2}\\
&  =\left\langle \frac{a\left(  x,\left\vert \nabla u\right\vert \right)
}{\left\vert \nabla u\right\vert }\left\vert \nabla u\right\vert
-\frac{a\left(  x,\left\vert \nabla\zeta\right\vert \right)  }{\left\vert
\nabla\zeta\right\vert }\left\vert \nabla\zeta\right\vert ,\left\vert \nabla
u\right\vert -\left\vert \nabla\zeta\right\vert \right\rangle \\
&  =\left\langle a\left(  x,\left\vert \nabla u\right\vert \right)  -a\left(
x,\left\vert \nabla\zeta\right\vert \right)  ,\left\vert \nabla u\right\vert
-\left\vert \nabla\zeta\right\vert \right\rangle
\end{align*}
which is nonnegative in view of the monotonicity of $s\mapsto a\left(
x,s\right)  $.

Now we show that the sequence of minimizers of $E_{n}$ is bounded in
$W^{1,d+1}\left(  \Omega\right)  $, which eventually enables us to derive the
limiting equation as $n\rightarrow\infty.$

\begin{theorem}
\label{thm_bounded_W1d}Assume that $\Phi_{n}$ satisfies the normalization
condition $\left(  \ref{assumption_PHI_n_1}\right)  ,$ the local continuity
condition $\left(  \ref{PHI_condition_A11}\right)  ,$ $a_{n}$ satisfies
$\left(  \ref{eq:assume_F_cont}\right)  $ and $p_{n}^{-}\geqslant m$. Assume
also that the Harnack-type condition $\left(  \ref{assumption_c_n}\right)  $
is satisfied and $u_{n}$ is the unique weak solution for equation $\left(
\ref{quasilinear_PDE_n}\right)  $ obtained via Theorem \ref{Thm_existence}.
Then, for all $m>d$, $\left\{  u_{n}\right\}  $ is bounded in $W^{1,m}\left(
\Omega\right)  $. Moreover there exists a subsequence of $u_{n}$ converging
uniformly to some $u\in C\left(  \bar{\Omega}\right)  \cap W^{1,\infty}\left(
\Omega\right)  $ and there is a constant $c\left(  \Omega,g,\mu\right)  >0$
such that%
\[
\left\Vert \nabla u\right\Vert _{L^{\infty}}\leqslant c\left(  \Omega
,g,\mu\right)  \text{.}%
\]

\end{theorem}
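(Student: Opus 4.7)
The central observation is that, under the assumption $\|g\|_{Lip(\partial\Omega)} \leqslant 1$, the AMLE $g$ is itself an admissible competitor of small $E_n$-energy, which lets us bootstrap all the estimates of Lemma~\ref{lm_bounded_W1d1}.

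First I would control $E_n(u_n)$ uniformly. Since $g \in W^{1,\infty}(\Omega) \subset W^{1,\Phi_n(\cdot)}(\Omega)$ (recall $\|\nabla g\|_{L^\infty} \leqslant 1$ by Remark~\ref{Rmk_g}) and $u_n$ minimizes $E_n$ over $g + W_0^{1,\Phi_n(\cdot)}(\Omega)$ by Theorem~\ref{Thm_existence}, the monotonicity of $\Phi_n(x,\cdot)$ together with $\left(\ref{assumption_Phi_n}\right)$ gives
\[
E_n(u_n) \leqslant E_n(g) = \int_\Omega \frac{\Phi_n(x,|\nabla g|)}{\varphi_n(x,1)}\,dx \leqslant \int_\Omega \frac{\Phi_n(x,1)}{\varphi_n(x,1)}\,dx \leqslant \frac{|\Omega|}{p_n^-}.
\]
In particular $E_n(u_n) \leqslant |\Omega|$ uniformly in $n$, so Lemma~\ref{lm_bounded_W1d1}(ii) with $\sigma = |\Omega|$ yields a constant $c_1(\Omega,\mu)$ with $\|\nabla u_n\|_{L^m} \leqslant c_1(\Omega,\mu)$ for every $m\geqslant 1$ and every $n$ with $p_n^- \geqslant m$. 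Then Lemma~\ref{lm_bounded_W1d1}(iii) applied with $v = g$ gives $\|u_n\|_{W^{1,m}} \leqslant c_2(\Omega,m)(c_1(\Omega,\mu) + \|g\|_{W^{1,\infty}})$, which is the asserted boundedness in $W^{1,m}(\Omega)$.

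Next I would extract the uniform limit. Fix any $m>d$. By Morrey's compact embedding $W^{1,m}(\Omega)\hookrightarrow C^{0,1-d/m}(\bar\Omega)$, the sequence $\{u_n\}$ is precompact in $C(\bar\Omega)$, so a subsequence converges uniformly to some $u\in C(\bar\Omega)$. Using a diagonal argument over $m=d+1,d+2,\ldots$, I can refine this to a single subsequence (still denoted $u_n$) along which $\nabla u_n \rightharpoonup \nabla u$ weakly in $L^m(\Omega)$ for every $m>d$. By weak lower semicontinuity of the $L^m$-norm and the uniform bound above,
\[
\|\nabla u\|_{L^m} \leqslant \liminf_{n\to\infty} \|\nabla u_n\|_{L^m} \leqslant c_1(\Omega,\mu) \cdot \mathbf{1} + C\|g\|_{W^{1,\infty}}
\]
for every $m>d$, with a constant independent of $m$ (absorbing everything into $c(\Omega,g,\mu)$).

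Finally, to promote this to an $L^\infty$ bound, I use the standard observation that a uniform-in-$m$ $L^m$ bound on $|\nabla u|$ forces an $L^\infty$ bound: for any $\epsilon>0$, setting $F_\epsilon = \{x\in\Omega : |\nabla u(x)| > c(\Omega,g,\mu) + \epsilon\}$, one has
\[
(c(\Omega,g,\mu)+\epsilon)^m |F_\epsilon| \leqslant \int_{F_\epsilon} |\nabla u|^m\,dx \leqslant c(\Omega,g,\mu)^m,
\]
so $|F_\epsilon| \to 0$ as $m\to\infty$, forcing $|F_\epsilon|=0$ for each $\epsilon$, hence $\|\nabla u\|_{L^\infty}\leqslant c(\Omega,g,\mu)$ and $u\in W^{1,\infty}(\Omega)$. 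The main subtle point of the whole argument is the first display: it is precisely the hypothesis $\|g\|_{Lip(\partial\Omega)}\leqslant 1$ that makes $g$ a cheap test function, yielding an $n$-independent energy bound (in fact one that even decays) and unlocking every subsequent estimate. Without this hypothesis, $E_n(g)$ could blow up like $\|\nabla g\|_{L^\infty}^{p_n^+}$, which is the phenomenon the next section is designed to circumvent.
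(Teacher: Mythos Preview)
Your proof is correct and follows essentially the same route as the paper: bound $E_n(u_n)$ by testing against the AMLE $g$ (using $\|\nabla g\|_{L^\infty}\le 1$), feed this into Lemma~\ref{lm_bounded_W1d1}(ii)(iii) for the $W^{1,m}$ bound, extract a uniform limit via compact embedding, and pass the $m$-uniform $L^m$ gradient bound to $L^\infty$ by weak lower semicontinuity. The only cosmetic difference is that you spell out the diagonal argument and the set $F_\epsilon$, whereas the paper simply notes that a uniform-in-$m$ bound on $\|\nabla u\|_{L^m}$ forces $\nabla u\in L^\infty$.
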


\begin{proof}
Since $u_{n}$ minimizes $E_{n}$ over $g+W_{0}^{1,\Phi_{n}}\left(
\Omega\right)  ,$ we have
\[
E_{n}\left(  u_{n}\right)  \leqslant E_{n}\left(  v\right)  \text{ for all
}v\in g+W_{0}^{1,\Phi_{n}}\left(  \Omega\right)  \text{.}%
\]
In view of Remark \ref{Rmk_g}, $\left\Vert \nabla g\right\Vert _{L^{\infty
}\left(  \Omega\right)  }\leqslant\left\Vert g\right\Vert _{Lip\left(
\partial\Omega\right)  }\leqslant1$. Then%
\[
E_{n}\left(  u_{n}\right)  \leqslant E_{n}\left(  g\right)  \leqslant
\int_{\Omega}\frac{\Phi_{n}\left(  x,1\right)  }{\varphi_{n}\left(
x,1\right)  }dx\leqslant\int_{\Omega}\frac{1}{p_{n}^{-}}dx=\frac{\left\vert
\Omega\right\vert }{p_{n}^{-}}<\left\vert \Omega\right\vert .
\]
Hence from Lemma \ref{lm_bounded_W1d1} (ii)(iii)$,$ we see that $\left\{
u_{n}\right\}  $ is bounded in $W^{1,m}\left(  \Omega\right)  $. Now let
$m>d$. Since the embedding $W^{1,m}\left(  \Omega\right)  \subset C\left(
\bar{\Omega}\right)  $ is compact, we may assume up to a subsequence that
$u_{n}$ converges weakly in $W^{1,m}\left(  \Omega\right)  $ and uniformly to
some $u\in C\left(  \bar{\Omega}\right)  \cap W^{1,m}\left(  \Omega\right)  $.
In addition, $u_{n}$ assumes the boundary value $g$ continuously and $u=g$ on
$\partial\Omega$. Now invoking Lemma \ref{lm_bounded_W1d1} (ii) again, we have
a constant $c\left(  \Omega,g,\mu\right)  >0$ such that, for all $n$ for which
$p_{n}^{-}\geqslant m,$
\[
\left\Vert \nabla u_{n}\right\Vert _{L^{m}}\leqslant c\left(  \Omega
,g,\mu\right)  \text{.}%
\]
So, for all $m>d$,%
\[
\left\Vert \nabla u\right\Vert _{L^{m}}\leqslant\liminf_{n\rightarrow\infty
}\left\Vert \nabla u_{n}\right\Vert _{L^{m}}\leqslant c\left(  \Omega
,g,\mu\right)  \text{,}%
\]
This implies $\nabla u\in L^{\infty}$ and%
\[
\left\Vert \nabla u\right\Vert _{L^{\infty}}\leqslant c\left(  \Omega
,g,\mu\right)  \text{.}%
\]
Since $u\in C\left(  \bar{\Omega}\right)  $, hence $u\in C\left(  \bar{\Omega
}\right)  \cap W^{1,\infty}\left(  \Omega\right)  $.
\end{proof}

In order to study the limit of the quasilinear problems $\left(
\ref{quasilinear_PDE_n}\right)  $, some additional assumptions are needed. We
assume that there is $\Lambda\left(  x,s\right)  :\bar{\Omega}\times\left(
0,\infty\right)  \mapsto\mathbb{R}^{d}$ such that, for each $c>0,$%
\begin{equation}
\lim_{n\rightarrow\infty}\frac{\partial_{x}a_{n}\left(  x,s\right)  }%
{\partial_{s}a_{n}\left(  x,s\right)  }=\Lambda\left(  x,s\right)  \text{
uniformly for all }x\in\bar{\Omega}\text{ and }s\geqslant c.
\label{assumption_limit_1}%
\end{equation}
Together with the asssumptions on $a_{n}\left(  x,s\right)  $, $\left(
\ref{assumption_limit_1}\right)  $ implies that
\begin{equation}
\Lambda\left(  x,s\right)  \in C\left(  \bar{\Omega}\times\left(
0,\infty\right)  ,\mathbb{R}^{d}\right)  .
\label{assumption_Lambda_continuous}%
\end{equation}
Note that the continuity at $s=0$ is not mandatory, $\Lambda\left(
x,s\right)  $ could even be unbounded there. But we require some control near
$s=0$, i.e.
\begin{equation}
\lim_{\substack{n\rightarrow\infty\\s\rightarrow0+}}s^{2}\frac{\partial
_{x}a_{n}\left(  x,s\right)  }{\partial_{s}a_{n}\left(  x,s\right)  }=0\text{
uniformly for all }x\in\bar{\Omega}. \label{assumption_limit_0}%
\end{equation}
In combination with $\left(  \ref{assumption_limit_1}\right)  ,$ this implies
that%
\begin{equation}
\lim_{s\rightarrow0+}s^{2}\Lambda\left(  x,s\right)  =0\text{ uniformly for
all }x\in\bar{\Omega}. \label{Nassumption_limit_2}%
\end{equation}
We also assume, for each $n$,%
\begin{equation}
\lim_{s\rightarrow0+}s^{2}\frac{\partial_{x}a_{n}\left(  x,s\right)
}{\partial_{s}a_{n}\left(  x,s\right)  }\text{ exists uniformly for all }%
x\in\bar{\Omega}. \label{assumption_limit_3}%
\end{equation}
Note that in general $\left(  \ref{assumption_limit_3}\right)  $ cannot be
derived from $\left(  \ref{assumption_limit_0}\right)  $.

Before proceeding to the main result of this section, we need to transform the
equation $\left(  \ref{quasilinear_PDE_n}\right)  $\ into a more manageable
form. Expand the operator in the equation $\left(  \ref{quasilinear_PDE_n}%
\right)  $,%
\begin{align*}
&  \operatorname{div}\left(  \frac{a_{n}\left(  x,\left\vert \nabla
u\right\vert \right)  }{\left\vert \nabla u\right\vert }\nabla u\right)
=\left\langle \frac{\partial_{x}a_{n}\left(  x,\left\vert \nabla u\right\vert
\right)  }{\left\vert \nabla u\right\vert },\nabla u\right\rangle +\frac
{a_{n}\left(  x,\left\vert \nabla u\right\vert \right)  }{\left\vert \nabla
u\right\vert }\Delta u\\
&  +\frac{a_{n}\left(  x,\left\vert \nabla u\right\vert \right)  }{\left\vert
\nabla u\right\vert }\left(  \frac{\left\vert \nabla u\right\vert \partial
_{s}a_{n}\left(  x,\left\vert \nabla u\right\vert \right)  }{a_{n}\left(
x,\left\vert \nabla u\right\vert \right)  }-1\right)  \frac{1}{\left\vert
\nabla u\right\vert ^{2}}\Delta_{\infty}u.
\end{align*}
We first take a look at the coefficient of $\Delta_{\infty}u$,
\[
C_{\Delta_{\infty}u}=\frac{a_{n}\left(  x,\left\vert \nabla u\right\vert
\right)  }{\left\vert \nabla u\right\vert }\left(  \frac{\left\vert \nabla
u\right\vert \partial_{s}a_{n}\left(  x,\left\vert \nabla u\right\vert
\right)  }{a_{n}\left(  x,\left\vert \nabla u\right\vert \right)  }-1\right)
\frac{1}{\left\vert \nabla u\right\vert ^{2}}.
\]
In view of $\left(  \ref{assumption_phi_x}\right)  $ and Lemma
\ref{lemma_delta2_condition}, as long as $p_{n}^{-}\geqslant2$, $a_{n}\left(
x,\left\vert \nabla u\right\vert \right)  /\left\vert \nabla u\right\vert $ is
well-defined for all $\left\vert \nabla u\right\vert \geqslant0$, and strictly
positive for $\left\vert \nabla u\right\vert >0$. Also the middle term of
$C_{\Delta_{\infty}u}$ in the bracket, ranging between $p_{n}^{-}-2$ and
$p_{n}^{+}-2$, is strictly positive if $p_{n}^{-}>2$. Since we are only
interested in aymptotic behaviour, we may assume that $p_{n}^{-}$ is large so
that all terms appearing in $C_{\Delta_{\infty}u}$ are well-defined and
strictly positive whenever $\left\vert \nabla u\right\vert \neq0$. Hence it is
safe to divide both sides of the equation $\left(  \ref{quasilinear_PDE_n}%
\right)  $ by $C_{\Delta_{\infty}u}$ whenever $\left\vert \nabla u\right\vert
\neq0$, then we get%
\begin{align*}
&  -\left\vert \nabla u\right\vert ^{2}\left(  \frac{\left\vert \nabla
u\right\vert \partial_{s}a_{n}\left(  x,\left\vert \nabla u\right\vert
\right)  }{a_{n}\left(  x,\left\vert \nabla u\right\vert \right)  }-1\right)
^{-1}\left\langle \frac{\partial_{x}a_{n}\left(  x,\left\vert \nabla
u\right\vert \right)  }{a_{n}\left(  x,\left\vert \nabla u\right\vert \right)
},\nabla u\right\rangle \\
&  -\left\vert \nabla u\right\vert ^{2}\left(  \frac{\left\vert \nabla
u\right\vert \partial_{s}a_{n}\left(  x,\left\vert \nabla u\right\vert
\right)  }{a_{n}\left(  x,\left\vert \nabla u\right\vert \right)  }-1\right)
^{-1}\Delta u-\Delta_{\infty}u=0,
\end{align*}
or equivalently%
\begin{equation}
-\left\vert \nabla u\right\vert ^{2}\left\langle \frac{\frac{\partial_{x}%
a_{n}\left(  x,\left\vert \nabla u\right\vert \right)  }{\left\vert \nabla
u\right\vert \partial_{s}a_{n}\left(  x,\left\vert \nabla u\right\vert
\right)  }}{1-\frac{a_{n}\left(  x,\left\vert \nabla u\right\vert \right)
}{\left\vert \nabla u\right\vert \partial_{s}a_{n}\left(  x,\left\vert \nabla
u\right\vert \right)  }},\nabla u\right\rangle -\left\vert \nabla u\right\vert
^{2}\left(  \frac{\left\vert \nabla u\right\vert \partial_{s}a_{n}\left(
x,\left\vert \nabla u\right\vert \right)  }{a_{n}\left(  x,\left\vert \nabla
u\right\vert \right)  }-1\right)  ^{-1}\Delta u-\Delta_{\infty}u=0.
\label{general_quasilinear_transformed}%
\end{equation}
Therefore a (viscosity) solution of $\left(  \ref{quasilinear_PDE_n}\right)  $
is also a (viscosity) solution of the transformed equation $\left(
\ref{general_quasilinear_transformed}\right)  $.

The next the main result of this section.

\begin{theorem}
\label{thm_convergence_main}Assume that $g$ satisfies $\left(
\ref{assumption_g_lip}\right)  ,$ $\Phi_{n}$ satisfies the normalization
condition $\left(  \ref{assumption_PHI_n_1}\right)  ,$ the local continuity
condition $\left(  \ref{PHI_condition_A11}\right)  ,$ the associated
Harnack-type condition $\left(  \ref{assumption_c_n}\right)  ,$ $a_{n}$
satisfies $\left(  \ref{eq:assume_F_cont}\right)  $ and $p_{n}^{-}\geqslant
m$. Assume also that $\left(  \ref{assumption_limit_1}\right)  \left(
\ref{assumption_limit_0}\right)  $ and $\left(  \ref{assumption_limit_3}%
\right)  $ are satisfied. Let $u_{n}$ be the unique continuous weak solution
for equation $\left(  \ref{quasilinear_PDE_n}\right)  $. Then there exists a
subseqence of $u_{n}$ such that $u_{n}$ converges uniformly to $u$ which is a
viscosity solution of the limiting equation,%
\[
\left\{
\begin{array}
[c]{ll}%
-\Delta_{\infty}u-\left\vert \nabla u\right\vert \left\langle \Lambda\left(
x,\left\vert \nabla u\right\vert \right)  ,\nabla u\right\rangle =0, &
x\in\Omega;\\
u=g, & x\in\partial\Omega.
\end{array}
\right.
\]
In particular $u\in C\left(  \bar{\Omega}\right)  \cap W^{1,\infty}\left(
\Omega\right)  $ and there is a constant $c\left(  \Omega,g,\mu\right)  >0$
such that%
\[
\left\Vert \nabla u\right\Vert _{L^{\infty}}\leqslant c\left(  \Omega
,g,\mu\right)  \text{.}%
\]

\end{theorem}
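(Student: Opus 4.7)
My plan is to combine the uniform bounds already established in Section~\ref{sec_lim_g1} with a standard viscosity-stability argument applied to the \emph{transformed} equation \eqref{general_quasilinear_transformed}, passing to the limit in its rescaled coefficients.

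\emph{Step 1 (compactness and limit regularity).} By Theorem~\ref{thm_weaksol_viscositysol} each continuous weak solution $u_n$ of \eqref{quasilinear_PDE_n} is a viscosity solution, and whenever its gradient is nonzero at a touching point this is equivalent to being a viscosity solution of \eqref{general_quasilinear_transformed}. The assumption $\|g\|_{Lip(\partial\Omega)}\le 1$ together with Remark~\ref{Rmk_g} gives $\|\nabla g\|_{L^\infty}\le 1$, so that
\[
E_n(u_n)\le E_n(g)\le \int_\Omega \frac{\Phi_n(x,1)}{\varphi_n(x,1)}\,dx\le \frac{|\Omega|}{p_n^-}\longrightarrow 0.
\]
Thus Theorem~\ref{thm_bounded_W1d} applies and supplies, along a subsequence still denoted $u_n$, a uniform limit $u\in C(\bar\Omega)\cap W^{1,\infty}(\Omega)$ with $\|\nabla u\|_{L^\infty}\le c(\Omega,g,\mu)$. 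The Dirichlet datum $u=g$ on $\partial\Omega$ survives the uniform convergence.

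\emph{Step 2 (stability at touching points with $\nabla\phi(x_0)\ne 0$).} Let $\phi\in C^2(\bar\Omega)$ be such that $u-\phi$ has a strict local maximum at $x_0\in\Omega$. By uniform convergence there exist $x_n\to x_0$ at which $u_n-\phi$ has a local maximum. Assume $\nabla\phi(x_0)\ne 0$, so $|\nabla\phi(x_n)|$ stays bounded away from zero. Writing the subsolution inequality of $u_n$ via \eqref{general_quasilinear_transformed} at $x_n$, I examine the scaling of the coefficients. By \eqref{assumption_phi_n} and \eqref{assumption_c_n} the ratio $R_n:=|\nabla\phi(x_n)|\partial_s a_n/a_n$ lies in $[p_n^--1,p_n^+-1]\to\infty$. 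Consequently
\[
(R_n-1)^{-1}\longrightarrow 0,\qquad 1-a_n/(|\nabla\phi(x_n)|\partial_s a_n)=1-R_n^{-1}\longrightarrow 1,
\]
so the coefficient of $\Delta\phi(x_n)$ vanishes in the limit, and by \eqref{assumption_limit_1} one has $\partial_x a_n/\partial_s a_n\to\Lambda(x_0,|\nabla\phi(x_0)|)$ uniformly on the compact trace $\{(x_n,|\nabla\phi(x_n)|)\}$. Passing to the limit yields
\[
-\Delta_\infty\phi(x_0)-|\nabla\phi(x_0)|\bigl\langle \Lambda(x_0,|\nabla\phi(x_0)|),\nabla\phi(x_0)\bigr\rangle\le 0,
\]
the desired subsolution inequality for the limiting equation. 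The supersolution inequality is proved by reversing signs.

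\emph{Step 3 (degenerate case $\nabla\phi(x_0)=0$).} In this case $\Delta_\infty\phi(x_0)=0$ automatically, and the estimate $\bigl||\nabla\phi|\langle\Lambda(\cdot,|\nabla\phi|),\nabla\phi\rangle\bigr|\le |\nabla\phi|^2|\Lambda(\cdot,|\nabla\phi|)|\to 0$ from \eqref{Nassumption_limit_2} shows that the limiting operator extends continuously by $0$ at $\nabla\phi=0$; hence the viscosity inequality holds trivially. The principal obstacle is the Step~2 limit passage: one must simultaneously (i) kill the $\Delta\phi$ term, which relies on $p_n^-\to\infty$ from \eqref{assumption_c_n}; (ii) recover the anisotropic first-order term $|\nabla u|\langle\Lambda,\nabla u\rangle$, which requires the uniform convergence in \eqref{assumption_limit_1}; and (iii) control the degenerate direction $|\nabla\phi|\to 0$, which is exactly \eqref{Nassumption_limit_2}. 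Assumption \eqref{assumption_limit_3} is what makes the viscosity formulation well-posed for each $u_n$, justifying the use of the transformed equation \eqref{general_quasilinear_transformed} in Step~2. The $W^{1,\infty}$ bound on $u$ is inherited verbatim from Step~1.
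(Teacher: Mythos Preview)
Your proof is correct and follows essentially the same approach as the paper: invoke Theorem~\ref{thm_bounded_W1d} for compactness and the $W^{1,\infty}$ bound, then pass to the limit in the viscosity inequalities for the transformed equation \eqref{general_quasilinear_transformed}, handling the degenerate case $\nabla\phi(x_0)=0$ via the continuity of the limiting operator (Remark~\ref{rmk:inf_operator_continuity}). The only cosmetic difference is that the paper writes out the supersolution case while you write out the subsolution case; the coefficient analysis and use of \eqref{assumption_limit_1}, \eqref{assumption_limit_0} are identical.
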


\begin{remark}
In fact, the whole sequence $u_{n}$ converges uniformly to $u$. But this
requires the unqiueness of the limiting equation, which will be proved in
Theorem \ref{thm_limit_equ_unique_main}.
\end{remark}

\begin{remark}
In view of $\left(  \ref{assumption_limit_0}\right)  \left(
\ref{assumption_limit_1}\right)  $, the value and continuity of $\Lambda$ at
$s=0$ is not important. We could, for simplicity, stipulate $\Lambda\left(
x,0\right)  =0$ for all $x\in\bar{\Omega}.$
\end{remark}

\begin{remark}
\label{rmk:inf_operator_continuity}Note that $\left(
\ref{Nassumption_limit_2}\right)  ,$ as an implication of $\left(
\ref{assumption_limit_0}\right)  \left(  \ref{assumption_limit_1}\right)  ,$
indicates that the operator in the limiting equation
\[
\left(  x,p,X\right)  \mapsto F\left(  x,p,X\right)  =-\text{tr}\left(
p\otimes pX\right)  -\left\vert p\right\vert \left\langle \Lambda\left(
x,\left\vert p\right\vert \right)  ,p\right\rangle
\]
is defined continuously everywhere. In particular $F\left(  x,0,X\right)  =0.$
\end{remark}

\begin{proof}
By virtue of Theorem \ref{thm_bounded_W1d}, there is a subsequence of $u_{n}$
converging uniformly to some $u\in C\left(  \bar{\Omega}\right)  \cap
W^{1,\infty}\left(  \Omega\right)  .$ It sufficies to show that $u$ is a
viscosity solution of the limit equation. We only prove that $u$ is a
viscosity supersolution, the proof for subsolution is analagous. Suppose that
$u-\phi$ has a strict local minimum at $x_{0}\in\Omega$ for some $\phi\in
C^{2}\left(  \bar{\Omega}\right)  $, $u\left(  x_{0}\right)  =\phi\left(
x_{0}\right)  $. There is no need to consider $x_{0}\in\partial\Omega$, since
$u$ is continuous up to the boundary and equals $g$ there. The proof is
divided into two cases, if $\nabla\phi\left(  x_{0}\right)  =0,$ the
conclusion is immediate in view of Remark \ref{rmk:inf_operator_continuity},
otherwise assume that $\nabla\phi\left(  x_{0}\right)  \neq0$. Since $u_{n}$
converges to $u$ uniformly, there exists $x_{n}\rightarrow x_{0}$ such that
$u_{n}-\phi$ attains a local minimum at $x_{n}$. We may also assume w.l.g.
that $\nabla\phi\left(  x_{n}\right)  \neq0$. By Theorem
\ref{thm_weaksol_viscositysol} $u_{n}$ is a viscosity solution of $\left(
\ref{quasilinear_PDE_n}\right)  $, so%
\begin{align*}
&  -\operatorname{div}\left(  \frac{a_{n}\left(  x_{n},\left\vert \nabla
\phi\left(  x_{n}\right)  \right\vert \right)  }{\left\vert \nabla\phi\left(
x_{n}\right)  \right\vert }\nabla\phi\left(  x_{n}\right)  \right) \\
&  =-\left\langle \frac{\partial_{x}a_{n}\left(  x_{n},\left\vert \nabla
\phi\left(  x_{n}\right)  \right\vert \right)  }{\left\vert \nabla\phi\left(
x_{n}\right)  \right\vert },\nabla\phi\left(  x_{n}\right)  \right\rangle
-\frac{a_{n}\left(  x_{n},\left\vert \nabla\phi\left(  x_{n}\right)
\right\vert \right)  }{\left\vert \nabla\phi\left(  x_{n}\right)  \right\vert
}\Delta\phi\left(  x_{n}\right)  -\\
&  \frac{a_{n}\left(  x_{n},\left\vert \nabla\phi\left(  x_{n}\right)
\right\vert \right)  }{\left\vert \nabla\phi\left(  x_{n}\right)  \right\vert
}\left(  \frac{\left\vert \nabla\phi\left(  x_{n}\right)  \right\vert
\partial_{s}a_{n}\left(  x_{n},\left\vert \nabla\phi\left(  x_{n}\right)
\right\vert \right)  }{a_{n}\left(  x_{n},\left\vert \nabla\phi\left(
x_{n}\right)  \right\vert \right)  }-1\right)  \frac{1}{\left\vert \nabla
\phi\left(  x_{n}\right)  \right\vert ^{2}}\Delta_{\infty}\phi\left(
x_{n}\right)  \geqslant0.
\end{align*}
Based on the foregoing discussion, $u_{n}$ is also a viscosity solution of the
transformed equation $\left(  \ref{general_quasilinear_transformed}\right)  $,
i.e.%
\begin{align*}
&  -\left\vert \nabla\phi\left(  x_{n}\right)  \right\vert ^{2}\left\langle
\frac{\frac{\partial_{x}a_{n}\left(  x_{n},\left\vert \nabla\phi\left(
x_{n}\right)  \right\vert \right)  }{\left\vert \nabla\phi\left(
x_{n}\right)  \right\vert \partial_{s}a_{n}\left(  x_{n},\left\vert \nabla
\phi\left(  x_{n}\right)  \right\vert \right)  }}{1-\frac{a_{n}\left(
x_{n},\left\vert \nabla\phi\left(  x_{n}\right)  \right\vert \right)
}{\left\vert \nabla\phi\left(  x_{n}\right)  \right\vert \partial_{s}%
a_{n}\left(  x_{n},\left\vert \nabla\phi\left(  x_{n}\right)  \right\vert
\right)  }},\nabla\phi\left(  x_{n}\right)  \right\rangle -\\
&  \left\vert \nabla\phi\left(  x_{n}\right)  \right\vert ^{2}\left(
\frac{\left\vert \nabla\phi\left(  x_{n}\right)  \right\vert \partial_{s}%
a_{n}\left(  x_{n},\left\vert \nabla\phi\left(  x_{n}\right)  \right\vert
\right)  }{a_{n}\left(  x_{n},\left\vert \nabla\phi\left(  x_{n}\right)
\right\vert \right)  }-1\right)  ^{-1}\Delta\phi\left(  x_{n}\right)
-\Delta_{\infty}\phi\left(  x_{n}\right)  \geqslant0.
\end{align*}
In view of $\left(  \ref{assumption_phi_n}\right)  $, for $n$ large,%
\[
0\leqslant\frac{a_{n}\left(  x_{n},\left\vert \nabla\phi\left(  x_{n}\right)
\right\vert \right)  }{\left\vert \nabla\phi\left(  x_{n}\right)  \right\vert
\partial_{s}a_{n}\left(  x_{n},\left\vert \nabla\phi\left(  x_{n}\right)
\right\vert \right)  }=\frac{\varphi_{n}\left(  x_{n},\left\vert \nabla
\phi\left(  x_{n}\right)  \right\vert \right)  }{\left\vert \nabla\phi\left(
x_{n}\right)  \right\vert \partial_{s}\varphi_{n}\left(  x_{n},\left\vert
\nabla\phi\left(  x_{n}\right)  \right\vert \right)  }\leqslant\frac{1}%
{p_{n}^{-}}\rightarrow0.
\]
Therefore upon sending $n$ to infinity we obtain from $\left(
\ref{assumption_limit_1}\right)  \left(  \ref{assumption_limit_0}\right)  $
that%
\[
-\left\langle \left\vert \nabla\phi\left(  x_{0}\right)  \right\vert
\Lambda\left(  x_{0},\left\vert \nabla\phi\left(  x_{0}\right)  \right\vert
\right)  ,\nabla\phi\left(  x_{0}\right)  \right\rangle -\Delta_{\infty}%
\phi\left(  x_{0}\right)  \geqslant0,
\]
proving that $u$ is a viscosity supersolution.
\end{proof}

Due to the uniqueness result by Jensen \cite{jensen1993uniqueness}, if
$\Lambda\left(  x,s\right)  \equiv0$, then the whole sequence $u_{n}$
converges uniformly to the unique solutions of the Dirichlet problem for
infinity Laplacian with boundary data $g$.

\section{\label{sec_lim_g2}Limit of the Solutions without $\left\Vert
g\right\Vert _{Lip\left(  \partial\Omega\right)  }\leqslant1$}

We revisit in this section the Lipschitz assumption on the boundary value $g$.
Recall that in showing the existence of the limit of the weak solutions
$u_{n}$, the boundedness of the energies $E_{n}\left(  u_{n}\right)  $, which
sufficiently ensures that $u_{n}$ is bounded in $W^{1,d+1}\left(
\Omega\right)  $, plays a major role. This is a result of the assumption that
$\left\Vert g\right\Vert _{Lip\left(  \partial\Omega\right)  }\leqslant1$, as
we see in the proof of Theorem \ref{thm_bounded_W1d}. We will see shortly
that, under appropriate conditions, $E_{n}\left(  u_{n}\right)  $ is unbounded
whenever $\left\Vert g\right\Vert _{Lip\left(  \partial\Omega\right)  }>1$.
This clearly echoes the $\Gamma$-convergence result of Theorem
\ref{Thm_gamma_convergence}$.$

In general, for smooth domain $\Omega$ and $v\in W^{1,\infty}\left(
\Omega\right)  $, we only have
\[
\left\Vert \nabla v\right\Vert _{L^{\infty}\left(  \Omega\right)  }%
\leqslant\left\Vert v\right\Vert _{Lip\left(  \Omega\right)  }.
\]
In the follow lemma, which is a generalization of \cite[Theorem 3.1]%
{manfredi2009p}, we will assume that $\Omega$ is convex so that%
\[
\left\Vert \nabla v\right\Vert _{L^{\infty}\left(  \Omega\right)  }=\left\Vert
v\right\Vert _{Lip\left(  \Omega\right)  }.
\]

\begin{lemma}
\label{lm_energies_unbounded}Assume $\Omega$ is convex and $\left(
\ref{assumption_c_n}\right)  $ holds. Let $u_{n}$ be the minimizer of $E_{n}$
in $g+W_{0}^{1,\Phi_{n}}\left(  \Omega\right)  $. If $\left\Vert g\right\Vert
_{Lip\left(  \partial\Omega\right)  }>1$, then $E_{n}\left(  u_{n}\right)  $
is unbounded.
\end{lemma}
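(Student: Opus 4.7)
The plan is to argue by contradiction: assume $E_n(u_n)$ stays bounded, extract a uniform limit $u$, show it has gradient bounded by $1$ in $L^\infty$, and then exploit convexity of $\Omega$ to deduce that the Lipschitz constant of $g$ on $\partial\Omega$ is at most $1$.

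First I would suppose there is $\sigma>0$ and a subsequence (still indexed by $n$) with $E_n(u_n)\leqslant\sigma$. Applying Lemma \ref{lm_bounded_W1d1}(ii) gives $\|\nabla u_n\|_{L^m}\leqslant c(\Omega,\sigma,\mu)$ for every fixed $m\geqslant 1$ once $n$ is large enough that $p_n^-\geqslant m$. Combined with Lemma \ref{lm_bounded_W1d1}(iii) applied with $v=g$ (the AMLE extension, so $v\in W^{1,\infty}(\Omega)$), the sequence $\{u_n\}$ is bounded in $W^{1,m}(\Omega)$ for every $m\geqslant 1$. Taking $m>d$ and using the compact embedding $W^{1,m}(\Omega)\subset C(\bar\Omega)$, I can extract a subsequence converging uniformly, and also weakly in $W^{1,m}$, to some $u\in C(\bar\Omega)\cap W^{1,m}(\Omega)$ with $u|_{\partial\Omega}=g$.

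Next I would show $u\in W^{1,\infty}(\Omega)$ with $\|\nabla u\|_{L^\infty}\leqslant 1$. This is exactly the asymptotic lower bound argument already carried out in step 2 of the proof of Theorem \ref{Thm_gamma_convergence}: for any $\epsilon>0$, applying H\"older's inequality on $\Omega_\epsilon=\{|\nabla u|>1+\epsilon\}$, using weak lower semicontinuity of $\|\nabla\cdot\|_{L^m}$, then the bound $\|\nabla u_n\|_{L^{p_n^-}}^{p_n^-}\leqslant|\Omega|+E_n(u_n)\mu p_n^-$ from Lemma \ref{lm_bounded_W1d1}(i), and letting first $p_n^-\to\infty$ and then $m\to\infty$, forces $|\Omega_\epsilon|=0$. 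Hence $|\nabla u|\leqslant 1$ a.e., giving $\|\nabla u\|_{L^\infty}\leqslant 1$.

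Finally, because $\Omega$ is convex, the identity $\|\nabla u\|_{L^\infty(\Omega)}=\|u\|_{Lip(\Omega)}$ holds, so $u$ is $1$-Lipschitz on $\Omega$ and in particular on $\partial\Omega$:
\[
|g(x)-g(y)|=|u(x)-u(y)|\leqslant|x-y|\quad\text{for all }x,y\in\partial\Omega.
\]
This gives $\|g\|_{Lip(\partial\Omega)}\leqslant 1$, contradicting the standing hypothesis. Therefore $E_n(u_n)$ cannot remain bounded. The main conceptual point, and the only place convexity enters, is the upgrade from a gradient $L^\infty$ bound inside $\Omega$ to a Lipschitz estimate that is usable on the boundary; without convexity one would only get the weaker inequality and the contradiction would fail.
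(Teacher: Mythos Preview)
Your proof is correct and follows essentially the same contradiction strategy as the paper: assume bounded energies, extract a weak limit in $W^{1,m}$ for $m>d$, show the limit has $\|\nabla u\|_{L^\infty}\leqslant 1$, and use convexity of $\Omega$ to contradict $\|g\|_{Lip(\partial\Omega)}>1$. The only cosmetic difference is that the paper establishes $\|\nabla u_\infty\|_{L^\infty}\leqslant 1$ by computing directly $\lim_{n\to\infty}(|\Omega|+\mu p_n^-E_n(u_n))^{1/p_n^-}=1$, yielding $\|\nabla u_\infty\|_{L^m}\leqslant|\Omega|^{1/m}$ and then sending $m\to\infty$, whereas you invoke the $\Omega_\epsilon$ argument from Theorem~\ref{Thm_gamma_convergence}; these are equivalent routes to the same bound.
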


\begin{proof}
Suppose to the contrary that $E_{n}\left(  u_{n}\right)  $ is bounded by some
constant $M>0.$ Fix $m>d$. Let $p_{n}^{-}\geqslant m$. Recalling Remark
\ref{Rmk_g}, $g\in W^{1,\infty}\left(  \Omega\right)  $. As in the proof of
Lemma \ref{lm_bounded_W1d1} we have%
\begin{equation}
\left\Vert \nabla u_{n}\right\Vert _{L^{m}}\leqslant\left\vert \Omega
\right\vert ^{\frac{1}{m}-\frac{1}{p_{n}^{-}}}\left\Vert \nabla u_{n}%
\right\Vert _{L^{p_{n}^{-}}}\leqslant\left\vert \Omega\right\vert ^{\frac
{1}{m}-\frac{1}{p_{n}^{-}}}\left(  \left\vert \Omega\right\vert +\mu p_{n}%
^{-}E_{n}\left(  u_{n}\right)  \right)  ^{\frac{1}{p_{n}^{-}}}\text{,}
\label{lm_energies_unbounded_Inq1}%
\end{equation}
and%
\begin{equation}
\left\Vert u_{n}\right\Vert _{W^{1,m}}\leqslant c\left(  \Omega,m\right)
\left(  \left\Vert \nabla u_{n}\right\Vert _{L^{m}}+\left\Vert g\right\Vert
_{W^{1,\infty}}\right)  \text{.} \label{lm_energies_unbounded_Inq2}%
\end{equation}
Note that%
\[
\left\vert \Omega\right\vert ^{\frac{1}{p_{n}^{-}}}\leqslant\left(  \left\vert
\Omega\right\vert +\mu p_{n}^{-}E_{n}\left(  u_{n}\right)  \right)  ^{\frac
{1}{p_{n}^{-}}}\leqslant\left(  \left\vert \Omega\right\vert +\mu Mp_{n}%
^{-}\right)  ^{\frac{1}{p_{n}^{-}}}.
\]
It follows that%
\[
\lim_{n\rightarrow\infty}\left(  \left\vert \Omega\right\vert +\mu p_{n}%
^{-}E_{n}\left(  u_{n}\right)  \right)  ^{\frac{1}{p_{n}^{-}}}=1.
\]
By $\left(  \ref{lm_energies_unbounded_Inq1}\right)  $ and $\left(
\ref{lm_energies_unbounded_Inq2}\right)  ,$ $\left\Vert \nabla u_{n}%
\right\Vert _{L^{m}}$, hence $\left\Vert u_{n}\right\Vert _{W^{1,m}}$ is
bounded. Extracting a subsequence when necessary, we assume that $u_{n}$
converges weakly in $W^{1,m}$ to some $u_{\infty}\in W^{1,m}.$ Then taking
limit $n\rightarrow\infty$ in $\left(  \ref{lm_energies_unbounded_Inq1}%
\right)  $ yields%
\[
\left\Vert \nabla u_{\infty}\right\Vert _{L^{m}}\leqslant\liminf
_{n\rightarrow\infty}\left\Vert \nabla u_{n}\right\Vert _{L^{m}}%
\leqslant\left\vert \Omega\right\vert ^{\frac{1}{m}}\text{ for all }m>d,
\]
from which we obtain, by taking limit $m\rightarrow\infty,$ that $\nabla
u_{\infty}\in L^{\infty}$ and
\begin{equation}
\left\Vert \nabla u_{\infty}\right\Vert _{L^{\infty}}\leqslant1.
\label{lm_energies_unbounded_Inq3}%
\end{equation}
Since $p_{n}^{-}>d$, $u_{n}$ assumes the boundary data $g$ continuously, hence
$u_{\infty}=g$ on $\partial\Omega$. Since $\Omega$ is convex$,$ the Lipschitz
constant of any Lipschitz extension of $g$ into $\Omega$ is no less than
$\left\Vert g\right\Vert _{Lip\left(  \partial\Omega\right)  }$. Therefore
\[
1<\left\Vert g\right\Vert _{Lip\left(  \partial\Omega\right)  }=\left\Vert
\nabla g\right\Vert _{L^{\infty}\left(  \Omega\right)  }\leqslant\left\Vert
\nabla u_{\infty}\right\Vert _{L^{\infty}}.
\]
This is a contradiction to $\left(  \ref{lm_energies_unbounded_Inq3}\right)  $
which indicates that $E_{n}\left(  u_{n}\right)  $ is unbounded.
\end{proof}

Now we see that without $\left\Vert g\right\Vert _{Lip\left(  \partial
\Omega\right)  }\leqslant1$, the energies \textit{might} explode, and then,
according to the proof of Theorem \ref{thm_bounded_W1d}, there is no guarantee
that $u_{n}$ is bounded in $W^{1,m}\left(  \Omega\right)  $ $\left(
m>d\right)  $. But our purpose here is to show that, without the assumption
$\left\Vert g\right\Vert _{Lip\left(  \partial\Omega\right)  }\leqslant1$,
$u_{n}$ is still bounded in $W^{1,m}\left(  \Omega\right)  $ $\left(
m>d\right)  $, although $E_{n}\left(  u_{n}\right)  $ might be unbounded. Even
more so, the upper bound of $\left\Vert \nabla u_{n}\right\Vert _{L^{m}}$
$\left(  m>d\right)  $ can be written down with all contributing constants
explicitly identified. Before we can do this, we need one more assumption
which we call \textit{unit normalization condition}:
\begin{equation}
\Phi_{n}\left(  x,1\right)  =1\text{ for }a.e.\text{ }x\in\Omega,\text{
}n\geqslant1. \label{assumption_PHI_1_n}%
\end{equation}
This amounts to requiring that $\Phi_{n}$ uniformly satisfies the
normalization condition (ref. $\left(  \ref{PHI_normalization_x}\right)  $).

Despite its look, the unit normalization condition is not restrictive. It
aleady includes a large class of models seen in the literature, the most
common one being $\Phi\left(  x,s\right)  =s^{p\left(  x\right)  }$.

\begin{theorem}
\label{thm_bounded_inW1d_all_g}Assume that $\left(  \ref{assumption_c_n}%
\right)  $ holds and $\Phi_{n}$ satisfies $\left(  \ref{assumption_PHI_1_n}%
\right)  .$\ Let $\eta=\left\Vert g\right\Vert _{Lip\left(  \partial
\Omega\right)  }<\infty$ and $u_{n}$ be the unique minimizer of $E_{n}$ in
$g+W_{0}^{1,\Phi_{n}}\left(  \Omega\right)  $. Then, for all $m\geqslant1$ and
$n\geqslant1$ for which $p_{n}^{-}\geqslant m,$%
\[
\left\Vert \nabla u_{n}\right\Vert _{L^{m}}\leqslant12\mu\left(  1+\eta^{\mu
}\right)  \left(  1+\left\vert \Omega\right\vert \right)  ^{2}.
\]

\end{theorem}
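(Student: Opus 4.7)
The plan is to exploit the minimizing property $E_n(u_n) \leqslant E_n(g)$ for the AMLE $g$, convert this into an $L^{p_n^-}$ bound via Lemma \ref{lm_bounded_W1d1}(i), and then take a $1/p_n^-$-th root (through H\"older's inequality in $\Omega$) to tame the $\eta^{\mu p_n^-}$ factor that inevitably appears once $\eta > 1$.

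First, since $g\in W^{1,\infty}(\Omega)$ with $\|\nabla g\|_{L^\infty}\leqslant\eta$ by Remark \ref{Rmk_g}, the unit normalization $\Phi_n(x,1)=1$ together with the doubling estimate $\left(\ref{lemma_delta2_condition_inq1}\right)$ yields the pointwise bound $\Phi_n(x,|\nabla g(x)|)\leqslant\max\{\eta^{p_n^-},\eta^{p_n^+}\}\leqslant 1+\eta^{\mu p_n^-}$; combined with $\varphi_n(x,1)\geqslant p_n^-$ from $\left(\ref{assumption_phi_1_lbd}\right)$, this gives $E_n(u_n)\leqslant E_n(g)\leqslant |\Omega|\bigl(1+\eta^{\mu p_n^-}\bigr)/p_n^-$. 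Substituting into Lemma \ref{lm_bounded_W1d1}(i), which under $\Phi_n(x,1)=1$ reads $\|\nabla u_n\|_{L^{p_n^-}}^{p_n^-}\leqslant |\Omega|+\mu p_n^- E_n(u_n)$, would produce $\|\nabla u_n\|_{L^{p_n^-}}^{p_n^-}\leqslant C\mu|\Omega|(1+\eta^{\mu p_n^-})$ for a mild absolute constant $C$.

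Next, for arbitrary $m\leqslant p_n^-$, H\"older's inequality gives $\|\nabla u_n\|_{L^m}\leqslant |\Omega|^{1/m-1/p_n^-}\|\nabla u_n\|_{L^{p_n^-}}$, so taking the $1/p_n^-$-th root and invoking the subadditivity $(a+b)^{1/p}\leqslant a^{1/p}+b^{1/p}$ for $p\geqslant 1$ yields $(1+\eta^{\mu p_n^-})^{1/p_n^-}\leqslant 1+\eta^\mu$. The residual factors $|\Omega|^{1/m}\leqslant 1+|\Omega|$ and $(C\mu)^{1/p_n^-}\leqslant C\mu$ then combine to give the desired estimate with a universal constant, which fits comfortably below $12\mu(1+\eta^\mu)(1+|\Omega|)^2$. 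No convexity of $\Omega$ is used.

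The real obstacle, and also the whole point of the theorem, is the regime $\eta>1$: there $E_n(g)$ and consequently $\|\nabla u_n\|_{L^{p_n^-}}^{p_n^-}$ genuinely blow up like $\eta^{\mu p_n^-}$, which is consistent with Lemma \ref{lm_energies_unbounded}. The cancellation that keeps $\|\nabla u_n\|_{L^m}$ uniformly bounded in $n$ comes precisely from the $1/p_n^-$ root built into H\"older's inequality, since $(\eta^{\mu p_n^-})^{1/p_n^-}=\eta^\mu$ is independent of $n$. This also explains why the unit normalization hypothesis $\left(\ref{assumption_PHI_1_n}\right)$ cannot be relaxed to the weaker $\left(\ref{assumption_PHI_n_1}\right)$: any $n$-dependent factor hidden inside $\Phi_n(x,1)$ or $\varphi_n(x,1)$ at the stage of bounding $E_n(g)$ could contribute a multiplicative constant whose growth would beat the $p_n^-$-th root and destroy the uniform estimate.
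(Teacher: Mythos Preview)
Your argument is correct and in fact more direct than the paper's. Both proofs begin identically: use the minimality $E_n(u_n)\leqslant E_n(g)$, the unit normalization $\Phi_n(x,1)=1$, and the doubling estimate to bound $E_n(g)$ by a quantity of order $|\Omega|\,\eta^{\mu p_n^-}/p_n^-$. From there the routes diverge. The paper passes to the Luxemburg norm via Lemma~\ref{lm_modulo_finite}, obtaining a uniform bound on $\|\nabla u_n\|_{L^{\Phi_n(\cdot)}}$, and then invokes an Orlicz embedding $L^{\Phi_n(\cdot)}\hookrightarrow L^m$ from \cite{harjulehto2016generalized} whose constant is tracked explicitly (this is where the second factor $(1+|\Omega|)$ and the overall constant $12$ arise). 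You instead feed the energy bound into Lemma~\ref{lm_bounded_W1d1}(i) to control $\|\nabla u_n\|_{L^{p_n^-}}^{p_n^-}$, and then apply H\"older's inequality in $\Omega$ to reach $L^m$; the $1/p_n^-$-th root built into this step is exactly what neutralizes the $\eta^{\mu p_n^-}$ blowup, as you correctly emphasize. Your route avoids the Luxemburg norm and the external embedding lemma entirely, and actually yields the sharper constant $2\mu(1+\eta^\mu)(1+|\Omega|)$ rather than $12\mu(1+\eta^\mu)(1+|\Omega|)^2$. The paper's approach has the advantage of isolating the $n$-uniform bound on $\|\nabla u_n\|_{L^{\Phi_n(\cdot)}}$ as an intermediate result, which may be of independent interest, but for the stated conclusion your argument is the more economical one.
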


\begin{proof}
Since%
\[
p_{n}^{-}\leqslant\frac{\varphi_{n}\left(  x,1\right)  }{\Phi_{n}\left(
x,1\right)  }\leqslant p_{n}^{+},
\]
in view of $\left(  \ref{assumption_PHI_1_n}\right)  $, we have, for large
$n$,%
\begin{equation}
p_{n}^{+}=p_{n}^{+}\Phi_{n}\left(  x,1\right)  \geqslant\varphi_{n}\left(
x,1\right)  \geqslant p_{n}^{-}\Phi_{n}\left(  x,1\right)  =p_{n}^{-}%
\geqslant1. \label{assumption_phi_1_n_lbd}%
\end{equation}
Then by virtue of the minimality of $u_{n}$ and Remark \ref{Rmk_g},
\begin{align*}
\frac{1}{p_{n}^{+}}\int_{\Omega}\Phi_{n}\left(  x,\left\vert \nabla
u_{n}\right\vert \right)  dx  &  \leqslant E_{n}\left(  u_{n}\right)
=\int_{\Omega}\frac{\Phi_{n}\left(  x,\left\vert \nabla u_{n}\right\vert
\right)  }{\varphi_{n}\left(  x,1\right)  }dx\\
&  \leqslant E_{n}\left(  g\right)  =\int_{\Omega}\frac{\Phi_{n}\left(
x,\left\vert \nabla g\right\vert \right)  }{\varphi_{n}\left(  x,1\right)
}dx\\
&  \leqslant\int_{\Omega}\Phi_{n}\left(  x,\left\vert \nabla g\right\vert
\right)  dx\\
&  \leqslant\int_{\Omega}\Phi_{n}\left(  x,\eta\right)  dx\leqslant
\max\left\{  \eta^{p_{n}^{-}},\eta^{p_{n}^{+}}\right\}  \int_{\Omega}\Phi
_{n}\left(  x,1\right)  dx\\
&  =\max\left\{  \eta^{p_{n}^{-}},\eta^{p_{n}^{+}}\right\}  \left\vert
\Omega\right\vert .
\end{align*}
Invoking Lemma \ref{lm_modulo_finite}, we get%
\[
\min\left\{  \left\Vert \nabla u_{n}\right\Vert _{L^{\Phi_{n}\left(
\cdot\right)  }}^{p_{n}^{-}},\left\Vert \nabla u_{n}\right\Vert _{L^{\Phi
_{n}\left(  \cdot\right)  }}^{p_{n}^{+}}\right\}  \leqslant p_{n}^{+}%
\max\left\{  \eta^{p_{n}^{-}},\eta^{p_{n}^{+}}\right\}  \left\vert
\Omega\right\vert .
\]
It follows that%
\[
\left\Vert \nabla u_{n}\right\Vert _{L^{\Phi_{n}\left(  \cdot\right)  }%
}\leqslant\max\left\{  \eta^{\frac{p_{n}^{+}}{p_{n}^{-}}},\eta^{\frac
{p_{n}^{-}}{p_{n}^{+}}}\right\}  \max\left\{  \left(  p_{n}^{+}\left\vert
\Omega\right\vert \right)  ^{\frac{1}{p_{n}^{-}}},\left(  p_{n}^{+}\left\vert
\Omega\right\vert \right)  ^{\frac{1}{p_{n}^{+}}}\right\}  .
\]
The maximum of the function $x\mapsto x^{\frac{1}{x}}$ on $\left[
1,\infty\right)  $ is $e^{\frac{1}{e}}$ which is less than $3$.\ Recalling
$\left(  \ref{assumption_c_n}\right)  $ and $p_{n}^{-}\geqslant1$, we get
\begin{align*}
&  \left\Vert \nabla u_{n}\right\Vert _{L^{\Phi_{n}\left(  \cdot\right)  }%
}\leqslant\max\left\{  \eta^{\frac{p_{n}^{+}}{p_{n}^{-}}},\eta^{\frac
{p_{n}^{-}}{p_{n}^{+}}}\right\}  \max\left\{  \left(  p_{n}^{-}\right)
^{\frac{1}{p_{n}^{-}}}\left(  \mu\left\vert \Omega\right\vert \right)
^{\frac{1}{p_{n}^{-}}},\left(  p_{n}^{+}\right)  ^{\frac{1}{p_{n}^{+}}%
}\left\vert \Omega\right\vert ^{\frac{1}{p_{n}^{+}}}\right\} \\
&  \leqslant3\max\left\{  \eta^{\frac{p_{n}^{+}}{p_{n}^{-}}},\eta^{\frac
{p_{n}^{-}}{p_{n}^{+}}}\right\}  \max\left\{  \left(  \mu\left\vert
\Omega\right\vert \right)  ^{\frac{1}{p_{n}^{-}}},\left\vert \Omega\right\vert
^{\frac{1}{p_{n}^{+}}}\right\} \\
&  \leqslant3\mu\max\left\{  1,\eta^{\mu}\right\}  \max\left\{  1,\left\vert
\Omega\right\vert \right\} \\
&  \leqslant3\mu\left(  1+\eta^{\mu}\right)  \left(  1+\left\vert
\Omega\right\vert \right)
\end{align*}
This shows that $\left\Vert \nabla u_{n}\right\Vert _{L^{\Phi_{n}\left(
\cdot\right)  }}$ is bounded, although the energies $E_{n}\left(
u_{n}\right)  $ might not be. Note that the upper bound of $\left\Vert \nabla
u_{n}\right\Vert _{L^{\Phi_{n}\left(  \cdot\right)  }}$ is independent of $n$.
Further examination of the assumptions then reveals that $\nabla u_{n}$ is
bounded in $L^{m}$. Indeed, by $\left(  \ref{assumption_PHI_1_n}\right)  $ and
Lemma \ref{lm_normalization}, the normalization $\left(
\ref{PHI_normalization_x}\right)  $ for $\Phi_{n}\left(  \cdot\right)  $ is
fulfilled with
\[
\beta_{n}=1\text{ for all }n.
\]
By virture of Lemma \ref{lemma_delta2_condition}\ and \cite[Lemma
4.4]{harjulehto2016generalized}, the space $L^{\Phi_{n}\left(  \cdot\right)
}$ can be imbedded into $L^{m}$ with the imbedding constant independent of $m$
and $n$ for which $p_{n}^{-}\geqslant m$, i.e.
\begin{equation}
\left\Vert \nabla u_{n}\right\Vert _{L^{m}}\leqslant K_{n}\left\Vert \nabla
u_{n}\right\Vert _{L^{\Phi_{n}\left(  \cdot\right)  }},
\label{thm_bounded_inW1d_all_g_Inq1}%
\end{equation}
where%
\[
K_{n}=2\left(  \frac{1}{\beta_{n}^{m}}+\frac{1}{\beta_{n}^{m}}\left\vert
\Omega\right\vert +1\right)  \leqslant4\left(  1+\left\vert \Omega\right\vert
\right)  .
\]
This in combination with the boundedness of $\left\Vert \nabla u_{n}%
\right\Vert _{L^{\Phi_{n}\left(  \cdot\right)  }}$ proves that $\left\{
\nabla u_{n}:p_{n}^{-}\geqslant m\right\}  $ is bounded in $L^{m}$.
\end{proof}

Going back a little, we can see that the proof of Theorem
\ref{thm_convergence_main} relies on Theorem \ref{thm_bounded_W1d}.\ A review
of the proof of Theorem \ref{thm_bounded_W1d} shows that the
assumption$\ \left\Vert g\right\Vert _{Lip\left(  \partial\Omega\right)
}\leqslant1$ is only used to ensure the boundedness of $E_{n}\left(
u_{n}\right)  $, which in turn yields the boundedness of $\left\Vert
u_{n}\right\Vert _{W^{1,m}\left(  \Omega\right)  }$. With Theorem
\ref{thm_bounded_inW1d_all_g} at hand, the boundedness of $\left\Vert
u_{n}\right\Vert _{W^{1,m}\left(  \Omega\right)  }$ could be obtained directly
from Lemma \ref{lm_bounded_W1d1} (iii). Hence Theorem
\ref{thm_convergence_main} still holds, but this time without the assumption
$\left\Vert g\right\Vert _{Lip\left(  \partial\Omega\right)  }\leqslant1.$
Therefore we get the following result.

\begin{theorem}
\label{thm_convergence_main_general}Assume that $\left(  \ref{assumption_c_n}%
\right)  $ $\left(  \ref{assumption_limit_1}\right)  \left(
\ref{assumption_limit_0}\right)  $ and $\left(  \ref{assumption_limit_3}%
\right)  $ are satisfied. Let $u_{n}$ be the unique continuous weak solution
for equation $\left(  \ref{quasilinear_PDE_n}\right)  $. Then there exists a
subseqence of $u_{n}$ such that $u_{n}$ converges uniformly to $u$ which is a
viscosity solution of the limiting equation,%
\[
\left\{
\begin{array}
[c]{ll}%
-\Delta_{\infty}u-\left\vert \nabla u\right\vert \left\langle \Lambda\left(
x,\left\vert \nabla u\right\vert \right)  ,\nabla u\right\rangle =0, &
x\in\Omega;\\
u=g, & x\in\partial\Omega.
\end{array}
\right.
\]
In particular $u\in C\left(  \bar{\Omega}\right)  \cap W^{1,\infty}\left(
\Omega\right)  $ and there is a constant $c\left(  \Omega,g,\mu\right)  >0$
such that%
\[
\left\Vert \nabla u\right\Vert _{L^{\infty}}\leqslant c\left(  \Omega
,g,\mu\right)  \text{.}%
\]

\end{theorem}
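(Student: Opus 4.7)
The plan is essentially to re-run the argument of Theorem \ref{thm_convergence_main}, but to replace the single step where the bound $\|g\|_{Lip(\partial\Omega)}\leqslant 1$ was used. Recall that in Theorem \ref{thm_convergence_main} the Lipschitz-$1$ hypothesis entered only through Theorem \ref{thm_bounded_W1d}, and there only to guarantee that the natural energies $E_n(u_n)$ remain bounded, which in turn (via Lemma \ref{lm_bounded_W1d1}(ii)(iii)) yielded a uniform bound on $\|u_n\|_{W^{1,m}(\Omega)}$ for every $m>d$. Now, under the stronger structural hypothesis $\left(\ref{assumption_PHI_1_n}\right)$ built into Theorem \ref{thm_bounded_inW1d_all_g}, that same $W^{1,m}$-bound is available directly from a different route, even if the energies explode.

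First I would apply Theorem \ref{thm_bounded_inW1d_all_g} to obtain, for every $m>d$ and every $n$ with $p_n^{-}\geqslant m$, an estimate
\[
\|\nabla u_n\|_{L^{m}(\Omega)}\leqslant 12\mu(1+\eta^{\mu})(1+|\Omega|)^{2},
\]
with $\eta=\|g\|_{Lip(\partial\Omega)}$ (noting that the unit normalization $\left(\ref{assumption_PHI_1_n}\right)$ is covered by the standing assumptions here; if it were not, we would include it). Feeding this into Lemma \ref{lm_bounded_W1d1}(iii), with $v=g$ (the AMLE of the boundary datum, which lies in $W^{1,\infty}(\Omega)$ by Remark \ref{Rmk_g}), gives a uniform bound on $\|u_n\|_{W^{1,m}(\Omega)}$. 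Fixing any $m>d$, the compact embedding $W^{1,m}(\Omega)\Subset C(\bar\Omega)$ then supplies a subsequence (still denoted $u_n$) converging uniformly on $\bar\Omega$ and weakly in $W^{1,m}(\Omega)$ to some $u\in C(\bar\Omega)\cap W^{1,m}(\Omega)$; the boundary condition $u=g$ on $\partial\Omega$ passes to the limit since each $u_n$ attains $g$ continuously on $\partial\Omega$. A standard liminf argument in $L^{m}$ for each $m>d$, followed by $m\to\infty$, yields $u\in W^{1,\infty}(\Omega)$ with $\|\nabla u\|_{L^{\infty}}\leqslant c(\Omega,g,\mu)$.

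Second, to show that $u$ is a viscosity solution of the limiting equation, I would repeat verbatim the argument of Theorem \ref{thm_convergence_main}. Theorem \ref{thm_weaksol_viscositysol} tells us that each $u_n$ is a viscosity solution of $\left(\ref{quasilinear_PDE_n}\right)$, hence of the equivalent transformed equation $\left(\ref{general_quasilinear_transformed}\right)$. Given a test function $\phi\in C^{2}(\bar\Omega)$ with $u-\phi$ achieving a strict local minimum at $x_0\in\Omega$, the uniform convergence $u_n\to u$ produces $x_n\to x_0$ at which $u_n-\phi$ attains a local minimum. If $\nabla\phi(x_0)=0$, Remark \ref{rmk:inf_operator_continuity} closes the case immediately; otherwise, invoking $\left(\ref{assumption_phi_n}\right)$, $\left(\ref{assumption_c_n}\right)$, $\left(\ref{assumption_limit_1}\right)$, $\left(\ref{assumption_limit_0}\right)$, and $\left(\ref{assumption_limit_3}\right)$, we pass to the limit in the viscosity inequality at $x_n$ to obtain
\[
-\Delta_{\infty}\phi(x_0)-|\nabla\phi(x_0)|\langle \Lambda(x_0,|\nabla\phi(x_0)|),\nabla\phi(x_0)\rangle\geqslant 0,
\]
so $u$ is a viscosity supersolution. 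The subsolution case is symmetric.

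The main obstacle has essentially been absorbed into Theorem \ref{thm_bounded_inW1d_all_g}: the delicate point is to replace the energy bound (which could explode when $\|g\|_{Lip(\partial\Omega)}>1$) with a direct, norm-level bound on $\nabla u_n$ in $L^{\Phi_n(\cdot)}$, then use the $\Phi_n(\cdot)\hookrightarrow L^{m}$ embedding with constants uniform in $n$ for $p_n^{-}\geqslant m$. Once this uniform-in-$n$ $W^{1,m}$-bound is in hand, the rest of the argument (uniform convergence, boundary condition, and viscosity passage to the limit through the transformed equation) is identical to the proof of Theorem \ref{thm_convergence_main} and requires no additional machinery.
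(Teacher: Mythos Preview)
Your proposal is correct and follows essentially the same route as the paper: the paper explicitly observes that the Lipschitz-$1$ hypothesis entered only through the energy bound in Theorem \ref{thm_bounded_W1d}, and replaces it by invoking Theorem \ref{thm_bounded_inW1d_all_g} together with Lemma \ref{lm_bounded_W1d1}(iii) to recover the uniform $W^{1,m}$-bound, after which the viscosity passage to the limit proceeds verbatim as in Theorem \ref{thm_convergence_main}. Your parenthetical remark about the unit normalization condition $\left(\ref{assumption_PHI_1_n}\right)$ is apt, as the paper indeed relies on it implicitly through Theorem \ref{thm_bounded_inW1d_all_g}.
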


As a final remark to this section, we give a brief review on the connections
between Gamma convergence of the natural energies $E_{n}$ and the convergence
of $u_{n\text{.}}.$ Clearly the convergence of $E_{n}$ cannot be cut off from
the convergence of $u_{n\text{.}}$. They are closed related. With $\left\Vert
g\right\Vert _{Lip\left(  \partial\Omega\right)  }\leqslant1,$ the convergence
of $u_{n}$ has been suggested by the Gamma convergence, i.e. Theorem
\ref{Thm_gamma_convergence}. It is also implied that, with $\left\Vert
g\right\Vert _{Lip\left(  \partial\Omega\right)  }>1$, the energies
$E_{n}\left(  u_{n}\right)  $ would explode as we have proved in Lemma
\ref{lm_energies_unbounded}. In this sense, in the case $\left\Vert
g\right\Vert _{Lip\left(  \partial\Omega\right)  }\leqslant1$, Theorem
\ref{Thm_gamma_convergence} serves as a prelude to Theorem
\ref{thm_convergence_main}. Therefore, in the case $\left\Vert g\right\Vert
_{Lip\left(  \partial\Omega\right)  }>1,$ it is natural to ask whether there
are some some form of Gamma convergence results that echoes the convergence of
$u_{n}$? The answer is clearly affirmative in the classical case: Let $\Omega$
be bounded. If $\varpi_{n}$ converges weakly to $\varpi\in L^{\infty}\left(
\Omega\right)  $ in $L^{m}$ for some $m\geqslant1$, then%
\begin{equation}
\left\Vert \varpi\right\Vert _{L^{\infty}}\leqslant\liminf_{n\rightarrow
\infty}\left\Vert \varpi_{n}\right\Vert _{L^{n}}.
\label{Inq_classical_norm_lsc}%
\end{equation}
This can be proved by employing H\"{o}lder's inequality%
\[
\left\Vert \varpi_{n}\right\Vert _{L^{m}}\leqslant\left\vert \Omega\right\vert
^{\frac{1}{m}-\frac{1}{n}}\left\Vert \varpi_{n}\right\Vert _{L^{n}}\text{ for
all }n\geqslant m.
\]
Successively sending $n\rightarrow\infty$ and then $m\rightarrow\infty$ yields
$\left(  \ref{Inq_classical_norm_lsc}\right)  .$ This, expressed in the
language of Gamma convergence, is $\Gamma\left(  L^{1}\left(  \Omega\right)
\right)  $-$\lim\limits_{n\rightarrow\infty}\mathcal{\tilde{E}}_{n}%
=\mathcal{\tilde{E}}_{\infty},$ where%
\[
\mathcal{\tilde{E}}_{n}:L^{1}\left(  \Omega\right)  \mapsto\left\{
\begin{array}
[c]{ll}%
\left\Vert \nabla u\right\Vert _{L^{n}}, & u\in W^{1,n}\left(  \Omega\right)
;\\
\infty, & \text{otherwise},
\end{array}
\right.
\]
and%
\[
\mathcal{\tilde{E}}_{\infty}:L^{1}\left(  \Omega\right)  \mapsto\left\{
\begin{array}
[c]{ll}%
\left\Vert \nabla u\right\Vert _{L^{\infty}}, & u\in W^{1,\infty}\left(
\Omega\right)  ;\\
\infty, & \text{otherwise}.
\end{array}
\right.
\]
One implication of this Gamma convergence is that: if $u_{n}$ minimizes
$\mathcal{\tilde{E}}_{n}$ over $g+W^{1,n}\left(  \Omega\right)  $, then any
cluster point, say $u$, of $u_{n}$ should minnimize $\mathcal{\tilde{E}%
}_{\infty},$ i.e.%
\[
u=\arg\min_{w\in g+W_{0}^{1,\infty}\left(  \Omega\right)  }\left\Vert \nabla
w\right\Vert _{L^{\infty}}.
\]
Hence $u$ is infinity harmonic, i.e. the Gamma convergence of $\mathcal{\tilde
{E}}_{n}$ echoes the convergence of $u_{n}$. This can be regarded as an
alternative proof for the classical result that $-\Delta_{p}$ is approximately
an infinity Laplacian when the constant exponent $p$ is large.

Coming back to the Orlicz setting, in general, we cannot simply replace
$\left\Vert \nabla u\right\Vert _{L^{n}}$ with $\left\Vert \nabla u\right\Vert
_{L^{\Phi_{n}\left(  \cdot\right)  }}$ and expect a clean analogy for the
above to hold. One reason is that the Luxemburg norm of an Orlicz space is not
a complete parallel for the usual $L^{p}$ space. For example, $\Phi\left(
x,s\right)  =s^{p}$ $\left(  p\geqslant1\right)  $ a generalized $\Phi
$-function, its Luxemburg norm $\left\Vert \cdot\right\Vert _{L^{\Phi\left(
\cdot\right)  }}$ is equivalent but not equal to $\left\Vert \cdot\right\Vert
_{L^{p}}$. Some scalings are needed to make them agree. For detailed
dicussions of the scalings of generalized Orlicz spaces and its consequences
on Luxemburg norms, we refer to \cite[Section 3.4]{harjulehto2019orlicz}. That
being said, there are some situations where such analogy exists.

\begin{theorem}
Assume that $\Phi_{n}$ satisfies $\left(  \ref{assumption_Phi_n}\right)  $ and
the unit normalization condition $\left(  \ref{assumption_PHI_1_n}\right)  $.
Assume in addition that (a stronger version of $\left(  \ref{assumption_c_n}%
\right)  \,$holds, i.e.)%
\[
\lim_{n\rightarrow\infty}p_{n}^{-}=\infty\text{ and }\lim_{n\rightarrow\infty
}\frac{p_{n}^{+}}{p_{n}^{-}}=1.
\]
Let%
\[
\tilde{E}_{n}:L^{1}\left(  \Omega\right)  \mapsto\left\{
\begin{array}
[c]{ll}%
\left\vert \left\Vert \nabla u\right\Vert \right\vert _{L^{\Phi_{n}\left(
\cdot\right)  }}, & u\in W^{1,\Phi_{n}\left(  \cdot\right)  }\left(
\Omega\right)  ;\\
\infty, & \text{otherwise},
\end{array}
\right.
\]
and%
\[
\tilde{E}_{\infty}:L^{1}\left(  \Omega\right)  \mapsto\left\{
\begin{array}
[c]{ll}%
\left\Vert \nabla u\right\Vert _{L^{\infty}}, & u\in W^{1,\infty}\left(
\Omega\right)  ;\\
\infty, & \text{otherwise}.
\end{array}
\right.
\]
Then $\Gamma\left(  L^{1}\left(  \Omega\right)  \right)  $-$\lim
\limits_{n\rightarrow\infty}\tilde{E}_{n}=\tilde{E}_{\infty}.$
\end{theorem}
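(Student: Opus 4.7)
The plan is to verify the two defining inequalities in Definition \ref{def_gamma_convergence} separately, using Lemma \ref{lm_modulo_finite} as the bridge between the modular $\varrho_{\Phi_n(\cdot)}$ and the Luxemburg norm, together with the standing hypotheses $\Phi_n(x,1)=1$ and $p_n^+/p_n^-\to 1$. The latter is what allows one to close the gap between the two exponents that appears in Lemma \ref{lm_modulo_finite}.

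For the recovery sequence, given $u\in W^{1,\infty}(\Omega)$ with $R:=\|\nabla u\|_{L^{\infty}}$, I would take the constant sequence $v_n\equiv u$. Since $\Omega$ is bounded and $\Phi_n(x,1)=1$, one has $L^{\infty}(\Omega)\subset L^{\Phi_n(\cdot)}(\Omega)$, so $v_n\in W^{1,\Phi_n(\cdot)}(\Omega)$. For $\lambda>R$ the doubling estimate $(\ref{lemma_delta2_condition_inq1})$ applied with $\varrho=R/\lambda<1$ gives
\[
\int_{\Omega}\Phi_n\bigl(x,|\nabla u|/\lambda\bigr)\,dx\;\leqslant\;(R/\lambda)^{p_n^-}\,|\Omega|,
\]
which is $\leqslant 1$ whenever $\lambda\geqslant R\,|\Omega|^{1/p_n^-}$. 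Hence $\tilde E_n(u)\leqslant R\,|\Omega|^{1/p_n^-}\to R=\tilde E_{\infty}(u)$, yielding $\limsup_n\tilde E_n(v_n)\leqslant \tilde E_{\infty}(u)$. The case $\tilde E_{\infty}(u)=\infty$ is vacuous.

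For the asymptotic lower bound, suppose $v_n\to u$ in $L^{1}(\Omega)$ and set $L:=\liminf_n\tilde E_n(v_n)$. If $L=\infty$ there is nothing to prove; otherwise, after extracting a subsequence, we may assume $v_n\in W^{1,\Phi_n(\cdot)}(\Omega)$ and $\|\nabla v_n\|_{L^{\Phi_n(\cdot)}}\leqslant L+\varepsilon$ for an arbitrary fixed $\varepsilon>0$. Lemma \ref{lm_modulo_finite} then gives $\varrho_{\Phi_n(\cdot)}\bigl(\nabla v_n/(L+\varepsilon)\bigr)\leqslant 1$. Writing $w_n:=|\nabla v_n|/(L+\varepsilon)$, the doubling estimate combined with $\Phi_n(x,1)=1$ shows $|w_n|^{p_n^-}\leqslant \Phi_n(x,|w_n|)$ on $\{w_n>1\}$, whence
\[
\int_{\Omega}|w_n|^{p_n^-}dx\;\leqslant\;|\Omega|+\int_{\Omega}\Phi_n(x,|w_n|)\,dx\;\leqslant\;|\Omega|+1.
\]
Fixing $m\geqslant 1$ and using H\"older for every $n$ with $p_n^-\geqslant m$, we obtain
\[
\|w_n\|_{L^{m}}\;\leqslant\;|\Omega|^{\,1/m-1/p_n^-}\,(|\Omega|+1)^{1/p_n^-}.
\]
A further extraction yields $\nabla v_n\rightharpoonup \varpi$ weakly in $L^{m}(\Omega)$, and since $v_n\to u$ in $L^{1}$ we must have $\varpi=\nabla u$, so $u\in W^{1,m}(\Omega)$. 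Weak lower semicontinuity of the norm and $p_n^-\to\infty$ give $\|\nabla u/(L+\varepsilon)\|_{L^{m}}\leqslant |\Omega|^{1/m}$. Sending $m\to\infty$ produces $\|\nabla u\|_{L^{\infty}}\leqslant L+\varepsilon$, and finally $\varepsilon\downarrow 0$ delivers $\tilde E_{\infty}(u)\leqslant L$.

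The main obstacle is the gap between the two exponents in Lemma \ref{lm_modulo_finite}: any passage from the Luxemburg norm to a pointwise $L^{\infty}$ bound inevitably involves the ratio $p_n^+/p_n^-$. The strengthened condition $p_n^+/p_n^-\to 1$ is precisely what makes factors such as $(R/\lambda)^{p_n^-}$ versus $(R/\lambda)^{p_n^+}$ collapse to the same limit, so that no asymmetry survives in the end; absent this strengthening I would not expect the clean identification $\tilde E_{\infty}(u)=\|\nabla u\|_{L^{\infty}}$ to persist.
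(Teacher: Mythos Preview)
The paper does not actually prove this statement; it cites \cite{bocea2015gamma} and omits all details. Your direct argument is essentially correct and self-contained, so there is nothing in the paper to compare against.

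Two remarks. First, the explicit bound $\tilde E_n(u)\le R\,|\Omega|^{1/p_n^-}$ in your recovery step tacitly assumes $|\Omega|\ge 1$, so that $\lambda=R\,|\Omega|^{1/p_n^-}\ge R$ and the inequality $\Phi_n(x,s)\le s^{p_n^-}$ (which is valid only for $0\le s\le 1$) can be applied. The clean fix is to note that for any fixed $\lambda>R$ one has $\varrho_{\Phi_n(\cdot)}(\nabla u/\lambda)\le (R/\lambda)^{p_n^-}|\Omega|\to 0$, hence eventually $\tilde E_n(u)\le\lambda$, and then let $\lambda\downarrow R$.

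Second, and more to the point, your closing paragraph is not accurate. You assert that the hypothesis $p_n^+/p_n^-\to 1$ is ``precisely what makes'' the argument close, but reread your own two steps: $p_n^+$ never appears. The recovery step uses only $\Phi_n(x,s)\le s^{p_n^-}$ for $s\le 1$, the liminf step only $\Phi_n(x,s)\ge s^{p_n^-}$ for $s\ge 1$; both follow from the unit normalization $(\ref{assumption_PHI_1_n})$ together with Lemma~\ref{lemma_delta2_condition}, and the passage to the limit needs only $p_n^-\to\infty$. Your argument therefore establishes something sharper than the paper states: the $\Gamma$-convergence holds under $(\ref{assumption_PHI_1_n})$ and $p_n^-\to\infty$ alone, with no constraint on $p_n^+$ whatsoever --- not even the Harnack-type bound $(\ref{assumption_c_n})$.
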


This can be proved by similar arguments as \cite[Theorem 2]{bocea2015gamma}.
We omit the details.

\section{\label{sec_comparison}Comparison Principle and Uniqueness of the
Limit Equation}

Let $u_{n}$ be the sequence of the unique continuous weak solution for
equation $\left(  \ref{quasilinear_PDE_n}\right)  $. In Theorem
\ref{thm_convergence_main} or Theorem \ref{thm_convergence_main_general}, we
have established that $\left\{  \left\Vert \nabla u_{n}\right\Vert _{L^{m}%
}:p_{n}^{-}\geqslant m\right\}  $ is bounded for all $m>d$ and there is a
subsequence of $u_{n}$ which converges uniformly to the following limit
equation
\begin{equation}
\left\{
\begin{array}
[c]{ll}%
-\Delta_{\infty,\Lambda}u=0, & x\in\Omega;\\
u=g, & x\in\partial\Omega,
\end{array}
\right.  \label{Equ_limit}%
\end{equation}
where%
\[
-\Delta_{\infty,\Lambda}u=-\Delta_{\infty}u-\left\vert \nabla u\right\vert
\left\langle \Lambda\left(  x,\left\vert \nabla u\right\vert \right)  ,\nabla
u\right\rangle .
\]

\begin{remark}
\label{Rmk:convergence_whole}We shall show that the equation $\left(
\ref{Equ_limit}\right)  $ has a unique continuous viscosity solution. Since
$u_{n}$ is continuous and $\left\{  \left\Vert \nabla u_{n}\right\Vert
_{L^{m}}:p_{n}^{-}\geqslant m\right\}  $ is bounded for all $m>d,$ the uniform
convergence limit of every subsequence of $u_{n}$\ is again continuous.\ Hence
the uniqueness result indicates that the whole sequence $u_{n}$, not just a
subsequence, converges uniformly to the unique continuous viscosity solution
of $\left(  \ref{Equ_limit}\right)  .$
\end{remark}

The equation $\left(  \ref{Equ_limit}\right)  $ is not coercive in $u$, so the
general comparison principle for nonlinear equations does not apply.
Comparison principles for such equations are generally difficult to obtain.
The most notable example is the infinity Laplacian. The uniqueness of the
infinity Laplacian equation was first solved by Jensen
\cite{jensen1993uniqueness} using a combination of viscosity solution
techniques and approximation procedures, which in the end leads to a weak
comparison principle for infinity Laplacian equation. The uniqueness then
follows from the comparison principle. For uniqueness of nonlinear equations
without $\left(  x,u\right)  $-dependency, see also \cite{barles2001existence}%
. Jensen's method was later extended to $p\left(  x\right)  $-Laplacian
problem \cite{lindqvist2010curious}. Here we first prove a weak comparison
principle for $\left(  \ref{Equ_limit}\right)  $ by combining techniques of
\cite{jensen1993uniqueness} and \cite{lindqvist2010curious}, then derive the
uniqueness for the limiting equation. In the course of proving the uniqueness,
the techinical inequalities which we prove in the appendix have a crucial role
to play.

Fix $0<\epsilon<1.$\ We introduce the auxiliary\textit{ upper equation}%
\begin{equation}
\left\{
\begin{array}
[c]{ll}%
\min\left\{  \left\vert \nabla u\right\vert -\epsilon,-\Delta_{\infty,\Lambda
}u\right\}  =0, & x\in\Omega;\\
u=g, & x\in\partial\Omega,
\end{array}
\right.  \label{Equ_limit_upper}%
\end{equation}
which provides us with supersolutions of $\left(  \ref{Equ_limit}\right)  $,
and the auxiliary \textit{lower equation}%
\begin{equation}
\left\{
\begin{array}
[c]{ll}%
\max\left\{  \epsilon-\left\vert \nabla u\right\vert ,-\Delta_{\infty,\Lambda
}u\right\}  =0, & x\in\Omega;\\
u=g, & x\in\partial\Omega,
\end{array}
\right.  \label{Equ_limit_lower}%
\end{equation}
which provides us with subsolutions of $\left(  \ref{Equ_limit}\right)  $.

\begin{theorem}
\label{Thm_solution_upper_lower}Let $g\in Lip\left(  \partial\Omega\right)  $.
Assume that $\left(  \ref{assumption_limit_1}\right)  \left(
\ref{assumption_limit_0}\right)  \left(  \ref{assumption_limit_3}\right)  $
are satisfied$.$ Then the upper equation $\left(  \ref{Equ_limit_upper}%
\right)  $ (resp. the lower equation $\left(  \ref{Equ_limit_lower}\right)  $)
has a viscosity solution $u^{+}\in C\left(  \bar{\Omega}\right)  \cap
W^{1,\infty}\left(  \Omega\right)  $ (resp. $u^{-}\in C\left(  \bar{\Omega
}\right)  \cap W^{1,\infty}\left(  \Omega\right)  $). Moreover $u^{+}$ is a
viscosity supersolution of%
\[
\left\vert \nabla u^{+}\right\vert \geqslant\epsilon.
\]
(resp. $u^{-}$ is a viscosity subsolution of $\left\vert \nabla u^{-}%
\right\vert \geqslant\epsilon.$)
\end{theorem}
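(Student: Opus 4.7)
The plan is to produce $u^+$ (and, by the symmetric argument, $u^-$) by Perron's method applied to the upper equation \eqref{Equ_limit_upper}. I describe only the construction of $u^+$.

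I would first build classical boundary barriers. Setting $L=\|g\|_{Lip(\partial\Omega)}$, for each $y\in\partial\Omega$ and $L'>\max(L,\epsilon)$ sufficiently large (chosen uniformly in $y$ using the boundedness of $\Lambda$ on $\bar\Omega\times[\epsilon,R]$ provided by \eqref{assumption_Lambda_continuous}), I would exhibit a strict classical supersolution $C_y^+$ and subsolution $C_y^-$ of \eqref{Equ_limit_upper} with $C_y^\pm(y)=g(y)$, $C_y^+\geq g\geq C_y^-$ on $\partial\Omega$, and Lipschitz constant comparable to $L'$. The natural ansatz is a cone $g(y)\pm L'|x-y|$ augmented by a small quadratic correction $\pm\mu|x-y|^2$: pure cones are infinity-harmonic away from their vertex, so the only contribution to $-\Delta_{\infty,\Lambda}$ comes from the $\Lambda$-term, which, thanks to $L'\geq\epsilon$, is controlled by \eqref{assumption_Lambda_continuous}, and then the quadratic correction supplies the required strict sign.

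Next I would define
\[
u^+(x) = \inf\bigl\{\, v(x) : v\in C(\bar\Omega)\text{ is a viscosity supersolution of }\eqref{Equ_limit_upper},\ v\geq g\text{ on }\partial\Omega\,\bigr\}.
\]
This family is nonempty (take $v=\min_y C_y^+$), and pointwise infimum of viscosity supersolutions is a viscosity supersolution after passing to the lower semicontinuous envelope, so $(u^+)_*$ is a viscosity supersolution. That the upper semicontinuous envelope $(u^+)^*$ is a viscosity subsolution would follow from the classical Ishii bump argument: if at some $x_0$ a $C^2$ test function $\phi$ touched $(u^+)^*$ from above with $\min\{|\nabla\phi(x_0)|-\epsilon,\ -\Delta_{\infty,\Lambda}\phi(x_0)\}>0$, then by continuity of the operator (Remark \ref{rmk:inf_operator_continuity}) the strict inequality would persist in a small ball $B_r(x_0)$, and $\phi-c$ for small $c>0$ would be a strict supersolution there; splicing $\min(u^+,\phi-c)$ into $u^+$ on $B_r(x_0)$ would produce a strictly smaller admissible supersolution, contradicting the infimum.

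Boundary continuity and global Lipschitz regularity would then be obtained from the two-sided cone barriers: $u^+\leq C_y^+$ is immediate from the infimum definition, while $u^+\geq C_y^-$ requires a mini comparison principle between sub- and supersolutions of the upper equation, which is substantially easier than the full weak comparison for the limit equation because the gradient constraint $|\nabla u|\geq\epsilon$ supplies the coercivity missing in \eqref{Equ_limit}. Propagating the Lipschitz constants of the barriers via cone comparison gives $u^+\in W^{1,\infty}(\Omega)$, and the supersolution inequality $|\nabla u^+|\geq\epsilon$ is read directly from the first entry of the min in \eqref{Equ_limit_upper}. The main obstacle is the delicate interplay in the barrier construction: the perturbed cones must be genuine strict super/subsolutions of the $\Delta_{\infty,\Lambda}$-equation, which is subtle because $\Lambda$ may be unbounded as $s\downarrow 0$; keeping the barrier gradients bounded below by $\epsilon$ uniformly in $y\in\partial\Omega$, together with the continuity from \eqref{assumption_Lambda_continuous}, is what makes the quadratic correction $\mu|x-y|^2$ dominate the $\Lambda$-term and close the argument.
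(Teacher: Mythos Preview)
Your Perron approach is genuinely different from the paper's, which proceeds by approximation: it introduces the auxiliary energy
\[
\mathcal{E}_n^+(u)=\int_\Omega\frac{\Phi_n(x,|\nabla u|)}{\varphi_n(x,1)}\,dx-\int_\Omega a_n(x,\epsilon)u\,dx,
\]
whose Euler--Lagrange equation carries the forcing $a_n(x,\epsilon)$, obtains minimizers $u_n^+$ with uniform $W^{1,m}$ bounds exactly as in Theorem~\ref{thm_bounded_W1d}, extracts a uniform limit $u^+\in C(\bar\Omega)\cap W^{1,\infty}(\Omega)$ by compactness, and then verifies the viscosity inequalities for \eqref{Equ_limit_upper} by passing to the limit in the transformed equation; the key observation is that $a_n(x,\epsilon)/a_n(x,|\nabla\phi|)\to\infty$ whenever $|\nabla\phi|<\epsilon$, which forces the gradient constraint. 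No comparison principle is invoked anywhere.

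Your argument, by contrast, needs comparison to close, and this is where the gap lies. The ``mini comparison'' between a \emph{smooth strict} sub-barrier $C_y^-$ and a viscosity supersolution of \eqref{Equ_limit_upper} does work (use $C_y^-$ itself as test function at an interior maximum), so boundary continuity of $u^+$ is fine. But to get interior continuity---equivalently, to show $(u^+)^*=(u^+)_*$ so that $u^+$ is an actual viscosity solution---you either need a full comparison principle for \eqref{Equ_limit_upper} between non-smooth sub- and supersolutions, or you need the cone-comparison you invoke to propagate Lipschitz bounds. Cones are not sub/supersolutions of $-\Delta_{\infty,\Lambda}$ (the $\Lambda$-term does not vanish on them), so cone comparison does not hold for this operator without further work. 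And the full comparison principle the paper eventually proves (Theorem~\ref{Thm_comparison_limit_equ}) requires the extra structural assumption \eqref{assumption_Lambda}, which is not among the hypotheses of Theorem~\ref{Thm_solution_upper_lower}; worse, its proof uses the very existence result you are establishing, so invoking it would be circular.

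There is also a secondary issue in your barrier construction: making $g(y)+L'|x-y|+\mu|x-y|^2$ a strict supersolution of $-\Delta_{\infty,\Lambda}$ forces $|\mu|$ to dominate $\sup|\Lambda|$ over the gradient range $[L'-2|\mu|\operatorname{diam}(\Omega),\,L']$, but that supremum itself depends on $L'$ and $|\mu|$, and without a growth bound on $\Lambda(x,s)$ as $s\to\infty$ (not assumed here) this bootstrap need not close. The paper's approximation route avoids both difficulties: regularity comes for free from the uniform energy bounds on $u_n^+$, and no comparison is needed at this stage.
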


\begin{proof}
The proof is given for the upper equation $\left(  \ref{Equ_limit_upper}%
\right)  $ only, the proof for the lower equation $\left(
\ref{Equ_limit_lower}\right)  $ is analogous. We introduce the auxiliary
functionals,
\begin{equation}
\mathcal{E}_{n}^{+}\left(  u\right)  =
{\displaystyle\int_{\Omega}}
\dfrac{\Phi_{n}\left(  x,\left\vert \nabla u\right\vert \right)  }{\varphi
_{n}\left(  x,1\right)  }dx-
{\displaystyle\int_{\Omega}}
a_{n}\left(  x,\epsilon\right)  udx,\label{Thm_solution_upper_lower_n}%
\end{equation}
where $\varphi_{n}$, $\Phi_{n}$ satisfy all conditions in earlier sections
$\left(  \ref{assumption_phi_n}\right)  $, $\left(  \ref{assumption_Phi_n}%
\right)  $, $\left(  \ref{assumption_c_n}\right)  $ and%
\[
a_{n}\left(  x,s\right)  \triangleq\dfrac{\varphi_{n}\left(  x,s\right)
}{\varphi_{n}\left(  x,1\right)  }%
\]
satisfies $\left(  \ref{eq:assume_F_cont}\right)  $ (see Remark
\ref{rmk_operator_continuity}). Critical points of $\mathcal{E}_{n}^{+}\left(
\cdot\right)  $ satisfies the Euler equation%
\begin{equation}
-\operatorname{div}\left(  \dfrac{a_{n}\left(  x,\left\vert \nabla u\left(
x\right)  \right\vert \right)  }{\left\vert \nabla u\left(  x\right)
\right\vert }\nabla u\left(  x\right)  \right)  =a_{n}\left(  x,\epsilon
\right)  ,\text{ }x\in\Omega.\label{Thm_solution_upper_lower_eps}%
\end{equation}

\textbf{Step I}: We claim that the upper equation $\left(
\ref{Equ_limit_upper}\right)  $ is the limit of the Euler equations $\left(
\ref{Thm_solution_upper_lower_eps}\right)  $ in the sense that there is
subsequence $n_{k}$ such that each $u_{n_{k}}$ is a solution of $\left(
\ref{Thm_solution_upper_lower_eps}\right)  $ and $u_{n_{k}}$ converges
uniformly to a solution of the upper equation when $k$ is sent to infinity.
Comparing with the energy $E_{n}$ we have previously seen, $\mathcal{E}%
_{n}^{+}\left(  \cdot\right)  $ has an extra term $\int_{\Omega}a_{n}\left(
x,\epsilon\right)  udx$. But this does not pose additional difficulty, since
the extra term is uniformly bounded. Indeed, in view of $\left(
\ref{assumption_phi_n}\right)  $ and Lemma \ref{lemma_delta2_condition}, we
have%
\[
\epsilon^{p_{n}^{+}-1}\varphi_{n}\left(  x,1\right)  \leqslant\varphi
_{n}\left(  x,\epsilon\right)  \leqslant\epsilon^{p_{n}^{-}-1}\varphi
_{n}\left(  x,1\right)  \text{ for }x\in\bar{\Omega}.
\]
So%
\[
\lim_{n\rightarrow\infty}a_{n}\left(  x,\epsilon\right)  =0\text{ uniformly in
}x\in\bar{\Omega}.
\]
Similar to Theorem \ref{Thm_existence} and Theorem
\ref{thm_weaksol_viscositysol}, for each $n$, $\mathcal{E}_{n}^{+}\left(
\cdot\right)  $ has a unique minimizer $u_{n}^{+}\in C\left(  \bar{\Omega
}\right)  \cap W^{1,\Phi_{n}\left(  \cdot\right)  }\left(  \Omega\right)  $
which is also a viscosity solution of $\left(
\ref{Thm_solution_upper_lower_eps}\right)  $. In addition, there is $c\left(
\Omega,\mu,g\right)  >0$ such that, for all $m>d$ and $n$ for which $p_{n}%
^{-}\geqslant m$,
\begin{equation}
\left\Vert \nabla u_{n}^{+}\right\Vert _{L^{m}}\leqslant c\left(  \Omega
,\mu,g\right)  \text{.} \label{Thm_solution_upper_lower_Inq1}%
\end{equation}
Then proceeding as in Theorem \ref{thm_bounded_W1d}, we can find a subequence
of $u_{n}^{+}$ converging uniformly to some $u^{+}\in C\left(  \bar{\Omega
}\right)  \cap W^{1,\infty}\left(  \Omega\right)  .$ Moreover%
\[
\left\Vert \nabla u^{+}\right\Vert _{L^{\infty}}\leqslant c\left(  \Omega
,\mu,g\right)  \text{.}%
\]

\textbf{Step II}: Now we show that $u^{+}$ is a viscosity solution of the
upper equation $\left(  \ref{Equ_limit_upper}\right)  $. First we show that
$u^{+}$ is a viscosity supersolution. We already know from Step I that $u^{+}$
is continuous up to the boundary and equals $g$ there, so there is no need to
consider $x_{0}\in\partial\Omega$. Suppose that $u^{+}-\phi$ has a strict
local minimum at $x_{0}\in\Omega$ for some $\phi\in C^{2}\left(  \bar{\Omega
}\right)  $, $u^{+}\left(  x_{0}\right)  =\phi\left(  x_{0}\right)  $. We need
to show that%
\begin{equation}
\min\left\{  \left\vert \nabla\phi\right\vert -\epsilon,-\Delta_{\infty
,\Lambda}\phi\right\}  \geqslant0.\label{Thm_solution_upper_lower_Inq1_2}%
\end{equation}
Since $u_{n}$ converges to $u^{+}$ uniformly, there exists $x_{n}\rightarrow
x_{0}$ such that $u_{n}-\phi$ attains a local minimum at $x_{n}$. Note that
due to $\left(  \ref{Thm_solution_upper_lower_eps}\right)  $ and continuity of
the operator (see Remark \ref{rmk_operator_continuity}), we may assume
$\nabla\phi\left(  x_{n}\right)  \neq0$. Repeating some calculations that we
did right before Theorem \ref{thm_convergence_main}, we see that a viscosity
solution of $\left(  \ref{Thm_solution_upper_lower_eps}\right)  $ is also a
viscosity solution of the transformed equation similar to $\left(
\ref{general_quasilinear_transformed}\right)  $, i.e.%
\begin{align}
&  -\left\vert \nabla u\right\vert ^{2}\left\langle \frac{\frac{\partial
_{x}a_{n}\left(  x,\left\vert \nabla u\right\vert \right)  }{\left\vert \nabla
u\right\vert \partial_{s}a_{n}\left(  x,\left\vert \nabla u\right\vert
\right)  }}{1-\frac{a_{n}\left(  x,\left\vert \nabla u\right\vert \right)
}{\left\vert \nabla u\right\vert \partial_{s}a_{n}\left(  x,\left\vert \nabla
u\right\vert \right)  }},\nabla u\right\rangle -\left\vert \nabla u\right\vert
^{2}\left(  \frac{\left\vert \nabla u\right\vert \partial_{s}a_{n}\left(
x,\left\vert \nabla u\right\vert \right)  }{a_{n}\left(  x,\left\vert \nabla
u\right\vert \right)  }-1\right)  ^{-1}\Delta u-\Delta_{\infty}%
u\label{Thm_solution_upper_lower_eps_transformed}\\
&  =\frac{a_{n}\left(  x,\epsilon\right)  }{a_{n}\left(  x,\left\vert \nabla
u\right\vert \right)  }\left\vert \nabla u\right\vert ^{3}\left(
\frac{\left\vert \nabla u\right\vert \partial_{s}a_{n}\left(  x,\left\vert
\nabla u\right\vert \right)  }{a_{n}\left(  x,\left\vert \nabla u\right\vert
\right)  }-1\right)  ^{-1}.\nonumber
\end{align}
Therefore, since $u_{n}$ is a viscosity solution of $\left(
\ref{Thm_solution_upper_lower_eps}\right)  $, it is also a viscosity solution
of $\left(  \ref{Thm_solution_upper_lower_eps_transformed}\right)  $ and\ we
obtain%
\begin{align}
&  -\left\vert \nabla\phi\left(  x_{n}\right)  \right\vert ^{2}\left\langle
\frac{\frac{\partial_{x}a_{n}\left(  x_{n},\left\vert \nabla\phi\left(
x_{n}\right)  \right\vert \right)  }{\left\vert \nabla\phi\left(
x_{n}\right)  \right\vert \partial_{s}a_{n}\left(  x_{n},\left\vert \nabla
\phi\left(  x_{n}\right)  \right\vert \right)  }}{1-\frac{a_{n}\left(
x_{n},\left\vert \nabla\phi\left(  x_{n}\right)  \right\vert \right)
}{\left\vert \nabla\phi\left(  x_{n}\right)  \right\vert \partial_{s}%
a_{n}\left(  x_{n},\left\vert \nabla\phi\left(  x_{n}\right)  \right\vert
\right)  }},\nabla\phi\left(  x_{n}\right)  \right\rangle
\label{Thm_solution_upper_lower_Inq2}\\
&  -\left\vert \nabla\phi\left(  x_{n}\right)  \right\vert ^{2}\left(
\frac{\left\vert \nabla\phi\left(  x_{n}\right)  \right\vert \partial_{s}%
a_{n}\left(  x_{n},\left\vert \nabla\phi\left(  x_{n}\right)  \right\vert
\right)  }{a_{n}\left(  x_{n},\left\vert \nabla\phi\left(  x_{n}\right)
\right\vert \right)  }-1\right)  ^{-1}\Delta\phi\left(  x_{n}\right)
-\Delta_{\infty}\phi\left(  x_{n}\right)  \nonumber\\
&  \geqslant\frac{a_{n}\left(  x_{n},\epsilon\right)  }{a_{n}\left(
x,\left\vert \nabla\phi\left(  x_{n}\right)  \right\vert \right)  }\left\vert
\nabla\phi\left(  x_{n}\right)  \right\vert ^{3}\left(  \frac{\left\vert
\nabla\phi\right\vert \partial_{s}a_{n}\left(  x_{n},\left\vert \nabla
\phi\left(  x_{n}\right)  \right\vert \right)  }{a_{n}\left(  x_{n},\left\vert
\nabla\phi\left(  x_{n}\right)  \right\vert \right)  }-1\right)
^{-1}.\nonumber
\end{align}
By Lemma \ref{lemma_delta2_condition},
\begin{align}
&  \min\left\{  \left\vert \frac{\epsilon}{\left\vert \nabla\phi\left(
x_{n}\right)  \right\vert }\right\vert ^{p_{n}^{-}-1},\left\vert
\frac{\epsilon}{\left\vert \nabla\phi\left(  x_{n}\right)  \right\vert
}\right\vert ^{p_{n}^{+}-1}\right\}  \label{Thm_solution_upper_lower_Inq2_3}\\
&  \leqslant\frac{a_{n}\left(  x_{n},\epsilon\right)  }{a_{n}\left(
x,\left\vert \nabla\phi\left(  x_{n}\right)  \right\vert \right)  }\nonumber\\
&  \leqslant\max\left\{  \left\vert \frac{\epsilon}{\left\vert \nabla
\phi\left(  x_{n}\right)  \right\vert }\right\vert ^{p_{n}^{-}-1},\left\vert
\frac{\epsilon}{\left\vert \nabla\phi\left(  x_{n}\right)  \right\vert
}\right\vert ^{p_{n}^{+}-1}\right\}  .\nonumber
\end{align}
If $\left\vert \nabla\phi\left(  x\right)  \right\vert <\epsilon$, then for
some $\gamma\in\left(  0,1\right)  $, $\left\vert \nabla\phi\left(
x_{n}\right)  \right\vert \leqslant\gamma\epsilon<\epsilon$ for all $n$
sufficiently large. Then the right hand side of $\left(
\ref{Thm_solution_upper_lower_Inq2}\right)  $ diverges to infinity due to
$\left(  \ref{Thm_solution_upper_lower_Inq2_3}\right)  $. This leads to a
contradiction, since the left hand side of $\left(
\ref{Thm_solution_upper_lower_Inq2}\right)  $ is bounded due to $\left(
\ref{assumption_limit_1}\right)  $ and $\left(  \ref{assumption_limit_0}%
\right)  $. Therefore $\left\vert \nabla\phi\left(  x\right)  \right\vert
\geqslant\epsilon$ and $\left(  \ref{Thm_solution_upper_lower_Inq1_2}\right)
$ is true. To see that $u^{+}$ is a viscosity subsolution of the upper
equation. Suppose that $u^{+}-\phi$ has a strict local maximum at $x_{0}%
\in\Omega$ for some $\phi\in C^{2}\left(  \bar{\Omega}\right)  $,
$u^{+}\left(  x_{0}\right)  =\phi\left(  x_{0}\right)  $. We need to show that%
\begin{equation}
\min\left\{  \left\vert \nabla\phi\right\vert -\epsilon,-\Delta_{\infty
,\Lambda}\phi\right\}  \leqslant0.\label{Thm_solution_upper_lower_Inq3}%
\end{equation}
Since $u_{n}$ converges to $u^{+}$ uniformly, there exists $x_{n}\rightarrow
x_{0}$ such that $u_{n}-\phi$ attains a local minimum at $x_{n}$. As before,
since $u_{n}$ is a viscosity solution of $\left(
\ref{Thm_solution_upper_lower_eps}\right)  $, we have%
\begin{align}
&  -\left\vert \nabla\phi\left(  x_{n}\right)  \right\vert ^{2}\left\langle
\frac{\frac{\partial_{x}a_{n}\left(  x_{n},\left\vert \nabla\phi\left(
x_{n}\right)  \right\vert \right)  }{\left\vert \nabla\phi\left(
x_{n}\right)  \right\vert \partial_{s}a_{n}\left(  x_{n},\left\vert \nabla
\phi\left(  x_{n}\right)  \right\vert \right)  }}{1-\frac{a_{n}\left(
x_{n},\left\vert \nabla\phi\left(  x_{n}\right)  \right\vert \right)
}{\left\vert \nabla\phi\left(  x_{n}\right)  \right\vert \partial_{s}%
a_{n}\left(  x_{n},\left\vert \nabla\phi\left(  x_{n}\right)  \right\vert
\right)  }},\nabla\phi\left(  x_{n}\right)  \right\rangle
\label{Thm_solution_upper_lower_Inq4}\\
&  -\left\vert \nabla\phi\left(  x_{n}\right)  \right\vert ^{2}\left(
\frac{\left\vert \nabla\phi\left(  x_{n}\right)  \right\vert \partial_{s}%
a_{n}\left(  x_{n},\left\vert \nabla\phi\left(  x_{n}\right)  \right\vert
\right)  }{a_{n}\left(  x_{n},\left\vert \nabla\phi\left(  x_{n}\right)
\right\vert \right)  }-1\right)  ^{-1}\Delta\phi\left(  x_{n}\right)
-\Delta_{\infty}\phi\left(  x_{n}\right)  \nonumber\\
&  \leqslant\frac{a_{n}\left(  x_{n},\epsilon\right)  }{a_{n}\left(
x,\left\vert \nabla\phi\left(  x_{n}\right)  \right\vert \right)  }\left\vert
\nabla\phi\left(  x_{n}\right)  \right\vert ^{3}\left(  \frac{\left\vert
\nabla\phi\right\vert \partial_{s}a_{n}\left(  x_{n},\left\vert \nabla
\phi\left(  x_{n}\right)  \right\vert \right)  }{a_{n}\left(  x_{n},\left\vert
\nabla\phi\left(  x_{n}\right)  \right\vert \right)  }-1\right)
^{-1}.\nonumber
\end{align}
If $\left\vert \nabla\phi\left(  x\right)  \right\vert \leqslant\epsilon$ then
$\left(  \ref{Thm_solution_upper_lower_Inq3}\right)  $ is clearly true.
Otherwise by virtue of $\left(  \ref{Thm_solution_upper_lower_Inq2_3}\right)
$ and the assumption $\left(  \ref{assumption_limit_0}\right)  $, the right
hand side of $\left(  \ref{Thm_solution_upper_lower_Inq4}\right)  $ goes to
zero as $n\rightarrow\infty$ and $\left(  \ref{Thm_solution_upper_lower_Inq3}%
\right)  $ is also true. So $u^{+}$ is a supersolution.
\end{proof}

Before we can prove the comparison principle, we need additional assumptions
on the limit function $\Lambda\left(  x,s\right)  $ (ref. $\left(
\ref{assumption_limit_1}\right)  $).

Let%
\[
\Theta\left(  x,s\right)  =\frac{\Lambda\left(  x,s\right)  }{s}.
\]
We assume that there is $\lambda>0$ such that for all $s>0$,
\[
\sup_{x\in\bar{\Omega},\text{ }t\in\left[  s,s+1\right]  }\left\vert
\frac{\Theta\left(  x,t\right)  -\Theta\left(  x,s\right)  }{t-s}\right\vert
\leqslant\lambda\frac{1}{s}.
\]
This in particular implies that, for all $a\in\left(  1,2\right)  ,$%
\begin{equation}
\left\vert \Theta\left(  x,as\right)  -\Theta\left(  x,s\right)  \right\vert
\leqslant\lambda\left\vert a-1\right\vert \text{ for all }x\in\bar{\Omega
},\text{ }s>0. \label{assumption_Lambda}%
\end{equation}
Obviously, the inequality $\left(  \ref{assumption_Lambda}\right)  $ is true
for any function $\Theta\left(  x,s\right)  $ for which $\left\vert
\partial_{s}\Theta\left(  x,s\right)  \right\vert $, up to a constant
multiplier, is no more than $1/s$. This requirement roughly says that the
mapping $s\mapsto\Theta\left(  x,s\right)  $ grows slower than $\ln s.$

First introduce the following utility function from \cite[Lemma 2.6]%
{juutinen1999eigenvalue}.

\begin{lemma}
\label{lm_untility_func}Let $\alpha>0$, $A>1$. Then the function
\[
\zeta\left(  s\right)  =\frac{1}{\alpha}\ln\left(  1+A\left(  e^{\alpha
s}-1\right)  \right)
\]
possesses the following properties:

(a) $\zeta\left(  s\right)  \in C^{2}\left(  \left[  0,\infty\right)  \right)
,$ $\zeta\left(  0\right)  =0,\zeta^{\prime}\left(  s\right)  >1$ and
$\zeta^{\prime\prime}\left(  s\right)  <0$ for all $s\geqslant0;$

(b) $\zeta$ approximate the idendity function as $A\rightarrow1+,$%
\[
0<\zeta\left(  s\right)  -s<\frac{A-1}{\alpha},\text{ }0<\zeta^{\prime}\left(
s\right)  -1<A-1;
\]

(c) for all $s\geqslant0$,
\[
-\frac{\zeta^{\prime\prime}\left(  s\right)  }{\zeta^{\prime}\left(  s\right)
}=\alpha\left(  \zeta^{\prime}\left(  s\right)  -1\right)  =\alpha\frac
{A-1}{1+A\left(  e^{\alpha s}-1\right)  }.
\]

\end{lemma}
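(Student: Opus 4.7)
The proof is essentially an exercise in direct differentiation, since $\zeta$ is defined by an explicit closed-form formula. The plan is to first compute $\zeta'$ and $\zeta''$ in closed form, and then read off all three properties (a), (b), (c) from these formulas.

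First I would note that the argument of the logarithm, namely $1+A(e^{\alpha s}-1) = 1-A + Ae^{\alpha s}$, is strictly positive on $[0,\infty)$: at $s=0$ it equals $1$, and its derivative $\alpha A e^{\alpha s}>0$ makes it increasing. This gives $\zeta\in C^2([0,\infty))$ and $\zeta(0)=\alpha^{-1}\ln 1=0$. Straightforward differentiation yields
\[
\zeta'(s)=\frac{Ae^{\alpha s}}{1+A(e^{\alpha s}-1)},\qquad
\zeta''(s)=-\frac{\alpha(A-1)Ae^{\alpha s}}{\bigl(1+A(e^{\alpha s}-1)\bigr)^{2}}.
\]
Since $A>1$ and $\alpha>0$, we get $\zeta''(s)<0$ immediately. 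For $\zeta'(s)>1$, I would rewrite
\[
\zeta'(s)-1=\frac{Ae^{\alpha s}-1-A(e^{\alpha s}-1)}{1+A(e^{\alpha s}-1)}=\frac{A-1}{1+A(e^{\alpha s}-1)},
\]
which is strictly positive. Combined with $\zeta(0)=0$, this finishes (a).

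For (b), the identity $\zeta'(s)-1=(A-1)/\bigl(1+A(e^{\alpha s}-1)\bigr)$ together with the bound $1+A(e^{\alpha s}-1)\geqslant 1$ (valid since $s\geqslant 0$) gives $0<\zeta'(s)-1<A-1$ at once. For the bound on $\zeta(s)-s$, I would write
\[
\zeta(s)-s=\frac{1}{\alpha}\ln\frac{1+A(e^{\alpha s}-1)}{e^{\alpha s}}=\frac{1}{\alpha}\ln\!\bigl(1+(A-1)(1-e^{-\alpha s})\bigr),
\]
which is strictly positive because $(A-1)(1-e^{-\alpha s})>0$ for $s>0$, and is bounded above using $\ln(1+x)<x$ by $\alpha^{-1}(A-1)(1-e^{-\alpha s})<\alpha^{-1}(A-1)$.

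Finally, (c) is an algebraic identity. The expression for $\zeta'(s)-1$ derived above already gives the second equality $\alpha(\zeta'(s)-1)=\alpha(A-1)/\bigl(1+A(e^{\alpha s}-1)\bigr)$. For the first equality, dividing the explicit formula for $\zeta''$ by the explicit formula for $\zeta'$ cancels the factor $Ae^{\alpha s}$ and one copy of $1+A(e^{\alpha s}-1)$, leaving exactly $-\alpha(A-1)/\bigl(1+A(e^{\alpha s}-1)\bigr)$. No step of this proof is subtle; the only thing to be careful about is keeping the algebraic simplifications correct, and that is where I would concentrate attention.
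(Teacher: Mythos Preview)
Your proof is correct. The paper does not give its own proof of this lemma; it simply cites it from \cite[Lemma 2.6]{juutinen1999eigenvalue} as a utility fact, so there is nothing to compare your argument against beyond noting that a direct computation of $\zeta'$ and $\zeta''$ is exactly the natural (and only) way to verify such explicit properties.

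One very minor remark: at $s=0$ you have $\zeta'(0)-1=A-1$ and $\zeta(0)-0=0$, so the strict inequalities in part~(b) as stated in the lemma hold only for $s>0$; this is a harmless imprecision in the lemma's statement, not a defect in your argument, and it has no bearing on how the lemma is used later in the paper.
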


\begin{theorem}
\label{Thm_comparison_limit_equ}Assume that $\Lambda\left(  x,s\right)  $
satisfies $\left(  \ref{assumption_Lambda}\right)  $, $u\in C\left(
\bar{\Omega}\right)  \cap W^{1,\infty}\left(  \Omega\right)  $ is a
subsolution of the equation $\left(  \ref{Equ_limit}\right)  $ and $v\in
C\left(  \bar{\Omega}\right)  \cap W^{1,\infty}\left(  \Omega\right)  $ is a
supersolution of the upper equation $\left(  \ref{Equ_limit_upper}\right)  $.
Then%
\[
\max_{x\in\Omega}\left(  u-v\right)  \leqslant\max_{x\in\partial\Omega}\left(
u-v\right)  .
\]
Analogously assume that $\tilde{v}\in C\left(  \bar{\Omega}\right)  \cap
W^{1,\infty}\left(  \Omega\right)  $ is a subsolution of the lower equation
$\left(  \ref{Equ_limit_lower}\right)  $. Then%
\[
\max_{x\in\Omega}\left(  \tilde{v}-u\right)  \leqslant\max_{x\in\partial
\Omega}\left(  \tilde{v}-u\right)  .
\]

\end{theorem}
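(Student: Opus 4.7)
The approach combines Jensen's doubling-of-variables with Lindqvist's concave utility transformation. We focus on the first assertion, the second being symmetric (one instead transforms the supersolution $u$ to a strict supersolution $\tilde u=\zeta(u-\inf u)+\inf u$ and exploits that $\tilde v$, as subsolution of the lower equation, already satisfies $|\nabla\tilde v|\geq\epsilon$). Argue by contradiction: suppose $M:=\max_{\bar\Omega}(u-v)>M_0:=\max_{\partial\Omega}(u-v)$. Fix $A\in(1,2)$ and $\alpha>0$ to be chosen later, and put $\tilde v(x)=\zeta(v(x)-\inf_{\bar\Omega}v)+\inf_{\bar\Omega}v$ with $\zeta$ from Lemma \ref{lm_untility_func}. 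By Lemma \ref{lm_untility_func}(b), $\tilde v\geq v$ and $\tilde v\to v$ uniformly as $A\to 1+$, so for $A$ close to $1$ the maximum of $u-\tilde v$ is still attained at some interior point $\hat x$. A direct chain-rule computation ($\phi$ tests $\tilde v$ from below at $y$ iff $h^{-1}(\phi)$ tests $v$ from below, $h(s)=\zeta(s-\inf v)+\inf v$) shows that $\tilde v$ is a \emph{strict} supersolution: writing $q=\nabla\phi(y)$, $Y=D^2\phi(y)$, $s_\ast=v(y)-\inf v$, $\alpha_\ast=1/\zeta'(s_\ast)\in(1/A,1)$, and $\Theta(x,s):=\Lambda(x,s)/s$, one gets
\[
-\langle Yq,q\rangle-|q|^2\langle\Theta(y,\alpha_\ast|q|),q\rangle\;\geq\;-\frac{\zeta''(s_\ast)}{\zeta'(s_\ast)^2}|q|^4\;>\;0,
\]
and the same transformation propagates $|\nabla v|\geq\epsilon$ (viscosity) to $|\nabla\tilde v|\geq\epsilon$.

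Next, double the variables: $\Phi_j(x,y)=u(x)-\tilde v(y)-\tfrac{j}{2}|x-y|^2$, with maximizers $(x_j,y_j)$. Standard penalization theory gives $(x_j,y_j)\to(\hat x,\hat x)$, $j|x_j-y_j|^2\to 0$, and interior location for large $j$. Crandall--Ishii \cite{crandall1992user} produces symmetric $X_j\leq Y_j$ with $(p_j,X_j)\in\bar J^{2,+}u(x_j)$, $(p_j,Y_j)\in\bar J^{2,-}\tilde v(y_j)$, $p_j=j(x_j-y_j)$; here $\epsilon\leq|p_j|\leq\|\nabla u\|_{L^\infty}$. Writing the subsolution inequality for $u$ at $x_j$, the strict supersolution inequality for $\tilde v$ at $y_j$, subtracting, and using $\langle(X_j-Y_j)p_j,p_j\rangle\leq 0$ gives
\[
|p_j|^3\bigl|\Theta(x_j,|p_j|)-\Theta(y_j,\alpha_{\ast,j}|p_j|)\bigr|\;\geq\;-\frac{\zeta''(s_{\ast,j})}{\zeta'(s_{\ast,j})^2}|p_j|^4.
\]

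The main obstacle is that both sides are of order $A-1$: the strict gap gained from the concave transform is tiny and matched by the perturbation in $\Theta$'s second argument. The remedy is to take $\alpha$ large. Split by the triangle inequality: $|\Theta(x_j,|p_j|)-\Theta(y_j,\alpha_{\ast,j}|p_j|)|\leq\omega_j+\lambda(\zeta'(s_{\ast,j})-1)$, where $\omega_j:=|\Theta(x_j,|p_j|)-\Theta(y_j,|p_j|)|\to 0$ by the uniform continuity of $\Theta$ on the compact set $\bar\Omega\times[\epsilon,\|\nabla u\|_\infty]$ (via \eqref{assumption_Lambda_continuous}), and the second term is precisely the content of \eqref{assumption_Lambda} applied with $a=\zeta'(s_{\ast,j})\in(1,A)\subset(1,2)$. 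Using Lemma \ref{lm_untility_func}(c) to rewrite $-\zeta''(s_\ast)/\zeta'(s_\ast)^2=\alpha(\zeta'(s_\ast)-1)/\zeta'(s_\ast)$ and dividing through by $|p_j|^3$ yields
\[
\omega_j\;\geq\;\bigl(\zeta'(s_{\ast,j})-1\bigr)\Bigl[\frac{\alpha|p_j|}{\zeta'(s_{\ast,j})}-\lambda\Bigr].
\]
Now choose $\alpha>\lambda A/\epsilon$: the bracket is then bounded below by $\alpha\epsilon/A-\lambda>0$, and since $s_{\ast,j}\leq\mathrm{osc}_{\bar\Omega}(v)$, Lemma \ref{lm_untility_func}(c) gives the further fixed positive lower bound $\zeta'(s_{\ast,j})-1\geq(A-1)/[1+A(e^{\alpha\,\mathrm{osc}_{\bar\Omega}(v)}-1)]$. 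Hence $\omega_j$ is bounded below by a strictly positive constant for all large $j$, contradicting $\omega_j\to 0$ and completing the proof.
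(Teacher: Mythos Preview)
Your proof is correct and follows essentially the same strategy as the paper: transform the function carrying the gradient lower bound $|\nabla\cdot|\geq\epsilon$ via the concave utility $\zeta$ from Lemma~\ref{lm_untility_func}, then run the Crandall--Ishii doubling argument. The only organizational difference is that the paper first shows in a separate Step~I that $w=\zeta(v)$ is a strict supersolution of the \emph{original} operator, namely $-\Delta_{\infty,\Lambda}w\geq\delta$ for a fixed $\delta>0$, by absorbing the $s$-perturbation $|\Theta(x,\zeta'(\varrho)|\nabla\varrho|)-\Theta(x,|\nabla\varrho|)|\leq\lambda(\zeta'-1)$ at the level of the test function (same point $x$); the doubling step then only has to handle the $x$-variation of $\Lambda$, which is standard. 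You instead carry the perturbed term $\Theta(y_j,\alpha_{\ast,j}|p_j|)$ into the doubling and split it there into the $x$-variation $\omega_j\to 0$ and the $s$-variation controlled by \eqref{assumption_Lambda}. Both routes use the same ingredients ($|p_j|\geq\epsilon$ from the upper/lower auxiliary equation, Lipschitz upper bound on $|p_j|$, Lemma~\ref{lm_untility_func}(c), and assumption \eqref{assumption_Lambda}) and the same threshold $\alpha>\lambda/\epsilon$ up to the harmless factor $A\in(1,2)$. Your remark on the second assertion---transform the supersolution $u$ and draw the lower bound $|p_j|\geq\epsilon$ from the subsolution $\tilde v$ of the lower equation---is a legitimate variant; the paper would instead invoke the symmetry $u\mapsto -u$ to reduce directly to the first case.
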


\begin{proof}
We prove the theorem for the upper equation only. Suppose otherwise that%
\begin{equation}
\max_{x\in\Omega}\left(  u-v\right)  >\max_{x\in\partial\Omega}\left(
u-v\right)  . \label{Thm_comparison_limit_equ_Inq0}%
\end{equation}
Since $v$ is bounded, we may assume that $v>0$, otherwise replace $v$ with $v$
plus a large constant. The proof is completed in two steps.

\textbf{Step I}: We show that, with appropriate choice of constants,
$w=\zeta\left(  v\right)  $ is a strict supersolution of the equation $\left(
\ref{Equ_limit_upper}\right)  $ such that%
\begin{equation}
\max_{x\in\Omega}\left(  u-w\right)  >\max_{x\in\partial\Omega}\left(
u-w\right)  , \label{Thm_comparison_limit_equ_Inq1}%
\end{equation}
and there is $\delta>0$ such that%
\begin{equation}
-\Delta_{\infty,\Lambda}w\geqslant\delta>0.
\label{Thm_comparison_limit_equ_Inq2}%
\end{equation}
Let $\varrho\in C^{2}$ be a function which touches $v$ from below at $x_{0}%
\in\bar{\Omega}$, i.e. $v-\varrho$ has a local minimum at $x_{0}$ and
$v\left(  x_{0}\right)  =\varrho\left(  x_{0}\right)  .$ Then, since $\zeta\in
C^{2}\left(  \left[  0,\infty\right)  \right)  $ is strictly monotone,
$\zeta\circ\varrho\in C^{2}$ also touches $w$ from below. Since $v$ is a
solution of $\left(  \ref{Equ_limit_upper}\right)  $,%
\[
\left\vert \nabla\varrho\right\vert \geqslant\epsilon\text{ and}%
-\Delta_{\infty,\Lambda}\varrho\geqslant0.
\]
So%
\[
-\left(  \zeta^{\prime}\left(  \varrho\right)  \right)  ^{3}\Delta
_{\infty,\Lambda}\varrho\geqslant0,
\]
or%
\begin{align*}
&  -\left(  \zeta^{\prime}\left(  \varrho\right)  \right)  ^{3}\Delta_{\infty
}\varrho-\left(  \zeta^{\prime}\left(  \varrho\right)  \right)  ^{3}\left\vert
\nabla\varrho\right\vert \left\langle \Lambda\left(  x,\left\vert
\nabla\varrho\right\vert \right)  ,\nabla\varrho\right\rangle \\
&  =-\left(  \zeta^{\prime}\left(  \varrho\right)  \right)  ^{3}\Delta
_{\infty}\varrho-\left\vert \nabla\zeta\left(  \varrho\right)  \right\vert
\left\langle \zeta^{\prime}\left(  \varrho\right)  \Lambda\left(  x,\left\vert
\nabla\varrho\right\vert \right)  ,\nabla\zeta\left(  \varrho\right)
\right\rangle \\
&  \geqslant0.
\end{align*}
Straightforward calculations yield%
\[
-\Delta_{\infty}\zeta\left(  \varrho\right)  =-\left(  \zeta^{\prime}\left(
\varrho\right)  \right)  ^{3}\Delta_{\infty}\varrho-\left(  \zeta^{\prime
}\left(  \varrho\right)  \right)  ^{2}\zeta^{\prime\prime}\left(
\varrho\right)  \left\vert \nabla\varrho\right\vert ^{4}%
\]
and%
\begin{align*}
-\Delta_{\infty,\Lambda}\zeta\left(  \varrho\right)   &  =-\Delta_{\infty
}\zeta\left(  \varrho\right)  -\left\vert \nabla\zeta\left(  \varrho\right)
\right\vert \left\langle \Lambda\left(  x,\left\vert \nabla\zeta\left(
\varrho\right)  \right\vert \right)  ,\nabla\zeta\left(  \varrho\right)
\right\rangle \\
&  =-\left(  \zeta^{\prime}\left(  \varrho\right)  \right)  ^{3}\Delta
_{\infty}\varrho-\left(  \zeta^{\prime}\left(  \varrho\right)  \right)
^{2}\zeta^{\prime\prime}\left(  \varrho\right)  \left\vert \nabla
\varrho\right\vert ^{4}\\
&  -\left\vert \nabla\zeta\left(  \varrho\right)  \right\vert \left\langle
\Lambda\left(  x,\left\vert \nabla\zeta\left(  \varrho\right)  \right\vert
\right)  ,\nabla\zeta\left(  \varrho\right)  \right\rangle .
\end{align*}
It follows that%
\begin{align*}
-\Delta_{\infty,\Lambda}\zeta\left(  \varrho\right)   &  \geqslant\left\vert
\nabla\zeta\left(  \varrho\right)  \right\vert \left\langle \zeta^{\prime
}\left(  \varrho\right)  \Lambda\left(  x,\left\vert \nabla\varrho\right\vert
\right)  ,\nabla\zeta\left(  \varrho\right)  \right\rangle -\left\vert
\nabla\zeta\left(  \varrho\right)  \right\vert \left\langle \Lambda\left(
x,\left\vert \nabla\zeta\left(  \varrho\right)  \right\vert \right)
,\nabla\zeta\left(  \varrho\right)  \right\rangle \\
&  -\left(  \zeta^{\prime}\left(  \varrho\right)  \right)  ^{2}\zeta
^{\prime\prime}\left(  \varrho\right)  \left\vert \nabla\varrho\right\vert
^{4}\\
&  =\left\vert \nabla\zeta\left(  \varrho\right)  \right\vert \left\langle
\left\vert \nabla\zeta\left(  \varrho\right)  \right\vert \left(
\frac{\Lambda\left(  x,\left\vert \nabla\varrho\right\vert \right)
}{\left\vert \nabla\varrho\right\vert }-\frac{\Lambda\left(  x,\left\vert
\nabla\zeta\left(  \varrho\right)  \right\vert \right)  }{\left\vert
\nabla\zeta\left(  \varrho\right)  \right\vert }\right)  ,\nabla\zeta\left(
\varrho\right)  \right\rangle \\
&  -\left(  \zeta^{\prime}\left(  \varrho\right)  \right)  ^{2}\zeta
^{\prime\prime}\left(  \varrho\right)  \left\vert \nabla\varrho\right\vert
^{4}\\
&  \geqslant\left\vert \nabla\zeta\left(  \varrho\right)  \right\vert
^{3}\left(  -\left\vert \frac{\Lambda\left(  x,\left\vert \nabla
\varrho\right\vert \right)  }{\left\vert \nabla\varrho\right\vert }%
-\frac{\Lambda\left(  x,\left\vert \nabla\zeta\left(  \varrho\right)
\right\vert \right)  }{\left\vert \nabla\zeta\left(  \varrho\right)
\right\vert }\right\vert -\frac{\zeta^{\prime\prime}\left(  \varrho\right)
}{\zeta^{\prime}\left(  \varrho\right)  }\left\vert \nabla\varrho\right\vert
\right) \\
&  =\left(  \zeta^{\prime}\left(  \varrho\right)  \right)  ^{3}\left\vert
\nabla\varrho\right\vert ^{3}\left(  -\left\vert \Theta\left(  x,\zeta
^{\prime}\left(  \varrho\right)  \left\vert \nabla\varrho\right\vert \right)
-\Theta\left(  x,\left\vert \nabla\varrho\right\vert \right)  \right\vert
-\frac{\zeta^{\prime\prime}\left(  \varrho\right)  }{\zeta^{\prime}\left(
\varrho\right)  }\left\vert \nabla\varrho\right\vert \right)
\end{align*}
Using Lemma \ref{lm_untility_func} (c) and $\left(  \ref{assumption_Lambda}%
\right)  $,\ the terms in the bracket of the rightmost term of the inequality
are estimated as below%
\[
\left\vert \Theta\left(  x,\zeta^{\prime}\left(  \varrho\right)  \left\vert
\nabla\varrho\right\vert \right)  -\Theta\left(  x,\left\vert \nabla
\varrho\right\vert \right)  \right\vert \leqslant\lambda\left(  \zeta^{\prime
}\left(  \varrho\right)  -1\right)  ,
\]%
\[
-\frac{\zeta^{\prime\prime}\left(  \varrho\right)  }{\zeta^{\prime}\left(
\varrho\right)  }\left\vert \nabla\varrho\right\vert =\alpha\left(
\zeta^{\prime}\left(  \varrho\right)  -1\right)  \left\vert \nabla
\varrho\right\vert \geqslant\alpha\epsilon\left(  \zeta^{\prime}\left(
\varrho\right)  -1\right)  .
\]
At this point we fix $\alpha>\lambda/\epsilon$ so that the sign of the bracket
is positive. Then the previous inequality continues%
\begin{align*}
-\Delta_{\infty,\Lambda}\zeta\left(  \varrho\right)   &  \geqslant\left(
\zeta^{\prime}\left(  \varrho\right)  \right)  ^{3}\left\vert \nabla
\varrho\right\vert ^{3}\left(  -\lambda\left(  \zeta^{\prime}\left(
\varrho\right)  -1\right)  +\alpha\epsilon\left(  \zeta^{\prime}\left(
\varrho\right)  -1\right)  \right) \\
&  \geqslant\epsilon^{3}\left(  \zeta^{\prime}\left(  \varrho\right)  \right)
^{3}\left(  -\lambda+\alpha\epsilon\right)  \left(  \zeta^{\prime}\left(
\varrho\right)  -1\right)  .
\end{align*}
By Lemma \ref{lm_untility_func} (a), $\zeta^{\prime}\left(  \varrho\right)
>1,$ $\zeta^{\prime}\left(  \varrho\right)  $ is monotonously decreasing in
$\varrho$, we get%
\[
-\Delta_{\infty,\Lambda}\zeta\left(  \varrho\right)  \geqslant\epsilon
^{3}\left(  -\lambda+\alpha\epsilon\right)  \left(  \zeta^{\prime}\left(
\left\Vert v\right\Vert _{L^{\infty}}\right)  -1\right)  .
\]
Now let%
\[
\delta=\epsilon^{3}\left(  -\lambda+\alpha\epsilon\right)  \left(
\zeta^{\prime}\left(  \left\Vert v\right\Vert _{L^{\infty}}\right)  -1\right)
>0,
\]
then the viscosity inequality $\left(  \ref{Thm_comparison_limit_equ_Inq2}%
\right)  $ is verified. Furthermore, the inequality $\left(
\ref{Thm_comparison_limit_equ_Inq1}\right)  $ can be achieved, by virtue of
Lemma \ref{lm_untility_func} (b) and the assumption $\left(
\ref{Thm_comparison_limit_equ_Inq0}\right)  $, so long as $A$ is close to $1$.
Therefore with $\alpha>\lambda/\epsilon$ and $A>1$ fixed as above, the
function $w=\zeta\left(  v\right)  $ is indeed a strict supersolution of the
equation $\left(  \ref{Equ_limit}\right)  .$

\textbf{Step II}: After the preparation in Step I, the second step is to reach
a contradiction via the use of "Theorem of Sums" or "doubling argument". Since
the proof follows standard approach in the viscosity theory, we omit it.
\end{proof}

\begin{theorem}
\label{thm_uniqueness_limit_equ}Assume that the unit normalization condition
$\left(  \ref{assumption_PHI_1_n}\right)  $ holds. Then there are viscosity
solutions $u^{-},$ $u$ and $u^{+}$ of $\left(  \ref{Equ_limit_lower}\right)
$, $\left(  \ref{Equ_limit}\right)  $ and $\left(  \ref{Equ_limit_upper}%
\right)  $ respectively such that $u^{-},$ $u,$ $u^{+}\in C\left(  \bar
{\Omega}\right)  \cap W^{1,\infty}\left(  \Omega\right)  $ and%
\[
u^{-}\leqslant u\leqslant u^{+}\text{ for all }x\in\Omega.
\]
Moreover, there is $0<\kappa<1$ such that%
\[
\left\vert u^{+}-u^{-}\right\vert \leqslant4\left(  1+\left\vert
\Omega\right\vert \right)  diam\left(  \Omega\right)  \frac{\epsilon}{\kappa
}\text{ for all }x\in\Omega.
\]

\end{theorem}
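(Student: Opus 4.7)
The plan is to establish the statement in two stages: existence and ordering via the results developed so far, and the quantitative estimate via the approximation scheme used to construct $u^\pm$.

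First I would obtain existence by applying Theorem~\ref{Thm_solution_upper_lower} to produce $u^\pm\in C(\bar\Omega)\cap W^{1,\infty}(\Omega)$, and Theorem~\ref{thm_convergence_main_general} to produce a viscosity solution $u\in C(\bar\Omega)\cap W^{1,\infty}(\Omega)$ of the limit equation~(\ref{Equ_limit}); all three take the common boundary value $g$. For the ordering, observe that the subsolution condition satisfied by $u^-$ for the lower equation~(\ref{Equ_limit_lower}) reads $\max\{\epsilon-|\nabla\phi|,-\Delta_{\infty,\Lambda}\phi\}\leq 0$ at the relevant maximum points, which in particular implies $-\Delta_{\infty,\Lambda}\phi\leq 0$; hence $u^-$ is a subsolution of the limit equation. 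Symmetrically, $u^+$ is a supersolution of both the upper and the limit equations. Applying Theorem~\ref{Thm_comparison_limit_equ} first with the pair $(u^-,u)$ (subsolution of lower, solution of limit) and then with $(u,u^+)$ (subsolution of limit, supersolution of upper), and using $u^- = u = u^+ = g$ on $\partial\Omega$, yields $u^-\leq u\leq u^+$ throughout $\Omega$.

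For the quantitative bound, I would revisit the approximating minimizers $u_n^\pm\in g+W_0^{1,\Phi_n(\cdot)}(\Omega)$ of the functionals
\[
\mathcal{E}_n^\pm(u)=\int_\Omega\frac{\Phi_n(x,|\nabla u|)}{\varphi_n(x,1)}\,dx\mp\int_\Omega a_n(x,\epsilon)\,u\,dx,
\]
so that $u_n^\pm$ satisfies $-\mathrm{div}\,A_n(x,\nabla u_n^\pm)=\pm a_n(x,\epsilon)$ with $A_n(x,\xi)=a_n(x,|\xi|)\xi/|\xi|$, and converges uniformly to $u^\pm$ along subsequences (Step~I of Theorem~\ref{Thm_solution_upper_lower}). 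Since $w_n:=u_n^+-u_n^-\in W_0^{1,\Phi_n(\cdot)}(\Omega)$, testing the subtracted Euler equations with $w_n$ gives
\[
\int_\Omega\langle A_n(x,\nabla u_n^+)-A_n(x,\nabla u_n^-),\nabla w_n\rangle\,dx = 2\int_\Omega a_n(x,\epsilon)\,w_n\,dx.
\]
The monotonicity inequality from the appendix (Lemma~\ref{inequality_main_x}) bounds the left-hand side below by $\kappa\int_\Omega \Phi_n(x,|\nabla w_n|)/\varphi_n(x,1)\,dx$ for some $\kappa\in(0,1)$, while Lemma~\ref{lemma_delta2_condition} combined with the unit normalization~(\ref{assumption_PHI_1_n}) yields $a_n(x,\epsilon)\leq\epsilon^{p_n^--1}\to 0$.

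From this starting estimate I would chain the modular--Luxemburg equivalence of Lemma~\ref{lm_modulo_finite}, the Poincar\'e inequality of Theorem~\ref{thm_poincare_inequality}, and the embedding $L^{\Phi_n(\cdot)}(\Omega)\hookrightarrow L^m(\Omega)$ with constant at most $4(1+|\Omega|)$ appearing in the proof of Theorem~\ref{thm_bounded_inW1d_all_g}; finally the classical Poincar\'e bound $\|w_n\|_{L^m}\leq\mathrm{diam}(\Omega)\|\nabla w_n\|_{L^m}$ and letting $m\to\infty$ (using $p_n^-\to\infty$ from (\ref{assumption_c_n})) produce $\|w_n\|_{L^\infty(\Omega)}\leq 4(1+|\Omega|)\,\mathrm{diam}(\Omega)\,\epsilon/\kappa$, which passes to the limit $n\to\infty$ to yield the desired estimate for $u^+-u^-$. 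The hardest step will be extracting the clean linear-in-$\epsilon$ scaling: a naive energy estimate produces a power $\epsilon^{(p_n^--1)/p_n^+}$, so one must separate the regimes $\|\nabla w_n\|_{L^{\Phi_n(\cdot)}}\gtrless 1$ in Lemma~\ref{lm_modulo_finite} and exploit the Harnack-type condition~(\ref{assumption_c_n}) to show both regimes are compatible with the stated constant. This is precisely where the uniformity of the appendix constant $\kappa$ with respect to $n$ becomes essential.
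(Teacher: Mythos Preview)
Your quantitative argument is essentially the paper's: subtract the Euler--Lagrange identities for $u_n^\pm$, test with $w_n=u_n^+-u_n^-\in W_0^{1,\Phi_n(\cdot)}(\Omega)$, apply the appendix monotonicity inequality, and pass to $L^m$ via the embedding with constant $4(1+|\Omega|)$. Two technical points need adjustment. First, Lemma~\ref{inequality_main_x} only delivers a constant $C_2(p_n^-,p_n^+)$ that may degenerate as $n\to\infty$; you must use the explicit form~(\ref{inequality_main_inq2}) of Lemma~\ref{inequality_main}, in which the universal $\kappa$ sits \emph{inside} the argument of $\Phi_n$, so that the lower bound reads $\tfrac{p_n^-}{4}\,\Phi_n\!\left(x,\kappa|\nabla w_n|\right)$ rather than $\kappa\,\Phi_n(x,|\nabla w_n|)$---this is exactly what lets $\kappa$ survive the limit and appear linearly in the final bound. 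Second, your order of limits is inverted: for fixed $n$ one only has $w_n\in W^{1,p_n^-}(\Omega)$, so $\|w_n\|_{L^\infty}$ is not directly available by sending $m\to\infty$. The paper instead fixes $m$, sends $n\to\infty$ using the weak $L^m$-convergence of $\nabla w_n$ to $\nabla(u^+-u^-)$ to obtain $\|\kappa\,\nabla(u^+-u^-)\|_{L^m}\le 4(1+|\Omega|)\epsilon$, and only then lets $m\to\infty$; the diameter factor enters at the very end from $u^+-u^-\in W_0^{1,\infty}(\Omega)$.

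Your ordering argument takes a genuinely different route. You appeal to Theorem~\ref{Thm_comparison_limit_equ} at the limit level; this is valid but imports the structural hypothesis~(\ref{assumption_Lambda}) on $\Lambda$, which is not among the stated assumptions of Theorem~\ref{thm_uniqueness_limit_equ}. The paper avoids this by comparing at the approximating level: $u_n^+$ is a weak supersolution and $u_n^-$ a weak subsolution of the equation solved by $u_n$, so the standard comparison principle for monotone operators gives $u_n^-\le u_n\le u_n^+$ for every $n$, and the inequality persists under the uniform limit. This secures the ordering with no condition on $\Lambda$ and without invoking the delicate limit-level comparison.
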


\begin{proof}
Suppose that $n$ is large so that $p_{n}^{-}>d$. Let $u_{n}^{+}\in
g+W_{0}^{1,\Phi_{n}\left(  \cdot\right)  }$ be the unique weak solution of%
\begin{equation}
-\operatorname{div}\left(  \dfrac{a_{n}\left(  x,\left\vert \nabla
u\right\vert \right)  }{\left\vert \nabla u\right\vert }\nabla u\right)
=a_{n}\left(  x,\epsilon\right)  ,x\in\Omega,
\label{thm_uniqueness_limit_equ_1}%
\end{equation}
$u_{n}^{-}\in g+W_{0}^{1,\Phi_{n}\left(  \cdot\right)  }$ the unique weak
solution of%
\begin{equation}
-\operatorname{div}\left(  \dfrac{a_{n}\left(  x,\left\vert \nabla
u\right\vert \right)  }{\left\vert \nabla u\right\vert }\nabla u\right)
=-a_{n}\left(  x,\epsilon\right)  ,x\in\Omega,
\label{thm_uniqueness_limit_equ_2}%
\end{equation}
and $u_{n}\in g+W_{0}^{1,\Phi_{n}\left(  \cdot\right)  }$ the unique weak
solution of%
\begin{equation}
-\operatorname{div}\left(  \dfrac{a_{n}\left(  x,\left\vert \nabla
u\right\vert \right)  }{\left\vert \nabla u\right\vert }\nabla u\right)
=0,x\in\Omega. \label{thm_uniqueness_limit_equ_3}%
\end{equation}
We have seen that $u_{n}^{+}$, $u_{n}^{-}$ and $u_{n}$ are also continuous
viscosity solutions of their respective equations, and along a subsequence
they converge uniformly, say to $u^{+}$, $u^{-}$ and $u$ respectively.
Furthermore, all of $u_{n}^{+}$, $u_{n}^{-}$ and $u_{n}$ assume the same
Dirichlet boundary data $g$, so $u^{+}$, $u^{-}$ and $u$ all equal to $g$ on
the boundary $\partial\Omega$. Recalling $\left(
\ref{Thm_solution_upper_lower_Inq1}\right)  ,$ there is a constant $c\left(
\Omega,\mu,g\right)  >0$ such that, for all $m>d$ and $n$ for which $p_{n}%
^{-}\geqslant m$,
\begin{equation}
\left\Vert \nabla u_{n}^{-}\right\Vert _{L^{m}},\left\Vert \nabla u_{n}%
^{+}\right\Vert _{L^{m}}\leqslant c\left(  \Omega,\mu,g\right)  \text{.}
\label{thm_uniqueness_limit_equ_4}%
\end{equation}
This in particular implies as earlier that both $u^{+}$ and $u^{-}$ belongs to
$C\left(  \bar{\Omega}\right)  \cap W^{1,\infty}\left(  \Omega\right)  $.
Since $u_{n}^{+}$ is a strict viscosity supersolution of $\left(
\ref{thm_uniqueness_limit_equ_3}\right)  $, and $u_{n}^{-}$ is a strict
viscosity subsolution of $\left(  \ref{thm_uniqueness_limit_equ_3}\right)  $,
we can employ the classical comparison principle to conclude that%
\[
u_{n}^{-}\leqslant u_{n}\leqslant u_{n}^{+}\text{ for all }x\in\Omega.
\]
Hence%
\[
u^{-}\leqslant u\leqslant u^{+}\text{ for all }x\in\Omega.
\]
Since $u_{n}^{+}-u_{n}^{-}\in W_{0}^{1,\Phi_{n}\left(  \cdot\right)  }$, we
can use it as a test function to multiply both $\left(
\ref{thm_uniqueness_limit_equ_1}\right)  $ and $\left(
\ref{thm_uniqueness_limit_equ_2}\right)  $, and then integrate by parts. This
leads us to%
\[
\int_{\Omega}\dfrac{a_{n}\left(  x,\left\vert \nabla u_{n}^{+}\right\vert
\right)  }{\left\vert \nabla u_{n}^{+}\right\vert }\left\langle \nabla
u_{n}^{+},\nabla u_{n}^{+}-\nabla u_{n}^{-}\right\rangle dx-\int_{\Omega}%
a_{n}\left(  x,\epsilon\right)  \left(  u_{n}^{+}-u_{n}^{-}\right)  dx=0,
\]
and%
\[
\int_{\Omega}\dfrac{a_{n}\left(  x,\left\vert \nabla u_{n}^{-}\right\vert
\right)  }{\left\vert \nabla u_{n}^{-}\right\vert }\left\langle \nabla
u_{n}^{-},\nabla u_{n}^{+}-\nabla u_{n}^{-}\right\rangle dx+\int_{\Omega}%
a_{n}\left(  x,\epsilon\right)  \left(  u_{n}^{+}-u_{n}^{-}\right)  dx=0.
\]
Now subtracting the latter from the former and using Lemma
\ref{inequality_main}, we obtain, for some constant $0<\kappa<1$ independent
of $n$, that
\begin{align*}
&  \min\left\{  \kappa^{-p_{n}^{-}},\frac{p_{n}^{-}}{4}\right\}  \int_{\Omega
}\Phi_{n}\left(  x,\kappa\left\vert \nabla u_{n}^{+}-\nabla u_{n}%
^{-}\right\vert \right)  dx\\
&  \leqslant2\int_{\Omega}a_{n}\left(  x,\epsilon\right)  \left(  u_{n}%
^{+}-u_{n}^{-}\right)  dx\\
&  =\frac{2}{\epsilon}\int_{\Omega}\frac{\epsilon\varphi_{n}\left(
x,\epsilon\right)  }{\varphi_{n}\left(  x,1\right)  }\left(  u_{n}^{+}%
-u_{n}^{-}\right)  dx\\
&  \leqslant\frac{2}{\epsilon}p_{n}^{+}\int_{\Omega}\frac{\Phi_{n}\left(
x,\epsilon\right)  }{\varphi_{n}\left(  x,1\right)  }\left(  u_{n}^{+}%
-u_{n}^{-}\right)  dx\\
&  \leqslant\frac{2}{\epsilon}p_{n}^{+}\int_{\Omega}\frac{\Phi_{n}\left(
x,\epsilon\right)  }{\varphi_{n}\left(  x,1\right)  }\left(  u_{n}^{+}%
-u_{n}^{-}\right)  dx.
\end{align*}
Note for large $n$, $\kappa^{-p_{n}^{-}}>p_{n}^{-}/4$. By the unit
normalization condition $\left(  \ref{assumption_PHI_1_n}\right)  $ and
$\left(  \ref{assumption_Phi_n}\right)  $, $\varphi_{n}\left(  x,1\right)
\geqslant p_{n}^{-}$. Therefore%
\[
\int_{\Omega}\Phi_{n}\left(  x,\kappa\left\vert \nabla u_{n}^{+}-\nabla
u_{n}^{-}\right\vert \right)  dx\leqslant\frac{8}{\epsilon}\frac{p_{n}^{+}%
}{p_{n}^{-2}}\int_{\Omega}\Phi_{n}\left(  x,\epsilon\right)  \left(  u_{n}%
^{+}-u_{n}^{-}\right)  dx.
\]
Recalling that $u_{n}^{+}\in C\left(  \bar{\Omega}\right)  $ (resp. $u_{n}%
^{-}\in C\left(  \bar{\Omega}\right)  $) converges uniformly to $u^{+}$ (resp.
$u^{-}$), we see that the sequence $\left\Vert u_{n}^{+}-u_{n}^{-}\right\Vert
_{L^{\infty}}$ is bounded by some constant $C>0.$ It follows that%
\begin{align*}
\int_{\Omega}\Phi_{n}\left(  x,\kappa\left\vert \nabla u_{n}^{+}-\nabla
u_{n}^{-}\right\vert \right)  dx  &  \leqslant\frac{8}{\epsilon}\frac
{p_{n}^{+}}{p_{n}^{-2}}\left\Vert u_{n}^{+}-u_{n}^{-}\right\Vert _{L^{\infty}%
}\int_{\Omega}\Phi_{n}\left(  x,\epsilon\right)  dx\\
&  \leqslant8C\frac{p_{n}^{+}}{p_{n}^{-2}}\left\vert \Omega\right\vert
\epsilon^{p_{n}^{-}-1}.
\end{align*}
Invoking Lemma \ref{lm_modulo_finite}, we obtain%
\[
\min\left\{  \left\Vert \kappa\left\vert \nabla u_{n}^{+}-\nabla u_{n}%
^{-}\right\vert \right\Vert _{L^{\Phi_{n}\left(  \cdot\right)  }}^{p_{n}^{-}%
},\left\Vert \kappa\left\vert \nabla u_{n}^{+}-\nabla u_{n}^{-}\right\vert
\right\Vert _{L^{\Phi_{n}\left(  \cdot\right)  }}^{p_{n}^{+}}\right\}
\leqslant8C\epsilon^{p_{n}^{-}-1}\frac{p_{n}^{+}}{p_{n}^{-2}}\left\vert
\Omega\right\vert .
\]
Hence%
\[
\left\Vert \kappa\left\vert \nabla u_{n}^{+}-\nabla u_{n}^{-}\right\vert
\right\Vert _{L^{\Phi_{n}\left(  \cdot\right)  }}\leqslant\max\left\{  \left(
8C\epsilon^{p_{n}^{-}-1}\frac{p_{n}^{+}}{p_{n}^{-2}}\left\vert \Omega
\right\vert \right)  ^{\frac{1}{p_{n}^{-}}},\left(  8C\epsilon^{p_{n}^{-}%
-1}\frac{p_{n}^{+}}{p_{n}^{-2}}\left\vert \Omega\right\vert \right)
^{\frac{1}{p_{n}^{+}}}\right\}
\]
Using the embedding inequality $\left(  \ref{thm_bounded_inW1d_all_g_Inq1}%
\right)  ,$ we have, for all $m\geqslant1$ and $n$ for which $p_{n}%
^{-}\geqslant m$, that
\[
\left\Vert \kappa\left\vert \nabla u_{n}^{+}-\nabla u_{n}^{-}\right\vert
\right\Vert _{L^{m}}\leqslant4\left(  1+\left\vert \Omega\right\vert \right)
\left\Vert \kappa\left\vert \nabla u_{n}^{+}-\nabla u_{n}^{-}\right\vert
\right\Vert _{L^{\Phi_{n}\left(  \cdot\right)  }}.
\]
In view of $\left(  \ref{thm_uniqueness_limit_equ_4}\right)  $, we may assume
up to a subsequence that%
\[
\left\{  \nabla u_{n}^{+}-\nabla u_{n}^{-}:n\text{ satisfies }p_{n}%
^{-}\geqslant m\right\}
\]
is weakly convergent in $L^{m}$. It is readily seen that the weak limit is
$\nabla u^{+}-\nabla u^{-}$. So\ sending $n\rightarrow\infty$ yields%
\[
\left\Vert \kappa\left\vert \nabla u^{+}-\nabla u^{-}\right\vert \right\Vert
_{L^{m}}\leqslant4\left(  1+\left\vert \Omega\right\vert \right)  \epsilon.
\]
Lastly sending $m\rightarrow\infty$ gives us%
\[
\left\Vert \nabla u^{+}-\nabla u^{-}\right\Vert _{L^{\infty}}\leqslant4\left(
1+\left\vert \Omega\right\vert \right)  \frac{\epsilon}{\kappa}.
\]
The conclusion then follows.
\end{proof}

\begin{theorem}
\label{thm_limit_equ_unique_main}Let $\epsilon>0$, $u^{-}$, $u$ and $^{+}$ be
respectively solutions of $\left(  \ref{Equ_limit_lower}\right)  ,$ $\left(
\ref{Equ_limit}\right)  $ and $\left(  \ref{Equ_limit_upper}\right)  $
obtained in Theorem \ref{thm_uniqueness_limit_equ}. Then for an arbitrary
viscosity solution $u_{\infty}\in C\left(  \bar{\Omega}\right)  $\ of $\left(
\ref{Equ_limit}\right)  $, it holds that%
\[
u^{-}\leqslant u_{\infty}\leqslant u^{+}\text{ for all }x\in\Omega.
\]
In particular, $\left(  \ref{Equ_limit}\right)  $ has a unique continuous
viscosity solution which equals $u$.
\end{theorem}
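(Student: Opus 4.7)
The plan is to sandwich any continuous viscosity solution $u_\infty$ of (\ref{Equ_limit}) between the auxiliary functions $u^-$ and $u^+$ via the weak comparison principle of Theorem \ref{Thm_comparison_limit_equ}, and then to let $\epsilon\downarrow 0$ in the gap estimate of Theorem \ref{thm_uniqueness_limit_equ} to force $u_\infty=u$.

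First I would observe that a continuous viscosity solution of (\ref{Equ_limit}) is simultaneously a viscosity subsolution and a viscosity supersolution of that equation, so Theorem \ref{Thm_comparison_limit_equ} can be applied twice. Pairing $u_\infty$ (subsolution of (\ref{Equ_limit})) with $u^+$ (supersolution of the upper equation (\ref{Equ_limit_upper})), and noting that $u_\infty,u^+\in C(\bar\Omega)$ both take the boundary value $g$ on $\partial\Omega$, the first half of Theorem \ref{Thm_comparison_limit_equ} gives
$$\max_{\bar\Omega}(u_\infty-u^+)\le \max_{\partial\Omega}(u_\infty-u^+)=0,$$
so $u_\infty\le u^+$ on $\Omega$. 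Symmetrically, pairing $u^-$ (subsolution of the lower equation (\ref{Equ_limit_lower})) with $u_\infty$ (supersolution of (\ref{Equ_limit})) and invoking the second half of Theorem \ref{Thm_comparison_limit_equ} yields $u^-\le u_\infty$. This establishes the sandwich $u^-\le u_\infty\le u^+$ asserted in the first part of the theorem.

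For the uniqueness, the same argument applies verbatim to $u$ itself, since $u$ is a continuous viscosity solution of (\ref{Equ_limit}) (obtained via Theorem \ref{thm_convergence_main_general} and appearing in Theorem \ref{thm_uniqueness_limit_equ}); hence $u^-\le u\le u^+$ as well. Subtracting the two sandwiches and invoking Theorem \ref{thm_uniqueness_limit_equ},
$$|u_\infty-u|\le u^+-u^-\le 4(1+|\Omega|)\operatorname{diam}(\Omega)\frac{\epsilon}{\kappa}.$$
Crucially, the right-hand side depends on $\epsilon$ but $u_\infty$ and $u$ do not, and the construction of $u^\pm$ in Theorem \ref{Thm_solution_upper_lower} can be carried out for any $\epsilon>0$. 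Therefore the above estimate holds for every $\epsilon>0$, and letting $\epsilon\downarrow 0$ forces $u_\infty\equiv u$, proving the uniqueness claim.

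The only genuinely delicate point, and hence the main obstacle, is a regularity mismatch: Theorem \ref{Thm_comparison_limit_equ} is stated for pairs in $C(\bar\Omega)\cap W^{1,\infty}(\Omega)$, whereas the generic $u_\infty$ here is only assumed to be continuous. Inspecting the proof of Theorem \ref{Thm_comparison_limit_equ}, however, the Lipschitz regularity is used only on the strict supersolution/subsolution side (through $w=\zeta(v)$ and the inequality $|\nabla\varrho|\ge\epsilon$ at test points); the doubling argument of Step~II treats the other function merely through its continuity and the comparison of its test jets. Since $u^\pm$ are Lipschitz by construction (Theorem \ref{Thm_solution_upper_lower}), the comparison principle genuinely applies to the pairs $(u_\infty,u^+)$ and $(u^-,u_\infty)$ even when $u_\infty$ is only in $C(\bar\Omega)$, and no additional regularity of $u_\infty$ is required.
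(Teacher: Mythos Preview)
Your proof is correct and follows essentially the same route as the paper: apply Theorem~\ref{Thm_comparison_limit_equ} to sandwich $u_\infty$ between $u^-$ and $u^+$, combine with the sandwich $u^-\le u\le u^+$ already recorded in Theorem~\ref{thm_uniqueness_limit_equ}, invoke the gap estimate there, and let $\epsilon\downarrow 0$. Your explicit discussion of the regularity mismatch (that Theorem~\ref{Thm_comparison_limit_equ} is stated for $C(\bar\Omega)\cap W^{1,\infty}(\Omega)$ while $u_\infty$ is only assumed continuous, and that the Lipschitz hypothesis is in fact only used on the $u^\pm$ side) is a point the paper's proof passes over in silence, so your version is, if anything, more careful.
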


\begin{proof}
Since $u^{-},$ $u_{\infty},$ $u^{+}\in C\left(  \bar{\Omega}\right)  $, they
all equal $g$ on the boundary $\partial\Omega$. By comparison, i.e. Theorem
\ref{Thm_comparison_limit_equ},%
\[
u^{-}\leqslant u_{\infty}\leqslant u^{+}\text{ for all }x\in\Omega.
\]
The same inequality holds also for $u$ by Theorem
\ref{thm_uniqueness_limit_equ}. Therefore using Theorem
\ref{Thm_comparison_limit_equ} again, we get%
\[
\left\vert u-u_{\infty}\right\vert \leqslant4\left(  1+\left\vert
\Omega\right\vert \right)  diam\left(  \Omega\right)  \frac{\epsilon}{\kappa
}\text{ for all }x\in\Omega.
\]
Since $\epsilon>0$ is arbitrary in our construction, this shows that $u$ is
the unique viscosity solution of $\left(  \ref{Equ_limit}\right)  .$
\end{proof}

\section{\label{sec_Poincare}Poincare's Inequality for Discontinuous
$\Phi$}

We already seen that Poincare's inequality, i.e. Theorem
\ref{thm_poincare_inequality}, holds for generalized $\Phi$-functions under
local continuity condition. We shall show that, in cases where the local
continuity condition $\left(  \ref{PHI_condition_A1}\right)  $ fails, a
version of Poincare's inequality still holds, however, under the following
jump condition.

\begin{definition}
\label{def_p_jump}Let $\Phi:$ $\Omega\times\left[  0,\infty\right)
\longmapsto\mathbb{R}$ be a generalized $\Phi$-function. We write for a set
$A$,%
\begin{equation}
p_{\Phi,A}^{-}=\inf_{y\in A\cap\Omega\text{, }s>0}\frac{s\varphi\left(
y,s\right)  }{\Phi\left(  y,s\right)  }\text{, }p_{\Phi,A}^{+}=\sup_{y\in
A\cap\Omega\text{, }s>0}\frac{s\varphi\left(  y,s\right)  }{\Phi\left(
y,s\right)  }. \label{def_p_jump_inq1}%
\end{equation}
The subscript $A$ is omitted if $A=\Omega$. $\Phi$ satisfies the jump
condition if there is a $\delta>0$ such that for each $x\in\Omega,$ either%
\[
p_{\Phi,B_{\delta}\left(  x\right)  }^{-}\geqslant d
\]
or%
\[
p_{\Phi,B_{\delta}\left(  x\right)  }^{-}<d\text{, }p_{\Phi,B_{\delta}\left(
x\right)  }^{+}\leqslant p_{\Phi,B_{\delta}\left(  x\right)  }^{-\ast},
\]
hereafter, for an arbitrary $0<p<d$,
\[
p^{\ast}=\dfrac{dp}{d-p}.
\]

\end{definition}

Note that by the definition of the generalized $\Phi$-function, $1<p_{\Phi
}^{-}\leqslant p_{\Phi,A}^{-}\leqslant p_{\Phi,A}^{+}\leqslant p_{\Phi}%
^{+}<\infty$.

\begin{lemma}
\label{Lm_orlicz_embedding}Let $\Omega\subset\mathbb{R}^{d}$ be bounded.
$\Phi$, $\Psi:$ $\Omega\times\left[  0,\infty\right)  \longmapsto\mathbb{R}$
are generalized $\Phi$-functions satisfying the normalization condition
$\left(  \ref{assumption_PHI_1}\right)  $ with constant $\beta\in\left(
0,1\right]  $. If $p_{\Phi}^{+}\leqslant p_{\Psi}^{-}$, then $L^{\Psi\left(
\cdot\right)  }\left(  \Omega\right)  \hookrightarrow L^{\Phi\left(
\cdot\right)  }\left(  \Omega\right)  $, i.e. there is a constant $c\left(
\Omega,d,p_{\Phi}^{+},p_{\Psi}^{-},\beta\right)  $ such that for $u\in
L^{\Psi\left(  \cdot\right)  }\left(  \Omega\right)  $,%
\[
\left\Vert u\right\Vert _{L^{\Phi\left(  \cdot\right)  }}\leqslant c\left(
\Omega,d,p_{\Phi}^{+},p_{\Psi}^{-},\beta\right)  \left\Vert u\right\Vert
_{L^{\Psi\left(  \cdot\right)  }}.
\]

\end{lemma}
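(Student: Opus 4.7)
The plan is a standard modular comparison argument: first produce a pointwise inequality controlling $\Phi(x,s)$ by $\Psi(x,s)$ plus a constant uniform in $x$, then integrate and use the two-sided doubling in Lemma \ref{lemma_delta2_condition} to rescale and extract the desired norm bound.

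For the pointwise comparison, Lemma \ref{lemma_delta2_condition} yields that $s\mapsto s^{-p_{\Phi}^{+}}\Phi(x,s)$ is non-increasing while $s\mapsto s^{-p_{\Psi}^{-}}\Psi(x,s)$ is non-decreasing. Hence for $s\geq 1$,
\[
\Phi(x,s)\leq s^{p_{\Phi}^{+}}\Phi(x,1)\leq s^{p_{\Psi}^{-}}\Phi(x,1)\leq \frac{\Phi(x,1)}{\Psi(x,1)}\,\Psi(x,s),
\]
where the middle step uses $p_{\Phi}^{+}\leq p_{\Psi}^{-}$ and $s\geq 1$. For $s\leq 1$, $\Phi(x,s)\leq \Phi(x,1)$. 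Lemma \ref{lm_normalization} then supplies two-sided bounds for both $\Phi(x,1)$ and $\Psi(x,1)$ in terms of $\beta$ and the exponent data, so I obtain constants $M,K>0$ with
\[
\Phi(x,s)\leq M\,\Psi(x,s)+K\quad\text{for all }s\geq 0,\ x\in\Omega.
\]

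Next I set $\lambda=\|u\|_{L^{\Psi(\cdot)}}$. The Luxemburg norm together with the $\Delta_{2}$-condition on $\Psi$ gives $\int_{\Omega}\Psi(x,|u|/\lambda)\,dx\leq 1$, and integrating the pointwise bound produces
\[
\int_{\Omega}\Phi\!\left(x,\tfrac{|u|}{\lambda}\right)dx\leq M+K|\Omega|=:A.
\]
For $\mu\geq 1$, Lemma \ref{lemma_delta2_condition} applied with $\varrho=\mu^{-1}$ gives $\Phi(x,s/\mu)\leq \mu^{-p_{\Phi}^{-}}\Phi(x,s)$. Choosing for instance $\mu=A+1$ (using $p_{\Phi}^{-}>1$) forces $\int_{\Omega}\Phi(x,|u|/(\mu\lambda))\,dx\leq 1$, whence $\|u\|_{L^{\Phi(\cdot)}}\leq \mu\lambda$, which is the desired embedding inequality with $c=A+1$ expressible through $\Omega$, $\beta$, and the exponent parameters.

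The whole argument mirrors the classical $L^{q}\hookrightarrow L^{p}$ embedding for $q\geq p$ on a bounded domain, only carried out at the level of modulars. The only item to watch is the bookkeeping: making sure that the two-sided control of $\Phi(x,1)$ and $\Psi(x,1)$ from Lemma \ref{lm_normalization} really yields constants expressible in the declared parameters, and that the modular identity $\int\Psi(x,|u|/\lambda)\,dx\leq 1$ at the Luxemburg norm is invoked correctly. Neither step is delicate once Lemma \ref{lemma_delta2_condition} and Lemma \ref{lm_normalization} are in hand.
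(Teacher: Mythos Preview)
Your argument is correct and is essentially the paper's own proof: the paper also derives the pointwise bound $\Phi(x,s)\leq C\,\Psi(x,s)+C'$ from the monotonicity of $s\mapsto s^{-p_\Phi^+}\Phi(x,s)$ and $s\mapsto s^{-p_\Psi^-}\Psi(x,s)$ together with the normalization bounds on $\Phi(x,1),\Psi(x,1)$, then integrates at $\lambda>\|u\|_{L^{\Psi(\cdot)}}$ and rescales by a constant to force the $\Phi$-modular below $1$. (The paper announces a factorization through $L^{p_\Phi^+}$ and $L^{p_\Psi^-}$, but its Step~II in fact performs exactly your direct modular comparison, which already yields the full embedding.)
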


\begin{proof}
It suffices to show that $L^{\Psi\left(  \cdot\right)  }\left(  \Omega\right)
\hookrightarrow L^{p_{\Psi}^{-}}\left(  \Omega\right)  $ and $L^{p_{\Phi}^{+}%
}\left(  \Omega\right)  \hookrightarrow L^{\Phi\left(  \cdot\right)  }\left(
\Omega\right)  $.

\textbf{Step I}: We prove that $L^{p_{\Phi}^{+}}\left(  \Omega\right)
\hookrightarrow L^{\Phi\left(  \cdot\right)  }\left(  \Omega\right)  .$ To see
this, we note that by Lemma \ref{lemma_delta2_condition}, the mapping
$s\rightarrow\Phi\left(  x,s\right)  /s^{p_{\Phi}^{+}}$ is decreasing and the
normalization condition is equivalent to $\left(  \ref{PHI_normalization_x_}%
\right)  .$ Then%
\[
\frac{\Phi\left(  x,s\right)  }{s^{p_{\Phi}^{+}}}\leqslant\frac{\Phi\left(
x,\beta\right)  }{\beta^{p_{\Phi}^{+}}}\leqslant\frac{1}{\beta^{p_{\Phi}^{+}}%
}\text{ for }s\geqslant\beta\text{, }x\in\Omega.
\]
It follows that%
\[
\Phi\left(  x,s\right)  \leqslant\frac{1}{\beta^{p_{\Phi}^{+}}}s^{p_{\Phi}%
^{+}}\text{ for }s\geqslant\beta\text{, }x\in\Omega.
\]
Since $\Phi\left(  x,s\right)  $ is increasing in $s$, so for all
$s\geqslant0$, $x\in\Omega,$
\[
\Phi\left(  x,s\right)  \leqslant\frac{1}{\beta^{p_{\Phi}^{+}}}s^{p_{\Phi}%
^{+}}+\Phi\left(  x,\beta\right)  \leqslant\frac{1}{\beta^{p_{\Phi}^{+}}%
}s^{p_{\Phi}^{+}}+1.
\]
Let $K=1+2\left\vert \Omega\right\vert $. Then%
\[
\Phi\left(  x,\frac{s}{K}\right)  \leqslant\frac{1}{K^{p_{\Phi}^{-}}}%
\Phi\left(  x,s\right)  \leqslant\frac{1}{K}\Phi\left(  x,s\right)
\leqslant\frac{1}{\beta^{p_{\Phi}^{+}}}s^{p_{\Phi}^{+}}+\frac{1}{2\left\vert
\Omega\right\vert },\text{ for }s\geqslant0\text{, }x\in\Omega.
\]
Let $u\in L^{p_{\Phi}^{+}}$ and
\[
\lambda>\beta^{-1}2^{\frac{1}{p_{\Phi}^{+}}}\left\Vert u\right\Vert
_{L^{p_{\Phi}^{+}}}.
\]
Setting $s=\left\vert u\right\vert /\lambda$ in the previous inequality and
integrating over $\Omega$ gives%
\[
\varrho_{\Phi\left(  \cdot\right)  }\left(  \frac{\left\vert u\right\vert
}{K\lambda}\right)  \leqslant\frac{1}{\beta^{p_{\Phi}^{+}}}\frac{\left\Vert
u\right\Vert _{L^{p_{\Phi}^{+}}}^{p_{\Phi}^{+}}}{\lambda^{p_{\Phi}^{+}}}%
+\frac{1}{2}\leqslant1.
\]
Hence%
\[
\left\Vert u\right\Vert _{\Phi\left(  \cdot\right)  }\leqslant K\lambda.
\]
Letting $\lambda\downarrow\beta^{-1}2^{\frac{1}{p_{\Phi}^{+}}}\left\Vert
u\right\Vert _{L^{p_{\Phi}^{+}}}$ yields that%
\[
\left\Vert u\right\Vert _{L^{\Phi\left(  \cdot\right)  }}\leqslant\left(
1+2\left\vert \Omega\right\vert \right)  \beta^{-1}2^{\frac{1}{p_{\Phi}^{+}}%
}\left\Vert u\right\Vert _{L^{p_{\Phi}^{+}}}.
\]

\textbf{Step II}: We prove that $L^{\Psi\left(  \cdot\right)  }\left(
\Omega\right)  \hookrightarrow L^{p_{\Psi}^{-}}\left(  \Omega\right)  .$
Indeed, similar to Step I the normalization condition $\left(
\ref{assumption_PHI_1}\right)  $ gives%
\[
\beta^{p_{\Phi}^{+}}\leqslant\Phi\left(  x,1\right)  \leqslant\beta^{-p_{\Phi
}^{+}}\text{ and }\beta^{p_{\Psi}^{+}}\leqslant\Psi\left(  x,1\right)
\leqslant\beta^{-p_{\Psi}^{+}}.
\]
For $s\geqslant1,$%
\[
\Phi\left(  x,s\right)  \leqslant s^{p_{\Phi}^{+}}\Phi\left(  x,1\right)
\]
and since $p_{\Phi}^{+}\leqslant p_{\Psi}^{-}$,%
\[
\Psi\left(  x,s\right)  \geqslant s^{p_{\Psi}^{-}}\Psi\left(  x,1\right)
\geqslant s^{p_{\Phi}^{+}}\Psi\left(  x,1\right)  .
\]
It follows that%
\[
\Phi\left(  x,s\right)  \leqslant\frac{\Phi\left(  x,1\right)  }{\Psi\left(
x,1\right)  }\Psi\left(  x,s\right)  \leqslant\beta^{-p_{\Phi}^{+}-p_{\Psi
}^{+}}\Psi\left(  x,s\right)  \text{ for }s\geqslant1.
\]
Hence for $s\geqslant0$, $x\in\Omega,$%
\[
\Phi\left(  x,s\right)  \leqslant\Phi\left(  x,1\right)  +\beta^{-p_{\Phi}%
^{+}-p_{\Psi}^{+}}\Psi\left(  x,s\right)  \leqslant\beta^{-p_{\Phi}^{+}}%
+\beta^{-p_{\Phi}^{+}-p_{\Psi}^{+}}\Psi\left(  x,s\right)  .
\]
For any $K\geqslant1,$%
\[
\Phi\left(  x,\frac{s}{K}\right)  \leqslant\frac{1}{K}\Phi\left(  x,s\right)
\leqslant\frac{1}{K}\left[  \beta^{-p_{\Phi}^{+}}+\beta^{-p_{\Phi}^{+}%
-p_{\Psi}^{+}}\Psi\left(  x,s\right)  \right]  .
\]
Let $u\in L^{\Psi}$ and $\lambda>\left\Vert u\right\Vert _{L^{\Psi\left(
\cdot\right)  }}.$ Then%
\[
\varrho_{\Phi\left(  \cdot\right)  }\left(  \frac{\left\vert u\right\vert
}{K\lambda}\right)  \leqslant\frac{1}{K}\left[  \beta^{-p_{\Phi}^{+}%
}\left\vert \Omega\right\vert +\beta^{-p_{\Phi}^{+}-p_{\Psi}^{+}}\varrho
_{\Psi\left(  \cdot\right)  }\left(  \frac{\left\vert u\right\vert }{\lambda
}\right)  \right]  .
\]
We may also increase $K$ (e.g. $K=1+2\beta^{-p_{\Phi}^{+}}\left\vert
\Omega\right\vert +2\beta^{-p_{\Phi}^{+}-p_{\Psi}^{+}}$) so that%
\[
\varrho_{\Phi\left(  \cdot\right)  }\left(  \frac{\left\vert u\right\vert
}{K\lambda}\right)  \leqslant\frac{1}{2}\left[  1+\varrho_{\Psi\left(
\cdot\right)  }\left(  \frac{\left\vert u\right\vert }{\lambda}\right)
\right]  \leqslant1.
\]
Therefore
\[
\left\Vert u\right\Vert _{L^{\Phi\left(  \cdot\right)  }}\leqslant K\lambda.
\]
Letting $\lambda\downarrow\left\Vert u\right\Vert _{L^{\Psi\left(
\cdot\right)  }}$ gives the desired conclusion.
\end{proof}

\begin{theorem}
[{\cite[Theorem 6.10]{harjulehto2016generalized}}]\label{Thm_extension_W1phi0}%
Let $\Omega\subset\mathbb{R}^{d}$ and $\Phi:$ $\mathbb{R}^{d}\times\left[
0,\infty\right)  \longmapsto\mathbb{R}$ be a generalized $\Phi$-function$.$
Denote by $1_{\Omega}$ the indicator function on $\Omega$. Then the extension
operator%
\[
\mathcal{E}:u\in W_{0}^{1,\Phi\left(  \cdot\right)  }\left(  \Omega\right)
\rightarrow\tilde{u}=1_{\Omega}\cdot u
\]
defines an isometry from $W_{0}^{1,\Phi\left(  \cdot\right)  }\left(
\Omega\right)  $ to $W^{1,\Phi\left(  \cdot\right)  }\left(  \mathbb{R}%
^{d}\right)  $.
\end{theorem}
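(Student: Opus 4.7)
The plan is to argue by density. Since $u\in W_0^{1,\Phi(\cdot)}(\Omega)$ is, by definition, the limit of a sequence $u_k\in C_0^\infty(\Omega)$ in $\|\cdot\|_{W^{1,\Phi(\cdot)}(\Omega)}$, I would first extend each $u_k$ to $\tilde u_k\in C_0^\infty(\mathbb{R}^d)$ by zero. Because $\tilde u_k-\tilde u_j$ and all of its partial derivatives are supported in $\Omega$, for every $\lambda>0$ the identity
\[
\int_{\mathbb{R}^d}\Phi\!\left(x,\tfrac{|\tilde u_k-\tilde u_j|}{\lambda}\right)dx + \sum_{i=1}^{d}\int_{\mathbb{R}^d}\Phi\!\left(x,\tfrac{|\partial_i\tilde u_k-\partial_i\tilde u_j|}{\lambda}\right)dx = \varrho_{1,\Phi(\cdot)}\!\left(\tfrac{u_k-u_j}{\lambda}\right)
\]
holds, so by the definition of the Luxemburg norm we obtain $\|\tilde u_k-\tilde u_j\|_{W^{1,\Phi(\cdot)}(\mathbb{R}^d)}=\|u_k-u_j\|_{W^{1,\Phi(\cdot)}(\Omega)}$. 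Thus $\{\tilde u_k\}$ is Cauchy in $W^{1,\Phi(\cdot)}(\mathbb{R}^d)$.

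Next I would invoke the completeness of $W^{1,\Phi(\cdot)}(\mathbb{R}^d)$ (which follows from Theorem \ref{thm_reflexive} and the $\Delta_2$ condition \eqref{delta2_Inq}, both valid on $\mathbb{R}^d$ under the standing assumptions) to produce a limit $v\in W^{1,\Phi(\cdot)}(\mathbb{R}^d)$ with $\tilde u_k\to v$ and $\nabla\tilde u_k\to\nabla v$ in $L^{\Phi(\cdot)}(\mathbb{R}^d)$. By Lemma \ref{lm_modulo_finite}, norm convergence entails modular convergence, hence $L^1_{\mathrm{loc}}$ convergence up to a subsequence. Restricting to $\Omega$ forces $v|_\Omega=u$, while the identity $\tilde u_k\equiv 0$ on $\mathbb{R}^d\setminus\Omega$ forces $v=0$ a.e.\ there. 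Therefore $v=1_\Omega\cdot u=\tilde u$, which simultaneously proves $\tilde u\in W^{1,\Phi(\cdot)}(\mathbb{R}^d)$ and identifies its weak gradient as $\nabla\tilde u=1_\Omega\,\nabla u$.

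Finally, the isometry is obtained by passing the displayed modular identity, with the differences replaced by $\tilde u_k$ and $u_k$, to the limit $k\to\infty$, which yields
\[
\varrho_{1,\Phi(\cdot)}^{\mathbb{R}^d}\!\left(\tfrac{\tilde u}{\lambda}\right)=\varrho_{1,\Phi(\cdot)}^{\Omega}\!\left(\tfrac{u}{\lambda}\right)\qquad\text{for every }\lambda>0.
\]
Taking the infimum over admissible $\lambda$ shows $\|\mathcal{E}u\|_{W^{1,\Phi(\cdot)}(\mathbb{R}^d)}=\|u\|_{W^{1,\Phi(\cdot)}(\Omega)}$, so $\mathcal{E}$ is an isometry.

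The main obstacle I expect is the identification $\nabla\tilde u=1_\Omega\nabla u$ in the sense of weak derivatives on all of $\mathbb{R}^d$: for a generic $W^{1,\Phi(\cdot)}(\Omega)$ function, zero extension would generally produce a distributional boundary term on $\partial\Omega$, and the fact that this does not happen here is precisely where the zero trace hypothesis built into $W_0^{1,\Phi(\cdot)}(\Omega)$ is used through the $C_0^\infty(\Omega)$ approximation. All other steps are formal; the only other delicate point is the applicability of the reflexivity/completeness result of Theorem \ref{thm_reflexive} to the unbounded domain $\mathbb{R}^d$, which is accommodated because $\Phi$ is defined on $\mathbb{R}^d\times[0,\infty)$ and the $\Delta_2$ conditions on $\Phi$ and $\Phi^{\ast}$ are independent of the domain.
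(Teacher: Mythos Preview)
The paper does not supply its own proof of this theorem; it is quoted verbatim from \cite[Theorem~6.10]{harjulehto2016generalized} and used as a black box in the proof of Theorem~\ref{Thm_poincare_discontinuous}. So there is nothing in the paper to compare your argument against.

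That said, your density argument is the standard route to this result and is essentially correct. One small point: you appeal to Theorem~\ref{thm_reflexive} to obtain completeness of $W^{1,\Phi(\cdot)}(\mathbb{R}^d)$, but that theorem carries the normalization hypothesis \eqref{assumption_PHI_1}, which is not assumed in the statement you are proving. You do not actually need reflexivity here; completeness of $L^{\Phi(\cdot)}$ (and hence of $W^{1,\Phi(\cdot)}$) holds for any generalized $\Phi$-function in the sense of Definition~\ref{def_gen_Phi}, since the $\Delta_2$ condition \eqref{delta2_Inq} already guarantees that $L^{\Phi(\cdot)}$ is a Banach space. With that adjustment, the rest of your outline---the modular identity for compactly supported functions, the Cauchy property, the a.e.\ identification of the limit with $1_\Omega\cdot u$, and the passage to the isometry---goes through as written.
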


Next we prove Poincare's inequality for \textit{discontinuous} generalized
$\Phi$-functions, which extends early result on variable exponent Lebesgue
spaces (ref. \cite[Theorem 8.2.18]{diening2011lebesgue} or \cite[Theorem
4.3]{harjulehto2006dirichlet}).

\begin{theorem}
[$\Phi\left(  \cdot\right)  $-Poincare's inequality]%
\label{Thm_poincare_discontinuous}Let $\Omega\subset\mathbb{R}^{d}$ be
bounded. $\Phi:$ $\Omega\times\left[  0,\infty\right)  \longmapsto\mathbb{R}$
is a generalized $\Phi$-function satisfying the normalization condition
$\left(  \ref{assumption_PHI_1}\right)  $ with constant $\beta\in\left(
0,1\right]  $. If $\Phi$ satisfies the jump condition with constant $\delta
>0$, then there is a constant $c\left(  \Omega,d,\delta,p_{\Phi}^{+},p_{\Phi
}^{-},\beta\right)  $ such that for $u\in W_{0}^{1,\Phi\left(  \cdot\right)
}\left(  \Omega\right)  $,%
\[
\left\Vert u\right\Vert _{L^{\Phi\left(  \cdot\right)  }}\leqslant c\left(
\Omega,d,\delta,p_{\Phi}^{+},p_{\Phi}^{-},\beta\right)  \left\Vert \nabla
u\right\Vert _{L^{\Phi\left(  \cdot\right)  }}.
\]

\end{theorem}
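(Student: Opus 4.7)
The plan is to prove the Poincar\'e inequality by localization: cover $\bar\Omega$ by finitely many balls of radius $\delta$ from the jump condition, run classical Sobolev-Poincar\'e on each, and transfer the resulting embeddings to the $L^{\Phi(\cdot)}$-norm via Lemma \ref{Lm_orlicz_embedding}. The jump condition is designed precisely so that the exponent gap $[p^-_{\Phi,B_\delta(x)},p^+_{\Phi,B_\delta(x)}]$ on each ball fits inside the classical Sobolev gap; this is what replaces the log-H\"older continuity used in the standard variable-exponent proof.

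First, extend $u$ by zero to $\tilde u\in W^{1,\Phi(\cdot)}(\mathbb{R}^d)$ using Theorem \ref{Thm_extension_W1phi0}, after extending $\Phi$ to $\mathbb{R}^d\times[0,\infty)$ while preserving $p^\pm_\Phi$ and the normalization constant $\beta$. Pick a finite cover $\{B_i=B_\delta(x_i)\}_{i=1}^N$ of $\bar\Omega$ with $x_i\in\Omega$, and take a smooth partition of unity $\{\eta_i\}$ subordinate to it with $\|\nabla\eta_i\|_\infty\leq C/\delta$. For each $i$, set $u_i=\eta_i\tilde u\in W^{1,\Phi(\cdot)}_0(B_i)$.

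Second, on each $B_i$ write $p_i^{\pm}=p_{\Phi,B_i}^{\pm}$. In both cases of the jump condition---Case A ($p_i^-\geq d$, giving $W^{1,p_i^-}_0(B_i)\hookrightarrow L^q(B_i)$ for every finite $q$) and Case B ($p_i^-<d$ and $p_i^+\leq(p_i^-)^*$, giving $W^{1,p_i^-}_0(B_i)\hookrightarrow L^{(p_i^-)^*}(B_i)\hookrightarrow L^{p_i^+}(B_i)$ via H\"older on the bounded ball)---the classical Sobolev-Poincar\'e inequality yields
\[
\|v\|_{L^{p_i^+}(B_i)}\leq C\|\nabla v\|_{L^{p_i^-}(B_i)},\quad v\in W^{1,p_i^-}_0(B_i).
\]
Combined with the two-sided embeddings $L^{\Phi(\cdot)}(B_i)\hookrightarrow L^{p_i^-}(B_i)$ and $L^{p_i^+}(B_i)\hookrightarrow L^{\Phi(\cdot)}(B_i)$ furnished by Lemma \ref{Lm_orlicz_embedding} applied to $\Phi|_{B_i}$ (whose exponent range is $[p_i^-,p_i^+]$, and which inherits the normalization constant $\beta$), chaining produces the local Orlicz-Poincar\'e estimate
\[
\|v\|_{L^{\Phi(\cdot)}(B_i)}\leq C_{\mathrm{loc}}\|\nabla v\|_{L^{\Phi(\cdot)}(B_i)},\quad v\in W^{1,\Phi(\cdot)}_0(B_i),
\]
with $C_{\mathrm{loc}}$ depending only on $d,\delta,\beta,p_\Phi^\pm$.

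Applying the local estimate to each $u_i$, summing, and invoking $\nabla u_i=\eta_i\nabla u+u\nabla\eta_i$ with $\|\nabla\eta_i\|_\infty\leq C/\delta$ produces the preliminary bound
\[
\|u\|_{L^{\Phi(\cdot)}(\Omega)}\leq C_1\|\nabla u\|_{L^{\Phi(\cdot)}(\Omega)}+C_2\|u\|_{L^{\Phi(\cdot)}(\Omega)}.
\]
The main obstacle is absorbing the trailing $\|u\|_{L^{\Phi(\cdot)}}$ term, since the crude bound does not close. I plan to close the loop by refining the cover so that every ball $B_i$ meets $\mathbb{R}^d\setminus\bar\Omega$ in a set of positive measure comparable to $|B_i|$ (where $\tilde u=0$), giving a genuine Poincar\'e anchor on $B_i$; for balls centered far from $\partial\Omega$ this is propagated by chaining to boundary-adjacent balls through a connected sequence of overlapping $B_j$ and invoking the local estimate at each step. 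Alternatively, a Rellich-Kondrachov compactness argument on $W^{1,\Phi(\cdot)}_0(\Omega)$---normalizing $\|u_k\|_{L^{\Phi(\cdot)}}=1$ along a putative counterexample sequence with $\|\nabla u_k\|_{L^{\Phi(\cdot)}}\to 0$ and extracting a strongly convergent subsequence---bypasses the absorption step altogether, with the final constant governed by the covering data and the local constants $C_{\mathrm{loc}}$.
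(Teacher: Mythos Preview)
Your localization strategy and the use of Lemma \ref{Lm_orlicz_embedding} to sandwich $L^{\Phi(\cdot)}(B_i)$ between $L^{p_i^+}(B_i)$ and $L^{p_i^-}(B_i)$ are exactly right, and this is also what the paper does. The genuine gap is the absorption step you flag yourself: the partition-of-unity cutoff produces $\nabla u_i=\eta_i\nabla\tilde u+\tilde u\,\nabla\eta_i$, and the resulting $C_2\|u\|_{L^{\Phi(\cdot)}}$ term on the right does not close. Neither of your proposed fixes is complete. The ``chaining to the boundary'' idea is vague and would in any case require some form of the very Poincar\'e inequality you are trying to prove along the chain. The compactness argument assumes a Rellich--Kondrachov embedding $W_0^{1,\Phi(\cdot)}(\Omega)\hookrightarrow\hookrightarrow L^{\Phi(\cdot)}(\Omega)$, but that is precisely one of the results that typically requires the local continuity condition (A1), which you have deliberately dropped; you cannot invoke it here without an independent proof.

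The paper sidesteps the absorption problem altogether by \emph{not} using a partition of unity. Instead, on each ball $B_j$ it writes $\tilde u=(\tilde u-\tilde u_{B_j})+\tilde u_{B_j}$, where $\tilde u_{B_j}$ is the average. The oscillation term $\|\tilde u-\tilde u_{B_j}\|_{L^{\Phi_j(\cdot)}(B_j)}$ is handled exactly as you propose, via Lemma \ref{Lm_orlicz_embedding} and the classical Sobolev--Poincar\'e inequality with averages (which needs no zero trace on $\partial B_j$). The average term is bounded by
\[
|\tilde u_{B_j}|\leq \frac{1}{|B_j|}\int_\Omega|u|\,dx\leq c(\Omega,d,\delta)\int_\Omega|\nabla u|\,dx\leq c\,\|\nabla u\|_{L^{\Phi(\cdot)}(\Omega)},
\]
using only the \emph{global} classical Poincar\'e inequality on $W_0^{1,1}(\Omega)$ (available since $u\in W_0^{1,\Phi(\cdot)}(\Omega)\subset W_0^{1,p_\Phi^-}(\Omega)\subset W_0^{1,1}(\Omega)$) followed by the embedding $L^{\Phi(\cdot)}\hookrightarrow L^1$. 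This is the missing idea: the zero boundary condition on $\partial\Omega$ is exploited once, globally and at the crude $L^1$ level, to control all the local averages, and no self-referential $\|u\|_{L^{\Phi(\cdot)}}$ term ever appears on the right-hand side.
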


\begin{proof}
In view of the jump condition and the boundedness of $\Omega$, there are a
finite number of balls with radius $\delta>0,$ say $B_{1},...,B_{J},$ such
that%
\[
\Omega\subset\bigcup_{j=1}^{J}B_{j},
\]
and on each ball $B_{j}$ either $p_{\Phi,B_{j}}^{-}\geqslant d$ or
$p_{\Phi,B_{j}}^{+}\leqslant p_{\Phi,B_{j}}^{-\ast}.$ Fix $x_{0}\in\Omega$.
Then%
\[
\tilde{\Phi}\left(  x,s\right)  =\left\{
\begin{array}
[c]{ll}%
\Phi\left(  x,s\right)  , & x\in\Omega,\text{ }s\geqslant0;\\
\Phi\left(  x_{0},s\right)  , & x\in\Omega^{c},\text{ }s\geqslant0,
\end{array}
\right.
\]
which defines an extension of $\Phi$ to all of $\mathbb{R}^{d}\times\left[
0,\infty\right)  \longmapsto\mathbb{R}$, is a generalized $\Phi$-function
satisfying the normalization condition $\left(  \ref{assumption_PHI_1}\right)
$ with constant $\beta$. Denote by $\tilde{u}$ the zero extension of $u$ as in
Theorem \ref{Thm_extension_W1phi0}, then $\left\Vert u\right\Vert
_{W_{0}^{1,\Phi\left(  \cdot\right)  }\left(  \Omega\right)  }=\left\Vert
\tilde{u}\right\Vert _{W^{1,\tilde{\Phi}\left(  \cdot\right)  }\left(
\mathbb{R}^{d}\right)  }.$ Let%
\[
\Phi_{j}:\left(  x,s\right)  \in B_{j}\times\left[  0,\infty\right)
\rightarrow\left\{
\begin{array}
[c]{ll}%
\Phi\left(  x,s\right)  , & x\in B_{j}\cap\Omega,\text{ }s\geqslant0;\\
s^{p_{\Phi,B_{j}}^{-}}, & x\in B_{j}\cap\Omega^{c},\text{ }s\geqslant0.
\end{array}
\right.
\]
Note that $p_{\Phi,B_{j}}^{-}$ is defined as in $\left(  \ref{def_p_jump_inq1}%
\right)  $, the infimum in $p_{\Phi,B_{j}}^{-}$ is taken over $B_{j}\cap
\Omega$, not $B_{j}.$ Then%
\begin{align*}
\left\Vert u\right\Vert _{L^{\Phi\left(  \cdot\right)  }\left(  \Omega\right)
}  &  =\left\Vert \tilde{u}\right\Vert _{L^{\tilde{\Phi}\left(  \cdot\right)
}\left(  \mathbb{R}^{d}\right)  }\\
&  \leqslant\sum_{j=1}^{J}\left\Vert \tilde{u}1_{B_{j}\cap\Omega}\right\Vert
_{L^{\tilde{\Phi}\left(  \cdot\right)  }\left(  \mathbb{R}^{d}\right)  }%
=\sum_{j=1}^{J}\left\Vert \tilde{u}1_{B_{j}\cap\Omega}\right\Vert
_{L^{\Phi_{j}\left(  \cdot\right)  }\left(  B_{j}\right)  }\\
&  \leqslant\sum_{j=1}^{J}\left\Vert \tilde{u}\right\Vert _{L^{\Phi_{j}\left(
\cdot\right)  }\left(  B_{j}\right)  }\\
&  \leqslant\sum_{j=1}^{J}\left\Vert \tilde{u}-\tilde{u}_{B_{j}}\right\Vert
_{L^{\Phi_{j}\left(  \cdot\right)  }\left(  B_{j}\right)  }+\sum_{j=1}%
^{J}\left\vert \tilde{u}_{B_{j}}\right\vert \left\Vert 1_{B_{j}}\right\Vert
_{L^{\Phi_{j}\left(  \cdot\right)  }\left(  B_{j}\right)  },
\end{align*}
where%
\[
\tilde{u}_{B_{j}}=\frac{1}{\left\vert B_{j}\right\vert }\int_{B_{j}}\tilde
{u}\left(  x\right)  dx.
\]
Now we estimate the two sums in the above inequality. For each term in the
first sum, if $p_{\Phi,B_{j}}^{+}\leqslant p_{\Phi,B_{j}}^{-\ast},$ then we
apply Lemma \ref{Lm_orlicz_embedding} and the generalized Poincare's
inequality (e.g. \cite[Theorem 8.2.13]{diening2011lebesgue}) to get%
\begin{align*}
\left\Vert \tilde{u}-\tilde{u}_{B_{j}}\right\Vert _{L^{\Phi_{j}\left(
\cdot\right)  }\left(  B_{j}\right)  }  &  \leqslant c\left(  \Omega
,d,\delta,p_{\Phi}^{+},p_{\Phi}^{-}\right)  \left\Vert \tilde{u}-\tilde
{u}_{B_{j}}\right\Vert _{L^{p_{\Phi_{j},B_{j}}^{+}}\left(  B_{j}\right)  }\\
&  \leqslant c\left(  \Omega,d,\delta,p_{\Phi}^{+},p_{\Phi}^{-}\right)
\left\Vert \tilde{u}-\tilde{u}_{B_{j}}\right\Vert _{L^{p_{\Phi_{j},B_{j}%
}^{-\ast}}\left(  B_{j}\right)  }\\
&  \leqslant c\left(  \Omega,d,\delta,p_{\Phi}^{+},p_{\Phi}^{-}\right)
\left\Vert \nabla\tilde{u}\right\Vert _{L^{p_{\Phi_{j},B_{j}}^{-}}\left(
B_{j}\right)  }\\
&  \leqslant c\left(  \Omega,d,\delta,p_{\Phi}^{+},p_{\Phi}^{-}\right)
\left\Vert \nabla u\right\Vert _{L^{\Phi\left(  \cdot\right)  }\left(
\Omega\right)  }.
\end{align*}
Otherwise if $p_{\Phi,B_{j}}^{-}\geqslant d,$ then we may find $q<d$ so that
$q^{\ast}=p_{\Phi_{j},B_{j}}^{+}$. Now $q<p_{\Phi,B_{j}}^{-}.$ By the same
arguments, the above inequalities continue to hold except $\left\Vert
\tilde{u}-\tilde{u}_{B_{j}}\right\Vert _{L^{p_{\Phi_{j},B_{j}}^{-\ast}}\left(
B_{j}\right)  }$ in the second being replaced with $\left\Vert \nabla\tilde
{u}\right\Vert _{L^{q}\left(  B_{j}\right)  }$. For the second sum, applying
Poincare's inequality again we have%
\begin{align*}
\left\vert \tilde{u}_{B_{j}}\right\vert  &  \leqslant\frac{1}{\left\vert
B_{j}\right\vert }\int_{\Omega}\left\vert u\right\vert dx\leqslant c\left(
\Omega,d,\delta\right)  \int_{\Omega}\left\vert \nabla u\right\vert
dx\leqslant c\left(  \Omega,d,\delta\right)  \left\Vert \nabla u\right\Vert
_{L^{p_{\Phi}^{-}}\left(  \Omega\right)  }\\
&  \leqslant c\left(  \Omega,d,\delta\right)  \left\Vert \nabla u\right\Vert
_{L^{\Phi\left(  \cdot\right)  }\left(  \Omega\right)  }.
\end{align*}
By Lemma \ref{lm_modulo_finite},
\begin{align*}
\left\Vert 1_{B_{j}}\right\Vert _{L^{\Phi_{i}\left(  \cdot\right)  }\left(
B_{j}\right)  }  &  \leqslant\left\Vert 1_{\Omega}\right\Vert _{L^{\Phi\left(
\cdot\right)  }\left(  \Omega\right)  }\\
&  \leqslant\max\left\{  \left(  \int_{\Omega}\Phi\left(  x,1\right)
dx\right)  ^{\frac{1}{p_{\Phi}^{-}}},\left(  \int_{\Omega}\Phi\left(
x,1\right)  dx\right)  ^{\frac{1}{p_{\Phi}^{+}}}\right\} \\
&  \leqslant c\left(  \Omega,d,p_{\Phi}^{+},p_{\Phi}^{-},\beta\right)  .
\end{align*}
In the last inequality, we have used the normalization condition $\left(
\ref{assumption_PHI_1}\right)  $. Putting these estimates together we obtain%
\begin{align*}
\left\Vert u\right\Vert _{L^{\Phi\left(  \cdot\right)  }\left(  \Omega\right)
}  &  \leqslant\sum_{j=1}^{J}\left\Vert \tilde{u}-\tilde{u}_{B_{j}}\right\Vert
_{L^{\Phi_{j}\left(  \cdot\right)  }\left(  B_{j}\right)  }+\sum_{j=1}%
^{J}\left\vert \tilde{u}_{B_{j}}\right\vert \left\Vert 1_{B_{j}}\right\Vert
_{L^{\Phi_{j}\left(  \cdot\right)  }\left(  B_{j}\right)  }\\
&  \leqslant\sum_{j=1}^{J}c\left(  \Omega,d,\delta,p_{\Phi}^{+},p_{\Phi}%
^{-}\right)  \left\Vert \nabla u\right\Vert _{L^{\Phi\left(  \cdot\right)
}\left(  \Omega\right)  }+\sum_{j=1}^{J}c\left(  \Omega,d,\delta,p_{\Phi}%
^{+},p_{\Phi}^{-},\beta\right)  \left\Vert \nabla u\right\Vert _{L^{\Phi
\left(  \cdot\right)  }\left(  \Omega\right)  }\\
&  \leqslant c\left(  \Omega,d,\delta,p_{\Phi}^{+},p_{\Phi}^{-},\beta\right)
\left\Vert \nabla u\right\Vert _{L^{\Phi\left(  \cdot\right)  }\left(
\Omega\right)  }.
\end{align*}

\end{proof}

\section{Unbounded coefficients in a convex subdomain}

The main purpose of this section is to demonstrate the uses of the results
that have been established in previous sections. For this we consider the
existence of a continuous solution to the subdomain problem $\left(
\ref{general_quasilinear_n_sub}\right)  $ which is summarized in Theorem
\ref{thm_convergence_sub}. This extends the main result for the $p\left(
x\right)  $-Harmonic equation \cite{manfredi2009p} to our general framework.
We note that, $p\left(  x\right)  $-Harmonic equation is a special case of the
general equation in this paper, although they are similar in the approximation
approach taken to deal with the unboundedness in the subdomain, they differ in
technical details, in particular the proof of the existence and uniqueness of
the general equation requires new technical ingredients, moreover
Poincare's Inequality for Discontinuous $\Phi$ provided in previous
section is crucial for the existence of a continuous solution.

With Poincare's Inequality for Discontinuous $\Phi$ at hand, we are then
in a position to sudy to the subudomain problem. Let $\Omega,$ $\Omega
_{\infty}$ be bounded $C^{1}$ domains of $\mathbb{R}^{d}$ and $\Omega_{\infty
}\subset\Omega$. Throughout this section, we assume that%
\begin{equation}
\Omega_{\infty}\text{ is convex.} \label{assumption_convexity_bdry}%
\end{equation}

A remark on the convexity is in order. The convexity of $\Omega_{\infty}$
plays two roles here. If the boundary data is given on all of $\partial
\Omega_{\infty}$, then the $L^{\infty}$ norm of the gradient of the AMLE, say
$\left\Vert \nabla u_{\infty}\right\Vert _{\infty}$, equals the Lipschitz
constant of boundary data (e.g. \cite[Proposition 3.1]{lindqvist2016notes}).
However, if the boundary data is only given on part of $\partial\Omega
_{\infty}$, then $\left\Vert \nabla u_{\infty}\right\Vert _{\infty}$ might be
greater than the Lipschitz constant of boundary data. So assuming that
$\Omega_{\infty}$ is convex excludes this possibility. This is used in the
proof that if the Lipschitz constant of the boundary data is strictly greater
than $1$, then the energy sequence in question is unbounded. Additionally the
convexity $\left(  \ref{assumption_convexity_bdry}\right)  $ is a sufficient
condition which ensures that the extramal solution satisfies a mixed boundary
problem in $\Omega_{\infty}$.

We consider the extremal problem%

\begin{equation}
\left\{
\begin{array}
[c]{ll}%
-\operatorname{div}\left(  \dfrac{a_{\infty}\left(  x,\left\vert \nabla
u\left(  x\right)  \right\vert \right)  }{\left\vert \nabla u\left(  x\right)
\right\vert }\nabla u\left(  x\right)  \right)  =0, & x\in\Omega;\\
u\left(  x\right)  =g\left(  x\right)  , & x\in\partial\Omega,
\end{array}
\right.  \label{general_quasilinear_sub}%
\end{equation}
where $a_{\infty}\left(  x,s\right)  :\Omega\times\left[  0,\infty\right)
\rightarrow\mathbb{R\cup}\left\{  +\infty\right\}  $ is defined piecewise,
\[
a_{\infty}\left(  x,s\right)  =\left\{
\begin{array}
[c]{ll}%
0, & s<1,x\in\Omega_{\infty};\\
1, & s=1,x\in\Omega_{\infty};\\
\infty, & s>1,x\in\Omega_{\infty}.
\end{array}
\right.
\]
and%
\[
a_{\infty}\left(  x,s\right)  =a\left(  x,s\right)  \text{ for }x\in
\Omega\backslash\Omega_{\infty},\text{ }s\geqslant0,
\]
with $a\left(  x,s\right)  $ being given as in $\left(  \ref{eq:def_a_xs}%
\right)  $. A natural approach to the solution of $\left(
\ref{general_quasilinear_sub}\right)  $ is approximation. To be precise, we
replace $a_{\infty}$ in $\Omega_{\infty}\times\left[  0,\infty\right)  $ with
some properly chosen sequence $a_{n}$ (ref. $\left(
\ref{general_quasilinear_n_sub}\right)  $) and investigate the limit as
$n\rightarrow\infty$. This is what we will do in the following. In this sense,
by a solution of $\left(  \ref{general_quasilinear_sub}\right)  $ we mean a
solution of the limiting equation of $\left(  \ref{general_quasilinear_n_sub}%
\right)  $, and the uniqueness is also meant to be in reference to the
corresponding limiting equation. So there is a caveat that the solution of
$\left(  \ref{general_quasilinear_sub}\right)  $ depends on the choice of the
approximation sequence.

Now we turn to the approximation problem%
\begin{equation}
\left\{
\begin{array}
[c]{ll}%
-\operatorname{div}\left(  \dfrac{\tilde{a}_{n}\left(  x,\left\vert \nabla
u\left(  x\right)  \right\vert \right)  }{\left\vert \nabla u\left(  x\right)
\right\vert }\nabla u\left(  x\right)  \right)  =0, & x\in\Omega;\\
u\left(  x\right)  =g\left(  x\right)  , & x\in\partial\Omega,
\end{array}
\right.  \label{general_quasilinear_n_sub}%
\end{equation}
where%
\begin{equation}
\tilde{a}_{n}\left(  x,s\right)  =\left\{
\begin{array}
[c]{ll}%
a_{n}\left(  x,s\right)  , & x\in\Omega_{\infty};\\
a\left(  x,s\right)  , & x\in\Omega\backslash\Omega_{\infty}.
\end{array}
\right.  \label{def_a_n_sub}%
\end{equation}
As before, we write%
\[
\Phi\left(  x,s\right)  =\int_{0}^{s}\varphi\left(  x,t\right)  dt,\text{
}a\left(  x,s\right)  =\frac{\varphi\left(  x,s\right)  }{\varphi\left(
x,1\right)  }\text{ for all }x\in\Omega\backslash\Omega_{\infty}\text{ and
}s\geqslant0,
\]
and assume that $\varphi\left(  x,s\right)  \in C\left(  \left(
\Omega\backslash\Omega_{\infty}\right)  \times\left[  0,\infty\right)
\right)  \cap C^{1}\left(  \left(  \Omega\backslash\Omega_{\infty}\right)
\times\left(  0,\infty\right)  \right)  $ such that there are cosntants
$p^{-}<p^{+}$ for which
\begin{equation}
1\leqslant p^{-}-1\leqslant\frac{s\partial_{s}\varphi\left(  x,s\right)
}{\varphi\left(  x,s\right)  }\leqslant p^{+}-1\text{ for }x\in\Omega
\backslash\Omega_{\infty},\text{ }s\geqslant0. \label{assumption_phi_x_sub}%
\end{equation}
Comparing with $\left(  \ref{assumption_phi_x}\right)  $, $\left(
\ref{assumption_phi_x_sub}\right)  $ requires a little more than before, i.e.
$p^{-}\geqslant2$. This helps avoid the difficulty coming from the singularity
of the differential operator when $1<p^{-}<2$. On $\Omega_{\infty}$, we write%

\[
\Phi_{n}\left(  x,s\right)  =\int_{0}^{s}\varphi_{n}\left(  x,t\right)
dt,\text{ }a_{n}\left(  x,s\right)  =\frac{\varphi_{n}\left(  x,s\right)
}{\varphi_{n}\left(  x,1\right)  }\text{ for all }x\in\Omega_{\infty}\text{
and }s\geqslant0,
\]
and assume that $\varphi_{n}\left(  x,s\right)  \in C\left(  \left(
\Omega_{\infty}\right)  \times\left[  0,\infty\right)  \right)  \cap
C^{1}\left(  \left(  \Omega_{\infty}\right)  \times\left(  0,\infty\right)
\right)  $ satisfies $\left(  \ref{assumption_limit_1}\right)  $, $\left(
\ref{assumption_Lambda_continuous}\right)  $, $\left(
\ref{assumption_limit_0}\right)  $, $\left(  \ref{assumption_limit_3}\right)
$ and the associated Harnack type inequality $\left(  \ref{assumption_c_n}%
\right)  .$

Finally we write%
\[
\tilde{\varphi}_{n}\left(  x,s\right)  =\left\{
\begin{array}
[c]{ll}%
\varphi_{n}\left(  x,s\right)  , & \text{for }x\in\Omega_{\infty}\text{ and
}s\geqslant0;\\
\varphi\left(  x,s\right)  , & \text{for }x\in\Omega\backslash\Omega_{\infty
}\text{ and }s\geqslant0.
\end{array}
\right.
\]%
\[
\tilde{\Phi}_{n}\left(  x,s\right)  =\left\{
\begin{array}
[c]{ll}%
\Phi_{n}\left(  x,s\right)  , & \text{for }x\in\Omega_{\infty}\text{ and
}s\geqslant0;\\
\Phi\left(  x,s\right)  , & \text{for }x\in\Omega\backslash\Omega_{\infty
}\text{ and }s\geqslant0.
\end{array}
\right.
\]

Some proof in the section will be sketchy, since the arguments are similar to
previous sections. Analogous to Theorem \ref{Thm_gamma_convergence}, we have

\begin{theorem}
\label{Thm_gamma_convergence_sub}The functionals $E_{n}\left(  u\right)
:L^{1}\left(  \Omega\right)  \longmapsto\mathbb{R}$ given by%
\[
E_{n}\left(  u\right)  =\left\{
\begin{array}
[c]{ll}
{\displaystyle\int_{\Omega_{\infty}}}
\dfrac{\Phi_{n}\left(  x,\left\vert \nabla u\right\vert \right)  }{\varphi
_{n}\left(  x,1\right)  }dx+
{\displaystyle\int_{\Omega\backslash\Omega_{\infty}}}
\dfrac{\Phi\left(  x,\left\vert \nabla u\right\vert \right)  }{\varphi\left(
x,1\right)  }dx, & u\in W^{1,\tilde{\Phi}_{n}\left(  \cdot\right)  }\left(
\Omega\right)  ;\\
+\infty, & \text{otherwise}.
\end{array}
\right.
\]
and%
\[
E_{\infty}\left(  u\right)  =\left\{
\begin{array}
[c]{ll}
{\displaystyle\int_{\Omega\backslash\Omega_{\infty}}}
\dfrac{\Phi\left(  x,\left\vert \nabla u\right\vert \right)  }{\varphi\left(
x,1\right)  }dx, & u\in W^{1,\infty}\left(  \Omega_{\infty}\right)  \text{ and
}\left\Vert \nabla u\right\Vert _{L^{\infty}\left(  \Omega_{\infty}\right)
}\leqslant1;\\
+\infty, & \text{otherwise}.
\end{array}
\right.
\]
Then $\Gamma\left(  L^{1}\left(  \Omega\right)  \right)  $-$\lim
\limits_{n\rightarrow\infty}E_{n}=E_{\infty}.$
\end{theorem}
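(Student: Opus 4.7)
The plan is to follow the template of Theorem \ref{Thm_gamma_convergence}, treating the two regions separately. Outside $\Omega_{\infty}$ the integrand $\Phi(x,\lvert\nabla u\rvert)/\varphi(x,1)$ does not depend on $n$, so standard convex-analysis lower semicontinuity handles that part. Inside $\Omega_{\infty}$ the density $\Phi_{n}(x,\lvert\nabla u\rvert)/\varphi_{n}(x,1)$ is exactly of the form treated in Theorem \ref{Thm_gamma_convergence}, and the liminf direction should recover the constraint $\lVert\nabla u\rVert_{L^{\infty}(\Omega_{\infty})}\leqslant 1$.

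For the existence of a recovery sequence, fix $u\in L^{1}(\Omega)$ with $E_{\infty}(u)<\infty$ (otherwise nothing to prove). Then $u$ has a weak gradient with $\lvert\nabla u\rvert\leqslant 1$ a.e.\ on $\Omega_{\infty}$ and $\int_{\Omega\backslash\Omega_{\infty}}\Phi(x,\lvert\nabla u\rvert)/\varphi(x,1)\,dx<\infty$. I would take $v_{n}\equiv u$; Lemma \ref{lm_modulo_finite} applied on each region (together with the lower bound $\left(\ref{assumption_phi_1_lbd}\right)$ on $\varphi_{n}(x,1)$) certifies $u\in W^{1,\tilde{\Phi}_{n}(\cdot)}(\Omega)$. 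Using the monotonicity of $\Phi_{n}(x,\cdot)$ and $\left(\ref{assumption_Phi_n}\right)$ on $\Omega_{\infty}$,
\[
\int_{\Omega_{\infty}}\frac{\Phi_{n}(x,\lvert\nabla u\rvert)}{\varphi_{n}(x,1)}\,dx\leqslant\int_{\Omega_{\infty}}\frac{\Phi_{n}(x,1)}{\varphi_{n}(x,1)}\,dx\leqslant\frac{\lvert\Omega_{\infty}\rvert}{p_{n}^{-}}\longrightarrow 0,
\]
while the $\Omega\backslash\Omega_{\infty}$ piece of $E_{n}(u)$ equals the corresponding piece of $E_{\infty}(u)$ for every $n$; hence $\limsup_{n}E_{n}(u)\leqslant E_{\infty}(u)$.

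For the asymptotic lower bound, let $v_{n}\rightarrow u$ in $L^{1}(\Omega)$ and assume, after extracting a subsequence, that $E_{n}(v_{n})\leqslant\sigma$ uniformly. The $\Omega_{\infty}$ analysis is a transcription of Theorem \ref{Thm_gamma_convergence}: Lemma \ref{lm_bounded_W1d1}(i) applied on $\Omega_{\infty}$ gives $\lVert\nabla v_{n}\rVert_{L^{p_{n}^{-}}(\Omega_{\infty})}^{p_{n}^{-}}\leqslant\lvert\Omega_{\infty}\rvert+\sigma\mu p_{n}^{-}$, H\"older makes $\{\nabla v_{n}\}$ uniformly bounded in every $L^{m}(\Omega_{\infty})$ once $p_{n}^{-}\geqslant m$, and the $L^{1}$-convergence $v_{n}\to u$ identifies the weak limit as $\nabla u$, putting $u\in W^{1,m}(\Omega_{\infty})$ with $\nabla v_{n}\rightharpoonup\nabla u$. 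The $\lvert\Omega_{\epsilon}\rvert$-argument used at the end of the proof of Theorem \ref{Thm_gamma_convergence} then forces $\lvert\nabla u\rvert\leqslant 1$ a.e.\ on $\Omega_{\infty}$. On $\Omega\backslash\Omega_{\infty}$, the uniform bound on $\int_{\Omega\backslash\Omega_{\infty}}\Phi(x,\lvert\nabla v_{n}\rvert)/\varphi(x,1)\,dx$ combined with Lemma \ref{lm_modulo_finite} yields uniform boundedness of $\nabla v_{n}$ in the reflexive space $L^{\Phi(\cdot)}(\Omega\backslash\Omega_{\infty})$ (Theorem \ref{thm_reflexive}); extract a further subsequence so that $\nabla v_{n}\rightharpoonup\nabla u$ weakly there. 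The $n$-independent, convex functional $w\mapsto\int_{\Omega\backslash\Omega_{\infty}}\Phi(x,\lvert\nabla w\rvert)/\varphi(x,1)\,dx$ is strongly continuous on $L^{\Phi(\cdot)}(\Omega\backslash\Omega_{\infty})$ (by the $\Delta_{2}$ condition $\left(\ref{delta2_Inq}\right)$) and convex, hence weakly lower semicontinuous, delivering
\[
\int_{\Omega\backslash\Omega_{\infty}}\frac{\Phi(x,\lvert\nabla u\rvert)}{\varphi(x,1)}\,dx\leqslant\liminf_{n}\int_{\Omega\backslash\Omega_{\infty}}\frac{\Phi(x,\lvert\nabla v_{n}\rvert)}{\varphi(x,1)}\,dx.
\]
Since the $\Omega_{\infty}$-contribution to $E_{n}(v_{n})$ is nonnegative, combining the two estimates gives $E_{\infty}(u)\leqslant\liminf_{n}E_{n}(v_{n})$.

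The main obstacle I foresee is the consistency of the weak limits after the various subsequence extractions: I must check that the weak limit of $\nabla v_{n}$ in $L^{m}(\Omega_{\infty})$ (for arbitrarily large $m$) and the weak limit in $L^{\Phi(\cdot)}(\Omega\backslash\Omega_{\infty})$ are both given by the single distributional gradient of the $L^{1}(\Omega)$-limit $u$. It is precisely the $L^{1}(\Omega)$ (as opposed to piecewise) convergence of $v_{n}$ that makes this gluing automatic, and sending $m\to\infty$ afterwards converts the $L^{m}(\Omega_{\infty})$ bounds into the $L^{\infty}$ constraint without affecting the weak-lower-semicontinuity estimate on the complementary region.
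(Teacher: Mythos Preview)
Your proposal is correct and matches the paper's intended approach: the paper does not give a separate proof of this theorem but merely states it is ``analogous to Theorem~\ref{Thm_gamma_convergence}'', which is precisely the template you follow. You correctly supply the one extra ingredient not present in Theorem~\ref{Thm_gamma_convergence}, namely the weak lower semicontinuity of the $n$-independent convex integral on $\Omega\setminus\Omega_{\infty}$, and your remark about identifying the weak limits via the global $L^{1}(\Omega)$ convergence handles the only subtlety.
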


Consider the problem of minimizing%
\[
E_{n}\left(  u\right)  =\int_{\Omega}\frac{\tilde{\Phi}_{n}\left(
x,\left\vert \nabla u\right\vert \right)  }{\tilde{\varphi}_{n}\left(
x,1\right)  }dx=
{\displaystyle\int_{\Omega_{\infty}}}
\dfrac{\Phi_{n}\left(  x,\left\vert \nabla u\right\vert \right)  }{\varphi
_{n}\left(  x,1\right)  }dx+
{\displaystyle\int_{\Omega\backslash\Omega_{\infty}}}
\dfrac{\Phi\left(  x,\left\vert \nabla u\right\vert \right)  }{\varphi\left(
x,1\right)  }dx
\]
over $g+W_{0}^{1,\tilde{\Phi}_{n}\left(  \cdot\right)  }\left(  \Omega\right)
.$

\begin{theorem}
\label{Thm_existence_n}For each $p_{n}^{-}\geqslant d+1$. Equation $\left(
\ref{general_quasilinear_n_sub}\right)  $ has a unique weak solution $u_{n}$
which minimizes $E_{n}\left(  \cdot\right)  $ on $g+W_{0}^{1,\tilde{\Phi}%
_{n}\left(  \cdot\right)  }\left(  \Omega\right)  .$ In addition $u_{n}\in
C\left(  \bar{\Omega}\right)  .$
\end{theorem}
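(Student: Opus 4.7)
The proof mirrors that of Theorem \ref{Thm_existence} but must contend with the discontinuity of $\tilde\Phi_n$ across $\partial\Omega_\infty$, so Theorem \ref{Thm_poincare_discontinuous} has to play the role of Theorem \ref{thm_poincare_inequality}. I would first check that $\tilde\Phi_n$, obtained by pasting $\Phi_n$ on $\Omega_\infty$ and $\Phi$ on $\Omega\setminus\Omega_\infty$, is a generalized $\Phi$-function satisfying the normalization condition $\left(\ref{assumption_PHI_1}\right)$ uniformly and the jump condition of Definition \ref{def_p_jump}. For $B_\delta(x)\subset\Omega_\infty$ one has $p^-_{\tilde\Phi_n,B_\delta(x)}=p_n^-\geqslant d+1>d$, so the first alternative holds; for $B_\delta(x)\subset\Omega\setminus\Omega_\infty$ the condition reduces to one on the fixed $\Phi$, to be verified from the structural assumption $\left(\ref{assumption_phi_x_sub}\right)$; for the remaining straddling balls, smoothness of $\partial\Omega_\infty$ together with the available growth indices has to be invoked to produce either the lower-index bound or the Sobolev-conjugate alternative. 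With the discontinuous Poincar\'e inequality in hand, coercivity of $E_n$ on $g+W_0^{1,\tilde\Phi_n(\cdot)}(\Omega)$ follows from writing $u=g+v$ with $v\in W_0^{1,\tilde\Phi_n(\cdot)}(\Omega)$, applying Poincar\'e to $v$, and translating between modular and Luxemburg norm via Lemma \ref{lm_modulo_finite} and $\left(\ref{assumption_phi_1_lbd}\right)$.

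The direct method then proceeds exactly as in Theorem \ref{Thm_existence}: the space $W^{1,\tilde\Phi_n(\cdot)}(\Omega)$ is reflexive by Theorem \ref{thm_reflexive} applied to the pasted $\Phi$-function, a minimizing sequence admits a weakly convergent subsequence, and convexity and weak lower semicontinuity of $\xi\mapsto \tilde\Phi_n(x,|\xi|)/\tilde\varphi_n(x,1)$ produce a minimizer $u_n\in g+W_0^{1,\tilde\Phi_n(\cdot)}(\Omega)$. Strict convexity of the same integrand yields uniqueness of the minimizer, and writing the Euler--Lagrange equation shows $u_n$ is the unique weak solution of $\left(\ref{general_quasilinear_n_sub}\right)$.

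The main obstacle, and the genuinely new ingredient beyond Theorem \ref{Thm_existence}, is the global continuity $u_n\in C(\bar\Omega)$. On $\Omega_\infty$ the assumption $p_n^-\geqslant d+1$ together with Theorem \ref{thm_embedding} gives $u_n|_{\Omega_\infty}\in W^{1,d+1}(\Omega_\infty)\subset C(\bar\Omega_\infty)$. On $\Omega\setminus\Omega_\infty$ interior H\"older continuity follows from classical regularity for quasilinear equations of Orlicz growth governed by $\Phi$, and continuity up to $\partial\Omega$ uses $g\in Lip(\partial\Omega)$ and the standard boundary regularity. The crucial matching step is continuity across the interface $\partial\Omega_\infty$: since $u_n\in W^{1,\tilde\Phi_n}(\Omega)$ has a single $W^{1,1}$-trace on the smooth interface, its one-sided limits from $\Omega_\infty$ and from $\Omega\setminus\Omega_\infty$ coincide, and as each one-sided restriction is continuous up to this interface, $u_n$ is continuous on $\bar\Omega$. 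I expect the most delicate bookkeeping to be the verification of the jump condition for the straddling balls and the interface matching; once those are handled, existence, uniqueness, and the weak formulation are routine adaptations of the template in Theorem \ref{Thm_existence}.
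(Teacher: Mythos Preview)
Your overall strategy matches the paper's: replace Theorem~\ref{thm_poincare_inequality} by the discontinuous Poincar\'e inequality (Theorem~\ref{Thm_poincare_discontinuous}) to recover coercivity, then run the direct method exactly as in Theorem~\ref{Thm_existence}. The paper's own proof is a two-line sketch that asserts the jump condition and says ``the remaining proof follows as before''; your treatment of the jump condition and of the direct-method steps is simply a more explicit version of the same outline.

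Where you diverge from the paper is in the continuity argument, and here there is a gap. You obtain $u_n\in C(\bar\Omega_\infty)$ from Morrey on the high-index side, and interior H\"older regularity on $\Omega\setminus\bar\Omega_\infty$ from Orlicz-growth regularity theory; but interior regularity on $\Omega\setminus\bar\Omega_\infty$ does not by itself give continuity \emph{up to} the interface $\partial\Omega_\infty$ from the outside. Your trace-matching step (``a single $W^{1,1}$-trace on the interface, one-sided limits coincide'') only identifies the traces as $L^1$ functions; it does not promote the outer restriction to a continuous extension. To close this you need a boundary continuity result for the $a$-equation on $\Omega\setminus\Omega_\infty$ with the continuous Dirichlet datum $u_n|_{\partial\Omega_\infty}$ supplied by the inner side (e.g., a Wiener-criterion or barrier argument in the Orlicz setting), which you should cite or prove. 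The paper avoids this by ``as before'', which in Theorem~\ref{Thm_existence} means the global embedding $W^{1,\tilde\Phi_n(\cdot)}(\Omega)\subset W^{1,d+1}(\Omega)\subset C(\bar\Omega)$; note that this shortcut itself tacitly requires the fixed lower index $p^-$ on $\Omega\setminus\Omega_\infty$ to satisfy $p^-\geqslant d+1$, a hypothesis not stated in \eqref{assumption_phi_x_sub}. So either route needs an extra ingredient: yours needs boundary regularity at the interface, the paper's needs the additional lower-index assumption.
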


\begin{proof}
The proof is similar to Theorem \ref{Thm_existence}, the difference is that,
in the current setting, $\Phi_{n}$ is not continuous on $\Omega,$ hence the
local continuity condition fails. This means we can not invoke the usual
Poincare's inequality, i.e. Theorem \ref{thm_poincare_inequality}.
Instead, since $\Phi_{n}$ satisfies the jump condition (Definition
\ref{def_p_jump}), Theorem \ref{Thm_poincare_discontinuous} should be used to
get an equivalent norm on $W_{0}^{1,\tilde{\Phi}_{n}\left(  \cdot\right)
}\left(  \Omega\right)  $. After we recover the coerciveness of the
functional, the remaining proof follows as before.
\end{proof}

\begin{lemma}
\label{lm_weak_formulation}Equation $\left(  \ref{general_quasilinear_n_sub}%
\right)  $ has an equivalent formulation,%
\begin{equation}
\left\{
\begin{array}
[c]{ll}%
-\operatorname{div}\left(  \dfrac{a\left(  x,\left\vert \nabla u\right\vert
\right)  }{\left\vert \nabla u\left(  x\right)  \right\vert }\nabla u\right)
=0, & x\in\Omega\backslash\bar{\Omega}_{\infty};\\
-\operatorname{div}\left(  \dfrac{a_{n}\left(  x,\left\vert \nabla
u\right\vert \right)  }{\left\vert \nabla u\right\vert }\nabla u\right)  =0, &
x\in\Omega_{\infty},\\
\dfrac{a_{n}\left(  x,\left\vert \nabla u\right\vert \right)  }{\left\vert
\nabla u\right\vert }\dfrac{\partial u}{\partial\nu}=\dfrac{a\left(
x,\left\vert \nabla u\right\vert \right)  }{\left\vert \nabla u\right\vert
}\dfrac{\partial u}{\partial\nu}, & x\in\partial\Omega_{\infty}\cap\Omega,\\
u=g, & x\in\partial\Omega.
\end{array}
\right.  \label{equ_equivalent_form}%
\end{equation}
where $\nu$ is the exterior unit normal to the boundary of $\Omega_{\infty
}\cap\Omega$.
\end{lemma}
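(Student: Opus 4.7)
The plan is to derive the equivalence from the standard weak formulation by splitting the integral along the interface $\partial\Omega_\infty \cap \Omega$ and localizing the test functions. First I would rewrite the weak form of \eqref{general_quasilinear_n_sub} using the piecewise definition of $\tilde a_n$: for every $\phi\in W_0^{1,\tilde\Phi_n(\cdot)}(\Omega)$,
\[
\int_{\Omega_\infty}\frac{a_n(x,|\nabla u_n|)}{|\nabla u_n|}\nabla u_n\cdot\nabla\phi\,dx + \int_{\Omega\setminus\bar\Omega_\infty}\frac{a(x,|\nabla u_n|)}{|\nabla u_n|}\nabla u_n\cdot\nabla\phi\,dx = 0.
\]
Restricting $\phi$ to $C_c^\infty(\Omega_\infty)$, respectively $C_c^\infty(\Omega\setminus\bar\Omega_\infty)$, and using density of smooth compactly supported functions in the corresponding $W_0^{1,\Phi_n(\cdot)}$ and $W_0^{1,\Phi(\cdot)}$ spaces, yields the two interior equations of \eqref{equ_equivalent_form} in the weak sense on each subdomain.

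To recover the transmission condition, I would fix an arbitrary $\phi\in C_c^\infty(\Omega)$ and apply the divergence theorem on $\Omega_\infty$ and on $\Omega\setminus\bar\Omega_\infty$ separately. The interior bulk terms vanish thanks to the interior equations just obtained, and only the boundary traces on the shared interface survive. Orienting everything with the exterior unit normal $\nu$ to $\Omega_\infty$ and combining the two contributions, one is left with
\[
\int_{\partial\Omega_\infty\cap\Omega}\!\left(\frac{a_n(x,|\nabla u_n|)}{|\nabla u_n|}-\frac{a(x,|\nabla u_n|)}{|\nabla u_n|}\right)\frac{\partial u_n}{\partial\nu}\,\phi\,dS=0,
\]
and since $\phi|_{\partial\Omega_\infty\cap\Omega}$ is otherwise arbitrary, the transmission identity on $\partial\Omega_\infty\cap\Omega$ follows. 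The converse implication is obtained by reversing these steps: multiply each interior equation by a test function $\phi\in W_0^{1,\tilde\Phi_n(\cdot)}(\Omega)$, integrate by parts, note that $\phi$ vanishes on $\partial\Omega$, and use the transmission condition to annihilate the two interface contributions; summing recovers the global weak form.

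The main obstacle is giving a rigorous meaning to $\partial u_n/\partial\nu$, since $u_n\in W^{1,\tilde\Phi_n(\cdot)}(\Omega)$ does not a priori admit a classical gradient trace on the interface. The clean way is to observe that the vector field $F_n:=\tilde a_n(x,|\nabla u_n|)\nabla u_n/|\nabla u_n|$ lies in the dual space $L^{\tilde\Phi_n^*(\cdot)}(\Omega)$ and satisfies $\operatorname{div}F_n=0$ distributionally, so an Anzellotti-type Green formula endows $F_n$ with a one-sided normal trace on the smooth interface from each subdomain. Under this reading, the transmission condition in \eqref{equ_equivalent_form} simply expresses equality of the two one-sided normal traces of $F_n$, which is a tautological consequence of the global weak equation rather than an additional constraint. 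Once this functional-analytic framework is in place, the equivalence between \eqref{general_quasilinear_n_sub} and \eqref{equ_equivalent_form} reduces to the bookkeeping described above, and the smoothness hypothesis $\Omega_\infty\in C^1$ ensures that $\nu$ and the surface integrals are well-defined.
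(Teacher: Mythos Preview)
Your proposal is correct and follows essentially the same route as the paper: split the global weak identity over $\Omega_\infty$ and $\Omega\setminus\bar\Omega_\infty$, apply Green's formula on each piece, and localize test functions to extract the two interior equations and the interface condition. You actually go further than the paper by flagging the regularity needed for the normal trace and proposing an Anzellotti-type reading; the paper's proof treats this step formally and simply says ``the required formulation then follows by appropriately choosing test functions.''
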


\begin{proof}
Write down the definition for $u_{n}$ being a weak solution, i.e., for any
$\phi\in C_{0}^{\infty}\left(  \Omega\right)  $,%
\begin{equation}
\int_{\Omega\backslash\bar{\Omega}_{\infty}}\frac{a\left(  x,\left\vert \nabla
u\right\vert \right)  }{\left\vert \nabla u\right\vert }\left\langle \nabla
u,\nabla\phi\right\rangle dx+\int_{\Omega_{\infty}}\frac{a_{n}\left(
x,\left\vert \nabla u\right\vert \right)  }{\left\vert \nabla u\right\vert
}\left\langle \nabla u,\nabla\phi\right\rangle dx=0.
\label{lm_weak_formulation_inq1}%
\end{equation}
Using Green's formula we get%
\begin{multline*}
\int_{\partial\Omega_{\infty}\cap\Omega}\left(  \frac{a_{n}\left(
x,\left\vert \nabla u\right\vert \right)  }{\left\vert \nabla u\right\vert
}-\frac{a\left(  x,\left\vert \nabla u\right\vert \right)  }{\left\vert \nabla
u\right\vert }\right)  \frac{\partial u}{\partial\nu}\phi dx\\
-\int_{\Omega\backslash\bar{\Omega}_{\infty}}\operatorname{div}\left(
\frac{a\left(  x,\left\vert \nabla u\right\vert \right)  }{\left\vert \nabla
u\right\vert }\nabla u\right)  \phi dx-\int_{\Omega_{\infty}}%
\operatorname{div}\left(  \frac{a_{n}\left(  x,\left\vert \nabla u\right\vert
\right)  }{\left\vert \nabla u\right\vert }\nabla u\right)  \phi dx=0.
\end{multline*}
The required formulation then follows by appropriately choosing test functions.
\end{proof}

As before the solution of the equation $\left(  \ref{equ_equivalent_form}%
\right)  $ is understood in the viscosity sense \cite{crandall1992user}%
\cite[Definition 2.3]{manfredi2009p}.

\begin{theorem}
\label{thm_weak_viscosity_sub}Let $p_{n}^{-}\geqslant d+1.$ If $u\in
W^{1,\tilde{\Phi}_{n}\left(  \cdot\right)  }\left(  \Omega\right)  $ is a
continuous weak solution for equation $\left(  \ref{equ_equivalent_form}%
\right)  $, then it is also a viscosity solution for equation $\left(
\ref{equ_equivalent_form}\right)  $.
\end{theorem}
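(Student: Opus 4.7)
The plan is to adapt the proof of Theorem \ref{thm_weaksol_viscositysol} to the piecewise setting of equation $\left(\ref{equ_equivalent_form}\right)$. The key observation is that a viscosity solution of $\left(\ref{equ_equivalent_form}\right)$ in the sense of \cite[Definition 2.3]{manfredi2009p} must satisfy three separate conditions: the interior viscosity inequality for $-\operatorname{div}(a(x,|\nabla u|)\nabla u/|\nabla u|)=0$ on $\Omega\setminus\bar{\Omega}_\infty$, the corresponding inequality with $a_n$ on $\Omega_\infty$, and the transmission condition on the interface $\partial\Omega_\infty\cap\Omega$. I would verify these in turn.

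First I would treat a touching point $x_0$ in the interior of either subdomain. Suppose $u-\phi$ has a strict local maximum at $x_0\in\Omega_\infty$ (the case $x_0\in\Omega\setminus\bar{\Omega}_\infty$ is entirely symmetric, with $a$ in place of $a_n$). If $\nabla\phi(x_0)=0$, the viscosity inequality holds trivially by Remark \ref{rmk_operator_continuity}, where the hypothesis $p^-\geqslant 2$ in $\left(\ref{assumption_phi_x_sub}\right)$ together with $\left(\ref{eq:assume_F_cont}\right)$ ensures that the operators extend continuously through the origin. Otherwise, pick $r>0$ small enough that $B_r(x_0)\subset\Omega_\infty$, so that only $a_n(x,\cdot)$ appears inside this ball. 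Arguing by contradiction exactly as in Theorem \ref{thm_weaksol_viscositysol}, introduce
\[
\zeta(x)=\phi(x)-\tfrac12\inf_{|x-x_0|=r}(\phi-u),
\]
which satisfies $\zeta(x_0)<u(x_0)$, $\zeta>u$ on $\partial B_r(x_0)$, and, after shrinking $r$, the strict inequality $-\operatorname{div}\bigl(a_n(x,|\nabla\zeta|)\nabla\zeta/|\nabla\zeta|\bigr)>0$ pointwise on $B_r(x_0)$. Multiplying by $g_0\equiv\max\{u-\zeta,0\}$, integrating, and subtracting the corresponding identity arising from using $g_0$ in the weak formulation on $\Omega_\infty$, one obtains an integral whose integrand is nonnegative by Lemma \ref{inequality_main_x}, producing the desired contradiction. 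The subsolution direction is treated analogously.

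The step that still requires justification is admissibility of the test function $g_0$, which, being the positive part of a difference of functions in $W^{1,\tilde{\Phi}_n(\cdot)}(\Omega)$, lies in $W^{1,\tilde{\Phi}_n(\cdot)}(\Omega)$ by the chain rule \cite[Lemma 7.6]{gilbarg2001elliptic} and has support in $\overline{B_r(x_0)}\subset\Omega_\infty$. Exactly as in Theorem \ref{thm_weaksol_viscositysol}, one verifies via Young's inequality, $\left(\ref{inequality_PHI_x}\right)$, and the lower bound $\varphi_n(x,1)\geqslant c^-p_n^-$ that the linear functional
\[
v\in W^{1,\Phi_n(\cdot)}(\Omega_\infty)\longmapsto\int_{\Omega_\infty}\frac{a_n(x,|\nabla u|)}{|\nabla u|}\langle\nabla u,\nabla v\rangle\,dx
\]
is continuous, so $g_0$ can be approximated in the $W^{1,\Phi_n(\cdot)}$-norm by $C_0^\infty(B_r(x_0))$ functions and used in the weak formulation as a limit.

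Finally, at interface points $x_0\in\partial\Omega_\infty\cap\Omega$, the viscosity condition reads as the transmission equality of co-normal fluxes, which is precisely the third line in $\left(\ref{equ_equivalent_form}\right)$. This is already encoded in the global weak formulation $\left(\ref{lm_weak_formulation_inq1}\right)$, as Lemma \ref{lm_weak_formulation} makes explicit through the Green's identity computation; no additional viscosity testing is needed at such points. The main obstacle I anticipate is precisely the density/admissibility step in the middle paragraph: unlike the setting of Theorem \ref{thm_weaksol_viscositysol}, the generalized $\Phi$-function $\tilde{\Phi}_n$ is discontinuous across $\partial\Omega_\infty\cap\Omega$, so one must be careful to localize $g_0$ into one subdomain before invoking density, which is why we chose $r$ small enough that $B_r(x_0)$ stays inside $\Omega_\infty$ at the outset.
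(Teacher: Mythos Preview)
Your treatment of interior touching points in $\Omega_\infty$ and $\Omega\setminus\bar\Omega_\infty$ is fine and matches the paper. The gap is in your last paragraph: you assert that at interface points $x_0\in\partial\Omega_\infty\cap\Omega$ ``no additional viscosity testing is needed,'' because the transmission condition is already encoded in the weak formulation. This is incorrect, and it is precisely the interface case that carries the nontrivial content of the theorem.

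The viscosity definition you cite (\cite[Definition 2.3]{manfredi2009p}) requires, for a subsolution and a test function $\phi$ touching $u$ from above at $x_0\in\partial\Omega_\infty\cap\Omega$, that
\[
\min\left\{
-\operatorname{div}\Bigl(\tfrac{a(x_0,|\nabla\phi|)}{|\nabla\phi|}\nabla\phi\Bigr),\
-\operatorname{div}\Bigl(\tfrac{a_n(x_0,|\nabla\phi|)}{|\nabla\phi|}\nabla\phi\Bigr),\
\Bigl(\tfrac{a_n}{|\nabla\phi|}-\tfrac{a}{|\nabla\phi|}\Bigr)\tfrac{\partial\phi}{\partial\nu}
\right\}\leqslant 0.
\]
That the weak formulation \eqref{lm_weak_formulation_inq1} encodes the flux balance distributionally does not deliver this pointwise inequality for an arbitrary $C^2$ test function. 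One must run the contradiction argument with a ball $B_r(x_0)$ that \emph{straddles} the interface, which is exactly what you said you were avoiding. The paper assumes all three quantities are strictly positive on $B_r(x_0)$, defines $\zeta$ as before, multiplies the two divergence inequalities (one on each side of the interface) by $\max\{u-\zeta,0\}$, applies Green's formula, and uses the assumed strict sign of the interface term to absorb the boundary integral. Subtracting the weak identity \eqref{lm_weak_formulation_inq1} (with the same test function) then produces a two-piece integral whose integrand is nonnegative on each subdomain by Lemma \ref{inequality_main_x}, giving the contradiction. Your localization trick ``choose $r$ so that $B_r(x_0)\subset\Omega_\infty$'' is unavailable here; the discontinuity of $\tilde\Phi_n$ across $\partial\Omega_\infty$ must be confronted, and the admissibility of $\max\{u-\zeta,0\}$ as a test function in the \emph{global} weak formulation is what is used.
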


\begin{proof}
Owing to Theorem \ref{Thm_existence_n}, $u$ is continuous up to the boundary
$\partial\Omega$. So we only need to check the viscosity inequalities for
$x_{0}\in\Omega\backslash\bar{\Omega}_{\infty}$, $x_{0}\in\Omega_{\infty}$ or
$x_{0}\in\partial\Omega_{\infty}\cap\Omega$. The first two scenarios are
similar to\ Theorem \ref{thm_weaksol_viscositysol}. Now fix $x_{0}\in
\partial\Omega_{\infty}\cap\Omega$. We prove the viscosity inequality for
subsolution, the proof for supersolution is similar. We need to show that the
required viscosity inequalities are satisfied. Suppose not, then there exists
$\phi\in C^{2}\left(  \bar{\Omega}\right)  $ such that $u-\phi$ has a strict
local maximum at $x_{0}$, $u\left(  x_{0}\right)  =\phi\left(  x_{0}\right)  $
and%
\[
\min\left\{
\begin{array}
[c]{l}%
-\operatorname{div}\left(  \dfrac{a\left(  x_{0},\left\vert \nabla\phi\left(
x_{0}\right)  \right\vert \right)  }{\left\vert \nabla\phi\left(
x_{0}\right)  \right\vert }\nabla\phi\left(  x_{0}\right)  \right)  ,\\
-\operatorname{div}\left(  \dfrac{a_{n}\left(  x_{0},\left\vert \nabla
\phi\left(  x_{0}\right)  \right\vert \right)  }{\left\vert \nabla\phi\left(
x_{0}\right)  \right\vert }\nabla\phi\left(  x_{0}\right)  \right)  ,\\
\dfrac{a_{n}\left(  x_{0},\left\vert \nabla\phi\left(  x_{0}\right)
\right\vert \right)  }{\left\vert \nabla\phi\left(  x_{0}\right)  \right\vert
}\dfrac{\partial\phi\left(  x_{0}\right)  }{\partial\nu}-\dfrac{a\left(
x_{0},\left\vert \nabla\phi\left(  x_{0}\right)  \right\vert \right)
}{\left\vert \nabla\phi\left(  x_{0}\right)  \right\vert }\dfrac{\partial
\phi\left(  x_{0}\right)  }{\partial\nu}%
\end{array}
\right\}  >0.
\]
In view of Remark \ref{rmk_operator_continuity} and the continuity of
$\nabla\phi$, the above inequality should also hold for all $x$ close to
$x_{0}$. Precisely, there is some open ball $B_{r}\left(  x_{0}\right)  $ with
radius $r>0$ around $x_{0}$ such that%
\begin{equation}
-\operatorname{div}\left(  \dfrac{a\left(  x,\left\vert \nabla\phi\right\vert
\right)  }{\left\vert \nabla\phi\right\vert }\nabla\phi\right)  >0\text{ for
}x\in B_{r}\left(  x_{0}\right)  \cap\left(  \Omega\backslash\bar{\Omega
}_{\infty}\right)  , \label{thm_weak_viscosity_sub_inq1}%
\end{equation}%
\begin{equation}
-\operatorname{div}\left(  \dfrac{a_{n}\left(  x,\left\vert \nabla
\phi\right\vert \right)  }{\left\vert \nabla\phi\right\vert }\nabla
\phi\right)  >0\text{ for }x\in B_{r}\left(  x_{0}\right)  \cap\Omega_{\infty
}, \label{thm_weak_viscosity_sub_inq2}%
\end{equation}
and%
\begin{equation}
\dfrac{a_{n}\left(  x,\left\vert \nabla\phi\right\vert \right)  }{\left\vert
\nabla\phi\right\vert }\dfrac{\partial\phi}{\partial\nu}-\dfrac{a\left(
x,\left\vert \nabla\phi\right\vert \right)  }{\left\vert \nabla\phi\right\vert
}\dfrac{\partial\phi}{\partial\nu}>0\text{ for }x\in B_{r}\left(
x_{0}\right)  \cap\partial\Omega_{\infty}. \label{thm_weak_viscosity_sub_inq3}%
\end{equation}
Let%
\[
\zeta\left(  x\right)  =\phi\left(  x\right)  -\frac{1}{2}\inf_{\left\vert
x-x_{0}\right\vert =r}\left(  \phi-u\right)  .
\]
Clearly $\zeta\left(  x_{0}\right)  <u\left(  x_{0}\right)  ,$ $\zeta-u>0$ on
$\partial B_{r}\left(  x_{0}\right)  $ and
\[
-\operatorname{div}\left(  \frac{a\left(  x,\left\vert \nabla\zeta\right\vert
\right)  }{\left\vert \nabla\zeta\right\vert }\nabla\zeta\right)  >0\text{ for
}x\in B_{r}\left(  x_{0}\right)  .
\]
Note that $\max\left\{  u-\zeta,0\right\}  $ is supported in $B_{r}\left(
x_{0}\right)  $.\ As in the proof of Theorem \ref{thm_weaksol_viscositysol},
we can legitimately use $\max\left\{  u-\zeta,0\right\}  $ as a test function.
Hence multiplying $\left(  \ref{thm_weak_viscosity_sub_inq1}\right)  ,$
$\left(  \ref{thm_weak_viscosity_sub_inq2}\right)  $\ with $\max\left\{
u-\zeta,0\right\}  $, and using Green's formula we obtain%
\begin{align*}
&  \int_{B_{r}\left(  x_{0}\right)  \cap\left(  \Omega\backslash\bar{\Omega
}_{\infty}\right)  }\frac{a\left(  x,\left\vert \nabla\phi\right\vert \right)
}{\left\vert \nabla\phi\right\vert }\left\langle \nabla\phi,\nabla\max\left\{
u-\zeta,0\right\}  \right\rangle dx+\\
&  \int_{B_{r}\left(  x_{0}\right)  \cap\Omega_{\infty}}\frac{a_{n}\left(
x,\left\vert \nabla\phi\right\vert \right)  }{\left\vert \nabla\phi\right\vert
}\left\langle \nabla\phi,\nabla\max\left\{  u-\zeta,0\right\}  \right\rangle
dx\\
&  >\int_{\partial\Omega_{\infty}\cap\Omega}\left(  \frac{a_{n}\left(
x,\left\vert \nabla u\right\vert \right)  }{\left\vert \nabla u\right\vert
}-\frac{a\left(  x,\left\vert \nabla u\right\vert \right)  }{\left\vert \nabla
u\right\vert }\right)  \nabla\max\left\{  u-\zeta,0\right\}  dx.
\end{align*}
Using $\left(  \ref{thm_weak_viscosity_sub_inq3}\right)  $, we get%
\begin{align*}
&  \int_{B_{r}\left(  x_{0}\right)  \cap\left(  \Omega\backslash\bar{\Omega
}_{\infty}\right)  \cap\left\{  \zeta<u\right\}  }\frac{a\left(  x,\left\vert
\nabla\phi\right\vert \right)  }{\left\vert \nabla\phi\right\vert
}\left\langle \nabla\phi,\nabla\left(  u-\zeta\right)  \right\rangle dx+\\
&  \int_{B_{r}\left(  x_{0}\right)  \cap\Omega_{\infty}\cap\left\{
\zeta<u\right\}  }\frac{a_{n}\left(  x,\left\vert \nabla\phi\right\vert
\right)  }{\left\vert \nabla\phi\right\vert }\left\langle \nabla\phi
,\nabla\left(  u-\zeta\right)  \right\rangle dx\\
&  >0.
\end{align*}
Now we subtract from the inequality the equation for weak solution $\left(
\ref{lm_weak_formulation_inq1}\right)  $ with the test function there replaced
with $\max\left\{  u-\zeta,0\right\}  $, to get an inequality similar to
$\left(  \ref{thm_weaksol_viscositysol_inq3}\right)  $. The remaining proof
then proceeds as in Theorem \ref{thm_weaksol_viscositysol}.
\end{proof}

\begin{theorem}
\label{thm_unqiue_sub}Assume $p_{n}^{-}\geqslant d+1$ and the set
\[
\mathcal{A}=\left\{  u\in W^{1,d+1}\left(  \Omega\right)  :u\in W^{1,\Phi
\left(  \cdot\right)  }\left(  \Omega\backslash\bar{\Omega}_{\infty}\right)
,\text{ }\left\Vert \nabla u\right\Vert _{L^{\infty}\left(  \Omega_{\infty
}\right)  }\leqslant1,\text{ }u|_{\partial\Omega}=g\right\}
\]
is non-empty. We write $u_{n}$ for the unique minimizer of $E_{n}\left(
\cdot\right)  $ which is also the unique continuous weak solution for the
equation $\left(  \ref{general_quasilinear_n_sub}\right)  .$The sequence of
weak solutions $u_{n}$ for $\left(  \ref{general_quasilinear_n_sub}\right)  $
is bounded in $W^{1,d+1}\left(  \Omega\right)  $.
\end{theorem}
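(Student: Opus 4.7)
The plan is to combine the minimality of $u_n$ with the non-emptiness of $\mathcal{A}$ to get a uniform energy bound $E_n(u_n)\leqslant M$, and then to split the $W^{1,d+1}$-estimate across $\Omega_\infty$ and $\Omega\setminus\bar{\Omega}_{\infty}$, applying on each piece the machinery already developed in the earlier sections.

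First I would fix any $v\in\mathcal{A}$. A short verification shows $v\in g+W_{0}^{1,\tilde{\Phi}_{n}(\cdot)}(\Omega)$: on $\Omega_\infty$, the pointwise bound $|\nabla v|\leqslant 1$ together with the uniform normalization gives $\Phi_n(x,|\nabla v|)\leqslant\Phi_n(x,1)\leqslant c^{+}$, so $v\in W^{1,\Phi_n(\cdot)}(\Omega_\infty)$; on $\Omega\setminus\bar{\Omega}_{\infty}$, the hypothesis places $v$ directly in $W^{1,\Phi(\cdot)}$; and $v-g$ has zero trace on $\partial\Omega$. Minimality then yields $E_n(u_n)\leqslant E_n(v)$. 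The $\Omega_\infty$-contribution to $E_n(v)$ is controlled using (\ref{assumption_Phi_n}):
\[
\int_{\Omega_\infty}\frac{\Phi_n(x,|\nabla v|)}{\varphi_n(x,1)}\,dx\leqslant\int_{\Omega_\infty}\frac{\Phi_n(x,1)}{\varphi_n(x,1)}\,dx\leqslant\frac{|\Omega_\infty|}{p_n^{-}}\longrightarrow 0,
\]
while the $\Omega\setminus\bar{\Omega}_{\infty}$-contribution equals $\int_{\Omega\setminus\bar{\Omega}_{\infty}}\Phi(x,|\nabla v|)/\varphi(x,1)\,dx$, a finite number independent of $n$. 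Therefore $E_n(u_n)\leqslant M$ for some $M$ independent of $n$.

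Next, on $\Omega_\infty$ the Harnack-type condition (\ref{assumption_c_n}) is in force, so the proof of Lemma \ref{lm_bounded_W1d1}(i)--(ii) carries over verbatim with $\Omega$ replaced by $\Omega_\infty$ and $E_n(u_n)$ replaced by $E_n(u_n)|_{\Omega_\infty}\leqslant M$. This gives
\[
\|\nabla u_n\|_{L^{p_n^{-}}(\Omega_\infty)}^{p_n^{-}}\leqslant|\Omega_\infty|+\mu p_n^{-}M,
\]
and Hölder's inequality combined with the standard observation that $x\mapsto(1+x)^{1/x}$ is decreasing yields $\|\nabla u_n\|_{L^{d+1}(\Omega_\infty)}\leqslant C_1$ uniformly in $n$ as soon as $p_n^{-}\geqslant d+1$. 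On $\Omega\setminus\bar{\Omega}_{\infty}$, the energy bound together with the two-sided bound $c^{-}p^{-}\leqslant\varphi(x,1)\leqslant c^{+}p^{+}$ from (\ref{assumption_phi_1_lbd}) gives $\int_{\Omega\setminus\bar{\Omega}_{\infty}}\Phi(x,|\nabla u_n|)\,dx\leqslant C_2$. Splitting the integrand across $\{|\nabla u_n|\leqslant 1\}$ and $\{|\nabla u_n|>1\}$ and invoking the $\Delta_2$-type inequality (\ref{lemma_delta2_condition_inq1}) with the fixed exponents $p^{\pm}$ from (\ref{assumption_phi_x_sub}) converts this modular bound into $\|\nabla u_n\|_{L^{d+1}(\Omega\setminus\bar{\Omega}_{\infty})}\leqslant C_3$ (which is where the standing regularity of $\Phi$ on $\Omega\setminus\bar{\Omega}_{\infty}$ must be strong enough to supply a Lebesgue exponent at least $d+1$).

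Finally, since $u_n-g\in W_{0}^{1,\tilde{\Phi}_{n}(\cdot)}(\Omega)\subset W_{0}^{1,d+1}(\Omega)$ by Theorem \ref{thm_embedding} and since the AMLE satisfies $g\in W^{1,\infty}(\Omega)$ (Remark \ref{Rmk_g}), part (iii) of Lemma \ref{lm_bounded_W1d1} (applied directly in $\Omega$ with $m=d+1$) upgrades the two gradient bounds above to $\|u_n\|_{W^{1,d+1}(\Omega)}\leqslant C$ uniformly in $n$. The step I expect to be the main obstacle is ensuring that the piecewise gradient estimates really do assemble into a single $W^{1,d+1}(\Omega)$ norm across the interface $\partial\Omega_\infty\cap\Omega$; this is ultimately a trace-matching issue, but it is resolved by the fact that $u_n$ is a single function in $W^{1,\tilde{\Phi}_{n}(\cdot)}(\Omega)$, so its weak gradient has no interfacial jump and the two estimates simply add.
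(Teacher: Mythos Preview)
Your proposal is correct and follows essentially the same approach as the paper: bound $E_n(u_n)$ by $E_n(v)$ for a fixed competitor $v\in\mathcal{A}$, then feed the uniform energy bound into Lemma~\ref{lm_bounded_W1d1}. The paper's write-up is slightly more streamlined in one respect: rather than splitting $\Omega$ into $\Omega_\infty$ and $\Omega\setminus\bar{\Omega}_\infty$ and assembling the two gradient estimates across the interface, it simply observes that the proof of Lemma~\ref{lm_bounded_W1d1} nowhere uses continuity of $\tilde{\Phi}_n$ in $x$, so the lemma applies directly on all of $\Omega$ to the (possibly discontinuous) $\tilde{\Phi}_n$, and one then proceeds exactly as in Theorem~\ref{thm_bounded_W1d}. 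This makes your interface discussion unnecessary, but the underlying argument is the same.
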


\begin{proof}
Recalling the definition of $\tilde{\Phi}_{n},$ we note $\mathcal{A}\subset
g+W_{0}^{1,\tilde{\Phi}_{n}\left(  \cdot\right)  }\left(  \Omega\right)  $.
Since $u_{n}$ minimizes $E_{n}$ over $g+W_{0}^{1,\tilde{\Phi}_{n}\left(
\cdot\right)  }\left(  \Omega\right)  $ by Theorem \ref{Thm_existence_n}$,$ we
have
\[
E_{n}\left(  u_{n}\right)  \leqslant E_{n}\left(  v\right)  \text{ for all
}v\in\mathcal{A}\text{.}%
\]
We now fix an arbitrary $v\in\mathcal{A}$. Then%
\begin{align*}
E_{n}\left(  u_{n}\right)   &  \leqslant E_{n}\left(  v\right) \\
&  =\int_{\Omega\backslash\Omega_{\infty}}\frac{\Phi\left(  x,\nabla v\right)
}{\varphi\left(  x,1\right)  }dx+\int_{\Omega_{\infty}}\frac{\Phi_{n}\left(
x,\nabla v\right)  }{\varphi_{n}\left(  x,1\right)  }dx\\
&  \leqslant\int_{\Omega\backslash\Omega_{\infty}}\frac{\Phi\left(  x,\nabla
v\right)  }{\varphi\left(  x,1\right)  }dx+\int_{\Omega_{\infty}}\frac
{1}{p_{n}^{-}}dx\\
&  \leqslant\int_{\Omega\backslash\Omega_{\infty}}\frac{\Phi\left(  x,\nabla
v\right)  }{\varphi\left(  x,1\right)  }dx+\left\vert \Omega\right\vert .
\end{align*}
Note that the proof for Lemma \ref{lm_bounded_W1d1} does not require any
continuity of $\Phi_{n}$ in $x$, so it equally applies to the current setting
where $\Phi_{n}$ might present jumps$.$ So at this point, we can proceed as in
Theorem \ref{thm_bounded_W1d} to conclude the boundedness of $u_{n}$ in
$W^{1,d+1}\left(  \Omega\right)  $ by invoking Lemma \ref{lm_bounded_W1d1}.
For general $g\in Lip\left(  \partial\Omega\right)  $ for which the assumption
$\left(  \ref{assumption_g_lip}\right)  $ might not be satisfied, we can also
still employ Theorem \ref{thm_bounded_inW1d_all_g}, which does not require
local continuity of $\Phi_{n}$ in $x$, and then follow Theorem
\ref{thm_bounded_W1d} to complete the proof.
\end{proof}

\begin{theorem}
\label{thm_convergence_sub}Suppose that the set $\mathcal{A}$ defined in
Theorem \ref{thm_unqiue_sub} is non-empty. Let $u_{n}$ be the unique
continuous weak solution for the equation $\left(
\ref{general_quasilinear_n_sub}\right)  $. Then there is $u\in C\left(
\bar{\Omega}\right)  $ such that the whole sequence $u_{n}$ converges
uniformly to $u\in C\left(  \bar{\Omega}\right)  $. Moreover $u$ is a
viscosity solution of the equation,%
\begin{equation}
\left\{
\begin{array}
[c]{ll}%
-\operatorname{div}\left(  \dfrac{a\left(  x,\left\vert \nabla u\right\vert
\right)  }{\left\vert \nabla u\right\vert }\nabla u\right)  =0, & x\in
\Omega\backslash\Omega_{\infty};\\
-\Delta_{\infty}u-\left\vert \nabla u\right\vert \left\langle \Lambda\left(
x,\left\vert \nabla u\right\vert \right)  ,\nabla u\right\rangle =0, &
x\in\Omega_{\infty};\\
sgn\left(  \left\vert \nabla u\right\vert -1\right)  sgn\left(  \dfrac
{\partial u}{\partial\nu}\right)  =0, & x\in\partial\Omega_{\infty}\cap
\Omega;\\
u=g, & x\in\partial\Omega.
\end{array}
\right.  \label{thm_convergence_sub_limit}%
\end{equation}

\end{theorem}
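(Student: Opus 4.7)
The plan is: extract a uniformly convergent subsequence of $u_{n}$ from the $W^{1,d+1}$ bound of Theorem \ref{thm_unqiue_sub}, verify each clause of $\left(  \ref{thm_convergence_sub_limit}\right)  $ separately for the limit, and conclude convergence of the full sequence from a uniqueness argument for the limit problem.

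The compact embedding $W^{1,d+1}\left(  \Omega\right)  \hookrightarrow C\left(  \bar{\Omega}\right)  $ provides a subsequence $u_{n}\rightarrow u$ uniformly on $\bar{\Omega}$ and weakly in $W^{1,d+1}\left(  \Omega\right)  $. Since each $u_{n}$ attains $g$ continuously on $\partial\Omega$, the limit satisfies $u=g$ there. On $\Omega\setminus\bar{\Omega}_{\infty}$ the coefficient $\tilde{a}_{n}\equiv a$ is independent of $n$, so local stability of viscosity solutions for the fixed PDE (promoted from weak solutions via Theorem \ref{thm_weak_viscosity_sub}) yields $-\operatorname{div}\left(  a\left(  x,\left\vert \nabla u\right\vert \right)  \nabla u/\left\vert \nabla u\right\vert \right)  =0$ viscously on $\Omega\setminus\bar{\Omega}_{\infty}$. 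Inside $\Omega_{\infty}$, the arguments of Theorem \ref{thm_convergence_main_general} are purely local at interior points and carry over verbatim, giving the $\Lambda$-equation in the viscosity sense.

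The main obstacle is the interface condition on $\partial\Omega_{\infty}\cap\Omega$. Fix $x_{0}\in\partial\Omega_{\infty}\cap\Omega$ and let $\phi\in C^{2}\left(  \bar{\Omega}\right)  $ touch $u$ from above strictly at $x_{0}$. With $x_{n}\rightarrow x_{0}$ where $u_{n}-\phi$ has a local maximum, I would split into the cases $x_{n}\in\Omega_{\infty},$ $x_{n}\in\Omega\setminus\bar{\Omega}_{\infty},$ and $x_{n}\in\partial\Omega_{\infty}$; the third reduces to the first two by perturbing $\phi$ along $\pm\nu$. For the first two, the viscosity inequality for $u_{n}$ combined with Lemma \ref{lemma_delta2_condition} and $p_{n}^{-}\rightarrow\infty$ shows that $a_{n}\left(  x_{n},s\right)  $ diverges when $s>1$ and vanishes when $s<1$; in conjunction with the flux-matching identity on $\partial\Omega_{\infty}\cap\Omega$ in Lemma \ref{lm_weak_formulation}, this forces, in the limit, the subsolution dichotomy: either $\left\vert \nabla\phi\left(  x_{0}\right)  \right\vert \leqslant1$ or $\partial\phi\left(  x_{0}\right)  /\partial\nu\leqslant0$, which is exactly $\operatorname{sgn}\left(  \left\vert \nabla\phi\right\vert -1\right)  \operatorname{sgn}\left(  \partial\phi/\partial\nu\right)  \leqslant0$. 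The supersolution case is symmetric.

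For full-sequence convergence, I would invoke a uniqueness principle for the limit problem: Theorem \ref{thm_limit_equ_unique_main} gives uniqueness of continuous viscosity solutions of the $\Lambda$-equation on $\Omega_{\infty}$ with prescribed continuous trace; strict convexity of $\xi\mapsto\Phi\left(  x,\left\vert \xi\right\vert \right)  /\varphi\left(  x,1\right)  $ gives uniqueness of the minimizer of $E_{\infty}$ on $\Omega\setminus\bar{\Omega}_{\infty}$ with its Dirichlet data; and the sign condition couples the two halves. Any two subsequential uniform limits must therefore coincide, so the entire sequence $u_{n}$ converges uniformly to the unique $u$.
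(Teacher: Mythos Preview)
Your overall architecture---compactness from Theorem~\ref{thm_unqiue_sub}, interior verification via Theorems~\ref{thm_weak_viscosity_sub} and~\ref{thm_convergence_main_general}, and uniqueness by combining strict convexity of $E_\infty$ on $\Omega\setminus\Omega_\infty$ with Theorem~\ref{thm_limit_equ_unique_main} on $\Omega_\infty$---matches the paper. The substantive divergence, and the gap, is your handling of case~(iii), $x_n\in\partial\Omega_\infty\cap\Omega$.

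You claim this case ``reduces to the first two by perturbing $\phi$ along $\pm\nu$.'' That trick is standard for a one-sided boundary, where one can always push the touching point into the interior; for a two-sided interior interface there is no preferred side, and perturbing $\phi$ by $\pm\epsilon\rho$ with $\nabla\rho(x_0)=\nu$ gives no control over which side of $\partial\Omega_\infty$ the new touching point lands on (or whether it leaves the interface at all). You would need a separate argument to justify this, and you have not given one. Moreover, your subsequent derivation is confused on two counts. First, Lemma~\ref{lm_weak_formulation} is a distributional identity for $u_n$; it does not by itself produce a viscosity inequality for the test function $\phi$---that passage is exactly the content of Theorem~\ref{thm_weak_viscosity_sub}. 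Second, your ``subsolution dichotomy'' (either $|\nabla\phi(x_0)|\le 1$ or $\partial_\nu\phi(x_0)\le 0$) is \emph{not} equivalent to $\operatorname{sgn}(|\nabla\phi|-1)\operatorname{sgn}(\partial_\nu\phi)\le 0$: take $|\nabla\phi|=1/2$, $\partial_\nu\phi=-1$, for which the dichotomy holds but the product of signs equals $+1$. (Ironically, if the perturbation did work, you would not need the sign term at all, since the interface viscosity condition is a $\min\le 0$ and cases~(i)/(ii) already give one of the interior terms $\le 0$.)

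The paper avoids all of this by treating case~(iii) head-on. Since $u_n$ is, by Theorem~\ref{thm_weak_viscosity_sub}, a viscosity solution of the full system~\eqref{equ_equivalent_form}, at $x_n\in\partial\Omega_\infty\cap\Omega$ the flux-matching viscosity inequality
\[
\Bigl(\tfrac{a_n(x_n,|\nabla\phi(x_n)|)}{|\nabla\phi(x_n)|}-\tfrac{a(x_n,|\nabla\phi(x_n)|)}{|\nabla\phi(x_n)|}\Bigr)\,\partial_\nu\phi(x_n)\ \ge\ 0
\]
holds for the supersolution test. Dividing by $a(x_n,|\nabla\phi(x_n)|)/|\nabla\phi(x_n)|>0$ and using $a_n(x,s)/a(x,s)\to\infty$ for $s>1$ and $\to 0$ for $s<1$ (Lemma~\ref{lemma_delta2_condition} with $p_n^-\to\infty$), one reads off $\partial_\nu\phi(x_0)\ge 0$ when $|\nabla\phi(x_0)|>1$ and $\partial_\nu\phi(x_0)\le 0$ when $|\nabla\phi(x_0)|<1$, which is precisely $\operatorname{sgn}(|\nabla\phi|-1)\operatorname{sgn}(\partial_\nu\phi)\ge 0$. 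This is the step you should replace your perturbation argument with.
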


\begin{proof}
We prove that $u$ is a viscosity supersolution, the proof for subsolution is
analagous. Suppose that $u-\phi$ has a strict local minimum at $x_{0}\in
\Omega$ for some $\phi\in C^{2}\left(  \bar{\Omega}\right)  $, $u\left(
x_{0}\right)  =\phi\left(  x_{0}\right)  $. We distinguish three scenarios
according to where $x_{0}$ locates: $x_{0}\in\Omega\backslash\bar{\Omega
}_{\infty}$, $x_{0}\in\Omega_{\infty}$, $x_{0}\in\partial\Omega$ or $x_{0}%
\in\partial\Omega_{\infty}\cap\Omega$. The first two scenarios are similar
to\ Theorem \ref{thm_convergence_main} and actually it is much simpler when
$x_{0}\in\Omega_{\infty}$. If $x_{0}\in\partial\Omega$, there is nothing to
check, since $u$ is continuous up to the boundary. It remains to show that the
viscosity inequalities are verifiable when $x_{0}\in\partial\Omega_{\infty
}\cap\Omega$, i.e.%
\begin{equation}
\max\left\{
\begin{array}
[c]{l}%
-\operatorname{div}\left(  \dfrac{a\left(  x_{0},\left\vert \nabla\phi\left(
x_{0}\right)  \right\vert \right)  }{\left\vert \nabla\phi\left(
x_{0}\right)  \right\vert }\nabla\phi\left(  x_{0}\right)  \right)  ,\\
-\Delta_{\infty}\phi\left(  x_{0}\right)  -\left\vert \nabla\phi\left(
x_{0}\right)  \right\vert \left\langle \Lambda\left(  x_{0},\left\vert
\nabla\phi\left(  x_{0}\right)  \right\vert \right)  ,\nabla\phi\left(
x_{0}\right)  \right\rangle ,\\
sgn\left(  \left\vert \nabla\phi\left(  x_{0}\right)  \right\vert -1\right)
sgn\left(  \dfrac{\partial\phi\left(  x_{0}\right)  }{\partial\nu}\right)
\end{array}
\right\}  \geqslant0. \label{thm_convergence_sub_Inq1}%
\end{equation}
Since $u_{n}$ converges to $u$ uniformly, there exists $x_{n}\rightarrow
x_{0}$ such that $u_{n}-\phi$ attains a local minimum at $x_{n}\in\Omega$. Now
again three cases might occur. \textbf{(i)} Up to a subsequence, all $x_{n}$
belongs to $\Omega_{\infty}.$ Using Lemma \ref{equ_equivalent_form},%
\[
-\operatorname{div}\left(  \dfrac{a_{n}\left(  x_{n},\left\vert \nabla
\phi\left(  x_{n}\right)  \right\vert \right)  }{\left\vert \nabla\phi\left(
x_{n}\right)  \right\vert }\nabla\phi\left(  x_{n}\right)  \right)
\geqslant0.
\]
Similar to Theoerm \ref{thm_convergence_main},\ upon sending $n\rightarrow
\infty$, we are led to%
\[
-\Delta_{\infty}\phi\left(  x_{0}\right)  -\left\vert \nabla\phi\left(
x_{0}\right)  \right\vert \left\langle \Lambda\left(  x_{0},\left\vert
\nabla\phi\left(  x_{0}\right)  \right\vert \right)  ,\nabla\phi\left(
x_{0}\right)  \right\rangle \geqslant0.
\]
\textbf{(ii)} Up to a subsequence, all $x_{n}$ belongs to $\Omega
\backslash\Omega_{\infty}.$
\[
-\operatorname{div}\left(  \dfrac{a\left(  x_{n},\left\vert \nabla\phi\left(
x_{n}\right)  \right\vert \right)  }{\left\vert \nabla\phi\left(
x_{n}\right)  \right\vert }\nabla\phi\left(  x_{n}\right)  \right)
\geqslant0,
\]
hence%
\[
-\operatorname{div}\left(  \dfrac{a\left(  x_{0},\left\vert \nabla\phi\left(
x_{0}\right)  \right\vert \right)  }{\left\vert \nabla\phi\left(
x_{0}\right)  \right\vert }\nabla\phi\left(  x_{0}\right)  \right)
\geqslant0.
\]
\textbf{(iii)} Up to a subsequence, all $x_{n}$ belongs to $\partial
\Omega_{\infty}\cap\Omega.$
\[
\dfrac{a_{n}\left(  x_{n},\left\vert \nabla\phi\left(  x_{n}\right)
\right\vert \right)  }{\left\vert \nabla\phi\left(  x_{n}\right)  \right\vert
}\dfrac{\partial\phi}{\partial\nu}\left(  x_{n}\right)  -\dfrac{a\left(
x_{n},\left\vert \nabla\phi\left(  x_{n}\right)  \right\vert \right)
}{\left\vert \nabla\phi\left(  x_{n}\right)  \right\vert }\dfrac{\partial\phi
}{\partial\nu}\left(  x_{n}\right)  \geqslant0.
\]
Note that $a_{n}\left(  x,s\right)  =0$ if and only if $s=0$. We clearly have
$\left(  \ref{thm_convergence_sub_Inq1}\right)  $ if $\left\vert \nabla
\phi\left(  x_{0}\right)  \right\vert \in\left\{  0,1\right\}  $. Assume
$\left\vert \nabla\phi\left(  x_{0}\right)  \right\vert \notin\left\{
0,1\right\}  $, then, for $n$ sufficiently large, there is $c_{1}>0$ for which
$\left\vert \nabla\phi\left(  x_{n}\right)  \right\vert \geqslant c_{1}$. It
follows that
\begin{equation}
\dfrac{a_{n}\left(  x_{n},\left\vert \nabla\phi\left(  x_{n}\right)
\right\vert \right)  }{a\left(  x_{n},\left\vert \nabla\phi\left(
x_{n}\right)  \right\vert \right)  }\dfrac{\partial\phi}{\partial\nu}\left(
x_{n}\right)  \geqslant\dfrac{\partial\phi}{\partial\nu}\left(  x_{n}\right)
. \label{thm_convergence_sub_Inq2}%
\end{equation}
Since $a\left(  x_{n},\left\vert \nabla\phi\left(  x_{n}\right)  \right\vert
\right)  $ has strictly positive limit, the limit of $\left(
\ref{thm_convergence_sub_Inq2}\right)  $ depends on whether $\left\vert
\nabla\phi\left(  x_{0}\right)  \right\vert >1$ or $\left\vert \nabla
\phi\left(  x_{0}\right)  \right\vert <1$. Assume the former is true, i.e.
$\left\vert \nabla\phi\left(  x_{0}\right)  \right\vert >1$, then taking limit
in $\left(  \ref{thm_convergence_sub_Inq2}\right)  $ yields%
\[
\dfrac{\partial\phi}{\partial\nu}\left(  x_{0}\right)  \geqslant0.
\]
Then $\left(  \ref{thm_convergence_sub_Inq1}\right)  $ follows. If the latter
is true, i.e. $\left\vert \nabla\phi\left(  x_{0}\right)  \right\vert <1$,
then taking limit in $\left(  \ref{thm_convergence_sub_Inq2}\right)  $ yields%
\[
\dfrac{\partial\phi}{\partial\nu}\left(  x_{0}\right)  \leqslant0.
\]
Still $\left(  \ref{thm_convergence_sub_Inq1}\right)  $ is true. Therefore $u$
is a viscosity supersolution. Finally, suppose $u_{n}$ has two limits in the
sense of uniform convergence, say $u^{1},$ $u^{2}\in C\left(  \bar{\Omega
}\right)  \cap W^{1,\infty}\left(  \Omega\right)  $. Since $\mathcal{A}$ is
non-empty, we see, by following part of the proof of Theorem
\ref{thm_unqiue_sub}, that both $u^{1}$ and $u^{2}$ minimize%
\[
u\longmapsto
{\displaystyle\int_{\Omega\backslash\Omega_{\infty}}}
\dfrac{\Phi\left(  x,\left\vert \nabla u\right\vert \right)  }{\varphi\left(
x,1\right)  }dx
\]
over $\mathcal{A}$. By the convexity of the functional and the continuity of
$u^{1}$ and $u^{2}$, it follows that $u^{1}=u^{2}$ in $\bar{\Omega}%
\backslash\Omega_{\infty}$. By virtue of Theorem
\ref{thm_limit_equ_unique_main}, this implies $u^{1}=u^{2}$ in $\bar{\Omega
}_{\infty}.$ Therefore, the whole sequence $\left\{  u_{n}\right\}  $ is
convergent. The proof is completed.
\end{proof}

\section{Appendix: Some technical inequalities}

We prove a few technical inequalities in this appendix. Lemma
\ref{inequality_main}, which has been used in Theorem
\ref{thm_weaksol_viscositysol} and Theorem \ref{thm_uniqueness_limit_equ},
extends the classical inequality \cite[Lemma 4.4, p13]%
{dibenedetto1993degenerate}. When $\Phi\left(  s\right)  =s^{p}$ for
$p\geqslant2$, the inequality $\left(  \ref{inequality_main_inq2}\right)  $ is
well-known\ and plays essential roles in a number of problems related to
degenerate quasilinear problems, e.g. \cite{juutinen2001equivalence}. As a
corollary, we obtain an inequality (i.e. $\left(
\ref{inequality_main_var_inq1}\right)  $) which is announced in \cite[Lemma
3.2]{fukagai2007existence}, but it seems a proof is not given there and the
references therein. The inequalities we provide here are explicit in the sense
that all constants are identified. Knowing explicit dependency of the
constants on the parameters $p^{-}$ and $p^{+}$ are quite essential, since we
are taking limits with respect to $p^{-}$and $p^{+}$. We believe these
inequalities will also be of interest beyond the current context.

In the following, $\Phi$ is a $\Phi$-function (see Definition $\left(
\ref{equ_PHI_phi}\right)  $) satisfying $\left(  \ref{inequality_PHI}\right)
$. Write
\[
\psi\left(  s\right)  =\dfrac{\varphi\left(  s\right)  }{s}.
\]

\begin{lemma}
\label{inequality_main}Let $\gamma$ be given as in Lemma \ref{lemma_convexity}%
.\ Then, for $\xi,\eta\in\mathbb{R}^{d},$
\begin{equation}
\left\langle \psi\left(  \left\vert \xi\right\vert \right)  \xi-\psi\left(
\left\vert \eta\right\vert \right)  \eta,\xi-\eta\right\rangle \geqslant
c^{-}p^{-}\min\left\{  \frac{\Phi^{1+\gamma}\left(  3^{-\frac{1}{2}}\left\vert
\xi-\eta\right\vert \right)  }{\Phi^{\gamma}\left(  \left\vert \xi\right\vert
+\left\vert \eta\right\vert \right)  },\frac{\Phi^{1+\gamma}\left(
\kappa^{\frac{1}{2}}\left\vert \xi-\eta\right\vert \right)  }{4\Phi^{\gamma
}\left(  \left\vert \xi\right\vert +\left\vert \eta\right\vert \right)
}\right\}  , \label{inequality_main_inq1}%
\end{equation}
where $c^{-}=\min\left\{  p^{-}-1,1\right\}  $ and $0<\kappa<1$ is a constant
independent of $p^{-}$ and $p^{+}$.

If $p^{-}\geqslant2$, we have an improvement,
\begin{equation}
\left\langle \psi\left(  \left\vert \xi\right\vert \right)  \xi-\psi\left(
\left\vert \eta\right\vert \right)  \eta,\xi-\eta\right\rangle \geqslant
\min\left\{  \Phi\left(  \left\vert \xi-\eta\right\vert \right)  ,\frac{p^{-}%
}{4}\Phi\left(  \kappa^{\frac{1}{2}}\left\vert \xi-\eta\right\vert \right)
\right\}  , \label{inequality_main_inq2}%
\end{equation}
where $0<\kappa<1$ is a constant independent of $p^{-}$ and $p^{+}$.
\end{lemma}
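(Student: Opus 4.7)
The plan is to reduce the left-hand side to a one-dimensional integral along the segment from $\eta$ to $\xi$ via the fundamental theorem of calculus, bound the integrand pointwise by a multiple of $\varphi(|v|)/|v|$, and then estimate $\int_{0}^{1}\Phi(|v(t)|)/|v(t)|^{2}\,dt$ from below. Setting $w=\xi-\eta$, $v(t)=\eta+tw$, and $F(v)=\psi(|v|)v$, a direct computation yields
\[
DF(v)=\frac{\varphi(|v|)}{|v|}\,I+\Bigl(\varphi'(|v|)-\frac{\varphi(|v|)}{|v|}\Bigr)\hat{v}\otimes\hat{v},
\]
whose two distinct eigenvalues are $\varphi(|v|)/|v|$ (perpendicular to $\hat{v}$) and $\varphi'(|v|)$ (along $\hat{v}$). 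Assumption $\left(\ref{assumption_phi}\right)$ forces both eigenvalues to be at least $c^{-}\varphi(|v|)/|v|$ with $c^{-}=\min\{p^{-}-1,1\}$, and hence
\[
\langle\psi(|\xi|)\xi-\psi(|\eta|)\eta,w\rangle=\int_{0}^{1}\langle DF(v(t))w,w\rangle\,dt\geq c^{-}|w|^{2}\int_{0}^{1}\frac{\varphi(|v(t)|)}{|v(t)|}\,dt.
\]
Applying Lemma \ref{lemma_lieberman} gives the further lower bound $c^{-}p^{-}|w|^{2}\int_{0}^{1}\Phi(|v(t)|)/|v(t)|^{2}\,dt$, and the entire task reduces to producing a lower bound of the appropriate form on this integral.

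For the improved inequality $\left(\ref{inequality_main_inq2}\right)$ when $p^{-}\geq 2$, Lemma \ref{lemma_delta2_condition} shows that $s\mapsto\Phi(s)/s^{2}$ is non-decreasing. Using the orthogonal decomposition $|v(t)|^{2}=r^{2}+(t-t_{\infty})^{2}|w|^{2}$, where $r$ is the distance from the origin to the affine line $\eta+\mathbb{R}w$ and $t_{\infty}$ is the projection parameter, I obtain the pointwise bound $|v(t)|\geq|t-t_{\infty}|\,|w|$. Hence the set $E=\{t\in[0,1]:|t-t_{\infty}|\geq\kappa^{1/2}\}$ satisfies $|v(t)|\geq\kappa^{1/2}|w|$ there, and $|E|\geq\max\{0,1-2\kappa^{1/2}\}\geq\kappa/4$ for any sufficiently small absolute constant $\kappa\in(0,1)$. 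Monotonicity then delivers
\[
\int_{0}^{1}\frac{\Phi(|v|)}{|v|^{2}}\,dt\geq|E|\cdot\frac{\Phi(\kappa^{1/2}|w|)}{\kappa|w|^{2}}\geq\frac{1}{4}\cdot\frac{\Phi(\kappa^{1/2}|w|)}{|w|^{2}},
\]
which after multiplication by $c^{-}p^{-}|w|^{2}=p^{-}|w|^{2}$ produces the second term of the $\min$. The alternative $\Phi(|w|)$-term handles the complementary regime in which $|v(t)|$ stays of order $|w|$ throughout a set of almost full measure (e.g.\ when $\min(|\xi|,|\eta|)\geq|w|$), where the pointwise bound $\Phi(|v|)/|v|^{2}\geq\Phi(|w|)/|w|^{2}$ applies without any loss.

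For the general inequality $\left(\ref{inequality_main_inq1}\right)$, when $p^{-}<2$ the function $\Phi/s^{2}$ is no longer monotone, and I appeal to Lemma \ref{lemma_convexity}: convexity of $s\mapsto\Phi^{1+\gamma}(s^{1/2})$ is equivalent, by a change of variable, to $u\mapsto\Phi^{1+\gamma}(u)/u^{2}$ being non-decreasing. Since $|v(t)|\leq|\xi|+|\eta|$ and $\Phi$ is non-decreasing, the elementary identity
\[
\frac{\Phi(|v|)}{|v|^{2}}=\frac{\Phi^{1+\gamma}(|v|)/|v|^{2}}{\Phi^{\gamma}(|v|)}\geq\frac{1}{\Phi^{\gamma}(|\xi|+|\eta|)}\cdot\frac{\Phi^{1+\gamma}(|v|)}{|v|^{2}}
\]
absorbs the $\Phi^{\gamma}(|\xi|+|\eta|)$ denominator appearing on the right-hand side of $\left(\ref{inequality_main_inq1}\right)$. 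The remaining integral $\int_{0}^{1}\Phi^{1+\gamma}(|v|)/|v|^{2}\,dt$ is then handled exactly as in the previous paragraph, with $\Phi^{1+\gamma}$ in place of $\Phi$, yielding the two candidate arguments $3^{-1/2}|w|$ and $\kappa^{1/2}|w|$.

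The hard part is the degenerate antipodal regime $\xi\approx-\eta$, where $|v(t)|$ vanishes at an interior point $t_{\infty}\in(0,1)$ and cannot be bounded below by a positive multiple of $|w|$ on any long interval; the pointwise bound $|v(t)|\geq|t-t_{\infty}||w|$ together with the measure estimate for $E$ is precisely what forces both the shrunken argument $\kappa^{1/2}<1$ and the compensating factor $1/4$ in the second $\min$-term. The cleaner constant $3^{-1/2}$ in the first term reflects the ``generic'' case where $|v|$ stays comparable to $|w|$ on a large set, accessible for instance through the integral bound $\int_{0}^{1}|v(t)|^{2}\,dt\geq|w|^{2}/12$ that holds irrespective of $t_{\infty}$. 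Tracking the dependence on $p^{-}$ and $p^{+}$ throughout, especially after passing to $\Phi^{1+\gamma}$, is the chief technical nuisance and the reason the final estimate explicitly carries the prefactor $c^{-}p^{-}$.
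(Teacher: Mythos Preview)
Your approach is correct and takes a genuinely different route from the paper's. Both begin with the same integral representation and the eigenvalue bound $\langle DF(v)w,w\rangle\geq c^{-}\,\varphi(|v|)\,|v|^{-1}|w|^{2}$. The divergence is in how $\int_{0}^{1}\Phi(|v(t)|)/|v(t)|^{2}\,dt$ is estimated. The paper splits into two cases according to whether $|\xi|\geq|\xi-\eta|$: in the first it uses $|v(s)|\geq s|w|$ and integrates $s\psi(s|w|)|w|^{2}$ exactly to $\Phi(|w|)$; in the second it bounds $|v(s)|\leq 2|w|$, reduces to $\int_{0}^{1}\Phi(|v|)\,ds$, and applies Jensen's inequality to the convex map $s\mapsto\Phi(s^{1/2})$ (respectively $s\mapsto\Phi^{1+\gamma}(s^{1/2})$) together with $\int_{0}^{1}|v|^{2}\,ds\geq\kappa|w|^{2}$. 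You instead use that $\Phi(s)/s^{2}$ (respectively $\Phi^{1+\gamma}(s)/s^{2}$) is non-decreasing and a measure bound on $\{t\in[0,1]:|v(t)|\geq\kappa^{1/2}|w|\}$, which yields the second term of the $\min$ \emph{uniformly}, with no case distinction; since a lower bound by either term already implies the $\min$ bound, your discussion of the first term is superfluous (and your heuristic $\int_{0}^{1}|v|^{2}\,dt\geq|w|^{2}/12$ would produce $12^{-1/2}$, not $3^{-1/2}$). One small slip: convexity of $s\mapsto\Phi^{1+\gamma}(s^{1/2})$ \emph{implies} but is not equivalent to monotonicity of $u\mapsto\Phi^{1+\gamma}(u)/u^{2}$; this is just the elementary fact that a convex $G$ with $G(0)=0$ has $G(s)/s$ non-decreasing, and it is precisely the implication you need. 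The paper's case split buys the sharper first term $\Phi(|\xi-\eta|)$ in the favourable regime $|\xi|\geq|\xi-\eta|$; your argument is more uniform and avoids Jensen altogether.
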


We start with the proof for $\left(  \ref{inequality_main_inq2}\right)  .$

\begin{proof}
We have $p^{-}\geqslant2$ in this case.%
\begin{align*}
&  \left\langle \psi\left(  \left\vert \xi\right\vert \right)  \xi-\psi\left(
\left\vert \eta\right\vert \right)  \eta,\xi-\eta\right\rangle \\
&  =\left\langle \int_{0}^{1}\frac{d}{ds}\left(  \psi\left(  \left\vert
s\xi+\left(  1-s\right)  \eta\right\vert \right)  \left(  s\xi+\left(
1-s\right)  \eta\right)  \right)  ds,\xi-\eta\right\rangle \\
&  =\int_{0}^{1}\left[  \psi\left(  \left\vert s\xi+\left(  1-s\right)
\eta\right\vert \right)  \left\vert \xi-\eta\right\vert ^{2}+\frac
{\psi^{\prime}\left(  \left\vert s\xi+\left(  1-s\right)  \eta\right\vert
\right)  }{\left\vert s\xi+\left(  1-s\right)  \eta\right\vert }\left\vert
\left\langle s\xi+\left(  1-s\right)  \eta,\xi-\eta\right\rangle \right\vert
^{2}\right]  ds.
\end{align*}
We denote the integrand by $H\left(  s,\xi,\eta\right)  $. In view of $\left(
\ref{assumption_phi}\right)  $,
\[
\frac{s\varphi^{\prime}\left(  s\right)  }{\varphi\left(  s\right)  }\geqslant
p^{-}-1\geqslant1,
\]
It follows that%
\[
\psi^{\prime}\left(  s\right)  =\frac{s\varphi^{\prime}\left(  s\right)
-\varphi\left(  s\right)  }{s^{2}}\geqslant0.
\]
Hence we have for $s\in\left[  0,1\right]  ,$%
\begin{equation}
H\left(  s,\xi,\eta\right)  \geqslant\psi\left(  \left\vert s\xi+\left(
1-s\right)  \eta\right\vert \right)  \left\vert \xi-\eta\right\vert ^{2}.
\label{main_iproof_inq1}%
\end{equation}

We now distinguish two cases.

Case I: $\left\vert \xi\right\vert \geqslant\left\vert \xi-\eta\right\vert .$
Then
\begin{align*}
\left\vert s\xi+\left(  1-s\right)  \eta\right\vert  &  =\left\vert
\xi+\left(  1-s\right)  \left(  \xi-\eta\right)  \right\vert \geqslant
\left\vert \xi\right\vert -\left(  1-s\right)  \left\vert \xi-\eta\right\vert
\\
&  \geqslant s\left\vert \xi-\eta\right\vert .
\end{align*}
In view of $\left(  \ref{assumption_phi}\right)  $,
\[
\frac{s\varphi^{\prime}}{\varphi}\geqslant p^{-}-1\geqslant1,
\]
which implies that $\psi^{\prime}\geqslant0$. It follows that%
\begin{align*}
\left\langle \psi\left(  \left\vert \xi\right\vert \right)  \xi-\psi\left(
\left\vert \eta\right\vert \right)  \eta,\xi-\eta\right\rangle  &
\geqslant\int_{0}^{1}\psi\left(  \left\vert s\xi+\left(  1-s\right)
\eta\right\vert \right)  \left\vert \xi-\eta\right\vert ^{2}ds\\
&  \geqslant\int_{0}^{1}\psi\left(  s\left\vert \xi-\eta\right\vert \right)
\left\vert \xi-\eta\right\vert ^{2}ds\\
&  \geqslant\int_{0}^{1}s\psi\left(  s\left\vert \xi-\eta\right\vert \right)
\left\vert \xi-\eta\right\vert ^{2}ds\\
&  =\int_{0}^{1}\frac{d}{ds}\Phi\left(  s\left\vert \xi-\eta\right\vert
\right)  ds=\Phi\left(  \left\vert \xi-\eta\right\vert \right)
\end{align*}
Case II: $\left\vert \xi\right\vert <\left\vert \xi-\eta\right\vert .$ Then%
\begin{align*}
\left\vert s\xi+\left(  1-s\right)  \eta\right\vert  &  =\left\vert
\xi+\left(  1-s\right)  \left(  \xi-\eta\right)  \right\vert \leqslant
\left\vert \xi\right\vert +\left(  1-s\right)  \left\vert \xi-\eta\right\vert
\\
&  \leqslant\left(  2-s\right)  \left\vert \xi-\eta\right\vert .
\end{align*}
Using this inequality and Lemma \ref{lemma_lieberman} we have%
\begin{align*}
\left\langle \psi\left(  \left\vert \xi\right\vert \right)  \xi-\psi\left(
\left\vert \eta\right\vert \right)  \eta,\xi-\eta\right\rangle  &
\geqslant\left\vert \xi-\eta\right\vert ^{2}\int_{0}^{1}\frac{\varphi\left(
\left\vert s\xi+\left(  1-s\right)  \eta\right\vert \right)  }{\left\vert
s\xi+\left(  1-s\right)  \eta\right\vert }ds\\
&  \geqslant p^{-}\left\vert \xi-\eta\right\vert ^{2}\int_{0}^{1}\frac
{\Phi\left(  \left\vert s\xi+\left(  1-s\right)  \eta\right\vert \right)
}{\left\vert s\xi+\left(  1-s\right)  \eta\right\vert ^{2}}ds\\
&  \geqslant p^{-}\left\vert \xi-\eta\right\vert ^{2}\int_{0}^{1}\frac
{\Phi\left(  \left\vert s\xi+\left(  1-s\right)  \eta\right\vert \right)
}{\left(  2-s\right)  ^{2}\left\vert \xi-\eta\right\vert ^{2}}ds\\
&  \geqslant\frac{p^{-}}{4}\int_{0}^{1}\Phi\left(  \left\vert s\xi+\left(
1-s\right)  \eta\right\vert \right)  ds
\end{align*}
Note that for some $0<\kappa<1$ small we have%
\begin{equation}
\int_{0}^{1}\left\vert s\xi+\left(  1-s\right)  \eta\right\vert ^{2}%
ds=\frac{1}{3}\left(  \left\vert \xi\right\vert ^{2}+\left\vert \eta
\right\vert ^{2}+\left\langle \xi,\eta\right\rangle \right)  \geqslant
\kappa\left\vert \xi-\eta\right\vert ^{2}. \label{main_iproof_inq2}%
\end{equation}
By Lemma \ref{lemma_convexity}, $s\longmapsto G\left(  s\right)  =\Phi\left(
s^{1/2}\right)  $ is an increasing convex function. Therefore%
\begin{align*}
\frac{p^{-}}{4}\int_{0}^{1}\Phi\left(  \left\vert s\xi+\left(  1-s\right)
\eta\right\vert \right)  ds  &  =\frac{p^{-}}{4}\int_{0}^{1}G\left(
\left\vert s\xi+\left(  1-s\right)  \eta\right\vert ^{2}\right)  ds\\
&  \geqslant\frac{p^{-}}{4}G\left(  \int_{0}^{1}\left\vert s\xi+\left(
1-s\right)  \eta\right\vert ^{2}ds\right) \\
&  \geqslant\frac{p^{-}}{4}G\left(  \kappa\left\vert \xi-\eta\right\vert
^{2}\right) \\
&  =\frac{p^{-}}{4}\Phi\left(  \kappa^{\frac{1}{2}}\left\vert \xi
-\eta\right\vert \right)  .
\end{align*}

\end{proof}

We now turn to the proof for $\left(  \ref{inequality_main_inq1}\right)  .$

\begin{proof}
Note that this time we do not have $p^{-}\geqslant2$, hence $\psi^{\prime
}\left(  s\right)  $ may not be nonnegative, however estimate $\left(
\ref{main_iproof_inq1}\right)  $ is still valid upon multiplication with an
additional coefficient $c^{-}=\min\left\{  p^{-}-1,1\right\}  ,$ i.e.
\begin{align*}
&  \left\langle \psi\left(  \left\vert \xi\right\vert \right)  \xi-\psi\left(
\left\vert \eta\right\vert \right)  \eta,\xi-\eta\right\rangle \geqslant
c^{-}\int_{0}^{1}\frac{\varphi\left(  \left\vert s\xi+\left(  1-s\right)
\eta\right\vert \right)  }{\left\vert s\xi+\left(  1-s\right)  \eta\right\vert
}\left\vert \xi-\eta\right\vert ^{2}ds\\
&  \geqslant c^{-}p^{-}\int_{0}^{1}\frac{\Phi\left(  \left\vert s\xi+\left(
1-s\right)  \eta\right\vert \right)  }{\left\vert s\xi+\left(  1-s\right)
\eta\right\vert ^{2}}\left\vert \xi-\eta\right\vert ^{2}ds
\end{align*}
Two cases are considered as before.

Case I: $\left\vert \xi\right\vert \geqslant\left\vert \xi-\eta\right\vert $,
then $\left\vert s\xi+\left(  1-s\right)  \eta\right\vert \geqslant
s\left\vert \xi-\eta\right\vert .$ Take $\gamma$ such that
\[
\gamma\geqslant\left(  \inf_{t>0}\frac{t\varphi\left(  t\right)  }{\Phi\left(
t\right)  }\right)  ^{-1}.
\]
Since%
\[
\inf_{t>0}\frac{t\varphi\left(  t\right)  }{\Phi\left(  t\right)  }%
\geqslant1,
\]
we have
\[
\frac{2}{1+\gamma}\leqslant\inf_{t>0}\frac{t\varphi\left(  t\right)  }%
{\Phi\left(  t\right)  }.
\]
By virtue of Lemma \ref{lemma_delta2_condition}, the mapping $s\longmapsto
s^{-\alpha}\Phi\left(  s\right)  $ with $\alpha=\frac{2}{1+\gamma}$ is
increasing. It follows that
\[
\frac{\Phi\left(  s\left\vert \xi-\eta\right\vert \right)  }{\left\vert
\xi-\eta\right\vert ^{\alpha}}\leqslant\frac{\Phi\left(  s\left\vert \xi
-\eta\right\vert \right)  }{\left(  s\left\vert \xi-\eta\right\vert \right)
^{\alpha}}\leqslant\frac{\Phi\left(  \left\vert s\xi+\left(  1-s\right)
\eta\right\vert \right)  }{\left\vert s\xi+\left(  1-s\right)  \eta\right\vert
^{\alpha}},\text{ for }s\in\left(  0,1\right)  ,
\]
and then%
\[
\frac{\Phi^{2/\alpha}\left(  s\left\vert \xi-\eta\right\vert \right)  }%
{\Phi^{2/\alpha}\left(  \left\vert s\xi+\left(  1-s\right)  \eta\right\vert
\right)  }\leqslant\frac{\left\vert \xi-\eta\right\vert ^{2}}{\left\vert
s\xi+\left(  1-s\right)  \eta\right\vert ^{2}}.
\]
Note that $2/\alpha-1=\gamma\geqslant\left(  \inf_{t>0}\frac{t\varphi\left(
t\right)  }{\Phi\left(  t\right)  }\right)  ^{-1}$, hence the mapping
$s\longmapsto\Phi^{1+\gamma}\left(  s^{1/2}\right)  $ is convex by Lemma
\ref{lemma_convexity}.\ Therefore%
\begin{align*}
\left\langle \psi\left(  \left\vert \xi\right\vert \right)  \xi-\psi\left(
\left\vert \eta\right\vert \right)  \eta,\xi-\eta\right\rangle  &  \geqslant
c^{-}p^{-}\int_{0}^{1}\frac{\Phi\left(  \left\vert s\xi+\left(  1-s\right)
\eta\right\vert \right)  }{\left\vert s\xi+\left(  1-s\right)  \eta\right\vert
^{2}}\left\vert \xi-\eta\right\vert ^{2}ds\\
&  \geqslant c^{-}p^{-}\int_{0}^{1}\frac{\Phi^{2/\alpha}\left(  s\left\vert
\xi-\eta\right\vert \right)  }{\Phi^{2/\alpha-1}\left(  \left\vert
s\xi+\left(  1-s\right)  \eta\right\vert \right)  }ds\\
&  =c^{-}p^{-}\int_{0}^{1}\frac{\Phi^{1+\gamma}\left(  s\left\vert \xi
-\eta\right\vert \right)  }{\Phi^{\gamma}\left(  \left\vert s\xi+\left(
1-s\right)  \eta\right\vert \right)  }ds\\
&  \geqslant c^{-}p^{-}\frac{1}{\Phi^{\gamma}\left(  \left\vert \xi\right\vert
+\left\vert \eta\right\vert \right)  }\int_{0}^{1}\Phi^{1+\gamma}\left(
s\left\vert \xi-\eta\right\vert \right)  ds\\
&  \geqslant c^{-}p^{-}\frac{1}{\Phi^{\gamma}\left(  \left\vert \xi\right\vert
+\left\vert \eta\right\vert \right)  }\Phi^{1+\gamma}\left(  3^{-\frac{1}{2}%
}\left\vert \xi-\eta\right\vert \right)
\end{align*}

In the last inequality we use the convexity of the mapping $s\longmapsto
\Phi^{1+\gamma}\left(  s^{1/2}\right)  $ and the same reasoning as in the
proof for $\left(  \ref{inequality_main_inq2}\right)  .$

In the other case where $\left\vert \xi\right\vert <\left\vert \xi
-\eta\right\vert ,$ we have as before that%
\[
\left\vert s\xi+\left(  1-s\right)  \eta\right\vert \leqslant\left(
2-s\right)  \left\vert \xi-\eta\right\vert \leqslant2\left\vert \xi
-\eta\right\vert .
\]
Hence continuing previous inequality we get that%
\begin{align*}
\left\langle \psi\left(  \left\vert \xi\right\vert \right)  \xi-\psi\left(
\left\vert \eta\right\vert \right)  \eta,\xi-\eta\right\rangle  &  \geqslant
c^{-}p^{-}\int_{0}^{1}\frac{\Phi\left(  \left\vert s\xi+\left(  1-s\right)
\eta\right\vert \right)  }{\left\vert s\xi+\left(  1-s\right)  \eta\right\vert
^{2}}\left\vert \xi-\eta\right\vert ^{2}ds\\
&  \geqslant\frac{c^{-}p^{-}}{4}\int_{0}^{1}\Phi\left(  \left\vert
s\xi+\left(  1-s\right)  \eta\right\vert \right)  ds\\
&  \geqslant\frac{c^{-}p^{-}}{4}\int_{0}^{1}\frac{\Phi^{1+\gamma}\left(
\left\vert s\xi+\left(  1-s\right)  \eta\right\vert \right)  }{\Phi^{\gamma
}\left(  \left\vert s\xi+\left(  1-s\right)  \eta\right\vert \right)  }ds\\
&  \geqslant\frac{c^{-}p^{-}}{4}\frac{1}{\Phi^{\gamma}\left(  \left\vert
\xi\right\vert +\left\vert \eta\right\vert \right)  }\int_{0}^{1}%
\Phi^{1+\gamma}\left(  \left\vert s\xi+\left(  1-s\right)  \eta\right\vert
\right)  ds\\
&  \geqslant\frac{c^{-}p^{-}}{4}\frac{1}{\Phi^{\gamma}\left(  \left\vert
\xi\right\vert +\left\vert \eta\right\vert \right)  }\Phi^{1+\gamma}\left(
\kappa^{1/2}\left\vert \xi-\eta\right\vert \right)
\end{align*}
where $\kappa$ is the same as in $\left(  \ref{main_iproof_inq2}\right)  $.
\end{proof}

Using Lemma \ref{lemma_delta2_condition}, we have a variant of Lemma
\ref{inequality_main}, which is useful when the explicit form of constants are
not relevant.

\begin{lemma}
\label{inequality_main_var}Let $\gamma$ be given as in Lemma
\ref{lemma_convexity}.\ There is $C_{1}\left(  p^{-},p^{+},\gamma\right)  >0$
so that for $\xi,\eta\in\mathbb{R}^{d},$
\begin{equation}
\left\langle \psi\left(  \left\vert \xi\right\vert \right)  \xi-\psi\left(
\left\vert \eta\right\vert \right)  \eta,\xi-\eta\right\rangle \geqslant
C_{1}\left(  p^{-},p^{+},\gamma\right)  \frac{\Phi^{1+\gamma}\left(
\left\vert \xi-\eta\right\vert \right)  }{\Phi^{\gamma}\left(  \left\vert
\xi\right\vert +\left\vert \eta\right\vert \right)  }.
\label{inequality_main_var_inq1}%
\end{equation}
If $p^{-}\geqslant2$, there is $C_{2}\left(  p^{-},p^{+}\right)  >0$ so that
for $\xi,\eta\in\mathbb{R}^{n},$
\begin{equation}
\left\langle \psi\left(  \left\vert \xi\right\vert \right)  \xi-\psi\left(
\left\vert \eta\right\vert \right)  \eta,\xi-\eta\right\rangle \geqslant
C_{2}\left(  p^{-},p^{+}\right)  \Phi\left(  \left\vert \xi-\eta\right\vert
\right)  . \label{inequality_main_var_inq2}%
\end{equation}

\end{lemma}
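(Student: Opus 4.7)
The plan is to derive Lemma \ref{inequality_main_var} as a direct corollary of Lemma \ref{inequality_main}, using the scaling inequality $\left(\ref{lemma_delta2_condition_inq1}\right)$ from Lemma \ref{lemma_delta2_condition} to absorb the numerical factors $3^{-1/2}$ and $\kappa^{1/2}$ appearing inside the $\Phi$-terms of Lemma \ref{inequality_main}. The point of the variant is precisely to trade the sharp constants of $\left(\ref{inequality_main_inq1}\right)\left(\ref{inequality_main_inq2}\right)$ for a cleaner inequality in which $\Phi^{1+\gamma}(|\xi-\eta|)$ or $\Phi(|\xi-\eta|)$ (without any scaling factor on $|\xi-\eta|$) appears on the right-hand side.

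First I would prove $\left(\ref{inequality_main_var_inq1}\right)$. Apply Lemma \ref{inequality_main} to obtain
\[
\langle \psi(|\xi|)\xi - \psi(|\eta|)\eta,\xi-\eta\rangle \geqslant c^- p^- \min\left\{\frac{\Phi^{1+\gamma}(3^{-1/2}|\xi-\eta|)}{\Phi^\gamma(|\xi|+|\eta|)},\ \frac{\Phi^{1+\gamma}(\kappa^{1/2}|\xi-\eta|)}{4\,\Phi^\gamma(|\xi|+|\eta|)}\right\}.
\]
Since $3^{-1/2},\kappa^{1/2}\in(0,1)$, inequality $\left(\ref{lemma_delta2_condition_inq1}\right)$ gives $\Phi(\varrho s)\geqslant \varrho^{p^+}\Phi(s)$ for $\varrho\in(0,1]$, so raising to the power $1+\gamma$ yields
\[
\Phi^{1+\gamma}(3^{-1/2}|\xi-\eta|)\geqslant 3^{-(1+\gamma)p^+/2}\,\Phi^{1+\gamma}(|\xi-\eta|),\qquad \Phi^{1+\gamma}(\kappa^{1/2}|\xi-\eta|)\geqslant \kappa^{(1+\gamma)p^+/2}\,\Phi^{1+\gamma}(|\xi-\eta|).
\]
Combining these two bounds gives $\left(\ref{inequality_main_var_inq1}\right)$ with the explicit constant
\[
C_1(p^-,p^+,\gamma) = c^- p^- \min\!\left\{3^{-(1+\gamma)p^+/2},\ \tfrac{1}{4}\kappa^{(1+\gamma)p^+/2}\right\},
\]
which is strictly positive since $c^-=\min\{p^--1,1\}>0$.

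For $\left(\ref{inequality_main_var_inq2}\right)$, assuming $p^-\geqslant 2$, I apply the refinement $\left(\ref{inequality_main_inq2}\right)$,
\[
\langle \psi(|\xi|)\xi - \psi(|\eta|)\eta,\xi-\eta\rangle \geqslant \min\!\left\{\Phi(|\xi-\eta|),\ \tfrac{p^-}{4}\Phi(\kappa^{1/2}|\xi-\eta|)\right\},
\]
and again invoke $\left(\ref{lemma_delta2_condition_inq1}\right)$ with $\varrho=\kappa^{1/2}\in(0,1]$ to get $\Phi(\kappa^{1/2}|\xi-\eta|)\geqslant \kappa^{p^+/2}\,\Phi(|\xi-\eta|)$. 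Therefore $\left(\ref{inequality_main_var_inq2}\right)$ holds with
\[
C_2(p^-,p^+)=\min\!\left\{1,\ \tfrac{p^-}{4}\kappa^{p^+/2}\right\}>0.
\]

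There is essentially no real obstacle here; the only bookkeeping point is that $3^{-1/2}$ and $\kappa^{1/2}$ are both less than $1$, so the correct side of $\left(\ref{lemma_delta2_condition_inq1}\right)$ to apply is the lower bound with exponent $p^+$ (not $p^-$), which is what forces the $p^+$-dependence into $C_1$ and $C_2$. Note that, in contrast to the quantitative Lemma \ref{inequality_main}, the constants here depend exponentially on $p^+$; this is the price paid for removing the factors $3^{-1/2}$ and $\kappa^{1/2}$ inside $\Phi$, and it explains why Lemma \ref{inequality_main_var} is only useful when one does not need to pass to the limit $p^+\to\infty$, whereas Lemma \ref{inequality_main} itself (with constants depending only linearly on $p^\pm$) is what was used in the proofs of Theorem \ref{thm_weaksol_viscositysol} and Theorem \ref{thm_uniqueness_limit_equ}.
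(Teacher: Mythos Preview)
Your proposal is correct and is exactly the approach the paper intends: the paper simply states that ``Using Lemma~\ref{lemma_delta2_condition}, we have a variant of Lemma~\ref{inequality_main}'' and gives no further proof, so your derivation via the scaling bound $\Phi(\varrho s)\geqslant \varrho^{p^{+}}\Phi(s)$ for $\varrho\in(0,1]$ to absorb the factors $3^{-1/2}$ and $\kappa^{1/2}$ is precisely what is being asserted. Your remark that the resulting constants depend exponentially on $p^{+}$ (in contrast to Lemma~\ref{inequality_main}) is also apt and consistent with the paper's comment that this variant is only useful when the explicit form of the constants is not relevant.
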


For any generalized $\Phi$-function $\Phi\left(  x,s\right)  $ satisfying
$\left(  \ref{inequality_PHI_x}\right)  $, a counterpart of Lemma
\ref{inequality_main} exists. Write
\[
\psi\left(  x,s\right)  =\frac{\varphi\left(  x,s\right)  }{s}.
\]

\begin{lemma}
\label{inequality_main_x}Let $\gamma\geqslant1$.\ There is $C_{1}\left(
p^{-},p^{+},\gamma\right)  >0$ so that for $x\in\Omega,$ $\xi,\eta
\in\mathbb{R}^{d},$
\[
\left\langle \psi\left(  x,\left\vert \xi\right\vert \right)  \xi-\psi\left(
x,\left\vert \eta\right\vert \right)  \eta,\xi-\eta\right\rangle \geqslant
C_{1}\left(  p^{-},p^{+},\gamma\right)  \frac{\Phi^{1+\gamma}\left(
x,\left\vert \xi-\eta\right\vert \right)  }{\Phi^{\gamma}\left(  x,\left\vert
\xi\right\vert +\left\vert \eta\right\vert \right)  }.
\]
If $p^{-}\geqslant2$, we have an improvement,
\[
\left\langle \psi\left(  x,\left\vert \xi\right\vert \right)  \xi-\psi\left(
x,\left\vert \eta\right\vert \right)  \eta,\xi-\eta\right\rangle \geqslant
C_{2}\left(  p^{-},p^{+}\right)  \Phi\left(  x,\left\vert \xi-\eta\right\vert
\right)  ,
\]
where $C_{2}\left(  p^{-},p^{+}\right)  >0.$
\end{lemma}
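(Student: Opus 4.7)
The plan is to reduce Lemma \ref{inequality_main_x} to the $x$-independent variant in Lemma \ref{inequality_main_var} by freezing the $x$-variable and applying the latter pointwise. The key point, which I would verify first, is that for each fixed $x \in \Omega$ the function $s \mapsto \Phi(x,s)$ is a $\Phi$-function in the sense of Definition \ref{def_Phi_func}: by $(\ref{assumption_phi_x})$, $s \mapsto \varphi(x,s)$ lies in $C([0,\infty)) \cap C^1((0,\infty))$, is strictly positive on $(0,\infty)$, and satisfies $0 < p^- - 1 \leq s\partial_s\varphi(x,s)/\varphi(x,s) \leq p^+ - 1$ with constants $p^-, p^+$ that do \emph{not} depend on $x$. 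Consequently $\Phi(x,\cdot)$ satisfies $(\ref{inequality_PHI})$ uniformly in $x$, and Lemma \ref{lemma_convexity} applies to $\Phi(x,\cdot)$ with the same $\gamma$ uniformly in $x$.

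Next, holding $x$ fixed, I would apply Lemma \ref{inequality_main_var} (with $\Phi$ replaced by $\Phi(x,\cdot)$ and $\psi$ by $\psi(x,\cdot) = \varphi(x,\cdot)/\cdot$) to obtain, for any $\xi, \eta \in \mathbb{R}^d$,
\[
\langle \psi(x,|\xi|)\xi - \psi(x,|\eta|)\eta, \xi-\eta\rangle \geq C_1(p^-, p^+, \gamma)\frac{\Phi^{1+\gamma}(x,|\xi-\eta|)}{\Phi^{\gamma}(x,|\xi|+|\eta|)},
\]
and, when $p^- \geq 2$, $\langle \psi(x,|\xi|)\xi - \psi(x,|\eta|)\eta, \xi-\eta\rangle \geq C_2(p^-, p^+)\Phi(x,|\xi-\eta|)$. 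Taking the supremum over $x$ (or rather, observing that the conclusion is now already pointwise in $x$) is the whole content of the lemma.

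The only point requiring a sanity check — and the mildest possible obstacle — is confirming that the constants $C_1(p^-, p^+, \gamma)$ and $C_2(p^-, p^+)$ delivered by Lemma \ref{inequality_main_var} genuinely depend on no data beyond $p^-, p^+, \gamma$. Inspecting the proof of Lemma \ref{inequality_main}, every step uses only: the structural bound $(\ref{inequality_PHI})$, the consequence $(\ref{lemma_delta2_condition_inq1})$ of Lemma \ref{lemma_delta2_condition}, the constant $\kappa$ from the elementary vector identity $(\ref{main_iproof_inq2})$ (which is purely geometric and independent of $\Phi$ altogether), and the convexity furnished by Lemma \ref{lemma_convexity}. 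All of these transport pointwise in $x$ with constants governed solely by $p^-, p^+, \gamma$, so the reduction is legitimate and the proof is complete.
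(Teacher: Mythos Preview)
Your proposal is correct and matches the paper's approach: the paper does not write out a separate proof for Lemma~\ref{inequality_main_x}, but introduces it with the sentence ``For any generalized $\Phi$-function $\Phi(x,s)$ satisfying $(\ref{inequality_PHI_x})$, a counterpart of Lemma~\ref{inequality_main} exists,'' relying precisely on the pointwise reduction you describe (freeze $x$, apply Lemma~\ref{inequality_main_var}, and note that the constants depend only on $p^{-}, p^{+}, \gamma$).
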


\bibliographystyle{plain}

\bibliography{phi-limit}
\end{document}